\newtheorem{prop}{Proposition}[section]
\newtheorem{lemma}[prop]{Lemma}
\newtheorem{thm}[prop]{Theorem}
\newtheorem{cor}[prop]{Corollary}
\theoremstyle{definition}
\newtheorem{defn}[prop]{Definition}
\newtheorem{constr}[prop]{Construction}
\newtheorem{notation}[prop]{Notation}
\newtheorem{rmk}[prop]{Remark}
\newtheorem{ex}[prop]{Example}
\DeclareMathOperator{\Sp}{sp}
\DeclareMathOperator{\gr}{gr}
\DeclareMathOperator{\coh}{coh}
\DeclareMathOperator{\res}{res}
\DeclareMathOperator{\sm}{sm}
\DeclareMathOperator{\supp}{supp}
\DeclareMathOperator{\FDiv}{FDiv}
\DeclareMathOperator{\ord}{ord}
\DeclareMathOperator{\orb}{orb}
\DeclareMathOperator{\Gal}{Gal}
\DeclareMathOperator{\tr}{tr} 
\DeclareMathOperator{\rk}{rk}        
\DeclareMathOperator{\spec}{Spec} \DeclareMathOperator{\Sym}{Sym}
\DeclareMathOperator{\id}{id}
\newcommand{\ra}{\rightarrow}      
\newcommand{\lra}{\longrightarrow}
\newcommand{\rap}{\stackrel{+}{\ra}}
\DeclareMathOperator*{\hocolim}{hocolim}
\DeclareMathOperator*{\colim}{colim}
\DeclareMathOperator*{\Lim}{lim}
\DeclareMathOperator{\Div}{Div} 
\DeclareMathOperator{\Jac}{Jac} 
\DeclareMathOperator{\Hom}{Hom}
\DeclareMathOperator{\Spec}{Spec}
\DeclareMathOperator{\Pic}{Pic}
\DeclareMathOperator{\codim}{codim}
\newcommand{\et}{\mathrm{\acute{e}t}}
\def\cA{\mathcal A}\def\cB{\mathcal B}\def\cC{\mathcal C}\def\cD{\mathcal D}
\def\cE{\mathcal E}\def\cF{\mathcal F}\def\cG{\mathcal G}
\def\cI{\mathcal I}\def\cL{\mathcal L}
\def\cM{\mathcal M}\def\cN{\mathcal N}\def\cO{\mathcal O}\def\cP{\mathcal P}
\def\cT{\mathcal T}
\def\cV{\mathcal V}\def\cW{\mathcal W}
\def\cY{\mathcal Y}\def\cZ{\mathcal Z}
\def\AA{\mathbb A}\def\CC{\mathbb C}
\def\GG{\mathbb G}
\def\LL{\mathbb L}
\def\NN{\mathbb N}\def\PP{\mathbb P}
\def\QQ{\mathbb Q}\def\RR{\mathbb R}
\def\ZZ{\mathbb Z}
\def\fd{\mathfrak d}
\def\fg{\mathfrak g}\def\fh{\mathfrak h}
\def\fl{\mathfrak l}
\def\fs{\mathfrak s}
\def\fM{\mathfrak M}
\def\fX{\mathfrak X}
 \def\GL{\mathrm{GL}} \def\SL{\mathrm{SL}}
\def\PGL{\mathrm{PGL}}
\def\DM{\mathrm{DM}}  \def\DA{\mathrm{DA}}
\def\SH{\mathrm{SH}}
\def\CH{\mathrm{CH}}
\def\ab{\mathrm{ab}}
\def\an{\mathrm{an}}
\def\num{\mathrm{num}}
\def\rat{\mathrm{rat}}
\def\fd{\mathrm{fd}}
\def\ev{\mathrm{ev}}
\def\@tocline#1#2#3#4#5#6#7{\relax
  \ifnum #1>\c@tocdepth 
  \else
    \par \addpenalty\@secpenalty\addvspace{#2}%
    \begingroup \hyphenpenalty\@M
    \@ifempty{#4}{%
      \@tempdima\csname r@tocindent\number#1\endcsname\relax
    }{%
      \@tempdima#4\relax
    }%
    \parindent\z@ \leftskip#3\relax \advance\leftskip\@tempdima\relax
    \rightskip\@pnumwidth plus4em \parfillskip-\@pnumwidth
    #5\leavevmode\hskip-\@tempdima
      \ifcase #1
       \or\or \hskip 1em \or \hskip 2em \else \hskip 3em \fi%
      #6\nobreak\relax
    \hfill\hbox to\@pnumwidth{\@tocpagenum{#7}}\par
    \nobreak
    \endgroup
  \fi}
\newsavebox{\@brx}
\newcommand{\llangle}[1][]{\savebox{\@brx}{\(\m@th{#1\langle}\)}%
  \mathopen{\copy\@brx\kern-0.5\wd\@brx\usebox{\@brx}}}
\newcommand{\rrangle}[1][]{\savebox{\@brx}{\(\m@th{#1\rangle}\)}%
  \mathclose{\copy\@brx\kern-0.5\wd\@brx\usebox{\@brx}}}
\DeclareMathOperator{\ch}{\mathcal{C}h}
\DeclareMathOperator{\m}{\cM}
\newcommand{\Bun}{\mathfrak{Bun}}
\newcommand{\Coh}{\mathfrak{Coh}}
\newcommand{\Ch}{\mathfrak{Ch}_{\underline{m},\underline{e}}}
\newcommand{\Chinj}{\Ch^{\mathrm{inj}}}
\newcommand{\ChinjL}{\mathfrak{Ch}_{\underline{m},\underline{e},L}^{\mathrm{inj}}}
\newcommand{\fChinj}{\widetilde{\mathfrak{Ch}}_{\underline{m},\underline{e}}^{\mathrm{inj}}}
\newcommand{\fChinjL}{\widetilde{\mathfrak{Ch}}_{\underline{m},\underline{e},L}^{\mathrm{inj}}}
\newcommand{\Chtau}{\Ch^{\alpha,\tau}}
\newcommand{\Chss}{\Ch^{\alpha,ss}}
\newcommand{\ChDss}{\Ch^{\alpha_D,ss}}
\newcommand{\mChDss}{\ch^{\alpha_D,ss}_{\underline{m},\underline{e}}}
\newcommand{\GLHiggs}{\m}
\newcommand{\GLHit}{\cA}
\newcommand{\SLHiggs}{\m_{L}}
\newcommand{\SLHit}{\cA_{L}}
\newcommand{\piHiggs}{\m_{\pi}}
\newcommand{\piHit}{\cA_{\pi}}
\newcommand{\gammaHiggs}{\m_{\gamma}}
\newcommand{\gammaHit}{\cA_{\gamma}}
\newcommand{\PGLHiggs}{\overline{\m}}
\newcommand{\PGLHit}{\overline{\cA}}
\newcommand{\hGamma}{\widehat{\Gamma}}
\newcommand{\mot}{\mathrm{mot}}
\newcommand{\Log}{\mathcal{L}\mathrm{og}^\vee}
\newcommand{\un}{\mathrm{uni}}
\newcommand{\one}{\mathbbm{1}}
\title[Motivic mirror symmetry and $\chi$-independence]{Motivic mirror symmetry and $\chi$-independence for Higgs bundles in arbitrary characteristic}
\author{Victoria Hoskins and Simon Pepin Lehalleur}
\begin{document}

\maketitle

\begin{abstract}
We prove that the (twisted orbifold) motives of the moduli spaces of $\SL_n$ and $\PGL_n$-Higgs bundles of coprime rank and degree on a smooth projective curve over an algebraically closed field in which the rank is invertible are isomorphic in Voevodsky's triangulated category of motives. The equality of twisted orbifold Hodge numbers of these moduli spaces was conjectured by Hausel and Thaddeus and recently proven by Groechenig, Ziegler and Wyss via $p$-adic integration and then by Maulik and Shen using the decomposition theorem, an analysis of the supports of $D$-twisted Hitchin fibrations and vanishing cycles. Our proof in characteristic zero combines the geometric ideas of Maulik and Shen with the conservativity of the Betti realisation on abelian motives; to apply the latter, we prove that the relevant motives are abelian. In particular, we prove that the motive of the $\SL_n$-Higgs moduli space is abelian, building on our previous work in the $\GL_{n}$-case. We then use motivic nearby cycles to deduce the result in positive characteristic from that in characteristic zero. Using the same ideas, we prove motivic $\chi$-independence for $\GL_n$-Higgs bundles.
\end{abstract}

\vspace{-0.5cm}

\setcounter{tocdepth}{1}
\tableofcontents

\section{Introduction}

Let $C$ be a smooth projective geometrically connected genus $g$ curve over a field $k$. The moduli space $\GLHiggs:=\cM_{n,d}(C)$ of stable Higgs bundles $(E,\Phi: E \ra E \otimes \omega_C)$ of coprime rank $n$ and degree $d$ is a smooth quasi-projective variety of dimension $2(n^2(g-1) +1)$. Taking the characteristic polynomial of the Higgs field $\Phi$ defines the Hitchin fibration, a morphism $h: \GLHiggs \ra \GLHit$ to an affine space $\GLHit$ called the Hitchin base \cite{Hitchin}. The variety $\GLHiggs$ admits an algebraic symplectic form, and the morphism $h$ is a proper Lagrangian fibration with respect to the symplectic structure; the generic fibres of $h$ are torsors under the Jacobians of the corresponding spectral curves. Over the complex numbers, $\GLHiggs$ is a non-compact hyperk\"{a}hler manifold and is diffeomorphic to both the moduli space of holomorphic flat connections and the character variety of topological local systems on $C$ by the non-abelian Hodge correspondence \cite{simpson_NAHT} (or more precisely, to variants of those moduli spaces taking into account the non-zero degree $d$).

More generally, for a reductive group $G/k$, there is a notion of $G$-Higgs bundles and corresponding moduli spaces $\cM_G$ of semistable $G$-Higgs bundles, which in the case of $G = \GL_n$ coincides with $\GLHiggs$. These $G$-Higgs moduli spaces are algebraic symplectic and also come with (proper Lagrangian) Hitchin fibrations; when $k=\CC$, the non-abelian Hodge correspondence extends to $G$-Higgs bundles. Given two Langlands dual reductive groups $G$ and ${^L}G$, the corresponding Hitchin fibrations have (almost) canonically isomorphic bases and it is expected that $\cM_{G}$ and $\cM_{^{L}G}$ are closely related via these Hitchin fibrations. This relationship can be understood heuristically as a ``limit'' of the geometric Langlands correspondence and also as a form of mirror symmetry. The first concrete statement in that direction is a relationship between the generic fibres of the two fibrations which has been established by Hausel and Thaddeus \cite{HT} in the case of $G = \SL_n$ and by Donagi-Pantev \cite{DP} and Derryberry \cite{Derryberry} in general: these generic fibres are torsors under dual abelian varieties, and certain natural gerbes match up.

\subsection{Mirror symmetry for $\SL$ and $\PGL$-Higgs bundles}

In this paper, we study $G$-Higgs bundles for the Langlands dual groups $\SL_n$ and $\PGL_n$. 
We fix a degree $d$ coprime to $n$ and choose a degree $d$ line bundle $L$ and let $\SLHiggs:= \cM_{n,L}(C)$ denote the moduli space of stable Higgs bundles of rank $n$ with determinant $L$ and trace-free Higgs field; we refer to these as $\SL_n$-Higgs bundles\footnote{Strictly speaking, these Higgs bundles should not quite be called $\SL$-Higgs bundles, as specialising the general definition of $G$-Higgs bundle to $G=\SL_n$ forces the determinant to be trivial and $d=0$ (but leads to singular moduli spaces). This variant of $\SL_n$-Higgs bundles turns out to be the right thing to consider in the context of the Hausel-Thaddeus conjecture.}. The $\SL_n$-Higgs moduli space $\SLHiggs$ is a smooth closed subvariety of $\GLHiggs=\cM_{n,d}(C)$, which can be realised as a fibre of the map $(\det,\tr) : \cM_{n,d}(C) \ra \cM_{1,d}(C)$. The Jacobian $\Jac(C)$ acts on $\GLHiggs$ by tensorisation and the action of the $n$-torsion subgroup $\Gamma: = \Jac(C)[n]$ restricts to $\SLHiggs$. The corresponding $\PGL_n$-Higgs moduli space $\PGLHiggs = \PGLHiggs_{n,d}$ is singular, but smooth as an orbifold: we can identify it with the following quotients
\[  \PGLHiggs \simeq [\SLHiggs/\Gamma] \simeq [\GLHiggs/T^*\Jac(C)]. \]
Consequently $\PGLHiggs$ can be viewed as a smooth Deligne--Mumford stack, which inherits a $\mu_{n}$-gerbe $\delta_{L}$ from the $\mu_{n}$-gerbe on $\SLHiggs$ coming from the corresponding moduli stack \cite[\S 3]{HT} .

The corresponding Hitchin bases for these $\SL$-Higgs and $\PGL$-Higgs moduli spaces are canonically isomorphic $\SLHit\simeq \PGLHit$ and the generic fibres of the $\SL$-Hitchin (resp. $\PGL$-Hitchin) fibration are torsors under Prym varieties (resp. $\Gamma$-quotients of Prym varieties) \cite{Hitchin}. 

Over $k = \CC$, Hausel and Thaddeus \cite{HT} predicted a``topological mirror symmetry'' relation between the $E$-polynomial of the $\SL_n$-Higgs moduli space and the stringy $E$-polynomial of the $\PGL_n$-Higgs moduli space, which encodes the twisted orbifold Hodge numbers with respect to the gerbe $\delta_{L}$; as we will see below this stringy $E$-polynomial has a more concrete description in terms of the $\Gamma$-action on $\SLHiggs$. The conjecture of Hausel and Thaddeus was proved by Groechenig, Wyss and Ziegler \cite{GWZ} using $p$-adic integration. Recently, Maulik and Shen \cite{MS} upgraded the agreement of (stringy) $E$-polynomials to an agreement of (orbifold) Hodge structures using perverse sheaves, the decomposition theorem, support theorems for Hitchin fibrations and vanishing cycles (see $\S$\ref{overview MS} for a summary). In both \cite{GWZ} and \cite{MS}, a crucial ingredient is the non-trivial $\Gamma$-action on the cohomology of $\SLHiggs$ and its isotypical decomposition.

In this paper, we build on the ideas and techniques of \cite{MS} to prove a motivic version of mirror symmetry (Theorem \ref{main_thm}) relating the (orbifold) Voevodsky motives of the $\SL_n$-Higgs and the $\PGL_n$-Higgs moduli spaces. The Voevodsky motive $M(X)$ of a smooth $k$-variety $X$ with coefficient in a $\QQ$-algebra $\Lambda$ is an object of the triangulated category $\DM(k,\Lambda)$ of mixed motives over $k$ with coefficients in $\Lambda$. It is a very fine cohomological invariant of $X$, which contains information both about the cohomology of $X$ with its mixed Hodge structure (when $k\subset \CC$) and also the rational Chow groups and rational algebraic K-theory groups of $X$. If $X$ is also projective, $M(X)$ contains the same information as the perhaps more familiar Chow motive of $X$. However, Voevodsky motives are much more flexible than Chow motives and admit a fully-fledged theory of ``motivic sheaves'' on schemes and stacks with a six operation formalism and vanishing cycles functors which are crucial to our results.

A virtual motivic version of topological mirror symmetry was already established by Loeser and Wyss \cite{LW} who prove an equality of (orbifold) virtual Chow motives in the Grothendieck ring of Chow motives using motivic integration and the ideas of \cite{GWZ}. Our result implies and can be thought of as a categorification of the main theorem in \cite{LW}; see Corollary \ref{corr results} \emph{\ref{cor virtual}}.

\subsection{Results and methods} First, we motivically lift topological mirror symmetry \cite{HT,GWZ,MS}.

\begin{thm}[Motivic mirror symmetry]\label{main_thm}
Let $C$ be a smooth projective connected genus $g \geq 2$ curve over an algebraically closed\footnote{This result also holds for non-algebraically closed fields after passing to a finite extension (see Remark \ref{rmk non-alg closed field}).} field $k$. For a rank $n$ (invertible in $k$), coprime degree $d$ and line bundle $L \in \Pic^d(C)$, there is an isomorphism in $\DM(k,\QQ(\zeta_n))$
\[ M(\SLHiggs) \simeq M_{\orb}(\PGLHiggs,\delta_{L})\]
between the motive of the $\SL_n$-Higgs moduli space $\SLHiggs = \cM_{n,L}$ and the orbifold motive of the $\PGL_n$-Higgs moduli space $\PGLHiggs =\PGLHiggs_{n,d}$ with respect to the gerbe $\delta_{L}$ (see $\S$\ref{sec orbifold motive}).
\end{thm}


The theorem implies that the (orbifold) Chow groups of the $\SL$-Higgs and $\PGL$-Higgs moduli spaces are isomorphic (see Corollary \ref{corr results}, which discusses consequences on motivic cohomology and algebraic K-theory). We relate this to other results and conjectures in $\S$\ref{sec related results}. As the topological mirror symmetry is part of a more general mirror symmetry picture for hyperkähler orbifolds, one may ask if other mirror symmetric pairs have isomorphic (orbifold) motives.

We need $\QQ(\zeta_n)$-coefficients to decompose the motive of $\SLHiggs$ into isotypical components for the $\Gamma$-action. By definition, the twisted orbifold motive of $\PGLHiggs = [\SLHiggs/\Gamma]$ is the sum over $\gamma \in \Gamma$ of Tate twists of isotypical pieces of the motives of $\gamma$-fixed loci in $\SLHiggs$. The above isomorphism is constructed as a sum over each $\gamma \in \Gamma$ of isomorphisms $\nu_{\gamma}$ constructed from relative morphisms $\beta_{\gamma}$ over the Hitchin base $\SLHit$ (see Definitions \ref{def beta mot otherwise} and $\S$\ref{end proof} for details).

Over a field of characteristic zero, the relative morphisms $\beta_{\gamma}$ are built as motivic lifts of morphisms in the constructible derived category constructed by Maulik--Shen \cite{MS} (which we review in Section \ref{overview MS}). We use motivic correspondences and motivic vanishing cycles, and require an extension of some results of \cite[Chapter 3]{Ayoub_these_2} to Artin stacks and a computation of motivic vanishing cycles for homogeneous functions (see Appendix \ref{sec motivic van cycles}). 

We do not know if the morphisms $\beta_{\gamma}$ of relative motives in $\DM(\SLHit,\Lambda)$ are isomorphisms, as the method of \cite{MS}, based on perverse sheaves and the decomposition theorem, is not at all available in $\DM(-,\Lambda)$. We expect they are isomorphisms, as by construction, the Betti realisations of $\beta_{\gamma}$ and $\nu_\gamma$ are the cohomological isomorphisms of Maulik--Shen \cite{MS}, and the Betti realisation on the category $\DM_{c}(-,\Lambda)$ of constructible motives with coefficients in a $\QQ$-algebra $\Lambda$ is conjectured to be conservative, but this is a difficult open question in general \cite{Ayoub_Survey}. However a result of Wildeshaus \cite{Wildeshaus_Picard} ensures that this is true when restricting to the subcategory $\DM_{c}^{\ab}(k,\Lambda)$ of constructible \emph{abelian} motives. Hence, to finish the proof of Theorem \ref{main_thm} in characteristic zero, we show that both source and target of $\nu_{\gamma}$ are abelian by extending our previous work \cite{HPL_Higgs} from $\GL_n$-Higgs bundles to $\SL_n$-Higgs bundles (see Theorem \ref{main thm motives abelian} below).

In positive characteristic, the $\ell$-adic realisation is not known to be conservative on $\DM_{c}^{\ab}(k,\Lambda)$ (except to some extent over a finite field \cite{Ancona_cons}) and it is not clear if the corresponding cohomological results hold, as the support theorems of Chaudouard-Laumon \cite{CL} and de Cataldo \cite{dC_supportSL} used in \cite{MS} only apply in characteristic zero. Instead, to pass from characteristic zero to positive characteristic, we consider a family of curves over the spectrum of a mixed characteristic DVR and use motivic nearby cycles to specialise from the generic fibre to the special fibre. Our argument here is related to \cite{dCZ_NAHTcharp} in the $\ell$-adic setting, but we use the semi-projective $\GG_m$-action on moduli of Higgs bundles rather than a compactification.

This idea of lifting cohomological results in characteristic zero using conservativity on abelian motives and then passing to positive characteristic using motivic nearby cycles can be applied to other cohomological results. In particular, cohomological $\chi$-independence for Higgs bundles, which states that the cohomology of moduli spaces $\cM_{n,d}$ of Higgs bundles of rank $n$ and coprime degree $d$ is independent of $d$ \cite{mellit,GWZ,Yu_H,Kinjo-Koseki}, admits the following motivic refinement.

\begin{thm}[Motivic $\chi$-independence]\label{main_thm_chi}
Let $C$ be a smooth projective connected genus $g \geq 2$ curve over a field $k$ with $C(k)\neq\emptyset$. For a rank $n$ and coprime degrees $d$ and $d'$, there is an isomorphism in $\DM(k,\QQ)$
\[ M(\cM_{n,d}) \simeq M(\cM_{n,d'}).\]
\end{thm}

We expect that motivic $\chi$-independence for the  $\SL_{n}$-Higgs moduli space $\cM_{n,L}$ (when $n$ is invertible in $k$) should be obtained by similar techniques from the cohomological version in \cite{MS}.

The $\chi$-independence phenomenon for Higgs bundles is now understood as an instance of $\chi$-independence of BPS cohomology in the context of enumerative geometry of Calabi-Yau 3-folds (e.g.\ see \cite[$\S$0.3]{MS-chi-ind} and \cite{Kinjo-Koseki}). Once again, it would be very interesting to know if this more general $\chi$-independence has a motivic nature.

A key ingredient in both these proofs is the fact that the motives involved are abelian, which boils down to our previous work \cite{HPL_Higgs} for $\GL_n$ and the following new result for $\SL_n$.

\begin{thm}\label{main thm motives abelian}
Let $C$ be a smooth projective geometrically connected genus $g \geq 2$ curve over an arbitrary field $k$ with $C(k) \neq \emptyset$. For a rank $n$ and line bundle $L$ of coprime degree, the motive in $\DM(k,\QQ)$ of the moduli space $\SLHiggs:=\cM_{n,L}(C)$ of $\SL$-Higgs bundles is a direct factor of the motive of a product of étale covers of $C$. In particular, it is abelian.
\end{thm}

In \cite{HPL_Higgs}, we obtained a stronger result for the $\GL_n$-Higgs moduli space $\cM_{n,d}$ of coprime rank $r$ and degree $d$: its motive is generated by the motive of $C$. Our approach was based on the geometric ideas in \cite{GPHS}: via a motivic Bia{\l}ynicki--Birula decomposition for the scaling $\GG_m$-action on $\cM_{n,d}$ we relate to motives of moduli spaces of chains of vector bundle homomorphisms, and using variation of stability for chains and wall-crossing, we reduce to describing the motive of the stack of injective chain homomorphisms. We explicitly compute the latter using a full flag version of the stack of injective chain homomorphisms which by \cite{Heinloth_LaumonBDay} admits a small map to the stack of injective chain homomorphisms and is an iterated projective bundle over the product of a power of $C$ with a moduli stack of vector bundles, whose motive we computed in \cite{HPL_formula}.

The $\SL_n$-case is more involved: $M(\SLHiggs)$ is not generated by $M(C)$ in general by \cite[Proposition 5.7]{FHPL_rank3} and, although we can reduce to describing the motive of the stack of injective chain homomorphisms with fixed total determinant, we do not obtain an explicit formula for this motive. There is still a small map from a full flag version of the stack of injective chain homomorphisms with fixed total determinant, which is an iterated projective bundle over a stack of vector bundles with fixed determinant on a family of curves over a base scheme $B$ built from powers of $C$ and multiplication maps on $\Jac(C)$. We show $M(B)$ is abelian and extend our motivic formula \cite{HPL_formula} for the stack of vector bundles to bundles with fixed determinant and provide relative versions of these formulae (see Appendix \ref{sec appendix motive stacks of bundles fixed det}); this should be of independent interest.

Finally, we note that Theorems \ref{main_thm} and \ref{main_thm_chi} have corresponding versions for Higgs bundles twisted by a divisor $D$ (see Theorems \ref{main_thm_D} and \ref{main_thm_chi_D}), since analogously to Maulik--Shen \cite{MS} the result is first proved for a divisor $D$ of sufficiently large degree and then one passes to smaller $D$ (and eventually $D = K_C$) using vanishing cycles. Consequently, we need to prove the moduli spaces of $D$-twisted Higgs bundles for $\GL_n$ and $\SL_n$ are abelian (see Theorems \ref{thm motive GL D Higgs gen by C} and \ref{thm mot SL Higgs ab}).

\subsection{Related works and further directions}\label{sec related results}

Besides the already mentioned works \cite{HT,GWZ,LW,MS} which inspired this paper, there have been many other interesting results on topological mirror symmetry extending the original setup of Hausel-Thaddeus.

One can ask about similar statements for moduli of parabolic Higgs bundles (of fixed coprime rank and degree). A version of topological mirror symmetry in this case was established by Gothen and Oliveira \cite{Gothen-Oliveira} in ranks $2$ and $3$ and by Shiyu Shen \cite{shen-parabolic} in general.

If $n$ and $d$ are non-coprime and the corresponding moduli spaces are singular, one could ask if topological mirror symmetry holds for a suitable cohomology theory. One reasonable choice is intersection cohomology, and indeed a form of topological mirror symmetry for IH was conjectured and established in rank $2$ for $D=K_{C}$ by Mauri \cite{Mauri} and by Maulik and Shen \cite[Theorem 0.2]{MS-IH} in arbitrary rank and degree but only for $D$-twisted Higgs bundles with $\deg(D)>2g-2$. Maulik and Shen also raised the question of whether topological mirror symmetry holds for BPS cohomology, an invariant coming from mathematical physics which has been given a rigourous definition for Higgs bundles in \cite{Kinjo-Koseki} following ideas of Davison-Meinhardt and Toda. If $\deg(D)>2g-2$, BPS cohomology coincides with intersection cohomology, but they differ when $D=K_{C}$. As discussed in \cite[End of \S 3.6]{MS-IH}, topological mirror symmetry for BPS cohomology for $D=K_{C}$ should follow from results in \cite[Theorem 0.2]{MS-IH} and \cite{Kinjo-Masuda}.

Another incarnation of mirror symmetry in this context is an expected derived equivalence
\begin{equation}
  \label{der eq}
D^{b}_{\mathrm{coh}}(\SLHiggs)\simeq D^{b}_{\mathrm{coh}}(\PGLHiggs,\delta_{L})
\end{equation}
induced by a twisted Fourier-Mukai kernel relative to their common Hitchin base $\SLHit\simeq \PGLHit$ whose restriction to smooth Hitchin fibres should be the Mukai derived equivalence between dual abelian varieties (such a derived equivalence was proven for the stack of all $G$-Higgs bundles over an open subset of the Hitchin base in \cite[Corollary 5.5]{DP}). The Chern character of this Fourier-Mukai kernel should not induce an isomorphism of motives, in the same way that for an abelian variety $A$, the Chern character of the Poincaré line bundle inducing the Mukai equivalence $D^{b}_{\mathrm{coh}}(A)\simeq D^{b}_{\mathrm{coh}}(\widehat{A})$ does not induce an isomorphism $M(A)\simeq M(\widehat{A})$ of rational Chow motives, but rather the Fourier transform on rational Chow groups $\CH^{*}(A,\QQ)\simeq \CH^{*}(\widehat{A},\QQ)$, which does not preserve the cohomological grading. However in the case of abelian varieties, we know that any fixed isogeny $A\to \widehat{A}$ induces an isomorphism $M(A)\simeq M(\widehat{A})$.

All results mentioned so far concern \emph{rational} cohomology. One can also ask if topological mirror symmetry holds integrally. Groechenig and Shiyu Shen \cite{GS} observed that this cannot be the case for integral singular cohomology, because they showed that the integral cohomology on the $\SL_n$-side is torsion-free, while the orbifold cohomology on the $\PGL_n$-side has torsion. Nevertheless they show an integral analogue at the level of (twisted) topological complex K-theory: in the coprime case, there is an isomorphism of topological $K$-theory spectra
\[
\mathrm{KU}(\SLHiggs^{\an})\simeq \mathrm{KU}(\PGLHiggs^{\an},\delta^{\an}_{L}).
\]
Unlike the result of \cite{MS} and Theorem \ref{main_thm}, this isomorphism is expected to be directly induced by the conjectural derived equivalence \eqref{der eq}. The proof in \cite{GS} shares nevertheless some interesting similarities to \cite{MS} and to our arguments, which we would like to point out by sketching their strategy. They start with a correspondence over the elliptic locus of the Hitchin base (which in their case is a Fourier-Mukai kernel inspired by the one studied by Arinkin in the $\GL_{n}$-case). They then use the decomposition theorem, results of de Cataldo on supports for the $\SL_{n}$-Hitchin fibration \cite{dC_supportSL} and vanishing cycles to get a statement about rational topological K-theory over the full Hitchin base. To be able to argue relatively to the Hitchin base, they use and refine ideas of Blanc, Moulinos and Brown about the topological K-theory of (sheaves of) dg-categories. To complete the proof, they upgrade this to a statement about integral topological K-theory by showing that the topological K-theory on both sides is torsion-free. In the case of $\SLHiggs$, they reduce using the Atiyah-Hirzebruch spectral sequence to show that the singular cohomology of $\SLHiggs$ is torsion-free. For this they use the chain wall-crossing techniques of \cite{GPHS,GPH} similarly to our Section \ref{sec mot SL-Higgs} (see Remark \ref{rmk comparison GS}). We only became aware of this similarity in both proofs after our paper was completed.

We expect that Theorem \ref{main_thm} follows from a relative isomorphism of motivic sheaves over the Hitchin base, i.e.\ that the morphism $\beta^D_{\gamma}$ in Definition \ref{def beta mot otherwise} is an isomorphism. Its Betti realisation is an isomorphism by \cite{MS} and Lemma \ref{lemma betti realisation of beta mot is beta}, thus by conservativity on abelian motives and \cite[Proposition 3.24]{Ayoub_etale}, it suffices to show that all fibres of the relevant Hitchin fibrations have abelian motives.

\subsection*{Acknowledgements} We thank Michael Groechenig, Junliang Shen and Dimitri Wyss for helpful comments. The second author is supported by Radboud Universiteit Nijmegen and the NWO-TOP1 grant ``Geometry and arithmetic beyond Shimura varieties'' of Ben Moonen and Lenny Taelman.

\subsection*{Motivic set-up}\label{sec set up}
Let $S$ be a finite type scheme over a field $k$ and $\Lambda$ be a $\QQ$-algebra. We denote by $\DM(S,\Lambda)$ the triangulated category\footnote{In Appendix \ref{sec motivic van cycles}, where we also review how to extend $\DM$ and motivic vanishing cycles to Artin stacks; there it is essential for technical reasons to we consider the $\infty$-categorical enhancement of $\DM(S,\Lambda)$, but this is not necessary in the body of the paper.} of (Morel-Voevodsky) étale motivic sheaves over $S$ with coefficients in $\Lambda$. This category can be defined in several equivalent ways (even more so because we work with rational coefficients), see \cite[\S 16]{Cisinski-Deglise-book}. For concreteness we adopt the construction of \cite[\S 3]{Ayoub_etale} where this category is denoted $\DA^{\et}(S,\Lambda)$. When $S=\Spec(k)$ is a perfect field, we write $\DM(k,\Lambda):=\DM(\Spec(k),\Lambda)$, and $\DM(k,\Lambda)$ is equivalently to the original definition of $\DM$ by Voevodsky using presheaves with transfers, see again \cite[\S 16]{Cisinski-Deglise-book}.

A central feature of $\DM(-,\Lambda)$ for the purpose of this paper is that it admits a full ``six operation formalism'', as well as a formalism of nearby and vanishing cycles. Almost everything that we need about this is contained in \cite{Ayoub_these_1,Ayoub_these_2,Ayoub_etale}. In Appendix \ref{sec motivic van cycles}, we give a few complements about motivic sheaves and motivic vanishing cycles on algebraic stacks.

Let $S$ be a finite type over a field $k$ with structure morphism $\pi:S\to \Spec(k)$. In terms of the six operation formalism, $S$ admits both an homological motive $M(S):=\pi_{!}\pi^{!}\one$ and a cohomological motive $M_{\mathrm{coh}}(S):=\pi_{*}\pi^{*}\one$, and the Verdier duality functor exchanges the two. Most of the paper is devoted to computing with (relative) cohomological motives, but we sometimes dualise the result to obtain statements about homological ones (which are in a sense more natural in Voevodsky's theory).

We denote by $\DM_c(S,\Lambda)\subset \DM(S,\Lambda)$ the subcategory of constructible objects \cite[Definition 8.1]{Ayoub_etale} which in this context coincide with the compact objects in the triangulated sense \cite[Proposition 8.3]{Ayoub_etale}. In our context, the six operations preserve constructible motives \cite[Theoremes 8.10-12]{Ayoub_etale}, so that almost all motivic sheaves appearing in this paper are constructible. 

We denote by $\DM^{\ab}_{c}(k,\Lambda)\subset \DM_{c}(k,\Lambda)$ the subcategory of abelian motives, i.e. the thick tensor triangulated subcategory generated by the motives of all smooth projective curves over $k$ (or equivalently the thick triangulated subcategory generated by the motives of all abelian varieties over $k$ by \cite[Proposition 4.5]{AEH}, hence the name).

Suppose we have a complex embedding\footnote{Such an embedding may not exist for every $k$ for cardinality reasons, but an application of the Lefschetz principle immediately reduces the proof of the main theorem to the case where it does; see $\S$\ref{end proof} for details.} $\sigma:k\to \CC$. If $S$ is a finite type $k$-scheme, we denote by $S^{\an}:=(S\times_{k,\sigma}\CC)^{\an}$ the associated complex analytic space. By \cite{Ayoub_Betti}, there is an associated Betti realisation functor, which is an exact symmetric monoidal functor
\[
R_{B}:\DM(S,\Lambda)\to D(S^{\an},\Lambda)
\]
to the triangulated category of sheaves of $\Lambda$-modules on $S^{\an}$. This functor ``commutes'' with the six operations in the best possible sense (without restrictions for the left adjoint functors $f^*$ and $f_!$, and when restricted to constructible objects for the right adjoint functors) \cite[Theorem 3.19]{Ayoub_Betti}. It also commutes with nearby cycles functors when restricted to constructible objects \cite[Theorem 4.9]{Ayoub_Betti} (including in the context of algebraic stacks, see \ref{thm van cycles commute with Betti realisation}).\\

\noindent \textbf{Notation.} For $j\in\ZZ$ and $M\in\DM(S,\Lambda)$, we write $M\{j/2\}:= M(\lfloor j/2 \rfloor)[j]\in \DM(S,\Lambda)$. Note that the ``Tate twist'' $\{j/2\}$ is pure if and only if $j$ is even.

Let $M\in \DM(S,\Lambda)$ be a motive. We write $\langle M \rangle^\otimes$ (resp. $\llangle M\rrangle^\otimes$) for the smallest thick tensor subcategory (resp. smallest localising tensor subcategory) of $\DM(S,\Lambda)$ containing $M$.

\section{Background on Higgs moduli spaces}

In this section, we introduce $D$-twisted Higgs bundles and their $\SL$ and $\PGL$-counterparts.

\subsection{$D$-twisted Higgs bundles}

For a divisor $D$ on $C$ with $D = K_C$ or $\deg(D) > 2g-2$, we define $D$-twisted Higgs bundles as follows to extend the classical case where $D= K_C$.

\begin{defn}
A $D$-twisted Higgs bundle is a pair $(E,\Phi: E \ra E \otimes \cO_C(D))$ consisting of a vector bundle $E$ and an $\cO_C$-linear homomorphism $\Phi$. 
\end{defn}

Note that this notion only depends on $D$ up to linear equivalence. In particular, if $\deg(D)>2g-2$, we can and will assume that $D$ is effective, as in \cite{MS}.

There is a notion of (semi)stability for $D$-twisted Higgs bundles involving verifying an inequality of slopes for all $\Phi$-invariant subbundles of $E$. We let $\GLHiggs^D:=\cM_{n,d}^D(C)$ denote the moduli space of semistable $D$-twisted Higgs bundles of rank $n$ and degree $d$. 

Generally speaking, whenever we introduce an object for $D$-Higgs bundles, we use a superscript $D$, which we drop when $D=K_{C}$.

\subsection{$\SL$-Higgs bundles}

In order to have analogues of moduli spaces of ($D$-twisted) $\SL$-Higgs bundles in non-zero degrees $d$, we consider fibres of the morphism
\[ (\det,\tr) : \GLHiggs^D:=\cM_{n,d}^D(C) \ra \cM_{1,d}^D(C) \]
over $(L,0)$. We write $\SLHiggs^D=\cM_{n,L}^D(C):= (\det,\tr)^{-1}(L,0)$ for the $D$-twisted $\SL$-Higgs moduli space. If $n$ and $d$ are coprime, then $\SLHiggs^D$ is smooth. We have $\dim(\SLHiggs^{D})=(n^{2}-1)\deg(D)$.

\subsection{$\PGL$-Higgs bundles}

The moduli of $\PGL_{n}$-Higgs bundles does not really play a role in this paper, however it is central to the original motivation of Hausel-Thaddeus \cite{HT} so we include a brief overview. A $\PGL_n$-Higgs bundle of degree $d$ on $C$ consists of a principal $\PGL_n$-bundle $P \ra C$ of degree $d$ and a Higgs field $\Phi \in H^0(C,P \times^{\PGL_n} \mathfrak{pgl}_n)$.   

The surjection $\GL_n \ra \PGL_n$ induces a morphism between the stacks of $\GL_n$ and $\PGL_n$-Higgs bundles preserving semistability. If the degree and rank are coprime, then the stack of semistable $\PGL_n$-Higgs bundles is the quotient of the stack of semistable $\GL_n$-Higgs bundles by the action of $T^*\Jac(C)$ (see \cite[Section 6]{GPH}). Equivalently, using the natural surjection $\SL_n \ra \PGL_n$, the stack of semistable $\PGL_n$-Higgs bundles is the quotient of the stack of semistable $\SL_n$-Higgs bundles by the finite group $\Gamma:=\Jac(C)[n]$. Consequently, the moduli space of semistable $\PGL_n$-Higgs bundles of degree $d$ can be defined as the following orbifold
\[ \PGLHiggs:= [\SLHiggs/\Gamma] \simeq [\GLHiggs/T^*\Jac(C)] \]
which is smooth as a Deligne--Mumford stack, although its coarse moduli space $\SLHiggs/\Gamma$ is singular. Note that because of that second presentation $\PGLHiggs$ does not depend on the choice of line bundle $L$ appearing in the $\SL$-Higgs moduli space. There are also $D$-twisted variants of the $\PGL_n$-Higgs moduli spaces given by
\[ \PGLHiggs^D:= [\SLHiggs^D/\Gamma] \simeq [\GLHiggs^D/\Jac(C) \times H^0(C,\cO(D))]. \]

\section{Motives of $D$-twisted Higgs moduli spaces}\label{sec mot D-twisted Higgs}

In this section, we generalise our previous result concerning motives of moduli spaces of Higgs bundles \cite[Theorem 1.1]{HPL_Higgs} to moduli spaces of $D$-twisted Higgs bundles. Throughout this section, we work over an arbitrary field $k$ and assume that $C(k) \neq \emptyset$. We fix a coprime rank $n$ and degree $d$ and a divisor $D$ on $C$ with $\deg(C)>2g-2$.

\subsection{Hitchin's scaling action and moduli of chains}

The action of $\GG_m$-action on $\GLHiggs^D:=\cM_{n,d}^D(C)$ scaling the Higgs field has fixed points $[E,\Phi]$ where $E \cong \oplus_{i=0}^r E_i$ (with $r=0$ allowed) and $\Phi(E_i) \subset E_{i+1} \otimes \cO_C(D)$ which determine a chain of vector bundle homomorphisms
\[ F_0 \ra F_1 \ra F_2 \ra \cdots \ra F_r \]
where $F_i :=  E_{i} \otimes \cO_C(iD)$. 

For a brief overview of semistability for chains (which depends on a choice of stability parameter) and chain moduli spaces and stacks see \cite[$\S$2.3]{HPL_Higgs} and \cite{ACGPS,GPHS}. Let $\mChDss$ denote the moduli space of chains $F_0 \ra \cdots \ra F_r$ with tuples of ranks $\underline{m}$ and degrees $\underline{e}$ which are semistable with respect to the chain stability parameter
\[ \alpha_D := (r \deg(D), (r-1) \deg(D), \dots , \deg(D), 0) \in \RR^{r+1}. \]
We refer to $\alpha_{D}$ as the $D$-twisted Higgs stability parameter. Since $\deg(D) > 2g -2$, this stability parameter lies in the cone $\Delta_r^\circ:=\{ (\alpha_0,\dots, \alpha_r) \in \RR^{r+1}: \alpha_{i-1} - \alpha_{i} > 2g-2 \text{ for } 1 \leq i \leq r \}$ of stability parameters with well-understood deformation theory \cite[Section 3]{ACGPS}. Therefore, if $\alpha_D$ is non-critical for the invariants $\underline{m}$ and $\underline{e}$ (so $\alpha_D$-semistability and $\alpha_D$-stability coincide for chains with these invariants), then $\mChDss$ is a smooth projective variety by \cite[Theorem 3.8 vi)]{ACGPS}. 

\begin{prop}\label{prop BB GL D-Higgs}
The $\GG_m$-action on $\GLHiggs^D$ is semi-projective (as in \cite[Definition A.1]{HPL_Higgs}) and
\[(\GLHiggs^D)^{\GG_m}=\bigsqcup_{(\underline{m},\underline{e}) \in \cI} \mChDss\]
where the $D$-twisted Higgs stability parameter $\alpha_D$ is non-critical for all the invariants $\underline{m}$ and $\underline{e}$ appearing in this finite index set $\cI$. Consequently, there is a motivic Bia{\l}ynicki-Birula decomposition
\[ M(\GLHiggs^D) \simeq \bigoplus_{(\underline{m},\underline{e}) \in \cI} M(\mChDss)\{ c_{\underline{m},\underline{e}} \}, \]
where $c_{\underline{m},\underline{e}}$ denotes the codimension of the corresponding Bia{\l}ynicki-Birula stratum in $\GLHiggs^D$.
\end{prop}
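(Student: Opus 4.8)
The plan is to run, in the $D$-twisted setting, the same argument as in \cite[\S2--3]{HPL_Higgs} for the classical case $D = K_C$. First I would recall that the $\GG_m$-action on $\GLHiggs^D$ by scaling the Higgs field, $t \cdot (E,\Phi) = (E, t\Phi)$, is semi-projective in the sense of \cite{HRV}: the fixed locus is proper (indeed projective, as we check below), and the limit $\lim_{t\to 0} t \cdot (E,\Phi)$ exists for every point because the Hitchin map $h^D$ is $\GG_m$-equivariant (with positive weights on the base) and proper, so that trajectories are confined to a proper subvariety over a neighbourhood of $0 \in \GLHit^D$. Semi-projectivity is exactly what is needed to apply the motivic Bia{\l}ynicki--Birula decomposition (see \cite[Theorem 2.7]{HPL_Higgs} or the references therein): for a smooth semi-projective variety $X$ with $\GG_m$-action one has $M(X) \simeq \bigoplus_\beta M(F_\beta)\{c_\beta\}$, where $F_\beta$ are the connected components of $X^{\GG_m}$ and $c_\beta$ is the codimension in $X$ of the attracting stratum $X_\beta^+$, which is an affine bundle over $F_\beta$. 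So once the fixed-locus description is in place, the motivic decomposition follows formally.

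Next I would identify the fixed locus. A fixed point is a Higgs bundle $(E,\Phi)$ admitting a $\GG_m$-weight decomposition $E = \bigoplus_{i=0}^r E_i$ with $\Phi(E_i) \subset E_{i+1}\otimes \cO_C(D)$; rescaling the grading by twisting, $F_i := E_i \otimes \cO_C(iD)$, turns the data into a chain $F_0 \to F_1 \to \cdots \to F_r$ of $\cO_C$-linear maps. The standard computation (as in \cite[\S2.3]{HPL_Higgs}, \cite{GPHS}) shows that $(E,\Phi)$ is semistable as a $D$-twisted Higgs bundle if and only if the associated chain is semistable with respect to the stability parameter $\alpha_D = (r\deg D, (r-1)\deg D, \dots, \deg D, 0)$; conversely every $\alpha_D$-semistable chain with the right numerical invariants arises this way. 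Since $(n,d)=1$, coprimality propagates to the chain invariants $(\underline m, \underline e)$ that actually occur, so $\alpha_D$ is non-critical for those invariants and $\alpha_D$-semistability coincides with $\alpha_D$-stability; moreover $\alpha_D \in \Delta_r^\circ$ because $\deg(D) > 2g-2$, so by \cite[Theorem 3.8 vi)]{ACGPS} each $\mChDss$ is a smooth projective variety. This gives $(\GLHiggs^D)^{\GG_m} = \bigsqcup_{(\underline m,\underline e)\in\cI}\mChDss$ with $\cI$ finite (finitely many numerical types can appear, by boundedness of the fixed locus), and in particular the fixed locus is projective, completing the verification of semi-projectivity. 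Feeding this into the BB decomposition yields the stated formula for $M(\GLHiggs^D)$.

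The one point requiring genuine care — the main obstacle — is that in \cite{HPL_Higgs} everything is set up for $D = K_C$, where Higgs fields are $\omega_C$-twisted and where one exploits the algebraic symplectic/Lagrangian structure and the explicit deformation theory of the Hitchin map. For general $D$ with $\deg D > 2g-2$ one loses the symplectic structure and $\dim \GLHiggs^D \neq 2\dim\GLHit^D$, so I would need to re-examine which inputs of \cite{HPL_Higgs} are actually used: properness and $\GG_m$-equivariance of $h^D$ (this holds for all $D$ by the usual construction of the Hitchin map), the identification of $\GG_m$-fixed Higgs bundles with chains (formal, independent of $D$), the matching of the two semistability notions (this is where the shape of $\alpha_D$ is pinned down, and it must be checked for $\alpha_D \in \Delta_r^\circ$, using $\deg D > 2g-2$), and smoothness of $\GLHiggs^D$ itself (which holds since $(n,d)=1$). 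Since none of these genuinely require $D = K_C$ — only $\deg D$ large enough, which is exactly our hypothesis — the argument goes through verbatim after these replacements. The codimensions $c_{\underline m,\underline e}$ can be computed, as in the classical case, from the negative weight spaces of the deformation complex at a fixed point, but their precise values are not needed here.
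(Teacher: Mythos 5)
Your proposal is correct and follows essentially the same route as the paper: reduce to the $K_C$ case by observing that the arguments of \cite[Propositions 2.2 and 2.5, Corollary 2.6]{HPL_Higgs} use only properness and $\GG_m$-equivariance of the Hitchin map, the identification of fixed points with chains, and the matching of semistability notions under $\alpha_D \in \Delta_r^\circ$ — all of which hold for any $D$ with $\deg(D) > 2g-2$ — and then apply the motivic Bia{\l}ynicki--Birula decomposition. The paper states this more briefly but uses the identical reductions and citations.
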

\begin{proof}
The fact that this $\GG_m$-action is semi-projective follows by the same argument for $K_C$-Higgs bundles (see \cite[Proposition 2.2]{HPL_Higgs}). As in \cite[Proposition 2.5]{HPL_Higgs}, one shows that semistability of the $\GG_m$-fixed point $[E \cong\oplus_{i\geq 0} E_i, \Phi]$ in $\GLHiggs^D$ corresponds to $\alpha_D$-semistability of the corresponding chain $F_{\bullet}$ with $F_i :=  E_{i} \otimes \cO_C(iD)$. Hence, the claimed description of the $\GG_m$-fixed locus follows as in \cite[Corollary 2.6]{HPL_Higgs}. Since the $\GG_m$-action is semi-projective, one applies the motivic Bia{\l}ynicki-Birula decomposition \cite[Theorem A.4]{HPL_Higgs} to finish the proof.
\end{proof}

\subsection{Stacks of chains, wall-crossings and Harder--Narasimhan recursions}

We let $\Chss$ denote the substack of $\alpha$-semistable chains and let $\Chtau$ denote the substack of chains of $\alpha$-HN type $\tau$. Let us recall a few facts we will need (for details, see \cite[$\S$2.3]{HPL_Higgs} and \cite{ACGPS,GPHS}). If $C(k) \neq \emptyset$ and $\alpha$ is non-critical for $\underline{m}$ and $\underline{e}$, then the stack of $\alpha$-semistable chains is a trivial $\GG_m$-gerbe over the moduli space $\ch^{\alpha,ss}_{\underline{m},\underline{e}}$. Furthermore, if $\alpha \in \Delta_r^\circ$, then $\Chss$ is smooth and so is the stack $\Chtau$ of chains with $\alpha$-HN type $\tau$, as taking the associated graded for the $\alpha$-HN filtration is an affine space fibration by \cite[Lemma 4.6 and Proposition 4.8]{GPHS}.

These HN strata appear in wall-crossings as we vary the stability parameter. We will employ the wall-crossing argument in \cite{GPHS} to end up in a chamber where the stack of semistable chains can be more readily described. More precisely, we will relate them to stacks of injective chain homomorphisms in the following sense.

\begin{defn}
Let $\Chinj$ denote the substack of $\Ch$ consisting of chains 
\[ F_\bullet = (F_0 \stackrel{\phi_1}{\ra} F_{1} \ra \cdots \ra F_{r-1} \stackrel{\phi_r}{\ra} F_r)\]
such that all the homomorphisms $\phi_i$ are injective. 
\end{defn}

If $\underline{m} = (m, \dots, m)$ is constant, $\Chinj$ coincides with the stack of generically surjective chain maps (see \cite{Heinloth_LaumonBDay}); in particular, this stack is smooth by \cite[Lemma 4.9]{GPHS} and \cite[Theorem 3.8 v)]{ACGPS}. In \cite{HPL_Higgs}, we computed the motive of $\Chinj$ using Hecke modification maps and motivic descriptions of small maps; since we will later prove something similar for $\SL$-Higgs bundles, we include a short proof.

\begin{prop}\label{prop motive inj chains}
If $\underline{m} = (m, \dots, m)$ is constant and for $\underline{e} = (e_0,\dots, e_r)$ we set $l_i:=e_i - e_{i-1}$, then the motive of $\Chinj$ in $\DM(k,\QQ)$ is given by
\[M(\Chinj)  \simeq  M(\Bun_{m,e_r}) \otimes \bigotimes_{i=1}^r M(\Sym^{l_i}(C \times \PP^{m-1})).
\]
In particular, the motive of $\Chinj$ lies in $\llangle M(C) \rrangle^{\otimes}$.
\end{prop}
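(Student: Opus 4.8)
The plan is to identify $\Chinj$ (for constant rank vector $\underline{m}=(m,\dots,m)$) with a tower of Hecke modification spaces over a stack of vector bundles, and then evaluate the motive at each stage. First I would use the observation recorded just before the statement: when $\underline{m}$ is constant, $\Chinj$ is the stack of chains $F_0 \xrightarrow{\phi_1} F_1 \to \cdots \to F_r$ whose maps are generically surjective (equivalently, injective with torsion cokernel, since source and target have the same rank $m$). The forgetful map remembering only $F_r$ exhibits $\Chinj$ as an iterated fibration over $\Bun_{m,e_r}$: given $F_i$, the datum of an injective $\phi_i\colon F_{i-1}\hookrightarrow F_i$ with $\deg F_{i-1}=e_{i-1}=e_i-l_i$ is the datum of a length-$l_i$ torsion quotient $F_i \twoheadrightarrow Q_i$, i.e. a point of the Quot scheme / Hecke modification stack parametrising such quotients. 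Following \cite{Heinloth_LaumonBDay}, this Hecke stack (relative to $\Bun_{m}$) has the same motive as $\Sym^{l_i}(C\times\PP^{m-1})$, the symmetric power encoding the supports of the torsion quotients together with the projectivised quotient data at each point.

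Next I would assemble these stages. Working downward from $F_r$, one gets a sequence of maps $\Chinj \to \Chinj[\text{truncated}] \to \cdots \to \Bun_{m,e_r}$, where each step adds the choice of $\phi_i$, hence is (motivically) a $\Sym^{l_i}(C\times\PP^{m-1})$-fibration over the previous stage. The key input is that each such Hecke fibration is motivically trivial — its motive is the tensor product of the base motive with $M(\Sym^{l_i}(C\times\PP^{m-1}))$ — which is exactly the content of the small-map / Hecke computation from \cite{HPL_Higgs} (and ultimately \cite{Heinloth_LaumonBDay}, where the relevant map is shown to be small with the symmetric power describing the generic fibre, and a motivic description of small maps is invoked). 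Iterating over $i=1,\dots,r$ then yields
\[
M(\Chinj) \simeq M(\Bun_{m,e_r}) \otimes \bigotimes_{i=1}^r M(\Sym^{l_i}(C\times\PP^{m-1})),
\]
which is the claimed formula.

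For the final sentence, I would argue that the right-hand side lies in $\llangle M(C)\rrangle^{\otimes}$. Indeed $M(\Sym^{l}(C\times\PP^{m-1}))$ is a direct factor of $M((C\times\PP^{m-1})^{l})$ (symmetric powers of a smooth variety being cut out by the idempotent of the symmetric group action, which is available with $\QQ$-coefficients), and $M(C\times\PP^{m-1}) \simeq M(C)\otimes M(\PP^{m-1}) \simeq \bigoplus_{j=0}^{m-1} M(C)\{j\}$ lies in $\llangle M(C)\rrangle^{\otimes}$ since Tate motives do (being direct factors of $M(\PP^1)$); hence so does each tensor factor indexed by $i$. For the stack of vector bundles $\Bun_{m,e_r}$, the motive lies in $\llangle M(C)\rrangle^{\otimes}$ by our formula for the motive of the stack of bundles in \cite{HPL_formula} (which expresses it in terms of motives of symmetric powers of $C$ and Tate twists). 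Since $\llangle M(C)\rrangle^{\otimes}$ is a localising tensor subcategory, it is closed under the tensor products and direct summands appearing above, so $M(\Chinj)\in\llangle M(C)\rrangle^{\otimes}$.

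The main obstacle is the motivic triviality of the Hecke fibrations: one needs that the relevant forgetful map is small (or at least that its motive splits as predicted), which requires the geometric analysis of \cite{Heinloth_LaumonBDay} together with the motivic decomposition theorem for small maps from \cite{HPL_Higgs}; keeping careful track of the stacky structure (the $\GG_m$-gerbes and the passage between stacks and their coarse moduli) when iterating is the delicate bookkeeping point, though it is already handled in the cited references.
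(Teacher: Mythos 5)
Your proposal is correct and uses the same essential ingredients as the paper's proof: Heinloth's full-flag Hecke modification stacks, Laumon's smallness of the forgetful map, the iterated $\PP^{m-1}$-bundle structure via supports, and the motivic small-map theorem with symmetric-group invariants. The only difference is organizational: you iterate the single-step Hecke computation $r$ times (applying the small-map theorem once for each $S_{l_i}$, implicitly using that smallness is preserved under smooth base change to the truncated chain stack), whereas the paper works with the global full-flag stack $\fChinj$ and applies the small-map theorem once for the full product $\prod_{i=1}^r S_{l_i}$; these yield the same formula and are minor repackagings of the same argument.
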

\begin{proof}
Let $\Coh_{0,l}$ denote the stack of length $l$ torsion sheaves and let $\widetilde{\Coh}_{0,l}$ be the stack of length $l$ torsion sheaves with a full flag; that is for a $k$-scheme $S$ we have
\[ \widetilde{\Coh}_{0,l} (S) = \langle \cT_1 \hookrightarrow \cT_2  \hookrightarrow \cdots \hookrightarrow \cT_l \: : \: \cT_i \in \Coh_{0,i}(S)  \rangle. \]
Then Laumon showed the forgetful map $\widetilde{\Coh}_{0,l} \ra \Coh_{0,l}$ is a small map and a $S_l$-torsor over the dense open on which the support of the torsion sheaf consists of $l$ distinct points \cite[Theorem 3.3.1]{Laumon}. Similarly, Heinloth considered in \cite[Proposition 11]{Heinloth_LaumonBDay} a full flag version $\fChinj$ of $\Chinj$ defined by
\[ \fChinj(S) = \langle \cF_0 =\cF_1^0  \hookrightarrow \cF_1^1  \hookrightarrow \cdots \hookrightarrow \cF_1^{l_1} = \cF_1 = \cF_2^0 \hookrightarrow \cdots \hookrightarrow \cF_{r}^{r_l}=\cF_{r}  : \cF_i^j \in \Bun_{m,e_i+j}(S)  \rangle,\]
which admits a forgetful map $f' \colon \fChinj \ra \Chinj$ that is also small and a $ \left(\prod_{i=1}^r S_{l_i}\right)$-torsor on the dense open consists of chains $F_0 \hookrightarrow F_1 \hookrightarrow \cdots \hookrightarrow F_r$ such that the support of $F_i/F_{i-1}$ consists of $l_i$ distinct points for each $1 \leq i \leq r$, as it is obtained by pulling back products of the small maps considered by Laumon under a smooth morphism. In fact, as in \cite[Proposition 11]{Heinloth_LaumonBDay} we have a commutative diagram
\[  \xymatrixcolsep{5pc} \xymatrix{ \Chinj \ar[r]^{\gr \quad \quad \quad \quad} & \prod_{i=1}^r \Coh_{0,l_i} \times \Bun_{m,e_r} \ar[r]^{\beta} & \prod_{i=1}^r \Sym^{l_{i}}(C) \times \Bun_{m,e_r} \\
  \fChinj \ar[r] \ar[u]_{f'} \ar@/_2pc/[rr]_{\mathrm{supp}} & \prod_{i=1}^r \widetilde{\Coh}_{0,l_i} \times \Bun_{m,e_r} \ar[r] \ar[u]_{f} & \prod_{i=1}^r C^{l_i} \times \Bun_{m,e_r}, \ar[u]
    } \]
where $\mathrm{gr}(F_0 \hookrightarrow \cdots \hookrightarrow F_r) :=(F_1/F_0, \dots, F_r/F_{r-1},F_r)$ is a smooth morphism and 
\[ \mathrm{supp}(F_0 =F_1^0 \hookrightarrow \cdots \subset F_1^{l_1} = F_1 =F_2^0 \hookrightarrow \cdots \hookrightarrow F_{r}^{l_r} = F_r):=(\supp(F_i^j/F_i^{j-1})_{1 \leq i \leq r, 1 \leq j \leq l_i}, F_r)\] is a $\left(\sum_{i=1}^{r} l_i\right)$-iterated $\PP^{m-1}$-bundle, as we take $\sum_{i=1}^{r} l_i$ successive elementary Hecke modifications of a rank $m$ bundle. Since $\mathrm{supp}$ is a $\left(\sum_{i=1}^{r} l_i\right)$-iterated $\PP^{m-1}$-bundle, we have 
\[ M(\fChinj) \simeq M(\Bun_{m,e_r}) \otimes \bigotimes_{i=1}^r M(C \times \PP^{m-1})^{\otimes l_i}. \]
Since $f'$ is a small map and a torsor under $G = \prod_{i=1}^r S_{l_i}$ on a dense open, by applying the motivic description of such small maps (see Theorem \ref{thm mot small map} below and \cite[Theorem 2.1]{HPL_coh}) we obtain
\[  M(\Chinj) \simeq  M(\fChinj)^G \simeq M(\Bun_{m,e_r}) \otimes \bigotimes_{i=1}^r M(\Sym^{l_i}(C \times \PP^{m-1})),\]
which completes the proof.
\end{proof}

To complete the proof, we state the relevant result on small maps from \cite{HPL_coh}, which generalises  \cite{dCM_small} and \cite[Theorem 2.13]{HPL_formula}.

\begin{thm}\label{thm mot small map} \cite[Theorem 2.1]{HPL_coh} 
Let $\pi : \mathfrak{X} \ra \cY$ be a small proper surjective morphism of smooth Artin stacks such that there exists a dense open $\cY^\circ \subset \cY$ with preimage $\mathfrak{X}^\circ$ such that $\pi : \mathfrak{X}^\circ \ra \cY^\circ$ is a $G$-torsor. Then the $G$-action on $M(\mathfrak{X}^\circ)$ in $\DM(k,\QQ)$ extends to $M(X)$ and we have
\[ M(\mathfrak{X})^G \simeq M(\cY). \]
\end{thm}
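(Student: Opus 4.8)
\emph{The plan.} I would descend the statement, via the six operation formalism, from an identity of relative motivic sheaves over $\cY$, and use the smallness hypothesis only to pin down the group of relative self-correspondences of $\pi$. For expository simplicity assume $\mathfrak{X}$ and $\cY$ connected (in general one works over each connected component of $\cY$). Since $\pi$ is proper, $\pi_!=\pi_*$; since $\mathfrak{X}$ and $\cY$ are smooth and $\pi$ is generically finite, they have the same dimension $n$, and purity supplies a canonical isomorphism $\pi^!\one_{\cY}\simeq\one_{\mathfrak{X}}$. Hence, with $f_{\cY}\colon\cY\to\Spec k$, one gets $M(\mathfrak{X})=(f_{\cY})_!\,(\pi_*\one_{\mathfrak{X}})(n)[2n]$ and $M(\cY)=(f_{\cY})_!\,\one_{\cY}(n)[2n]$. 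As $(f_{\cY})_!(-)(n)[2n]$ is exact and $\QQ$-linear it commutes with taking $G$-invariants, so it suffices to construct a $G$-action on $\pi_*\one_{\mathfrak{X}}\in\DM(\cY,\QQ)$ restricting over $\cY^{\circ}$ to the torsor action, together with a $G$-equivariant isomorphism $(\pi_*\one_{\mathfrak{X}})^G\simeq\one_{\cY}$.

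\emph{Correspondences and the role of smallness.} Put $W:=\mathfrak{X}\times_{\cY}\mathfrak{X}$ with projections $p_1,p_2$. Clean base change for the defining square, together with smoothness of $\mathfrak{X}$ (so that $p_1^!\one_{\mathfrak{X}}\simeq\omega_W(-n)[-2n]$), gives a natural isomorphism $\End_{\DM(\cY)}(\pi_*\one_{\mathfrak{X}})\cong\Hom_{\DM(W)}(\one_W,\omega_W(-n)[-2n])$; the latter group vanishes when $\dim W<n$ and is otherwise generated by the classes of the $n$-dimensional irreducible components of $W$. Over $\cY^{\circ}$ one has $W\times_{\cY}\cY^{\circ}=\bigsqcup_{g\in G}\Gamma_g$, the disjoint union of the graphs of the torsor automorphisms, each an $n$-dimensional copy of $\mathfrak{X}^{\circ}$; the closures $\overline{\Gamma_g}\subset W$ then define classes $[\overline{\Gamma_g}]\in\End_{\DM(\cY)}(\pi_*\one_{\mathfrak{X}})$ whose restrictions to $\cY^{\circ}$ are the endomorphisms of $\pi^{\circ}_*\one_{\mathfrak{X}^{\circ}}$ induced by the $g$, so these classes provide the desired $G$-action on $\pi_*\one_{\mathfrak{X}}$ and hence on $M(\mathfrak{X})$ (and in fact $\End_{\DM(\cY)}(\pi_*\one_{\mathfrak{X}})\cong\QQ[G]$). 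The one input from smallness is the dimension bound $\dim(W\times_{\cY}Z)<n$ for $Z:=\cY\setminus\cY^{\circ}$, coming from the codimension estimates in the definition of a small morphism; via the localisation triangle and the base-change identification applied over $Z$, this shows that the kernel of the restriction $\End_{\DM(\cY)}(\pi_*\one_{\mathfrak{X}})\to\End_{\DM(\cY^{\circ})}(\pi^{\circ}_*\one_{\mathfrak{X}^{\circ}})$ is $\Hom_{\DM(W\times_{\cY}Z)}(\one,\omega_{W\times_{\cY}Z}(-n)[-2n])=0$. Thus this restriction is injective, so associativity of the $G$-action and all equivariance assertions below reduce to checks over the genuine-torsor locus $\cY^{\circ}$; and if $\mathcal G:=(\pi_*\one_{\mathfrak{X}})^G$ denotes the image of $e:=\frac1{|G|}\sum_g g_*$, the same injectivity embeds $\End_{\DM(\cY)}(\mathcal G)$ into $\End_{\DM(\cY^{\circ})}(j^*\mathcal G)=\End_{\DM(\cY^{\circ})}(\one_{\cY^{\circ}})=\QQ$, whence $\End_{\DM(\cY)}(\mathcal G)=\QQ$.

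\emph{Conclusion.} The unit $u\colon\one_{\cY}\to\pi_*\pi^*\one_{\cY}=\pi_*\one_{\mathfrak{X}}$ and the trace $\mathrm{tr}\colon\pi_*\one_{\mathfrak{X}}=\pi_!\pi^!\one_{\cY}\to\one_{\cY}$ (the counit, using $\pi^!\one_{\cY}\simeq\one_{\mathfrak{X}}$) are $G$-equivariant: both $\Hom_{\DM(\cY)}(\one_{\cY},\pi_*\one_{\mathfrak{X}})$ and $\Hom_{\DM(\cY)}(\pi_*\one_{\mathfrak{X}},\one_{\cY})$ are isomorphic to $H^0_{\mathrm{mot}}(\mathfrak{X})=\QQ$ and inject into their restrictions to $\cY^{\circ}$, where $u$ and $\mathrm{tr}$ are the pullback and transfer of the $G$-torsor $\pi^{\circ}$. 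They therefore induce $\bar u\colon\one_{\cY}\to\mathcal G$ and $\overline{\mathrm{tr}}\colon\mathcal G\to\one_{\cY}$ with $\overline{\mathrm{tr}}\circ\bar u=\deg(\pi^{\circ})\cdot\id=|G|\cdot\id$ (again checked on $\cY^{\circ}$, where $\End(\one_{\cY})=\QQ$). Writing $\bar u\circ\overline{\mathrm{tr}}=\lambda\cdot\id_{\mathcal G}$ with $\lambda\in\End_{\DM(\cY)}(\mathcal G)=\QQ$ and restricting to $\cY^{\circ}$, where $j^*\bar u$ is an isomorphism and hence $j^*\overline{\mathrm{tr}}=|G|(j^*\bar u)^{-1}$, forces $\lambda=|G|$. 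Thus $\bar u$ is an isomorphism with inverse $\frac1{|G|}\overline{\mathrm{tr}}$; applying $(f_{\cY})_!(-)(n)[2n]$ yields $M(\mathfrak{X})^G\simeq M(\cY)$, and the $G$-action so obtained extends the torsor action on $M(\mathfrak{X}^{\circ})$, the canonical map $M(\mathfrak{X}^{\circ})\to M(\mathfrak{X})$ being $G$-equivariant by a final check over $\cY^{\circ}$.

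\emph{Main obstacle.} The heart of the argument — and the only place where ``small'' is really used — is the isomorphism $\End_{\DM(\cY)}(\pi_*\one_{\mathfrak{X}})\cong\Hom_{\DM(W)}(\one_W,\omega_W(-n)[-2n])$ and the consequence, forced by the bound $\dim(W\times_{\cY}Z)<n$, that this correspondence group, and hence the $G$-action it carries, is detected on the dense open $\cY^{\circ}$. This rests on the six operation formalism for motivic sheaves on the (in general singular) Artin stack $\mathfrak{X}\times_{\cY}\mathfrak{X}$ as set up in Appendix \ref{sec motivic van cycles}, on clean base change, and on the vanishing of the motivic (co)homology of a stack in cohomological degrees exceeding twice its dimension; granting these, the remainder is formal manipulation with adjunctions and idempotents.
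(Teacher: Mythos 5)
The paper does not actually give a proof of Theorem~\ref{thm mot small map}: it is quoted verbatim from \cite[Theorem 2.1]{HPL_coh} and used as a black box, with the remark that it generalises \cite{dCM_small} (perverse sheaves) and \cite[Theorem 2.13]{HPL_formula} (motives of schemes). There is therefore no in-paper argument to compare against. That said, your write-up is a correct and reasonably detailed rendering of the expected de Cataldo--Migliorini-style proof, and it is almost certainly the route taken in the cited reference: identify $\End_{\DM(\cY)}(\pi_*\one_{\mathfrak X})$ with the relative correspondence group $\Hom_{\DM(W)}(\one_W,\omega_W(-n)[-2n])$ for $W=\mathfrak X\times_\cY\mathfrak X$ via adjunction, base change and relative purity; use the smallness bound $\dim(W\times_\cY Z)<n$ to show that restriction to $\cY^{\circ}$ is injective on this endomorphism ring; construct the $G$-action by closing up the graph correspondences of the torsor automorphisms; and then compare the unit and trace maps, which become mutually inverse (up to $|G|$) after passing to $G$-invariants. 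Applying $(f_{\cY})_{!}(-)(n)[2n]$ and dualising the purity twists then gives the statement about $M(\mathfrak X)$ and $M(\cY)$.

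Two places in your argument lean on input that deserves to be made explicit in the stacky setting and are presumably supplied by \cite{HPL_coh}. First, you use that $\Hom_{\DM(W)}(\one_W,\omega_W(-n)[-2n])$ vanishes when $\dim W<n$ and is spanned by the classes of the $n$-dimensional irreducible components otherwise; this is the Borel--Moore/Chow description in top degree, standard for schemes, but $W$ is in general a singular Artin stack and so the identification requires a further argument (an atlas and descent, or a reduction to quotient stacks), not just the six-operation formalism of Theorem~\ref{thm Adeel translated to DM}. Second, you repeatedly use $\pi_!\simeq\pi_*$; on Artin stacks this requires $\pi$ to be proper \emph{and} Deligne--Mumford-representable (Theorem~\ref{thm Adeel translated to DM}~\ref{dm stack pbc}), a hypothesis not stated here but satisfied in the paper's applications and doubtless part of the precise hypotheses of \cite[Theorem 2.1]{HPL_coh}. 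Neither of these is a wrong step; they are simply the points where the machinery is actually being used and where a self-contained proof would need a few more lines.
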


We can now employ the wall-crossing argument of \cite{GPHS} as in \cite[$\S$6]{HPL_Higgs} to conclude an analogous result to \cite[Theorem 1.1]{HPL_Higgs}.

\begin{thm}\label{thm motive GL D Higgs gen by C}
Assume that $C(k) \neq \emptyset$. Let $D$ be a divisor on $C$ with either $D=K_{C}$ or $\deg(D)>2g-2$. The motive of the $D$-twisted Higgs moduli space $\GLHiggs^D:=\cM_{n,d}^D(C)$ for coprime rank and degree lies in the subcategory $\langle M(C) \rangle^{\otimes}$ of $\DM(k,\QQ)$ and is a direct factor of $M(C^m)$ for sufficiently high $m$.
\end{thm}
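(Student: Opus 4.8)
The plan is to reduce the statement about the $D$-twisted $\GL$-Higgs moduli space to a statement about stacks of chains and then to the stack of injective chain homomorphisms, exactly following the template of \cite[\S 6]{HPL_Higgs}. First I would invoke Proposition \ref{prop BB GL D-Higgs}: the motivic Bia\l{}ynicki--Birula decomposition reduces the problem to showing that, for each $(\underline{m},\underline{e})\in\cI$, the motive of the chain moduli space $\mChDss$ lies in $\langle M(C)\rangle^\otimes$ and is a direct factor of a power of $M(C)$. Since the $\GG_m$-action is semi-projective, the stratum codimensions $c_{\underline{m},\underline{e}}$ are well-defined and finite, so only finitely many chain moduli spaces enter; it then suffices to treat each one. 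Because $\alpha_D\in\Delta_r^\circ$ and is non-critical for the invariants in $\cI$, the stack $\Chss$ of $\alpha_D$-semistable chains is a trivial $\GG_m$-gerbe over $\mChDss$, so $M(\mChDss)$ is a direct factor of $M(\Chss)$ (after inverting the gerbe class, which is fine rationally) and it is enough to show $M(\Chss)\in\langle M(C)\rangle^\otimes$.

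Next I would run the Harder--Narasimhan and wall-crossing machinery of \cite{GPHS}. Varying the stability parameter $\alpha$ within $\Delta_r^\circ$, the HN recursion expresses $M(\mathfrak{Ch}^\alpha_{\underline{m},\underline{e}})$ in terms of $M(\Chss)$ and motives of HN strata $\Chtau$, each of which is built (via affine-space fibrations, by \cite[Lemma 4.6, Proposition 4.8]{GPHS}) from semistable chain stacks of smaller invariants. Crossing walls relates the semistable loci for different chambers up to such strata. Iterating, one can reach a chamber where the semistable locus is all of $\mathfrak{Ch}^{\alpha}_{\underline{m},\underline{e}}$ for the relevant invariants, or more precisely a chamber in which the semistable chain stack is controlled by stacks of \emph{injective} chain homomorphisms $\Chinj$ with constant rank vector. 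Concretely, as in \cite[\S 6]{HPL_Higgs}, after a suitable sequence of wall-crossings and Hecke-type modifications every semistable chain stack appearing is, up to motives lying in $\langle M(C)\rangle^\otimes$ by induction, expressed through stacks $\mathfrak{Ch}^{\mathrm{inj}}_{\underline{m}',\underline{e}'}$ with $\underline{m}'=(m,\dots,m)$ constant.

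At that point Proposition \ref{prop motive inj chains} applies directly: $M(\Chinj)\simeq M(\Bun_{m,e_r})\otimes\bigotimes_i M(\Sym^{l_i}(C\times\PP^{m-1}))$, which lies in $\llangle M(C)\rrangle^\otimes$. Since $M(\PP^{m-1})$ is a sum of Tate motives and symmetric powers of a motive in $\langle M(C)\rangle^\otimes$ stay in $\langle M(C)\rangle^\otimes$, and since $M(\Bun_{m,e_r})\in\langle M(C)\rangle^\otimes$ by our formula \cite{HPL_formula}, all the pieces are in $\langle M(C)\rangle^\otimes$. Assembling the HN/wall-crossing identities, the affine-space-fibration relations, the gerbe splitting and the BB decomposition, one concludes $M(\GLHiggs^D)\in\langle M(C)\rangle^\otimes$. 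The upgrade from membership in $\langle M(C)\rangle^\otimes$ to being a direct \emph{factor} of $M(C^m)$ is the weight-structure argument of \cite[\S 6.3]{HPL_Higgs}: from the semi-projectivity of the $\GG_m$-action one deduces that $M(\GLHiggs^D)$ is pure for Bondarko's weight structure, and a pure object of $\langle M(C)\rangle^\otimes$ is a direct summand of a sum of Tate twists of $M(C^{\otimes a_i})$; since $M(\GLHiggs^D)$ is moreover the motive of a smooth variety of the expected dimension, the twists can be absorbed to give a direct factor of $M(C^m)$ for $m\gg 0$.

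The main obstacle is the bookkeeping in the wall-crossing step: one must check that the HN recursions and wall-crossing formulas of \cite{GPHS} go through verbatim for the $D$-twisted stability parameter $\alpha_D$ — which does lie in the good cone $\Delta_r^\circ$ precisely because $\deg(D)>2g-2$ — and that the induction on the invariants $(\underline{m},\underline{e})$ terminates, so that every semistable chain stack is ultimately reduced to $\Chinj$ with constant rank vector plus contributions already known to lie in $\langle M(C)\rangle^\otimes$. This is essentially the content of \cite[\S 6]{HPL_Higgs}, and the only genuinely new point is that the relevant estimates survive replacing $K_C$ by a divisor $D$ of degree $>2g-2$; since all the deformation-theoretic inputs of \cite{ACGPS,GPHS} only require $\alpha\in\Delta_r^\circ$, this causes no difficulty. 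The purity/weight argument and the descent along the $\GG_m$-gerbe are routine given the tools already cited.
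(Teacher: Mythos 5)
Your proposal is correct and follows essentially the same route as the paper: BB decomposition via Proposition \ref{prop BB GL D-Higgs}, passage from moduli space to stack through the trivial $\GG_m$-gerbe, the wall-crossing/HN recursion of \cite{GPHS} as packaged in \cite[\S 6]{HPL_Higgs} to reduce to $\Chinj$, then Proposition \ref{prop motive inj chains} and the weight-structure argument for the direct-factor upgrade. The one imprecision worth flagging is that the gerbe step actually places $M(\Chss)$ in the \emph{localising} subcategory $\llangle M(C)\rrangle^\otimes$ (since $M(B\GG_m)$ is an infinite Tate sum), and it is \cite[Lemma 6.5]{HPL_Higgs} that then returns the compact object $M(\mChDss)$ to the thick subcategory $\langle M(C)\rangle^\otimes$ — not simply "inverting the gerbe class".
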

\begin{proof}
The case where $D=K_{C}$ is covered by \cite[Theorem 1.1]{HPL_Higgs}, so we assume $\deg(D)>2g-2$. The final statement follows from the first using a purity argument as in \cite[\S 6.3]{HPL_Higgs}. For the first, by Proposition \ref{prop BB GL D-Higgs}, it suffices to show for each index $(\underline{m},\underline{e}) \in \cI$ that $M(\mChDss) \in \langle M(C) \rangle^{\otimes}$. As the map  $\ChDss \ra \mChDss$ from the stack to its good moduli space is a trivial $\GG_m$-gerbe, it suffices to show that  $M(\mChDss)\in \llangle M(C) \rrangle^{\otimes}$ by \cite[Lemma 6.5]{HPL_Higgs}. 
For this, we use Proposition \ref{prop motive inj chains} and \cite[Theorem 6.3]{HPL_Higgs} recursively, which employs the wall-crossing argument and HN recursion of \cite{GPHS}. The basic idea is that, as stated in \cite[Proposition 2.9]{HPL_Higgs}, there is a ray in the cone $\Delta_r^\circ$ of stability parameters starting at $\alpha_D$ and ending at a stability parameter $\alpha_\infty$ such that either
\begin{itemize}
\item $\Ch^{\alpha_\infty,ss} = \emptyset$ if $\underline{m}$ is non-constant or
\item $\Ch^{\alpha_\infty,ss} \subset \Chinj$ and $\Chinj$ is a union of $\alpha_\infty$-HN strata if $\underline{m}$ is constant.
\end{itemize} 
The cone $\Delta_r^\circ$ of stability parameters admits a wall and chamber decomposition and as this ray crosses each of the (finitely many) walls between $\alpha_D$ and $\alpha_\infty$, the semistable locus for the stability parameter on the wall is the union of the semistable loci on either side of the wall and finitely many HN-strata. Since by induction the motives of HN-strata can be related to motives of stacks of semistable for $\alpha \in \Delta_r^\circ$ (\textit{cf.} \cite[Lemma 6.2]{HPL_Higgs}), we see from looking at the Gysin triangles associated to each wall-crossing that it suffices to show by a HN-induction that $\Chinj$ lies in $\llangle M(C) \rrangle^{\otimes}$ for $\underline{m}$ constant, which is proven in  Proposition \ref{prop motive inj chains}.
\end{proof}

\section{Motives of $\SL$-Higgs moduli spaces}\label{sec mot SL-Higgs}

Throughout this section, we assume $k$ is an arbitrary field and $C(k) \neq \emptyset$. We fix a rank $n$, a line bundle $L$ on $C$ of degree $d$ coprime to $n$ and a divisor $D$ on $C$ with either $D = K_C$ or $\deg(D) > 2g -2$. Our goal is to prove that the motive of the $D$-twisted $\SL$-Higgs moduli space $\SLHiggs^D:=\cM_{n,L}^D(C)$ is abelian.

\subsection{The scaling action on the $\SL$-Higgs moduli space}

The scaling $\GG_m$-action on $\GLHiggs^D$ restricts to $\SLHiggs^D$ and the fixed loci are chain moduli spaces with fixed total determinant 
\[ \ch^{\alpha_D,ss}_{\underline{m},\underline{e},L\otimes \cO_C(\sum_i i m_i D)} := \left\{ F_0 \ra \cdots \ra F_r :  \det \left(\bigoplus_{i=0}^r F_i \right) \cong L \otimes \cO_C\left(\sum_i i m_i D\right) \right\},\]
as if $E \simeq \oplus_{i=0}^r E_i$ has determinant $L$ and $F_i := E_i \otimes \cO_C(iD)$, then $\oplus_{i=0}^r F_i$ has determinant $L \otimes \cO_C(\sum_i i m_i D)$. 
Consequently, we have the following motivic Bia{\l}ynicki-Birula decomposition 
\begin{equation}\label{motivic BB for SL-Higgs}
 M(\SLHiggs^D) \simeq \bigoplus_{(\underline{m},\underline{e}) \in \cI} M(\ch^{\alpha_D,ss}_{\underline{m},\underline{e},L\otimes \cO_C(\sum_i i m_i D)} )\{ c_{\underline{m},\underline{e}} \}, 
 \end{equation}
where $c_{\underline{m},\underline{e}}$ denotes the codimension of the corresponding Bia{\l}ynicki-Birula stratum in $\SLHiggs^D$.

\subsection{The stack of injective chain homomorphisms of fixed total determinant}

For an arbitrary line bundle $L$ on $C$ (different in general from the $L$ above, and in particular not assumed to be of degree $d$) and constant tuple of ranks $\underline{m}$, the aim of this section is to prove that the motive of the substack $\ChinjL \hookrightarrow \Chinj$ of injective chain homomorphisms with total determinant $L$
\[ \ChinjL := \left\langle F_0 \hookrightarrow F_1 \hookrightarrow \cdots \hookrightarrow F_r \in \Chinj : \det \left(\bigoplus_{i=0}^r F_i \right) \cong L \right\rangle\]
is abelian. We will prove this by considering stacks of full flags $\fChinjL \hookrightarrow \fChinj$ with total determinant $L$ and proving that
\begin{enumerate}[i)]
\item the motive of $\fChinjL$ is abelian (Proposition \ref{prop motive flag chain inj L abelian} below) and
\item the forgetful map $f'_L : \fChinjL \ra \ChinjL$ is small and a torsor under a product of symmetric groups on a dense open (Proposition \ref{prop fL is small} below).
\end{enumerate}

In both proofs it will be useful to consider a commutative diagram given by taking fibres of the commutative diagram in Proposition \ref{prop motive inj chains} over $L \in \Pic(C)$ under a certain weighted determinant map. As above, let us write $l_i :=e_i - e_{i-1}$. First, similarly to \cite[$\S$5]{GPHS}, we define a weighted determinant map
\[ \begin{array}{rccl} \omega \colon & \prod_{i=1}^{r} C^{(l_i)} \times \Bun_{m,e_r} & \ra &  \Pic(C) \\ & (D_1, \dots, D_r, F_r) & \mapsto & \det(F)^{\otimes r +1} \otimes \bigotimes_{i=1}^r \cO_C(-iD_i) \end{array}\]
so that $\ChinjL$ is the fibre over $L$ of the composition $\omega \circ \beta \circ \gr : \Chinj \ra \Pic(C)$. Let us write $\cT:= \prod_{i=1}^r \Coh_{0,l_i}$ and $\widetilde{\cT}:= \prod_{i=1}^r \widetilde{\Coh}_{0,l_i}$ and $C^{(\underline{l})}:=\prod_{i=1}^r C^{(l_i)} $ and $C^{\underline{l}}:=\prod_{i=1}^r C^{l_i}$. Let $\gamma:= \omega \circ \beta$ and we denote the corresponding morphisms in the full flag setting by $\widetilde{\gamma}:= \widetilde{\omega} \circ \widetilde{\beta}$. Then we have a commutative diagram
\[ \xymatrixcolsep{1.5pc} 
 \xymatrix{ \Chinj \ar[rr]^{\gr} & & \cT \times \Bun_{m,e_r} \ar[rr]^{\beta}  & & C^{(\underline{l})} \times \Bun_{m,e_r} \ar@/^3pc/[rrdd]^{\omega} & &  \\
& \ChinjL  \ar@{_{(}->}[lu] \ar[rr]^{\gr_L  \quad \quad \quad } & & \gamma^{-1}(L)  \ar@{_{(}->}[lu] \ar[rr] & & \omega^{-1}(L)  \ar@{_{(}->}[lu]  & \\
& & & & & & \Pic(C) \\
& \fChinjL  \ar@{^{(}->}[ld] \ar[rr] \ar[uu]^{f'_L} & & \widetilde{\gamma}^{-1}(L)  \ar@{^{(}->}[ld] \ar[rr] \ar[uu]^{f_L} & & \widetilde{\omega}^{-1}(L)  \ar@{^{(}->}[ld]\ar[uu]  & \\
\fChinj \ar[rr]_{\widetilde{\gr}} \ar[uuuu]^{f'}  & & \widetilde{\cT} \times \Bun_{m,e_r} \ar[rr]_{\widetilde{\beta}} \ar[uuuu]^{f}  & & C^{\underline{l}} \times \Bun_{m,e_r} \ar[uuuu] \ar@/_3pc/[rruu]_{\tilde{\omega}} &
 } 
 \]
where $f$ and $f'$ are small and $\prod_{i=1}^r S_{l_i}$-torsors on a dense open, $\gr$ and also its base change $\gr_L$ are smooth, and $\supp:=\widetilde{\beta} \circ \widetilde{\gr}$ is an iterated projective bundle (see Proposition \ref{prop motive inj chains}). 

\begin{prop}\label{prop motive flag chain inj L abelian}
The motive of $\fChinjL$ is abelian.
\end{prop}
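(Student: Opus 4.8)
The plan is to compute the motive of $\fChinjL$ by following the right-hand column of the big commutative diagram, exactly as $M(\fChinj)$ was computed in the proof of Proposition \ref{prop motive inj chains}, but now taking fibres over $L\in\Pic(C)$. Recall that in the absolute case the composite $\mathrm{supp}=\widetilde{\beta}\circ\widetilde{\gr}\colon \fChinj\to C^{\underline{l}}\times\Bun_{m,e_r}$ is an iterated $\PP^{m-1}$-bundle, and that the target motive is abelian (indeed it lies in $\llangle M(C)\rrangle^\otimes$). The point is that $\fChinjL$ is, by definition, the fibre of $\widetilde{\omega}\circ\widetilde{\beta}\circ\widetilde{\gr}$ over $L$, and $\mathrm{supp}$ being an iterated projective bundle is preserved under base change; hence $\fChinjL$ is an iterated $\PP^{m-1}$-bundle over $\widetilde{\omega}^{-1}(L)\subset C^{\underline{l}}\times\Bun_{m,e_r}$. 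So it suffices to show that the motive of $\widetilde{\omega}^{-1}(L)$ is abelian, and then invoke the projective bundle formula (and the fact that $\llangle M(C)\rrangle^\otimes$, or at least the class of abelian motives, is closed under the operations involved).

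Next I would identify $\widetilde{\omega}^{-1}(L)$ geometrically. The map $\widetilde{\omega}\colon C^{\underline{l}}\times\Bun_{m,e_r}\to\Pic(C)$ is $(\underline{x},F_r)\mapsto \det(F_r)^{\otimes r+1}\otimes\bigotimes_i\cO_C(-i\sigma_i(\underline{x}))$ where $\sigma_i$ sends the tuple in $C^{l_i}$ to the associated effective divisor of degree $l_i$. Thus the fibre over $L$ consists of pairs $(\underline{x},F_r)$ with $\det(F_r)$ constrained to a specific translate of the degree-$0$ part, i.e. $F_r$ ranges over a stack $\Bun_{m,e_r}$ with determinant pinned to a line bundle depending on $\underline{x}$. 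This is naturally a stack of rank $m$ bundles with fixed determinant on the constant family $C\times C^{\underline{l}}\to C^{\underline{l}}$: concretely, pull $C$ back to $B:=C^{\underline{l}}$ and take the stack of rank $m$ bundles on $C\times B/B$ whose determinant equals the tautological line bundle $\mathcal{L}_{\mathrm{univ}}$ built from the universal divisors. Its motive is abelian by the relative version of our formula for the motive of the stack of vector bundles with fixed determinant (Appendix \ref{sec appendix motive stacks of bundles fixed det}), combined with the fact that $M(C^{\underline{l}})$ is abelian and that the relevant formula expresses the motive of $\Bun^{\det=\mathcal{L}}$ in terms of motives of symmetric powers of the curve (family) and Tate twists, all of which are abelian.

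Finally I would assemble the pieces: $M(\fChinjL)$ is a sum of Tate twists of $M(\widetilde{\omega}^{-1}(L))$ by the iterated projective bundle formula for $\mathrm{supp}$ restricted over $L$, and $M(\widetilde{\omega}^{-1}(L))$ is abelian by the previous paragraph; since the class of abelian motives $\DM^{\ab}_c(k,\QQ)$ is a thick tensor triangulated subcategory it is closed under these operations, so $M(\fChinjL)$ is abelian. The main obstacle I anticipate is the identification and motive computation of $\widetilde{\omega}^{-1}(L)$: one must carefully set up the stack of fixed-determinant bundles on the nontrivial but constant family of curves over $B=C^{\underline{l}}$ with determinant equal to a nonconstant (tautological) line bundle, and check that the relative fixed-determinant bundle formula from the appendix applies in this generality and yields an abelian answer — the absolute formula for $\Bun_{m,e}$ is a twisted product of zeta-type factors $Z(C,\cdot)$ and a $\Pic$-factor, and passing to fixed determinant removes the $\Pic$-factor, so one needs the relative analogue over $B$ with $M(B)$ abelian to conclude. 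The rest (base change stability of iterated projective bundles, closure properties of abelian motives) is routine.
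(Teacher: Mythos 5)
Your reduction to $\widetilde{\omega}^{-1}(L)$ via the iterated projective bundle $\mathrm{supp}$ is exactly the paper's first step, and the appeal to Theorem~\ref{thm motive rel Bun fixed det} is also the right tool. However, there is a genuine gap in the identification of $\widetilde{\omega}^{-1}(L)$. You assert that the fibre over $L$ is a stack of rank-$m$ bundles on $C\times B/B$ with $B=C^{\underline{l}}$ and determinant pinned to a tautological line bundle depending on $\underline{c}$. This is not correct: the defining equation of $\widetilde{\omega}^{-1}(L)$ is $\det(F)^{\otimes(r+1)}\cong L\otimes\cO_C(\underline{c})$, so it is the $(r+1)$-th \emph{power} of the determinant, not the determinant itself, that is pinned by $\underline{c}$. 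For each $\underline{c}$ the determinant can be any of the $(r+1)^{2g}$ roots, so $\widetilde{\omega}^{-1}(L)\to C^{\underline{l}}$ is not a fixed-determinant bundle stack over $C^{\underline{l}}$; rather, it only becomes one after base-changing to the finite \'{e}tale cover $B\to C^{\underline{l}}$ given by the fibre product of $C^{\underline{l}}\to \Pic^{(r+1)e_r}(C)$ (sending $\underline{c}\mapsto L\otimes\cO_C(\underline{c})$) with the $(r+1)$-power map $\Pic^{e_r}(C)\to\Pic^{(r+1)e_r}(C)$. The paper works over this $B$, identifies $\widetilde{\omega}^{-1}(L)\cong \Bun_{C\times B/B,m,\cN}$, and then reduces via Theorem~\ref{thm motive rel Bun fixed det} to showing $M(B)$ is abelian.

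A second, consequential gap is that showing $M(B)$ is abelian for the \emph{correct} $B$ is not trivial (unlike for $C^{\underline{l}}$): $B$ is a nontrivial finite \'{e}tale cover of $C^{\underline{l}}$. The paper resolves this by constructing a surjective morphism $(C')^{\underline{l}}\to B$, where $C'\to C$ is the \'{e}tale cover obtained by base-changing the multiplication-by-$(r+1)$ map on $\Jac(C)$ along an Abel--Jacobi map, and then invoking Lemma~\ref{lemma surj map ab motive}, which shows that abelian-ness (via Kimura finite-dimensionality) descends along surjections of smooth projective varieties. This step is entirely absent from your sketch and is precisely what forces the appearance of \'{e}tale covers of $C$ (rather than $C$ itself) in Theorem~\ref{main thm motives abelian}(ii) and Remark~\ref{rmk SL Higgs gen by etale covers}.
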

\begin{proof}
Since $\mathrm{supp}:= \widetilde{\beta} \circ \widetilde{\gr}: \fChinj \ra C^{\underline{l}} \times \Bun_{m,e_r}$ is a $\left(\sum_{i=1}^r l_i\right)$-iterated $\PP^{m-1}$-bundle (see Proposition \ref{prop motive inj chains}), its restriction $\mathrm{supp}_L : \fChinjL \ra \widetilde{\omega}^{-1}(L)$ is also an iterated projective bundle and so it suffices to prove that the motive of $\widetilde{\omega}^{-1}(L)$ is abelian.

For $\underline{c}:=(c_{i,j})_{1\leq i \leq r,1 \leq j \leq l_i} \in C^{\underline{l}}$, let $\cO_C(\underline{c}):=\cO_C(- \sum_{i=1}^r \sum_{j=1}^{l_i} i c_{ij})$. Then by definition
\[ \widetilde{\omega}^{-1}(L) = \left\langle ( \underline{c},F) \in C^{\underline{l}} \times \Bun_{m,e_r} : \det(F)^{\otimes r+1} \cong L \otimes \cO_C(\underline{c}) \right\rangle. \]
Note that as $\det(F)^{\otimes r+1}$ has degree $(r+1)e_r$, so does $L \otimes \cO_C(\underline{c})$. Consider the fibre product $B$
\[  \xymatrix{ B \ar[r] \ar[d] & C^{\underline{l}} \ar[d] \\ \Pic^{e_r}(C) \ar[r]^{\cdot (r+1) \quad} & \Pic^{(r+1)e_r}(C)
}\]
where the bottom arrow is the map taking $(r+1)$-powers and the right arrow is given by $\underline{c}\mapsto L \otimes \cO_C(\underline{c})$. Hence, the scheme $B$ parametrises pairs $(N,\underline{c}) \in \Pic^{e_r}(C) \times C^{\underline{l}}$ such that $N^{\otimes r+1} \cong L \otimes \cO_C(\underline{c})$. Let $\cN \in \Pic^{e_r}(C\times B)$ denote the pullback of the Poincar\'{e} line bundle on $C \times \Pic^{e_r}(C)$; then 
\[ \widetilde{\omega}^{-1}(L) \cong \Bun_{C \times B/B,m, \cN} \]
is isomorphic to the stack of rank $m$ vector bundles on $C \times B/B$ with determinant $\cN$.

To show that the motive of $\Bun_{C \times B/B,m, \cN}$ is abelian, it suffices to show that $M(B)$ is abelian by Theorem \ref{thm motive rel Bun fixed det}. To prove that the motive of $B$ is abelian, we let $C' \ra C$ denote the finite \'{e}tale cover of $C$ obtained by base change under the multiplication by $r+1$ map on $\Jac(C)$, where we fix $x_0 \in C$ to determine an Abel--Jacobi map $C \ra \Jac(C)$ given by $c \mapsto \cO_C(x_0 - c)$. We shall view elements in $C'$ as pairs $(c,M)$ where $c \in C$ and $M$ is a line bundle with $M^{\otimes r+1} \cong \cO(x_0 -c)$. We claim that there is a surjective morphism $(C')^{\underline{l}} \ra B$. To define this map, we use the universal property of the fibre product $B$: we take the natural morphism $(C')^{\underline{l}} \ra C^{\underline{l}}$ and the morphism $(C')^{\underline{l}} \ra \Pic^{e_r}(C)$ given by \[ (c_{ij},M_{ij})_{1 \leq i \leq r, 1 \leq j \leq l_i} \mapsto L' \otimes \bigotimes_{i=1}^r \bigotimes_{j = 1}^{l_i} M_{ij}^{\otimes i},\]
where $L' \in \Pic^{e_r}(C)$ is a fixed $(r+1)$-root of $\cO_C(\underline{x_0}):=L \otimes \cO_C(-\sum_{i=1}^r \sum_{j=1}^{l_i}  i x_0)$. Since
\[ \left(L' \otimes \bigotimes_{i=1}^r \bigotimes_{j = 1}^{l_i} M_{ij}^{\otimes i} \right)^{\otimes r + 1} \simeq L \otimes \cO_C\left(-\sum_{i=1}^r \sum_{j=1}^{l_i} i x_0\right)\otimes \bigotimes_{i=1}^r \bigotimes_{j = 1}^{l_i} \cO(i(x_0 - c_{ij})) \simeq L \otimes \cO_C(\underline{c})\]
the corresponding compositions from $(C')^{\underline{l}}$ to $\Pic^{(r+1)e_r}(C)$ commute and consequently there is a morphism $(C')^{\underline{l}} \ra B$, which is clearly surjective.
By Lemma \ref{lemma surj map ab motive} below, we conclude that $B$ is abelian and thus also $\widetilde{\omega}^{-1}(L)$ is abelian.
\end{proof}

To complete the proof, we use the following result, which is well-known to experts but for which we did not find a suitable reference. Recall that there is a fully faithful embedding of the category of Chow motives with rational coefficients over $k$ into $\DM(k,\QQ)$, so that it is enough to show that $M(B)$ is abelian as a Chow motive in the sense of \cite[Definition 1.1]{Wildeshaus}. Recall as well that abelian Chow motives in this sense are Kimura finite-dimensional \cite[Proposition 1.8]{Wildeshaus}.

\begin{lemma}\label{lemma surj map ab motive}
Let $X \ra Y$ be a surjective morphism of smooth projective $k$-varieties. If the Chow motive $M(X)$ is Kimura finite-dimensional (in particular, if $M(X)$ is abelian), then $M(Y)$ is a direct factor of $M(X)$. In particular, if $M(X)$ is abelian, then $M(Y)$ is abelian.
\end{lemma}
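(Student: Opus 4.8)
The plan is to prove the stronger, unconditional statement that $M(Y)$ is a direct factor of $M(X)$ in the category $\Chow(k)_{\QQ}$ of rational Chow motives, and then to deduce the ``abelian'' (resp.\ ``Kimura finite-dimensional'') assertions from the fact that both classes of motives are stable under passage to direct summands: the abelian Chow motives because they form by construction a pseudo-abelian subcategory (cf.\ \cite[Definition 1.1]{Wildeshaus}), and the Kimura finite-dimensional ones by Kimura's theory. In particular, the finite-dimensionality hypothesis is only used for this last, purely formal step. (We may and do assume $X$ and $Y$ irreducible, replacing $X$ by an irreducible component dominating $Y$ and, if needed, treating the irreducible components of $Y$ separately.)

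To exhibit $M(Y)$ as a direct factor of $M(X)$ it suffices to produce morphisms $s\colon M(Y)\to M(X)$ and $r\colon M(X)\to M(Y)$ in $\Chow(k)_{\QQ}$ with $r\circ s=N\cdot\id_{M(Y)}$ for some integer $N\geq1$; then $e:=\tfrac1N\,s\circ r$ is an idempotent in $\End_{\Chow(k)_{\QQ}}(M(X))$ and $(M(X),e)\cong M(Y)$. For $r$ I would take $r:=M(f)$, induced by the class of the graph $\Gamma_f\subset X\times Y$. For $s$ I would use a multisection of $f$: since $f$ is surjective, the generic fibre $X_\eta:=X\times_Y\Spec k(Y)$ is a non-empty variety over $k(Y)$, hence has a closed point; letting $Z\subseteq X$ be the reduced closure of the corresponding point of $X$ and $N:=\deg(f|_Z)$, the morphism $f|_Z\colon Z\to Y$ is proper, surjective, generically finite of degree $N$, and — this is the crucial point — $\dim Z=\dim Y$. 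I then let $s$ be the correspondence given by the class, in $\CH^{\dim X}(Y\times X)_{\QQ}$, of the image of the closed immersion $(f|_Z,\iota_Z)\colon Z\hookrightarrow Y\times X$. The equality $\dim Z=\dim Y$ is precisely what places this class in $\CH^{\dim X}(Y\times X)_{\QQ}=\Hom_{\Chow(k)_{\QQ}}(M(Y),M(X))$, i.e.\ makes $s$ a morphism $M(Y)\to M(X)$ with no Tate twist.

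It then remains to verify $r\circ s=N\cdot\id_{M(Y)}$ in $\End_{\Chow(k)_{\QQ}}(M(Y))=\CH^{\dim Y}(Y\times Y)_{\QQ}$, which is a routine correspondence computation. One way: for every smooth projective $T$ and every class $\alpha$ on $Y\times T$ one has, by the projection formula together with $f_*[Z]=N[Y]$, that $r\circ s$ acts by $\alpha\mapsto(f\times\id_T)_*\bigl((f\times\id_T)^*\alpha\cdot[Z\times T]\bigr)=\alpha\cdot(f\times\id_T)_*[Z\times T]=N\alpha$; and an endomorphism of $M(Y)$ acting as multiplication by $N$ on all such Chow groups is $N\cdot\id_{M(Y)}$. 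Equivalently, one computes the composite cycle class directly: the relevant intersection inside $Y\times X\times Y$ is proper and generically transverse along a copy of $Z$, and its pushforward to $Y\times Y$ is $N[\Delta_Y]$. I expect the only point requiring genuine care to be the Tate-twist bookkeeping: using a multisection $Z$ of dimension exactly $\dim Y$ is what keeps all the correspondences in the ``weight zero'' Chow groups, whereas a more naive construction (e.g.\ a Gysin-type morphism attached to $f$) would introduce a twist by the relative dimension $\dim X-\dim Y$ that would have to be corrected. A pleasant feature of this argument is that $Z$ need not be smooth: only the smooth ambient products $Y\times X$ and $Y\times X\times Y$ enter the correspondence calculus, so no Bertini-type genericity argument or resolution of singularities is required, and the proof is valid over an arbitrary base field.
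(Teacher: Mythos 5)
Your proof is correct, and it takes a more direct route than the paper's. You construct the retraction explicitly: $r=M(f)$ and a multisection correspondence $s$, and you verify $r\circ s = N\cdot\id_{M(Y)}$ by a cycle computation before normalizing over $\QQ$. This is the classical Manin-style argument, and it proves \emph{unconditionally} that $M(Y)$ is a direct factor of $M(X)$; no Kimura finite-dimensionality is needed for the splitting itself (a point not visible from the lemma's statement), and that hypothesis enters only in the trivial final step of passing finite-dimensionality or abelianness through a direct summand. The paper instead invokes Kimura's abstract theory of surjective morphisms of Chow motives (his Definition 6.5, Remark 6.6, Lemma 6.8 and Proposition 6.9) to obtain a section $\eta$ of $M(f)$ and to propagate finite-dimensionality from $M(X)$ to $M(Y)$, and then passes through the conservative, idempotent-lifting functor from finite-dimensional Chow motives to numerical motives together with Jannsen's semi-simplicity theorem to extract the splitting projector. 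Your version is shorter, self-contained, and isolates the one place where $\QQ$-coefficients are genuinely used (dividing by $N$); the paper's version trades explicitness for modularity by outsourcing the multisection input to Kimura's framework and leaning on the standard numerical-motive machinery. One small streamlining worth noting: the identity $r\circ s = N\cdot\id_{M(Y)}$ drops out immediately from the rule $[\Gamma_f]\circ\alpha = (\id_Y\times f)_*\alpha$ for composing a correspondence with a graph, which applied to $\alpha=s$ gives $(f|_Z,f|_Z)_*[Z]=\Delta_{Y*}(f|_Z)_*[Z]=N[\Delta_Y]$, so no transversality discussion is needed.
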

\begin{proof}
  The surjective morphism $X\ra Y$ induces a surjective morphism of Chow motives in the sense of \cite[Definition 6.5]{Kimura} by \cite[Remark 6.6]{Kimura}. By \cite[Lemma 6.8]{Kimura}, this implies there is a morphism $\eta:M(Y)\to M(X)$ of Chow motives such that $M(f)\circ \eta=\id_{M(Y)}$. By \cite[Proposition 6.9]{Kimura}, the fact that $M(X)$ is Kimura finite-dimensional then also implies that $M(Y)$ is Kimura finite-dimensional.

Now consider the category $\cM_{\rat}^{\fd}(k,\QQ)$ of Kimura finite-dimensional Chow motives over $k$ with $\QQ$-coefficients and the category $\cM_{\num}(k,\QQ)$ of pure motives over $k$ with rational coefficients with respect to numerical equivalence. Denote by $(-)^{\num}:\cM_{\rat}^{\fd}(k,\QQ)\to \cM_{\num}(k,\QQ)$  the natural functor. This functor is conservative and lifts idempotents by \cite[Corollaires 3.15-16]{Andre-bourbaki}.
  
The category $\cM_{\num}(k,\QQ)$ is abelian semi-simple by Jannsen's theorem, so the existence of $\eta$ implies that $M(f)^{\num}:M(Y)^{\num}\to M(X)^{\num}$ makes $M(Y)^{\num}$ into a direct factor of $M(X)^{\num}$. Let $p:M(X)\to M(X)$ be a projector such that $p^{\num}$ has image $M(Y)^{\num}$. In other words, the composition $M(Y)^{\num}\stackrel{M(f)^{\num}}{\lra} M(X)^{\num}\stackrel{p^{\num}}{\lra} \mathrm{Im}(p^{\num})$ is an isomorphism. Since $(-)^{\num}$ is conservative, this implies that $M(Y)\simeq \mathrm{Im}(p)$ is a direct factor of $M(X)$.
\end{proof}

It is not obvious that $f'_L$ is small, as it is the base change of (the small map) $f'$ under the non-flat morphism $\ChinjL \hookrightarrow \Chinj$. To prove that $f'_L$ is small we will instead prove that $f_L$ is small.

\begin{prop}\label{prop fL is small}
The forgetful map $f'_L : \fChinjL \ra \ChinjL$ is small and a $\prod_{i=1}^r S_{l_i}$-torsor on a dense open. 
\end{prop}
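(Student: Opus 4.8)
The plan is to deduce smallness of $f'_L$ from smallness of $f_L$, using that $\gr_L$ is smooth (being a base change of the smooth morphism $\gr$) so that smallness is preserved along it, and then to prove smallness of $f_L$ by a direct dimension count on the fibre product $\widetilde{\gamma}^{-1}(L) \to \gamma^{-1}(L)$. The key point is that $f'_L$ is the pullback of $f_L$ along the smooth morphism $\gr_L : \ChinjL \to \gamma^{-1}(L)$ (more precisely along the composite of $\gr_L$ with the smooth projection to $\cT\times\Bun_{m,e_r}$ restricted to $L$), and smallness is stable under smooth base change; likewise the torsor structure on a dense open pulls back. So everything reduces to the statement that $f_L : \widetilde{\gamma}^{-1}(L) \to \gamma^{-1}(L)$ is small.

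To analyse $f_L$, I would first observe that $f : \widetilde{\cT}\times\Bun_{m,e_r} \to \cT\times\Bun_{m,e_r}$ is the product of the $\Bun$-factor with $\prod_i(\widetilde{\Coh}_{0,l_i}\to\Coh_{0,l_i})$, and via the $\beta$ (resp. $\widetilde{\beta}$) support maps these factor through $C^{(\underline l)}$ (resp. $C^{\underline l}$); the fibres of $f$ over a torsion sheaf supported at a point with multiplicities $(a_1,\dots,a_s)$ are the products of flag-type varieties Laumon describes \cite[Theorem 3.3.1]{Laumon}, of dimension $\sum_j \binom{a_j}{2}$, which is $\le \sum_j(a_j-1)$ with equality iff all $a_j\le 2$; this is exactly the inequality making $f$ small. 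Now $\gamma^{-1}(L)$ is cut out inside $C^{(\underline l)}\times\Bun_{m,e_r}$ by the weighted determinant condition $\omega = L$, and $\widetilde\gamma^{-1}(L)$ is cut out of $C^{\underline l}\times\Bun_{m,e_r}$ by $\widetilde\omega = L$; since the maps $C^{\underline l}\to C^{(\underline l)}$ and the $\Bun$-factor are compatible with $\omega$ and $\widetilde\omega$, the morphism $f_L$ is obtained from $f$ by base change along the (locally closed, or closed-in-smooth) inclusion $\gamma^{-1}(L)\hookrightarrow C^{(\underline l)}\times\Bun_{m,e_r}$. The crucial observation is that along the relevant loci this inclusion meets the strata of $C^{(\underline l)}$ transversally enough: the weighted determinant map $\omega$ is smooth (it is, up to the smooth $\Bun$-direction, a composition of Abel--Jacobi maps and multiplication on $\Pic$, which is smooth after the $\Bun$-factor is taken into account since $e_r$ can be assumed large), so $\gamma^{-1}(L)$ is smooth and its intersection with each diagonal stratum of $C^{(\underline l)}$ has the expected codimension. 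Hence the dimension of the fibres of $f_L$ over the stratum of $\gamma^{-1}(L)$ corresponding to collision pattern $(a_j)$ is still $\sum_j\binom{a_j}{2}$ while the codimension of that stratum in $\gamma^{-1}(L)$ is still $\sum_j(a_j-1)$, and the smallness inequality is unchanged. On the open locus where all collisions are simple (all $a_j\le 1$, i.e. distinct points), $f_L$ is the pullback of the $\prod_i S_{l_i}$-torsor, hence itself such a torsor; this dense open is nonempty because $\gamma^{-1}(L)$ surjects onto a positive-dimensional symmetric product factor.

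The main obstacle I anticipate is verifying that the weighted determinant map $\omega$ (equivalently $\gamma$) is smooth, or at least flat with smooth fibres, so that $\gamma^{-1}(L)$ is smooth and the stratum-codimension bookkeeping above is valid; this is where one must use that the relevant $\Bun_{m,e_r}$-direction makes $\det(F)^{\otimes(r+1)}$ vary in a smooth family over $\Pic$, and combine it with smoothness of the Abel--Jacobi and power maps on the Jacobian. A secondary technical point is that $\ChinjL\hookrightarrow\Chinj$ is not flat, so one cannot base-change smallness of $f'$ directly; the whole point of routing through $f_L$ and $\gr_L$ is precisely to replace this bad base change by the smooth base change $\gr_L$, for which smallness and the generic-torsor property are automatically preserved. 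Once smallness of $f_L$ is established, pulling back along the smooth $\gr_L$ gives smallness of $f'_L$ and finishes the proof.
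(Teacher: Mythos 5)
Your first reduction agrees with the paper: $f'_L$ is the pullback of $f_L$ along the smooth morphism $\gr_L$, and smallness together with the generic torsor structure is preserved under smooth base change. From there, however, the two arguments diverge, and I think your route has a genuine gap.

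You try to establish smallness of $f_L$ by a direct dimension count on the strata of the small map $f$ and then arguing that the closed embedding cut out by the weighted determinant condition meets those strata transversally. Two problems arise. First, the fibre-dimension formula is off: the inequality $\sum_j\binom{a_j}{2}\le \sum_j(a_j-1)$ already fails for $a_j=3$, and in any case the fibre of $\widetilde{\Coh}_{0,l}\to\Coh_{0,l}$ depends on the isomorphism type of the torsion sheaf, not just its support cycle, so the bookkeeping would have to be set up differently. Second, and more importantly, this entire count is unnecessary if one shows that $\gamma^{-1}(L)\to\cT$ is smooth: since $f$ is the base change of $\widetilde{\cT}\to\cT$ along the projection $\cT\times\Bun_{m,e_r}\to\cT$, the map $f_L$ is then a smooth base change of a small map, hence small, with no further work.

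The step you flag as the ``main obstacle'', verifying smoothness, is indeed the content of the paper's proof, but the argument you sketch does not reach it. Your appeal to ``$e_r$ can be assumed large'' and to smoothness of Abel--Jacobi plays no role here: the relevant map from the $\Bun$-direction is $F\mapsto \det(F)^{\otimes(r+1)}$, which factors through the multiplication-by-$(r+1)$ isogeny on $\Pic$, and this is where the subtlety lies. The paper does not try to show $\omega$ or $\gamma$ is smooth. Instead it proves directly that $\gamma^{-1}(L)\to\cT$ is smooth by passing to the finite \'etale cover $\cT'\to\cT$ that base-changes the $(r+1)$-power map on $\Pic$; over $\cT'$ the fibre $\gamma^{-1}(L)$ decomposes into a disjoint union of stacks $\Bun_{C\times\cT'/\cT',m,\cN}$ of bundles with fixed determinant, each smooth over $\cT'$, and smoothness of $\gamma^{-1}(L)\to\cT$ follows by \'etale descent. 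You would also want to be careful about which space $\gamma^{-1}(L)$ sits in: it lives in $\cT\times\Bun_{m,e_r}$, not in $C^{(\underline{l})}\times\Bun_{m,e_r}$ (that would be $\omega^{-1}(L)$), and $\beta$ is not smooth, so smoothness of $\omega$ would not directly give smoothness of $\gamma^{-1}(L)$. In summary: right first move, but the dimension count should be replaced by the \'etale descent argument establishing smoothness of $\gamma^{-1}(L)\to\cT$, after which smallness of $f_L$ is immediate.
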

\begin{proof}
It suffices to prove that $f_L$ is small and a $\prod_{i=1}^r S_{l_i}$-torsor on a dense open, as $f'_L$ is the pullback of $f_L$ under the smooth map $\gr_L$. Recall that the forgetful map 
\[ \widetilde{\cT}:=\prod_{i=1}^r \widetilde{\Coh}_{0,l_i}  \ra \cT:= \prod_{i=1}^r \Coh_{0,l_i}\]
is small and a $\prod_{i=1}^r S_{l_i}$-torsor on a dense open. 

Let $\cL \ra C \times \cT$ be the line bundle with $\cL_t:=L \otimes \bigotimes_{i=1}^r \cO_C(i\supp T_i)$ over $t = (T_1,\dots,T_r) \in \cT$. Then by definition
\[ {\gamma}^{-1}(L) = \left\langle ( t,F) \in \cT \times \Bun_{m,e_r} : \det(F)^{\otimes r+1} \cong \cL_t \right\rangle=:\Bun_{C \times \cT/\cT,m, \det( \: )^{\otimes r+1} \simeq \cL}. \]
Let $\cT'$ be the fibre product
\[  \xymatrix{\cT' \ar[r] \ar[d] & \cT \ar[d]^{\cL} \\ \Pic^{e_r}(C) \ar[r]^{\cdot (r+1) \quad} & \Pic^{(r+1)e_r}(C)
}\]
where the bottom morphism is multiplication by $(r+1)$ and the right morphism is given by $\cL$. Let $\cL' \in \Pic^{(r+1)e_r}_{C \times \cT'/\cT'}(\cT')$ denote the pullback of $\cL$. Then by construction all $(r+1)$-roots of $\cL'$ exist and
\[ \Bun_{C \times \cT'/\cT',m, \det( \: )^{\otimes r+1} \simeq \cL'} = \bigsqcup_{\begin{smallmatrix} (r+1)\mathrm{-root } \\ \cN \text{ of } \cL' \end{smallmatrix}}  \Bun_{C \times \cT'/\cT',m, \cN}. \]
Since each $\Bun_{C \times \cT'/\cT',m, \cN}$ is smooth\footnote{Here $\cT'$ is a stack, but we can base change to an atlas of $
\cT'$ to prove $\Bun_{C \times \cT'/\cT',m, \cM}$ is smooth via descent.} over $\cT'$, we have that $\Bun_{C \times \cT'/\cT',m, \det( \: )^{\otimes r+1} \simeq \cL'}$ is smooth over $\cT'$ is smooth. Consequently 
\[\gamma^{-1}(L)\simeq \Bun_{C \times \cT/\cT,m, \det( \: )^{\otimes r+1} \simeq \cL} \ra \cT\]
is smooth by \'{e}tale descent under the finite \'{e}tale cover $\cT' \ra \cT$. Since $f_L$ is the base change of $\widetilde{\cT} \ra \cT $ under the smooth map $\gamma^{-1}(L) \ra \cT$, we conclude that the morphism $f_L$ is also small and a $\prod_{i=1}^r S_{l_i}$-torsor on a dense open.
\end{proof}

We can now conclude that the stack of injective chain homomorphisms with fixed total determinant has abelian motive. 

\begin{cor}\label{cor motive inj chain fixed det abelian}
For a constant tuple of ranks $\underline{m}$, the motive of $\ChinjL$ is abelian.
\end{cor}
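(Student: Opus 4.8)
The plan is to combine the two preceding propositions via the motivic small-map theorem. Recall that Proposition \ref{prop fL is small} establishes that the forgetful map $f'_L : \fChinjL \ra \ChinjL$ is small, proper, surjective (the properness and surjectivity being inherited from the corresponding properties of $f' : \fChinj \ra \Chinj$, or directly from the finite surjective map $\widetilde{\cT}\to\cT$ after base change along the smooth morphism $\gr_L$) and a $G$-torsor over a dense open, where $G := \prod_{i=1}^r S_{l_i}$. Both $\fChinjL$ and $\ChinjL$ are smooth Artin stacks: smoothness of $\ChinjL$ follows since it is a fibre of the smooth morphism $\gamma^{-1}(L)\to \cT$ composed appropriately (indeed one sees in the proof of Proposition \ref{prop fL is small} that $\gamma^{-1}(L)\to\cT$ is smooth and $\ChinjL$ is the base change $\gr_L^{-1}(\gamma^{-1}(L))$ of it along the smooth map $\gr$), and smoothness of $\fChinjL$ follows since $\mathrm{supp}_L : \fChinjL \to \widetilde{\omega}^{-1}(L)$ is an iterated projective bundle over the smooth stack $\widetilde{\omega}^{-1}(L)\cong \Bun_{C\times B/B,m,\cN}$.

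Given this, I would invoke Theorem \ref{thm mot small map}: since $f'_L$ is small, proper, surjective between smooth Artin stacks and a $G$-torsor on a dense open, the $G$-action on $M(\fChinjL^\circ)$ extends to $M(\fChinjL)$ and $M(\fChinjL)^G \simeq M(\ChinjL)$ in $\DM(k,\QQ)$. Therefore $M(\ChinjL)$ is a direct factor of $M(\fChinjL)$ (it is the image of the projector $\frac{1}{|G|}\sum_{g\in G} g$ acting on $M(\fChinjL)$, using that $\QQ$-coefficients make this averaging idempotent available). By Proposition \ref{prop motive flag chain inj L abelian}, $M(\fChinjL)$ is abelian, i.e.\ lies in $\DM^{\ab}_c(k,\QQ)$; since $\DM^{\ab}_c(k,\QQ)$ is a thick subcategory of $\DM_c(k,\QQ)$, it is closed under direct factors, so $M(\ChinjL)$ is abelian as well. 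This completes the proof.

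I do not expect any serious obstacle here — the corollary is essentially a formal consequence of the two propositions it follows. The only point requiring a little care is checking that all the hypotheses of Theorem \ref{thm mot small map} are genuinely in place, in particular the smoothness of $\ChinjL$ itself and the fact that $f'_L$ is proper and surjective (not merely small and generically a torsor); but these are already implicit in the constructions of the previous two propositions, where $\ChinjL$ is cut out as a fibre of smooth morphisms and $f'_L$ is obtained as a base change of the finite surjective forgetful map $\widetilde{\Coh}\to\Coh$ along smooth morphisms. One should also note that constructibility of the motives is not an issue since all the stacks involved are of finite type.
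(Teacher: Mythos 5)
Your proof is correct and follows the same route as the paper: combine Proposition \ref{prop fL is small} and Proposition \ref{prop motive flag chain inj L abelian} via Theorem \ref{thm mot small map} to exhibit $M(\ChinjL)$ as a direct factor of $M(\fChinjL)$, then use that the subcategory of abelian motives is thick. The extra hypothesis-checking you include is sensible but not a departure from the paper's argument.
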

\begin{proof}
This follows from Propositions \ref{prop motive flag chain inj L abelian} and \ref{prop fL is small}, as by Theorem \ref{thm mot small map}, the motive of $\ChinjL$ is a direct factor of the motive of $\fChinjL$ and thus is also abelian.
\end{proof}

\subsection{The $\SL$-Higgs moduli space has abelian motive}

We are now able to prove the main result of this section.

\begin{thm}\label{thm mot SL Higgs ab}
Assume that $C(k) \neq \emptyset$. Let $D$ be a divisor on $C$ with either $D=K_{C}$ or $\deg(D)>2g-2$. The motive in $\DM(k,\QQ)$ of the $D$-twisted $\SL$-Higgs moduli space $\SLHiggs^D:=\cM_{n,L}^D(C)$ for a line bundle $L$ of coprime degree to $n$ is abelian.
\end{thm}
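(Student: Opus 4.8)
The plan is to combine the motivic Bia{\l}ynicki--Birula decomposition \eqref{motivic BB for SL-Higgs} with the results just established on stacks of injective chain homomorphisms of fixed total determinant, following the blueprint of \cite[Theorem 1.1]{HPL_Higgs} and of the proof of Theorem \ref{thm motive GL D Higgs gen by C}, but now tracking the "abelian" condition rather than "generated by $M(C)$". First I would reduce, via \eqref{motivic BB for SL-Higgs}, to showing that each fixed-point piece $M(\ch^{\alpha_D,ss}_{\underline{m},\underline{e},L\otimes \cO_C(\sum_i i m_i D)})$ is abelian; since the stack $\ch^{\alpha_D,ss}_{\underline{m},\underline{e},L'}\to \mathrm{Ch}^{\alpha_D,ss}_{\underline{m},\underline{e},L'}$ from the stack of chains of fixed total determinant to its good moduli space is a trivial $\GG_m$-gerbe, and the subcategory of abelian motives is stable under the relevant operations (it is a thick tensor subcategory), it suffices by \cite[Lemma 6.5]{HPL_Higgs} to show the corresponding \emph{stack} $\mathfrak{Ch}^{\alpha_D,s}_{\underline{m},\underline{e},L'}$ of $\alpha_D$-semistable chains of fixed total determinant has abelian motive, where $L' := L\otimes \cO_C(\sum_i i m_i D)$.

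Next I would run the wall-crossing and Harder--Narasimhan recursion of \cite{GPHS} in the cone $\Delta_r^\circ$, exactly as in \cite[\S 6]{HPL_Higgs} and Theorem \ref{thm motive GL D Higgs gen by C}, but now in the fixed-total-determinant setting. The point is that the HN strata $\mathfrak{Ch}^{\alpha,\tau}_{\underline{m},\underline{e},L'}$ of chains of fixed total determinant are still smooth affine-space fibrations over products of lower-dimensional stacks of semistable chains of fixed determinants (the determinant of a chain is the tensor product of the determinants of the HN pieces, so fixing the total determinant cuts out the analogous substack of the associated graded), so the Gysin triangles attached to wall-crossings reduce everything, by induction on the discrete invariants, to the case of the stack $\mathfrak{Ch}^{\mathrm{inj}}_{\underline{m},\underline{e},L'}$ of injective chain homomorphisms with fixed total determinant and $\underline{m}$ constant — which is exactly Corollary \ref{cor motive inj chain fixed det abelian}. (When $\underline{m}$ is non-constant the relevant endpoint stability parameter $\alpha_\infty$ has empty semistable locus, as in \cite[Proposition 2.9]{HPL_Higgs}.) Assembling the triangles, and using that $\DM^{\ab}_c(k,\QQ)$ is a thick triangulated subcategory closed under the direct sums appearing in the BB decomposition, yields that $M(\SLHiggs^D)$ is abelian. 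The final "direct factor of a product of étale covers of $C$" refinement in Theorem \ref{main thm motives abelian}(ii) follows from a purity argument as in \cite[\S 6.3]{HPL_Higgs}: the $\GG_m$-action is semi-projective so $M(\SLHiggs^D)$ is pure for Bondarko's weight structure, and a pure abelian motive lying in the localising subcategory generated by motives of étale covers of $C$ is a direct factor of a finite sum of twists of such, hence (since these are themselves direct factors of powers of the total space of a product of étale covers) of the motive of a product of étale covers of $C$.

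The main obstacle I anticipate is the same technical point that made the $\SL$ case harder than the $\GL$ case already in the previous sections, namely keeping the fixed-determinant condition compatible with the HN recursion: one must check that fixing the total determinant of a chain is compatible with passing to HN strata and with the affine-fibration structure of the stratum — i.e. that $\mathfrak{Ch}^{\alpha,\tau}_{\underline{m},\underline{e},L'}$ really is an affine-space bundle over a product of stacks $\mathfrak{Ch}^{\alpha,ss}_{\underline{m}^{(j)},\underline{e}^{(j)},L'_j}$ with $\bigotimes_j L'_j \cong L'$ — and that the intermediate stacks of semistable chains of fixed determinant for parameters in $\Delta_r^\circ$ to which one reduces are themselves covered by the inductive hypothesis. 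This is a matter of transporting \cite[Lemma 6.2, Lemma 6.3, Theorem 6.3]{HPL_Higgs} to the fixed-determinant setting, which should be routine but requires care; everything genuinely new has already been isolated in Corollary \ref{cor motive inj chain fixed det abelian} and the appendix on relative stacks of bundles with fixed determinant.

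\begin{proof}
By the motivic Bia{\l}ynicki--Birula decomposition \eqref{motivic BB for SL-Higgs}, it suffices to prove that for each $(\underline{m},\underline{e})\in\cI$ the motive of the chain moduli space $\ch^{\alpha_D,ss}_{\underline{m},\underline{e},L'}$ with $L':= L\otimes \cO_C(\sum_i i m_i D)$ is abelian, since $\DM^{\ab}_c(k,\QQ)$ is a thick triangulated subcategory of $\DM(k,\QQ)$ and the decomposition is finite. The stack $\mathfrak{Ch}^{\alpha_D,s}_{\underline{m},\underline{e},L'}$ of $\alpha_D$-semistable chains with total determinant $L'$ is a trivial $\GG_m$-gerbe over its good moduli space $\ch^{\alpha_D,ss}_{\underline{m},\underline{e},L'}$, so by \cite[Lemma 6.5]{HPL_Higgs} it is enough to prove that $M(\mathfrak{Ch}^{\alpha_D,s}_{\underline{m},\underline{e},L'})$ is abelian.

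We now argue as in \cite[\S 6]{HPL_Higgs} and in the proof of Theorem \ref{thm motive GL D Higgs gen by C}, carrying along the fixed-total-determinant condition. Recall from \cite[Proposition 2.9]{HPL_Higgs} that there is a ray in $\Delta_r^\circ$ starting at $\alpha_D$ and ending at a parameter $\alpha_\infty$ such that either $\underline{m}$ is non-constant and $\Ch^{\alpha_\infty,ss}=\emptyset$ (whence the fixed-determinant substack is also empty and there is nothing to prove), or $\underline{m}$ is constant and $\Ch^{\alpha_\infty,ss}\subset \Chinj$ with $\Chinj$ a union of $\alpha_\infty$-HN strata. As the ray crosses each of the finitely many walls, the semistable locus on a wall is the union of the semistable loci on either side together with finitely many HN strata $\mathfrak{Ch}^{\alpha,\tau}_{\underline{m},\underline{e}}$. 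Intersecting everything with the locus of total determinant $L'$ preserves this picture: the determinant of a chain is the tensor product of the determinants of its HN subquotients, so $\mathfrak{Ch}^{\alpha,\tau}_{\underline{m},\underline{e},L'}$ is the substack of $\mathfrak{Ch}^{\alpha,\tau}_{\underline{m},\underline{e}}$ cut out by the corresponding condition on the associated graded, and taking the associated graded for the $\alpha$-HN filtration remains an affine-space fibration over $\prod_j \mathfrak{Ch}^{\alpha,ss}_{\underline{m}^{(j)},\underline{e}^{(j)},L'_j}$ with $\bigotimes_j L'_j\cong L'$, by \cite[Lemma 4.6 and Proposition 4.8]{GPHS}. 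Hence the motive of $\mathfrak{Ch}^{\alpha,\tau}_{\underline{m},\underline{e},L'}$ is (a Tate twist of) a tensor product of motives of stacks $\mathfrak{Ch}^{\alpha,ss}_{\underline{m}^{(j)},\underline{e}^{(j)},L'_j}$ with smaller discrete invariants, and by induction on these invariants we may assume these are abelian. Reading off the Gysin triangles attached to the successive wall-crossings, and using that $\DM^{\ab}_c(k,\QQ)$ is thick and triangulated, we reduce to the endpoint $\alpha_\infty$, that is, to showing $M(\mathfrak{Ch}^{\mathrm{inj}}_{\underline{m},\underline{e},L'})$ is abelian for $\underline{m}$ constant. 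This is Corollary \ref{cor motive inj chain fixed det abelian}. Therefore $M(\SLHiggs^D)$ is abelian.
\end{proof}
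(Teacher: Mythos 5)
Your proof is correct and follows the same strategy as the paper, which dispatches this theorem in a single sentence ("adapt Theorem \ref{thm motive GL D Higgs gen by C}, replacing Proposition \ref{prop BB GL D-Higgs} by \eqref{motivic BB for SL-Higgs} and Proposition \ref{prop motive inj chains} by Corollary \ref{cor motive inj chain fixed det abelian}"); you have essentially filled in that sentence. One small imprecision worth flagging: the fixed-determinant HN stratum $\mathfrak{Ch}^{\alpha,\tau}_{\underline{m},\underline{e},L'}$ is an affine fibration not over a product $\prod_j \mathfrak{Ch}^{\alpha,ss}_{\underline{m}^{(j)},\underline{e}^{(j)},L'_j}$ with a fixed choice of $L'_j$, but over the fibre over $L'$ of the total-determinant map $\prod_j \mathfrak{Ch}^{\alpha,ss}_{\underline{m}^{(j)},\underline{e}^{(j)}}\to\Pic(C)$; reducing that fibre to the fixed-determinant base case requires the same device (passing to a finite étale cover of $\Pic$, or working relative to a base as in Appendix \ref{sec appendix motive stacks of bundles fixed det}) that you already use in Proposition \ref{prop motive flag chain inj L abelian}. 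This is exactly the "requires care" step you anticipated in your plan, and the paper's terse proof elides it as well; the rest of your argument is a faithful and accurate expansion.
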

\begin{proof}
The proof follows by adapting the argument of Theorem \ref{thm motive GL D Higgs gen by C} but using the motivic Bia{\l}ynicki-Birula decomposition \eqref{motivic BB for SL-Higgs} in place of Proposition \ref{prop BB GL D-Higgs}, and Corollary \ref{cor motive inj chain fixed det abelian} in place of Proposition \ref{prop motive inj chains}.
\end{proof}

\begin{rmk}\label{rmk SL Higgs gen by etale covers}
From the proof of Proposition \ref{prop motive flag chain inj L abelian}, one sees that 
\[ M(\SLHiggs^D) \in \langle M(C_r) : 1 \leq r \leq n \rangle^{\otimes}, \]
where $C_r \ra C$ is the $r^{2g}$-\'{e}tale cover obtained by pullback along the multiplication by $r$ map on $\Jac(C)$. In fact, one could ask if this category is genuinely larger than the category generated by $M(C)$. In \cite[Proposition 5.7]{FHPL_rank3}, we show with L. Fu that if $C$ is a general complex curve then $M(\SLHiggs) \notin \langle M(C) \rangle^{\otimes}$ and so this category really is larger in this case.
\end{rmk}

We can now prove Theorem \ref{main thm motives abelian}.

\begin{proof}[Proof of Theorem \ref{main thm motives abelian}]
This follows from Theorem \ref{thm mot SL Higgs ab} and Remark \ref{rmk SL Higgs gen by etale covers} combined with the argument in \cite[\S 6.3]{HPL_Higgs} to show the further claim that the motive is a direct factors of a products of \'{e}tale covers of $C$.
\end{proof}

\begin{rmk}\label{rmk comparison GS}
As mentioned in the introduction, in upcoming work of Groechenig and Shiyu Shen \cite{GS}, they also adapt the techniques of \cite{GPHS,GPH} to study the class of the $\SL$-Higgs moduli space in the Grothendieck ring of varieties in order to deduce its Betti cohomology is torsion-free. However, in \cite{GS}, they describe the class of the stack of injective chain homomorphisms of fixed total determinant using \'{e}tale covers of products of symmetric powers of curves, whereas our proof is simplified by working with the flag version of the stack and using properties of small maps similarly to the approach for $\GL$-Higgs moduli spaces in \cite{Heinloth_LaumonBDay} .
\end{rmk}

\section{Motivic isotypical decompositions and orbifold motives}

In this section, we introduce the the motivic isotypical decomposition for the $\Gamma$-action on $M(\SLHiggs)$, as well as the set-up and notation of \cite{MS} needed to describe this. Throughout we work with motives with coefficients in $\Lambda = \QQ(\zeta_n)$, where $\zeta_n$ is a primitive $n$th root of unity.

\subsection{Isotypical decompositions}

Fix a rank $n$ and line bundle $L \in \Pic^d(C)$ of coprime degree to $n$. Recall that $\Jac(C)$ acts on $\GLHiggs^D:=\cM_{n,d}^D(C)$ and also $\Gamma := \Jac(C)[n]$ acts on $\SLHiggs^D:=\cM_{n,L}^D(C)$ by tensoring. Associated to the $\Gamma$-action on the motive $M(\SLHiggs^D)$ of the $D$-twisted $\SL$-Higgs moduli space, we have the following isotypical decomposition in $\DM(k,\Lambda)$
\begin{equation}\label{eq isotypical mot}
 M(\SLHiggs^D) \simeq \bigoplus_{\kappa \in \hGamma} M(\SLHiggs^D)_\kappa \simeq M(\SLHiggs^D/\Gamma) \oplus \bigoplus_{\kappa \neq 0 \in \hGamma} M(\SLHiggs^D)_\kappa,
\end{equation}
where the $\Gamma$-invariant piece  $M(\SLHiggs^D)^{\Gamma} \simeq M(\SLHiggs^D/\Gamma) \simeq M(\PGLHiggs^D)$ is the motive of the $D$-twisted $\PGL_n$-Higgs moduli space. Note that the piece indexed by non-trivial characters $\kappa$ is non-zero, as $\Gamma$ acts non-trivially on $M(\SLHiggs^D)$. Indeed this was already observed on the level of cohomology in rank $n = 2$ by Hitchin \cite{Hitchin} and the above decomposition on the level of cohomology was recently described by Maulik and Shen \cite{MS}.

\begin{rmk}
This is one way in which moduli of Higgs bundles behaves very different to moduli of vector bundles. For the moduli spaces $\cN:=\cN_{n,d}(C)$ (resp. $\cN_{L}$) of stable vector bundles (resp. of fixed determinant) of rank $n$ and coprime degree $d$, Harder and Narasimhan \cite{HN} showed that $\Gamma$-action on the $\ell$-adic cohomology of $\cN_{L}$ is trivial and concluded that the $\ell$-adic cohomology of $\cN$ is isomorphic to the tensor product of the cohomology of $\cN_{L}$ and $\Jac(C)$. In joint work with Fu \cite[Theorem 1.1]{FHPL_rank2}, we lifted this to an isomorphism of Chow motives. However, the situation for Higgs bundles is very different.

Another difference concerns tautological generation of the cohomology ring. For $n$ and $d$ coprime, Atiyah and Bott \cite{atiyah_bott} showed the cohomology of $\cN$ is generated by tautological classes (the K\"{u}nneth components of the Chern classes of the universal bundle) and by the result of Harder and Narasimhan \cite{HN}, the same is true for the moduli space $\cN_{L}$ of stable vector bundles of fixed determinant. Markman proved that the cohomology of the $\GL$-Higgs moduli space is also generated by the tautological classes \cite{Markman}. However, the cohomology of the $\SL$-Higgs moduli space is not generated by tautological classes.

On a motivic level, this difference between the case of moduli of vector bundles and Higgs bundles (with fixed determinant) is illustrated by the fact that the motives of the vector bundle moduli spaces $\cN$ and $\cN_L$ are both generated by the motive of $C$ (see \cite[Proposition 4.1]{FHPL_rank2}), whereas in the case of Higgs bundles, although the motive of the $\GL$-Higgs moduli space $\GLHiggs$ is generated by the motive of $C$ by \cite{HPL_Higgs}, this is not true for the $\SL$-Higgs moduli space $\SLHiggs$ for a general complex curve $C$ by \cite[Proposition 5.7]{FHPL_rank3}.
\end{rmk}

\subsection{Weil pairing and cyclic covers}\label{sec Weil pairing}

The $n$-torsion in the Jacobian $\Gamma := \Jac(C)[n]$ has a natural non-degenerate Weil pairing
\[ \langle - , - \rangle : \Gamma \times \Gamma \ra \mu_n \]
which allows us to identify $\Gamma \cong \hGamma$. Alternatively, we can use the Abel-Jacobi map to identify $\Gamma$
\[ \Gamma \cong H^1(C,\ZZ/n\ZZ) \cong \Hom(\pi_1(C),\ZZ/n\ZZ) \]
with cyclic covers of $C$ of degree dividing $n$. The Weil pairing corresponds to the intersection pairing on $H^1(C,\ZZ/n\ZZ)$.

\begin{notation}
For $\gamma \in \Gamma$, we denote the corresponding character in $\hGamma$ by $\kappa:=\kappa(\gamma)= \langle \gamma , - \rangle$ and the corresponding cyclic cover by $\pi:=\pi_\gamma : C_\gamma \ra C$ and write $m_\gamma:= \ord(\gamma) = \deg (\pi)$, which divides $n$ and so we write $n_\gamma := n/m_{\gamma}$, and write $G_\pi:= \Gal (C_\gamma/C) \cong \ZZ/m_\gamma \ZZ$. 
\end{notation}

Note that $g_{C_\gamma} = 1 + m_\gamma(g_C -1)$ by Riemann--Hurwitz. More concretely, if $\gamma \in \Gamma$ of order $m\gamma$ corresponds to $L_\gamma \in \Jac(C)$, then $C_\gamma$ is constructed as a closed subscheme of the total space of $L_\gamma$ given by taking fibrewise the $m_\gamma$-roots of unity.

\subsection{Fixed loci and relative Higgs moduli spaces for cyclic covers}\label{sec fixed loci and rel Higgs}

In this section, we fix $\gamma \in \Gamma$. Our goal is to interpret the motive of the $\gamma$-fixed locus in the $D$-twisted $\SL$-Higgs moduli space in terms of a direct summand of a motive of a relative Higgs moduli space for the associated cyclic cover $\pi : C_\gamma \ra C$. In fact, a description of the $\gamma$-fixed locus in the moduli spaces of vector bundles in terms of a relative moduli space for $\pi$ was given by Narasimhan and Ramanan \cite{NR}, and as observed in \cite[Section 7]{HT}, the arguments also extend to Higgs bundles. The compatibility of this description with the corresponding Hitchin fibrations was described in \cite[Section 1.5]{MS}.

\begin{defn}\label{def fixed locus}
The fixed locus in $\SLHiggs^D$ of an element $\gamma \in \Gamma$ is denoted $\cM^D_\gamma := (\SLHiggs^D)^\gamma$. We let $\gammaHit^D$ denote the image of $\cM^D_\gamma$ under the $D$-twisted $\SL$-Hitchin map $h_{L}^D \colon \SLHiggs \ra \SLHit^D$ and write $h_\gamma^D \colon\cM^D_\gamma \ra \gammaHit^D$ for the restricted Hitchin map and define
\[ d^D_\gamma:= \codim(i_\gamma^D : \gammaHit^D \hookrightarrow \SLHit^D).\]
\end{defn}

The fixed locus $\cM^D_\gamma$ is smooth as the fixed locus of an action of a cyclic group on a smooth scheme in characteristic $0$. 

There is an induced $\Gamma$-action on $\cM^D_\gamma$, which also has an associated motivic isotypical decomposition in $\DM(k,\Lambda)$. Note that there is no $\Gamma$-action on the $D$-twisted $\SL$-Hitchin base $\SLHit^D$ and thus $\gammaHit^D$ is not a fixed locus, but rather the image of a fixed locus under the $\SL$-Hitchin map.

Now consider the cyclic cover $\pi : C_\gamma \ra C$ associated to $\gamma$ of degree $m_\gamma:= \ord(\gamma) = \deg (\pi_\gamma)$ with $n_\gamma := n/m_{\gamma}$ and $G_\pi:= \Gal (C_\gamma/C) \cong \ZZ/m_\gamma \ZZ$.

\begin{defn}\label{def relative SLHiggs moduli space}
We let $D_\gamma := \pi^*D$ and define a map
\[ \begin{array}{rccl} Q_{\gamma} \colon & \cM_{n_\gamma,d}^{D_\gamma}(C_\gamma) & \ra  & \cM_{1,d}^{D}(C) \\
& [E,\Phi]  & \mapsto & [\det(\pi_{*}(E)),\tr(\pi_{*}(\Phi))] \end{array} \]
and we define the $\pi$-relative $D$-twisted $\SL$-Higgs moduli space to be $\piHiggs^D:=Q_{\gamma}^{-1}([L,0])$. The $D$-twisted $\GL$-Hitchin fibration $h_{n_\gamma,d}^{D_\gamma}(C_\gamma) :\cM_{n_\gamma,d}^{D_\gamma}(C_\gamma) \ra \cA_{n_\gamma,d}^{D_\gamma}(C_\gamma)$ restricts to a Hitchin fibration
\[ h_{\pi}^D : \piHiggs^D \twoheadrightarrow \cA_{\pi}^D.\]
\end{defn}

As explained in \cite[$\S$1.2]{MS}, the morphism $Q_{\gamma}$ is smooth, as it is the composition of two smooth maps. Hence the $\pi$-relative $D$-twisted $\SL$-Higgs moduli space is smooth, but is not connected (see \cite[Proposition 1.1]{MS}). We summarise the geometric properties from \cite{MS} which we need.

\begin{prop}\label{prop fixed relative} \cite[$\S$1.5]{MS}
The morphism $\piHiggs^D \ra \SLHiggs^D$ given by pushforward along $\pi$ has image $\gammaHiggs^D$. Furthermore, there is a commutative diagram
\[ \xymatrix{ \piHiggs^D \ar[r]^{ p^D_\gamma} \ar[d]_{h_{\pi}^D} & \gammaHiggs^D  \ar[d]^{h_\gamma^D}  \\
\piHit^D \ar[r]^{q^D_\gamma} & \gammaHit^D }
\]
where $p^D_\gamma$ and $q^D_\gamma$ are geometric $G_{\pi}$-quotients and $p^D_\gamma$ is $\Gamma$-equivariant. The action of $G_{\pi}$ on $\piHiggs^D$ is free and permutes the connected components (however, the action of $G_{\pi}$ on $\SLHit$ is not free). The quotient $\gammaHiggs^{D}\simeq \piHiggs^D/G_{\pi}$ is connected.
\end{prop}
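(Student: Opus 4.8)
The plan is to prove the proposition by descent along the cyclic cover $\pi=\pi_\gamma:C_\gamma\to C$, which is finite \'{e}tale and Galois with group $G_\pi\cong\ZZ/m_\gamma\ZZ$, extending the Narasimhan--Ramanan description of fixed loci in moduli of bundles \cite{NR} to $D$-twisted Higgs bundles exactly as was done for $D=K_C$ in \cite[\S 7]{HT} and in general in \cite[\S 1.5]{MS}. Write $L_\gamma\in\Jac(C)$ for the line bundle of order $m_\gamma$ attached to $\gamma$, so that $\pi^*L_\gamma\cong\cO_{C_\gamma}$ and $\pi_*\cO_{C_\gamma}\cong\bigoplus_{i=0}^{m_\gamma-1}L_\gamma^{-i}$ carries its tautological $G_\pi$-linearisation.

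First I would identify the points of $\gammaHiggs^D=(\SLHiggs^D)^\gamma$. Such a point is a stable $D$-twisted $\SL$-Higgs bundle $(E,\Phi)$ together with an isomorphism $\psi:E\xrightarrow{\sim}E\otimes L_\gamma$ intertwining $\Phi$; simplicity of $(E,\Phi)$ lets one rescale $\psi$ so that its $m_\gamma$-fold iterate is $\id_E$ under the fixed trivialisation $L_\gamma^{\otimes m_\gamma}\cong\cO_C$. The pair $(E,\psi)$ is then the same thing as a module over the sheaf of algebras $\bigoplus_{i=0}^{m_\gamma-1}L_\gamma^{-i}$, i.e. a rank $n_\gamma$ bundle $F$ on $C_\gamma$ with $\pi_*F=E$, and the equivariant Higgs field $\Phi$ descends to $\Psi:F\to F\otimes\cO_{C_\gamma}(D_\gamma)$ because $D_\gamma=\pi^*D$ (Definition \ref{def relative SLHiggs moduli space}). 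Conversely $E:=\pi_*F$ always acquires a canonical $\gamma$-linearisation from the one on $\pi_*\cO_{C_\gamma}$ via the projection formula $\pi_*F\otimes L_\gamma\cong\pi_*(F\otimes\pi^*L_\gamma)\cong\pi_*F$. Since $\pi$ is \'{e}tale, $\chi(\pi_*F)=\chi(F)$ together with Riemann--Hurwitz give $\deg\pi_*F=\deg F$ and $\rk\pi_*F=n$, and the identity $\pi^*\pi_*F\cong\bigoplus_{\sigma\in G_\pi}\sigma^*F$ shows that $\pi_*F$ is semistable whenever $F$ is (and conversely), hence stable since $(n,d)=1$. As the conditions $\det(\pi_*F)=L$ and $\tr(\pi_*\Psi)=0$ are precisely $Q_\gamma([F,\Psi])=[L,0]$, this exhibits $F\mapsto[\pi_*F,\pi_*\Psi]$ as a morphism $p^D_\gamma:\piHiggs^D\to\SLHiggs^D$ with image $\gammaHiggs^D$.

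Next I would establish the quotient structures. The isomorphism $\pi^*\pi_*F\cong\bigoplus_\sigma\sigma^*F$ together with Krull--Schmidt (stable Higgs bundles being indecomposable) shows that $\pi_*F\cong\pi_*F'$ forces $F'\cong\sigma^*F$ for some $\sigma\in G_\pi$, so the fibres of $p^D_\gamma$ are exactly the $G_\pi$-orbits and $p^D_\gamma$ is a geometric $G_\pi$-quotient onto $\gammaHiggs^D$; the $\Gamma$-equivariance follows from $\pi_*(F\otimes\pi^*\gamma')\cong(\pi_*F)\otimes\gamma'$ for $\gamma'\in\Gamma$, using $(\gamma')^{\otimes n}\cong\cO_C$ to see that $\Gamma$ preserves $\piHiggs^D$. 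For freeness I would argue by contradiction: if $\sigma^*(F,\Psi)\cong(F,\Psi)$ with $\sigma\neq 1$, then $(F,\Psi)$ descends along $C_\gamma\to C_\gamma/\langle\sigma\rangle$ and $E=\pi_*F$ splits as a non-trivial direct sum of pushforwards from $C_\gamma/\langle\sigma\rangle$, contradicting the stability of $E$; hence $G_\pi$ acts freely on $\piHiggs^D$ and $p^D_\gamma$ is finite \'{e}tale of degree $m_\gamma$. For the Hitchin square, $G_\pi$ acts on $\cA^{D_\gamma}_{n_\gamma,d}(C_\gamma)=\bigoplus_i H^0(C_\gamma,\cO(iD_\gamma))$ by pullback, the $\GL$-Hitchin map on $C_\gamma$ is $G_\pi$-equivariant with $G_\pi$-stable image $\piHit^D$, and comparing the characteristic polynomials of $(F,\Psi)$ and of $\pi_*(F,\Psi)$ shows $h^D_\gamma\circ p^D_\gamma$ is $G_\pi$-invariant and factors through a surjection $q^D_\gamma:\piHit^D\to\gammaHit^D$ which one checks, as in \cite[\S 1.5]{MS}, to be again the orbit map — only here $G_\pi$ no longer acts freely, since a spectral point may be $G_\pi$-fixed.

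The hard part will be the statement about connected components: that $G_\pi$ permutes the components of $\piHiggs^D$ and that the quotient $\gammaHiggs^D\simeq\piHiggs^D/G_\pi$ is connected. Unlike the rest of the argument, which is formal descent along $\pi$ combined with the coprimality hypothesis, this is genuinely geometric, and I would import it directly from \cite[Proposition 1.1]{MS} — which computes $\pi_0(\piHiggs^D)$ via the smooth determinant-and-trace map $\cM^{D_\gamma}_{n_\gamma,d}(C_\gamma)\to\cM^{D}_{1,d}(C)$ — noting that that argument is already carried out for $D$-twisted Higgs bundles and therefore applies in the generality we need.
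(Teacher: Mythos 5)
The paper does not actually prove this proposition: it is stated with the citation \cite[$\S$1.5]{MS} and no proof environment follows, so all the assertions are imported wholesale from Maulik--Shen. Your proposal is therefore not a parallel to a proof in the present paper but a reconstruction of the arguments of \cite{NR}, \cite[\S 7]{HT} and \cite[\S 1.5]{MS}, and as such it is sound. The identification of $\gamma$-fixed points with pushforwards along $\pi$ via the equivalence between bundles on $C_\gamma$ and $\pi_*\cO_{C_\gamma}$-modules, the Euler-characteristic and Riemann--Hurwitz bookkeeping, the preservation of (semi)stability under $\pi_*$ and $\pi^*$ using $\pi^*\pi_*F\cong\bigoplus_\sigma\sigma^*F$, the Krull--Schmidt identification of $(p^D_\gamma)^{-1}(\pi_*F)$ with the $G_\pi$-orbit of $F$, the $\Gamma$-equivariance via $(\gamma')^{\otimes n}\cong\cO_C$, and the deferral of the $\pi_0$ computation to \cite[Proposition 1.1]{MS} are exactly the steps one expects and are all executed correctly.

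One step deserves to be spelled out a touch more in the freeness argument: to pass from $\sigma^*(F,\Psi)\cong(F,\Psi)$ to an actual descent datum along $C_\gamma\to C_\gamma/\langle\sigma\rangle$ one must normalise the chosen isomorphism so that its $r$-fold iterate is the identity (where $r=|\langle\sigma\rangle|$), which is possible because $(F,\Psi)$ is simple and $k$ is algebraically closed, i.e.\ one is using the vanishing of $H^2(\langle\sigma\rangle,k^\times)$. Once that is said, the conclusion that $E=\pi_*F$ decomposes as a Higgs bundle into the slope-equal summands $q_*(F'\otimes L_\chi)$ and hence cannot be stable is correct, and yields the freeness of the $G_\pi$-action. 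So modulo making that normalisation explicit, the proposal is a faithful and correct unpacking of the cited reference.
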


Consequently, we compute some dimension, which are used (sometimes implicitly) in \cite{MS}.

\begin{lemma}\label{lemma dim form}
  The following dimension formulae hold.
  \begin{enumerate}[label=\emph{\roman*)}]
  \item $\dim \cM_\pi^D=\dim \cM_{\gamma}^{D}=(nn_{\gamma}-1)\deg(D).$
\item $\dim \cA_\pi^D=\dim \cA^D_\gamma=\frac{n(n_{\gamma}+1)\deg(D)}{2}-(n-1)(g-1)-\deg(D).$    
\item 

$\codim_{\SLHiggs^D}(\gammaHiggs^D)=n(n-n_{\gamma})\deg(D).$
  
\item
$d^{D}_{\gamma}=\frac{n(n-n_{\gamma})\deg(D)}{2}.$
\end{enumerate}
\end{lemma}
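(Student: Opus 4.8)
The plan is to derive all four formulae from standard deformation-theoretic dimension counts together with the geometric facts recorded in Proposition \ref{prop fixed relative}, working in the following order.

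First I would compute $\dim \cM_\pi^D$ in item \emph{i)}. By Definition \ref{def relative SLHiggs moduli space}, $\piHiggs^D = Q_\gamma^{-1}([L,0])$ where $Q_\gamma \colon \cM_{n_\gamma,d}^{D_\gamma}(C_\gamma) \to \cM_{1,d}^D(C)$ is smooth (as noted after the definition, being a composition of smooth maps), so $\dim \cM_\pi^D = \dim \cM_{n_\gamma,d}^{D_\gamma}(C_\gamma) - \dim \cM_{1,d}^D(C)$. Now $\dim \cM_{n_\gamma,d}^{D_\gamma}(C_\gamma) = n_\gamma^2 \deg(D_\gamma) = n_\gamma^2 m_\gamma \deg(D) = n n_\gamma \deg(D)$ using $\deg(D_\gamma) = \deg(\pi^*D) = m_\gamma \deg(D)$ and $n = n_\gamma m_\gamma$; while $\dim \cM_{1,d}^D(C) = \deg(D)$ (the case $n=1$ of the general $\GL$-Higgs moduli space dimension, or directly: a line bundle moduli point plus $H^0(C,\cO(D))$, which since $\deg(D)>2g-2$ has dimension $\deg(D)-g+1$, plus $g = \dim \Pic$). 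This gives $\dim \cM_\pi^D = (nn_\gamma - 1)\deg(D)$. Then $\dim \cM_\gamma^D = \dim \cM_\pi^D$ since $\cM_\gamma^D = \piHiggs^D / G_\pi$ is a quotient by a finite group (Proposition \ref{prop fixed relative}).

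Next, item \emph{ii)}: the Hitchin base $\cA_{n_\gamma,d}^{D_\gamma}(C_\gamma) = \bigoplus_{i=1}^{n_\gamma} H^0(C_\gamma, \cO(iD_\gamma))$ has dimension $\sum_{i=1}^{n_\gamma}(i\deg(D_\gamma) - g_{C_\gamma} + 1)$ (using $i\deg(D_\gamma) \geq \deg(D_\gamma) = m_\gamma\deg(D) > m_\gamma(2g-2) \geq 2g_{C_\gamma}-2$ by Riemann--Hurwitz $g_{C_\gamma} = 1 + m_\gamma(g-1)$, so each $H^0$ has the expected dimension). Similarly the base $\cA_{1,d}^D(C) = H^0(C,\cO(D))$ has dimension $\deg(D)-g+1$. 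Since $\cA_\pi^D$ is cut out inside $\cA_{n_\gamma,d}^{D_\gamma}(C_\gamma)$ by the condition that the trace of the pushforward characteristic data vanish — i.e. it is a linear fibre of the induced map on Hitchin bases, which is surjective because $Q_\gamma$ is — we get $\dim\cA_\pi^D = \dim\cA_{n_\gamma,d}^{D_\gamma}(C_\gamma) - \dim\cA_{1,d}^D(C)$. Substituting $g_{C_\gamma} = 1 + m_\gamma(g-1)$, $\deg(D_\gamma) = m_\gamma\deg(D)$ and simplifying the resulting sum $\sum_{i=1}^{n_\gamma} i = n_\gamma(n_\gamma+1)/2$ should yield $\frac{n(n_\gamma+1)\deg(D)}{2} - (n-1)(g-1) - \deg(D)$; this is the one arithmetic simplification I would actually carry out carefully. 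Again $\dim\cA_\gamma^D = \dim\cA_\pi^D$ since $q_\gamma^D$ is a finite quotient map.

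Finally, items \emph{iii)} and \emph{iv)} are immediate bookkeeping. For \emph{iii)}, $\codim_{\SLHiggs^D}(\gammaHiggs^D) = \dim\SLHiggs^D - \dim\cM_\gamma^D = (n^2-1)\deg(D) - (nn_\gamma-1)\deg(D) = n(n-n_\gamma)\deg(D)$, using the formula $\dim\SLHiggs^D = (n^2-1)\deg(D)$ recorded in the subsection on $\SL$-Higgs bundles and item \emph{i)}. For \emph{iv)}, $d_\gamma^D = \codim(i_\gamma^D\colon \gammaHit^D \hookrightarrow \SLHit^D) = \dim\SLHit^D - \dim\cA_\gamma^D$; here $\dim\SLHit^D = \dim\cA_{n,d}^D(C) - \dim\cA_{1,d}^D(C) = \sum_{i=1}^n(i\deg(D)-g+1) - (\deg(D)-g+1) = \frac{n(n+1)\deg(D)}{2} - (n-1)(g-1) - \deg(D)$, and subtracting the formula from \emph{ii)} collapses the $(n-1)(g-1)$ and $\deg(D)$ terms, leaving $\frac{n(n-n_\gamma)\deg(D)}{2}$, which is exactly half of \emph{iii)} as claimed. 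The only mild subtlety — and the step I would be most careful about — is justifying that $\gammaHit^D$ really has the same dimension as $\cA_\pi^D$ and that the two surjections on Hitchin bases are genuinely linear/flat with the asserted fibre dimensions; this follows from Proposition \ref{prop fixed relative} and the explicit additive structure of Hitchin bases, but it is worth stating precisely rather than treating as obvious.
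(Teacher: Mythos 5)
Your approach is essentially the same as the paper's: compute $\dim\cM_\pi^D$ and $\dim\cA_\pi^D$ from the smooth fibration $Q_\gamma$ and the additive structure of Hitchin bases, pass to $\cM_\gamma^D$ and $\cA_\gamma^D$ via the finite quotient, and then deduce \emph{iii)} and \emph{iv)} by subtracting against the known dimensions of $\SLHiggs^D$ and $\SLHit^D$; the final formulae agree with the paper.

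Two small points to fix or make explicit, however. First, the intermediate dimensions you quote are both off by one: $\dim\cM_{n_\gamma,d}^{D_\gamma}(C_\gamma)=n_\gamma^2\deg(D_\gamma)+1$ (not $n_\gamma^2\deg(D_\gamma)$) and $\dim\cM_{1,d}^D(C)=\deg(D)+1$ (indeed your own parenthetical count $g+(\deg(D)-g+1)$ gives $\deg(D)+1$, contradicting the stated $\deg(D)$). The two errors cancel in the difference, so \emph{i)} still comes out right, but as written the justification is internally inconsistent. Second, when $D=K_C$ one has $D_\gamma=K_{C_\gamma}$ (the cover is \'etale, so $\pi^*K_C=K_{C_\gamma}$), hence $\deg(D_\gamma)=2g_{C_\gamma}-2$ and the Riemann--Roch terms get an additional shift; the cancellations still work out, but one should note this, and also note that the $\gamma=\id$ case (where $n_\gamma=n$, $\pi=\id$) has to be read off directly from the known formulae for $\SLHiggs^D$ and $\SLHit^D$ as the paper does. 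Finally, your claim that $\cA_\pi^D$ is exactly the linear fibre of the induced map on Hitchin bases (rather than merely contained in it) is the one non-obvious geometric input: the paper takes this from the explicit description in Maulik--Shen, and you correctly flag it as the step needing justification.
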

\begin{proof}
The last two formulae follow from the first two together with the formulae \cite[Eq.(78)]{dC_supportSL} for $\dim(\cM^{D}_{L})$ and $\dim(\cA^{D}_{L})$.

By Proposition \ref{prop fixed relative}, we have $\cM_{\gamma}^{D}\simeq \cM^{D}_{\pi}/G_{\gamma}$ hence $\dim \cM_{\pi}^D =\dim \cM_{\gamma}^{D}$ and the same argument implies $\dim \cA^D_{\pi}=\dim \cA^D_{\gamma}$. If $\gamma=\id$, then $\cM_{\pi}^{D}=\cM^{D}_{L}$ and $\cA^{D}_{\pi}=\cA^{D}_{L}$, in which case we also have $n_{\gamma}=n$ and we are done by \cite[Eq.(78)]{dC_supportSL}. We thus assume $\gamma\neq\id$.

Since $\piHiggs^D$ is a fibre of the fibration $Q_{\gamma}$, we have
  \[
\dim \piHiggs^D = \dim \cM_{n_\gamma,d}^{D_\gamma}(C_\gamma) - \dim \cM_{1,d}^{D}(C).
  \]
Since $\gamma\neq \id$, we have $m_{\gamma}\neq 1$ and $\deg(D_{\gamma})=m_{\gamma}\deg(D)>2g-2$ (even if $D=K_{C}$). By the formula in \cite[Proposition 7.1.(c)]{Nitsure} and Riemann-Roch, we deduce that
\[
\dim \piHiggs^D  = n^{2}_{\gamma}(m_{\gamma}\deg(D))+1-(g+(\deg(D)+1-g))=(nn_{\gamma}-1)\deg(D)
\]
as claimed.

By \cite[Eq.(18)]{MS}, \cite[Eq.(77)]{dC_supportSL} and Riemann-Roch, we have
  \begin{align*}
    \dim \cA^{D}_{\pi} &= \dim \cA^{D_{\gamma}}_{n_{\gamma}}(C_{\gamma}) - \dim H^{0}(C_{\gamma},D_{\gamma})^{G_{\gamma}} \\
                                        &= \frac{n_{\gamma}(n_{\gamma}+1)}{2}m_{\gamma}\deg(D)-n_{\gamma}m_{\gamma}(g-1)-\dim H^{0}(C,D) \\
                       &= \frac{n(n_{\gamma}+1)}{2}\deg(D)-n(g-1)-n(g-1)-(\deg(D)+1-g)\\
    &= \frac{n(n_{\gamma}+1)\deg(D)}{2}-(n-1)(g-1)-\deg(D)
  \end{align*}
which finishes the proof.  
\end{proof}  

Note that, in the case $D=K_{C}$, we have
\[ d_\gamma = \frac{1}{2} \codim_{\SLHiggs}(\gammaHiggs)\]
(reflecting the symplectic geometry of that case) but this does not hold otherwise.

The following table records some dimension formulae, with references to the literature or the above lemma. Note that from the second line on in this table (i.e. for all the moduli spaces related to $\SL_{n}$ rather than $\GL_{n}$-Higgs bundles), the second column is equal to the first column specialised to $\Delta:=\deg(D)=2g-2$; we chose to keep this presentation for ease of reference.\\

\begin{center}
\begin{tabular}{|l|l|l|l|} 
\hline
                  & $\Delta:=\deg(D) > 2g-2$ & $D = K_C$ & Reference \\ \hline \hline
$\dim \GLHiggs^D$ &  $n^{2}\Delta+1$                      &      $n^{2}(2g-2)+2$               &  \cite[Prop. 7.1]{Nitsure}             \\ \hline
$\dim \cA^{D}$                  &    $\frac{n(n+1)\Delta}{2}-n(g-1)$                      &  $n^{2}(g-1)+1 $              &  \cite[Eq.(77),\S 6.1]{dC_supportSL}             \\ \hline

  $\dim \SLHiggs^{D}$                  &   $(n^{2}-1)\Delta$                &   $(n^{2}-1)(2g-2)$                  & \cite[Eq.(78), \S 6.1]{dC_supportSL}              \\ \hline
$\dim \cA^{D}_{L}$                  &      $\frac{n(n+1)\Delta}{2}-(n-1)(g-1)-\Delta $                   & $(n^{2}-1)(g-1)$               & \cite[Eq.(78), \S 6.1]{dC_supportSL}              \\ \hline
$\dim \cM^{D}_{\pi}=\dim \cM^{D}_{\gamma}$                  &   $(nn_{\gamma}-1)\Delta$                            &   $(nn_{\gamma}-1)(2g-2)$             &       Lemma \ref{lemma dim form} i)          \\ \hline
$\dim\cA^{D}_{\pi}=\dim \cA^{D}_{\gamma}$                  &  $\frac{n(n_{\gamma}+1)\Delta}{2}-(n-1)(g-1)-\Delta$                                & $(nn_{\gamma}-1)(g-1)$               &      Lemma \ref{lemma dim form} ii)         \\ \hline
$\codim_{\SLHiggs^{D}}\cM^{D}_{\gamma}$                 &   $n(n-n_{\gamma})\Delta$                                       & $n(n-n_{\gamma})(2g-2)$                & Lemma \ref{lemma dim form} iii)                \\ \hline
$d_{\gamma}:=\codim_{\cA^D}\cA^D_{\gamma}$                   &  $\frac{n(n-n_{\gamma})\Delta}{2}$    &                   $n(n-n_{\gamma})(g-1)$              & Lemma \ref{lemma dim form} iv)     \\ \hline
\end{tabular}\newline
\end{center}

To relate the motives of the fixed loci of the $\Gamma$-action and the relative Higgs moduli spaces, we need the following standard result.

\begin{lemma}\label{lemma motive quotient}
For an arbitrary field $k$, let $G$ be a finite group acting on a $k$-variety $Y$ such that the action admits a geometric quotient  $f:Y\to X$. For $M\in \DM(X,\Lambda)$, there is an induced $G$-action on $p_{*}p^{*}M$ and the unit map $M\to p_{*}p^{*}M$ factors via an isomorphism $M\simeq (p_{*}p^{*}M)^{G}$.
\end{lemma}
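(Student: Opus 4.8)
The plan is to first make sense of the functor $(-)^{G}$ on $\DM(X,\Lambda)$, then to reduce to the case $M=\one_{X}$ by the projection formula, and finally to prove the resulting isomorphism $\one_{X}\simeq(p_{*}\one_{Y})^{G}$ by a d\'evissage to residue fields. Since $\Lambda$ is a $\QQ$-algebra, the averaging element $e=\frac{1}{|G|}\sum_{g\in G}g\in\Lambda[G]$ is an idempotent. The $G$-action on $Y$ over $X$ induces a $G$-action on the endofunctor $p_{*}p^{*}$, hence on $p_{*}p^{*}M$, and as $\DM(X,\Lambda)$ is idempotent complete one sets $(p_{*}p^{*}M)^{G}:=\mathrm{Im}(e)$, a direct summand of $p_{*}p^{*}M$. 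Naturality of the unit $\eta_{M}\colon M\to p_{*}p^{*}M$ gives $g\circ\eta_{M}=\eta_{M}$ for all $g$, so $e\circ\eta_{M}=\eta_{M}$ and $\eta_{M}$ factors canonically as $M\xrightarrow{\bar\eta_{M}}(p_{*}p^{*}M)^{G}\hookrightarrow p_{*}p^{*}M$; it remains to show $\bar\eta_{M}$ is an isomorphism.

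Since $p$ is finite, hence proper, $p_{*}=p_{!}$, so the projection formula provides a $G$-equivariant isomorphism $p_{*}p^{*}M\simeq p_{*}\one_{Y}\otimes M$ (with $G$ acting on the first factor only). As $-\otimes M$ is exact it commutes with $e$, whence $(p_{*}p^{*}M)^{G}\simeq(p_{*}\one_{Y})^{G}\otimes M$ and $\bar\eta_{M}$ is identified with $\bar\eta_{\one_{X}}\otimes\id_{M}$. Thus it is enough to prove $\bar\eta_{\one_{X}}\colon\one_{X}\to(p_{*}\one_{Y})^{G}$ is an isomorphism. I would check this against the conservative family $i_{x}^{*}\colon\DM_{c}(X,\Lambda)\to\DM_{c}(k(x),\Lambda)$, $x\in X$ (conservativity being a standard Noetherian induction on supports via localization triangles). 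By proper base change along $i_{x}$ one has $i_{x}^{*}(p_{*}\one_{Y})\simeq(p_{x})_{*}\one_{Y_{x}}$ with $p_{x}\colon Y_{x}=Y\times_{X}\Spec k(x)\to\Spec k(x)$ the fibre with its $G$-action; since $i_{x}^{*}$ is $\Lambda$-linear and monoidal it carries $(p_{*}\one_{Y})^{G}$ and $\bar\eta_{\one_{X}}$ to $((p_{x})_{*}\one_{Y_{x}})^{G}$ and to the corresponding unit over $k(x)$.

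Now $\DM$ is insensitive to nilpotent thickenings, and the fibres of a geometric quotient are exactly the $G$-orbits, so $(Y_{x})_{\mathrm{red}}=\coprod_{i=1}^{r}\Spec k(y_{i})$ is a single $G$-orbit; moreover $k(y_{i})/k(x)$ is finite since $p$ is finite. Writing $H:=\Stab(y_{1})$, the orbit decomposition presents $(p_{x})_{*}\one_{Y_{x}}$ as $\mathrm{Ind}_{H}^{G}$ applied to the relative motive $\pi_{1*}\one_{\Spec k(y_{1})}\in\DM_{c}(k(x),\Lambda)$ with its $H$-action (here $\pi_{1}\colon\Spec k(y_{1})\to\Spec k(x)$), so that $((p_{x})_{*}\one_{Y_{x}})^{G}\simeq(\pi_{1*}\one_{\Spec k(y_{1})})^{H}$ using $(\mathrm{Ind}_{H}^{G}V)^{G}\simeq V^{H}$ for $\QQ$-coefficients. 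The $H$-action on $k(y_{1})$ is faithful through a quotient $\bar H$, so by Artin's lemma $k(y_{1})/k(y_{1})^{H}$ is finite Galois with group $\bar H$, and Galois descent for $\DM(-,\Lambda)$ gives $(\pi_{1*}\one_{\Spec k(y_{1})})^{H}\simeq$ the pushforward to $\Spec k(x)$ of $\one_{\Spec k(y_{1})^{H}}$; finally, because $p$ is a geometric quotient, the classical structure theory of residue extensions in a quotient by a finite group (e.g. Bourbaki, \emph{Commutative Algebra} V, \S2) shows $k(y_{1})^{H}$ is purely inseparable over $k(x)$, so $\Spec k(y_{1})^{H}\to\Spec k(x)$ is a universal homeomorphism and induces an isomorphism on motives. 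Tracing through all the identifications, $\bar\eta_{\one_{X}}$ becomes an isomorphism after every $i_{x}^{*}$, and hence is an isomorphism.

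The hard part, and the reason the ``standard'' status of the lemma deserves care, is positive characteristic: when $\mathrm{char}(k)$ divides $|G|$ the quotient stack $[Y/G]$ is wild, $p$ need not be flat, and residue extensions in an orbit may be inseparable, so the simple ``\'etale $G$-torsor over a dense open'' picture available in characteristic zero is not. What rescues the argument is that all of these defects --- nilpotents, inseparability, and the non-invertibility of the orders of ramification subgroups --- are invisible to $\DM(-,\Lambda)$ once $\QQ\subseteq\Lambda$: nilpotent thickenings and universal homeomorphisms induce equivalences, and homotopy fixed points for a finite group with rational coefficients compute naive fixed points. Beyond that, the only genuinely external input is the classical description of residue extensions under a finite-group quotient; everything else is formal manipulation with the six operations, base change, and \'etale descent.
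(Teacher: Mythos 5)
Your proof is correct, but it takes a genuinely different route from the paper's. The paper disposes of this lemma in one line by citing a general result of Ayoub (\cite[Corollaire 2.1.166]{Ayoub_these_1}) which holds for any $\QQ$-linear, separated monoidal homotopy $2$-functor; the only work is to verify that $\DM(-,\Lambda)$ is separated, for which it cites \cite[Theoreme 3.9]{Ayoub_etale}. You instead give a self-contained argument: define $(-)^{G}$ via the averaging idempotent; reduce to $M=\one_{X}$ using finiteness of $p$ and the projection formula; check the resulting map of constructible objects after pulling back to residue fields of points of $X$ (where conservativity of the family $\{i_{x}^{*}\}$ is \cite[Proposition 3.24]{Ayoub_etale}, already used elsewhere in the paper); and then unwind the orbit decomposition of a fibre, Artin's lemma, Galois descent for $\DM$, and the Bourbaki description of residue extensions of a finite-group quotient, together with invariance of rational $\DM$ under nilpotents and universal homeomorphisms, to conclude. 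Each step is sound: the unit factors through $(-)^{G}$ because $p\circ g=p$ forces $g\circ\eta_{M}=\eta_{M}$; the stabiliser $H$ of a point of the fibre is exactly the decomposition group, so $k(y_{1})^{H}/k(x)$ is indeed purely inseparable; and $(\mathrm{Ind}_{H}^{G}V)^{G}\simeq V^{H}$ holds because $\QQ\subseteq\Lambda$. What your approach buys is transparency and independence from the axiomatic framework of \cite{Ayoub_these_1}; what the paper's approach buys is brevity and modularity (the single hypothesis ``separated'' encapsulates all the descent). Your closing remark about positive characteristic correctly identifies why the lemma is not as innocent as it looks and why rational coefficients are essential.
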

\begin{proof}
This is a special case of \cite[Corollaire 2.1.166]{Ayoub_these_1} which applies as, in the terminology of \textit{loc.\ cit.}, $\DM(-,\Lambda)$ is $\QQ$-linear and separated (in characteristic $0$ this last condition follows from étale descent by \cite[Proposition 2.1.162]{Ayoub_these_1}, but in fact holds for any $k$ \cite[Theoreme 3.9]{Ayoub_etale}). 
\end{proof}  

\begin{cor}\label{cor fixed locus and rel moduli}
The object $q^D_{\gamma_*}h_{\pi_*}^D\one$ admits a $G_\pi$-action such that there is an isomorphism
\[
(q^D_{\gamma_*}(h_{\pi_*}^D\one))^{G_{\pi}}\simeq h^D_{\gamma_*}\one
\]
in $\DM(\gammaHit^D,\Lambda)$. Consequently, we have the following isomorphism in $\DM(k,\Lambda)$
\[ M(\piHiggs^D)^{G_\pi}\simeq M(\gammaHiggs^D).\]
Moreover, this isomorphism is $\Gamma$-equivariant.
\end{cor}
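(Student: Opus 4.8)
The plan is to reduce the whole statement to Lemma~\ref{lemma motive quotient} applied to the finite geometric quotient $p^D_\gamma:\piHiggs^D\to\gammaHiggs^D$ of Proposition~\ref{prop fixed relative}, transporting it to $\gammaHit^D$ via the commutative square there, and then passing from cohomological to homological motives by Verdier duality. First I would treat the relative statement in $\DM(\gammaHit^D,\Lambda)$. Functoriality of $(-)_*$ applied to the identity $q^D_\gamma\circ h^D_\pi=h^D_\gamma\circ p^D_\gamma$ (commutativity of the square in Proposition~\ref{prop fixed relative}) gives a canonical isomorphism $q^D_{\gamma_*}h^D_{\pi_*}\one\simeq h^D_{\gamma_*}(p^D_{\gamma_*}\one)$. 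Since $p^D_\gamma$ is a geometric $G_\pi$-quotient and $\gammaHiggs^D=(\SLHiggs^D)^\gamma$ is a smooth $k$-variety, Lemma~\ref{lemma motive quotient} applied with $M=\one_{\gammaHiggs^D}$ yields a $G_\pi$-action on $p^D_{\gamma_*}\one=p^D_{\gamma_*}(p^D_\gamma)^*\one$ together with an isomorphism $\one_{\gammaHiggs^D}\simeq(p^D_{\gamma_*}\one)^{G_\pi}$. As $\Lambda$ is a $\QQ$-algebra, taking $G_\pi$-invariants amounts to applying the idempotent $\tfrac1{|G_\pi|}\sum_{g\in G_\pi}g$, i.e.\ splitting off a direct summand, and the triangulated functor $h^D_{\gamma_*}$ commutes with this operation; applying it and transporting the $G_\pi$-action along the identification above gives the asserted $G_\pi$-action on $q^D_{\gamma_*}h^D_{\pi_*}\one$ and the isomorphism $(q^D_{\gamma_*}h^D_{\pi_*}\one)^{G_\pi}\simeq h^D_{\gamma_*}\one$.

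To obtain the statement over $\Spec(k)$ I would push the isomorphism just constructed forward along the structure morphism $a:\gammaHit^D\to\Spec(k)$. Using $a\circ h^D_\gamma\circ p^D_\gamma=a\circ q^D_\gamma\circ h^D_\pi$ and $(-)_*$-functoriality one identifies $a_*h^D_{\gamma_*}\one=M_{\mathrm{coh}}(\gammaHiggs^D)$ and $a_*q^D_{\gamma_*}h^D_{\pi_*}\one=M_{\mathrm{coh}}(\piHiggs^D)$, and since $a_*$ again commutes with the averaging idempotent this gives $M_{\mathrm{coh}}(\gammaHiggs^D)\simeq M_{\mathrm{coh}}(\piHiggs^D)^{G_\pi}$. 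Now $\piHiggs^D$ and $\gammaHiggs^D$ are of finite type over $k$, so their motives are dualisable and Verdier duality exchanges $M$ with $M_{\mathrm{coh}}$; as the dual of the averaging idempotent is again the averaging idempotent, dualising yields $M(\gammaHiggs^D)\simeq M(\piHiggs^D)^{G_\pi}$. Finally, $\Gamma$-equivariance is formal: $p^D_\gamma$ is $\Gamma$-equivariant by Proposition~\ref{prop fixed relative}, and the $\Gamma$-action on $\piHiggs^D$ commutes with the $G_\pi$-action (as one reads off from the Narasimhan--Ramanan/Maulik--Shen description \cite{NR,HT,MS}), so every morphism entering the construction is $\Gamma$-compatible and the resulting isomorphism, being functorially induced by $p^D_\gamma$, intertwines the two $\Gamma$-actions.

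The argument is essentially formal, so I do not expect a genuine obstacle; the points requiring care are bookkeeping ones. The first is the interplay between the cohomological motives $M_{\mathrm{coh}}$ naturally produced by Lemma~\ref{lemma motive quotient} (via $p_*p^*$) and the homological motives $M$ in the statement, resolved by Verdier duality together with the facts that it exchanges the two and commutes with the averaging idempotent. The second is checking that the $\Gamma$- and $G_\pi$-actions on $\piHiggs^D$ genuinely commute, so that $\Gamma$ preserves the $G_\pi$-invariant direct summand; this is immediate from the explicit constructions but should be recorded. One could additionally verify that the $G_\pi$-action on $q^D_{\gamma_*}h^D_{\pi_*}\one$ produced above agrees with the one induced by the $G_\pi$-equivariance of $h^D_\pi$ and $q^D_\gamma$, but this is not needed for the Corollary as stated.
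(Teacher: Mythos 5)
Your proof is correct and follows essentially the same route as the paper: the relative statement in $\DM(\gammaHit^D,\Lambda)$ is obtained by combining the commutative square of Proposition~\ref{prop fixed relative} with Lemma~\ref{lemma motive quotient} applied to $p^D_\gamma$, and the absolute statement is then obtained by pushing forward to $\Spec(k)$. The paper's own proof is terser (it writes the chain $(q_{\gamma*}h^D_{\pi*}\one)^{G_\pi}\simeq(h^D_{\gamma*}p_{\gamma*}\one)^{G_\pi}\simeq h^D_{\gamma*}(p_{\gamma*}\one)^{G_\pi}\simeq h^D_{\gamma*}\one$ and then simply says the second statement follows by pushforward, with the passage from $M_{\mathrm{coh}}$ to $M$ via Verdier duality left implicit as announced in the motivic set-up), so your explicit dualisation step is a welcome clarification rather than a deviation.
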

\begin{proof}
The first statement follows from Proposition \ref{prop fixed relative} and Lemma \ref{lemma motive quotient} applied to $p^D_{\gamma}$, as well as the fact that $h_\pi^D$ is $G_{\pi}$-equivariant:
  \[
(q_{\gamma_*}(h^{D}_{\pi_{*}}\one))^{G_{\pi}}\simeq (h^D_{\gamma_*}p_{\gamma_*}\one)^{G_{\pi}}\simeq h^D_{\gamma_*}(p_{\gamma_*}\one)^{G_{\pi}}\simeq h^{D}_{\gamma_*}\one.
\]
The second follows by pushforward to $\Spec(k)$. The $\Gamma$ and $G_{\pi}$-actions commute and the constructions are all $\Gamma$-equivariant, so the resulting isomorphism is $\Gamma$-equivariant.
\end{proof}
  
\subsection{The orbifold motive of the $\PGL$-Higgs moduli space}\label{sec orbifold motive}

We consider the $D$-twisted $\PGL_n$-Higgs moduli space as the orbifold quotient
\[  \PGLHiggs^D = [\SLHiggs^D/\Gamma] \]
which has a natural gerbe $\delta_{L}$ obtained as by descending a $\Gamma$-equivariant $\mu_n$-gerbe on $\SLHiggs^D$ (see \cite[Section 3]{HT}). 

The following definition is just the natural extension of Hausel and Thaddeus's description  \cite{HT} of the stringy E-polynomial of  $\PGLHiggs^D$ to the motivic context (see also \cite{LW}).

\begin{defn}
The orbifold motive of the $D$-twisted $\PGL$-Higgs moduli space $\PGLHiggs^D$ with respect to the gerbe $\delta_{L}$ is defined in $\DM(k,\Lambda)$ as follows
\[ M_{\orb}(\PGLHiggs^D,\delta_{L}) := \bigoplus_{\gamma \in \Gamma} M(\gammaHiggs^D)_{\kappa(\gamma)} \{ d_\gamma  \}  =  M(\PGLHiggs^D) \oplus \bigoplus_{0 \neq \gamma  \in \Gamma} M(\gammaHiggs^D)_{\kappa(\gamma)} \{ d_\gamma \},\]
where $\gammaHiggs^D$ is the $\gamma$-fixed locus in the $D$-twisted $\SL$-Higgs moduli space and $d_\gamma$ is the codimension appearing in Definition \ref{def fixed locus}.
\end{defn}

\section{Motivic mirror symmetry in characteristic zero}\label{sec proof}

In this section, we assume that $k$ is an algebraically closed field of characteristic zero and consider motives with coefficients in $\Lambda = \QQ(\zeta_n)$. We fix $\gamma \in \Gamma$ corresponding to $\kappa = \kappa(\gamma) \in \hGamma$ and cyclic cover $\pi = \pi_\gamma : C_\gamma \ra C$ of degree $m_\gamma$ with $n = n_\gamma m_\gamma$ and Galois group $G_\pi$ as in $\S$\ref{sec Weil pairing}.

Let us outline the structure of this section. In $\S$\ref{overview MS}, we give a short summary of \cite{MS}. In $\S$\ref{sec motivic corr}-\ref{sec beta hat mot}, we will construct the morphism $\beta_{\gamma}^D \in \DM(\SLHit^D,\Lambda)$ whose Betti realisation is map $\beta_{\gamma,\coh}^D$ appearing in \eqref{coh corr MS hat} below, which Maulik and Shen prove is an isomorphism. Since the cohomological construction of $\beta_{\gamma,\coh}^D$ uses cohomological correspondences and vanishing cycles, we will use motivic correspondences and motivic vanishing cycles, which are discussed in $\S$\ref{sec motivic corr} and  Appendix \ref{sec motivic van cycles} respectively. In $\S$\ref{sec beta even}, we use motivic correspondences to construct $\beta_{\gamma}^D$ in the case when $\deg(D) > 2g-2$ and even. In $\S$\ref{sec beta hat mot}, we use motivic vanishing cycles to construct $\beta_{\gamma}^D$ in general by passing from $D + p$ to $D$. To complete the proof, we need to show the pushfoward $\nu_{\gamma}^D$ of $\beta_{\gamma}^D$ to $k$ is an isomorphism. In $\S$\ref{sec Higgs motives ab}, we prove that the motives appearing as the source and target of $\nu_{\gamma}^D$ are abelian in order to conclude that $\nu_{\gamma}^D$ is an isomorphism using a conservativity argument in $\S$\ref{end proof}.

\subsection{Overview of Maulik and Shen's cohomological mirror symmetry}\label{overview MS}

Let us outline the cohomological constructions of \cite{MS}. Maulik and Shen obtain their Hodge-structure theoretic version of mirror symmetry as a sum over all $\gamma \in \Gamma$ of isomorphisms (see \cite[Theorem 0.5]{MS}) of pure Hodge structures
\begin{equation}\label{MS_isotypical_piece_iso}
\nu_{\gamma,\coh}:H^*(\SLHiggs,\CC)_{\kappa} \cong H^{*-2d_\gamma}(\gammaHiggs,\CC)_{\kappa}(- d_\gamma), 
\end{equation}
where $d_\gamma=n(n-n_{\gamma})(g-1)$. Note that the isomorphism $\nu_{\gamma,\coh}$ is not induced by a Gysin morphism, as $d_\gamma$ is only half the codimension of $\gammaHiggs$ in $\SLHiggs$ (see Lemma \ref{lemma dim form}). In fact, $\nu_{\gamma,\coh}$ is constructed from an isomorphism $\beta_{\gamma,\coh}$ relative to the Hitchin base $\SLHit$.

For simplicity, as in the main body of the paper \cite{MS}, let us forget the Hodge structure and concentrate on the isomorphism of cohomology groups; the isomorphism of Hodge structures is then obtained by running the same argument using the theory of mixed Hodge modules.

To construct the isomorphism $\beta_{\gamma,\coh}$, Maulik and Shen use a cohomological correspondence following work of Yun \cite{Yun-global-springer-III}, based on ideas of Ng\^{o} \cite{Ngo}, to construct a morphism
\begin{equation}\label{coh corr MS}
\beta_{\gamma,\coh}^{\mathrm{naive}}: (h_{L})_*\CC_\kappa \ra i_{\gamma *}(h_{\gamma*}\CC)_\kappa\{-d_{\gamma}\}\in D^b_c(\SLHit)
\end{equation} 
where $\{i\}:=(i)[2i]$ for $i\in\ZZ$ denotes a pure Tate twist. This morphism is an isomorphism when restricted to a dense open of $\SLHit$ by an explicit computation of Yun \cite{Yun-global-springer-III}. Note that Yun works in a much more general setting (Higgs bundles for an arbitrary reductive group $G$ and its Langlands dual), interprets this isomorphism conceptually in terms of endoscopic groups of $G$, and already connects the picture to mirror symmetry. However, it is not clear whether this naive morphism is an isomorphism over the full Hitchin base, and a key innovation of Maulik and Shen is to construct a variant that they can prove is an isomorphism over all of $\SLHit$.

For that purpose, they first consider versions of the above constructions for $D$-twisted Higgs bundles, where $D$ is a divisor with $\deg(D) > 2g-2$ and $\deg(D)$ is even; the corresponding moduli spaces and morphisms are denoted with a superscript $D$. This leads to a morphism
\begin{equation}\label{coh corr MS D}
\beta_{\gamma,\coh}^D : ((h_L^D)_*\CC)_\kappa \ra i^D_{\gamma *}(h^D_{\gamma*}\CC)_\kappa\{-d_{\gamma}^{D}\} \in D^b_c(\SLHit^D).
\end{equation} 
The advantage of working with a divisor of degree $>2g-2$ is that the geometry of the $D$-twisted Hitchin fibration $h^D: \GLHiggs^D \ra \GLHit^D$ 
is simpler, in the sense that the supports of in the decomposition theorem for $h^{D}$ are known by work of de Cataldo \cite{dC_supportSL}; however, $\dim \GLHiggs^D \neq 2 \dim \GLHit^D$ and the $D$-twisted Higgs moduli space $\GLHiggs^D$ is no longer algebraic symplectic but rather admits an algebraic Poisson structure. The morphism $\beta_{\gamma,\coh}^D$ is again generically an isomorphism by \cite{Yun-global-springer-III}. Maulik and Shen use the decomposition theorem together with an analysis of the supports for the $\pi$-relative Hitchin maps $h_{\pi}^D : \piHiggs^D \ra \piHit^D$ over the full Hitchin base, based on work of Chaudouard-Laumon \cite{CL} in the $\GL_{n}$-case and of de Cataldo \cite{dC_supportSL} in the $\SL_{n}$-case, to prove that $\beta_{\gamma,\coh}^D$ is actually an isomorphism.

Maulik and Shen then use vanishing cycles to treat the case of odd degree divisors with $\deg(D)>2g-2$ as well as the original case of classical Higgs bundles $D=K_{C}$. Let us concentrate on the case of odd degree divisors with $\deg(D)>2g-2$; the case $D=K_{C}$ is obtained by iterating the construction twice. For $p\in C$, the divisor $D+p$ has even degree $>2g-2$, so there is an isomorphism $\beta_{\gamma,\coh}^{D+p}$. Maulik and Shen construct a function $\mu_{\cA}:\SLHit^{D+p}\to \AA^{1}$ using restriction to $p$ and the Killing form on $\mathfrak{sl}_n$ such that $\SLHit^{D}\subset \SLHit^{D+p}$ is the critical locus of $\mu_{\cA}$ \cite[Theorem 4.5]{MS}. They show the vanishing cycles functor 
$\phi_{\mu_{\cA}}:D^b_c(\SLHit^{D+p})\to D^b_c(\SLHit^D)$
satisfies
\[
\phi_{\mu_{\cA}}((h_L^{D+p})_*\CC)\simeq (h_L^{D})_*\CC(a)[b]
\]
for a Tate twist $a$ and shift $b$ whose precise values do not matter; this requires working with vanishing cycles on certain simple Artin stacks. They prove a similar statement relating the $\pi$-relative Hitchin fibrations $h_{\pi}^{D+p}$ and $h_{\pi}^{D}$, which then allows them to construct an isomorphism
\begin{equation}\label{coh corr MS hat}
\beta_{\gamma,\coh}^D : ((h_L^D)_*\CC)_\kappa \simeq i^D_{\gamma *}(h_{\gamma*}^D \CC)_\kappa\{-d^D_\gamma\} \in D^b_c(\SLHit^D)
\end{equation} 
from $\beta_{\gamma,\coh}^{D+p}$ and finish their proof \cite{MS}.

\subsection{Motivic correspondences}\label{sec motivic corr}

In this section, we discuss motivic correspondences, which are lifts to motivic sheaves of the cohomological correspondences of \cite[Exposé III \S3]{SGA5}. We follow the presentation of \cite[Appendix A]{Yun-global-springer-pub}, since this is the source used by \cite{MS} and we wish to lift their results to motives; however, we use a covariant convention like in \cite{SGA5} instead of the contravariant one in \cite{Yun-global-springer-pub} which seems unnecessarily confusing.

Note that the results of this subsection apply more generally when $k$ is any field and $\Lambda$ is any $\QQ$-algebra (and indeed much more generally to other types of motivic sheaves). We sometimes denote $\DM(X,\Lambda)$ by $\DM(X)$.

\begin{defn}
For a commutative (but not necessarily cartesian) diagram of finite type separated $k$-schemes\footnote{For us it suffices to work with finite type separated schemes, but the construction works in greater generality.}
\begin{equation}\label{diagram-corr}
\xymatrix{
  & Z \ar[ld]_{p} \ar[rd]^{q} & \\
X \ar[rd]_{f} & & Y \ar[ld]^{g} \\
& S &
}
\end{equation}
such that the induced morphism $r:=(p,q):Z\to X\times_{S}Y$ is proper, we define a \emph{motivic correspondence supported on $Z$} from $M\in \DM(X,\Lambda)$ to $N\in \DM(Y,\Lambda)$ to be a morphism
  \[
\zeta: p^*M\to q^!N
\]
in $\DM(Z,\Lambda)$.
\end{defn}

Motivic correspondences can be pushed forward to morphisms in $\DM(S,\Lambda)$ as follows.

\begin{constr}\label{push corr}
Given a motivic correspondence $\zeta:p^{*}M\to q^!N$ as above, we will construct a morphism 
\[
\zeta_\sharp:f_!M\to g_*N
\]
in $\DM(S,\Lambda)$. For this, we first associate to the motivic correspondence $\zeta$ supported on $Z$ a motivic  correspondence $\zeta_{X \times_S Y}$ supported on $X\times_SY$ via the following map of morphism groups
\begin{eqnarray*}
  \DM(Z)(p^*M,q^!N) \stackrel{r_*}{\to}  \DM(X\times_SY)(r_*p^*M,r_*q^!N) \simeq & \hspace{-0.2cm}  \DM(X\times_S Y)(r_* r^* \pi_X^*M,r_!r^!\pi_Y^!N) \\
                                & \quad \to  \DM(X\times_SY)(\pi_X^*M,\pi_Y^!N)
\end{eqnarray*}
where $\pi_X$ and $\pi_Y$ denote the projections from $X \times_S Y$ to $X$ and $Y$. Here the first map comes from the functoriality of $r_*$, the next isomorphism relies on the properness of $r$ and the final map is given by pre- and post-composition with the unit and counit for the adjunctions $r^*\dashv r_*$ and $r_!\dashv r^!$. Then we define $\zeta_\sharp$ to be the image of the motivic correspondence $\zeta_{X \times_S Y}$ supported on $X\times_SY$ under the following isomorphisms
\[
\DM(X\times_{S}Y)(\pi_X^*M,\pi_Y^!N)\simeq \DM(X)(M,\pi_{X_*}\pi_Y^!N)\simeq \DM(X)(M,f^!g_*N)\simeq \DM(S)(f_!M,g_*N)
\]
where the first and last isomorphism are adjunctions and the middle one is base change.
\end{constr}

The construction above is natural in $Z$ and $\zeta$ in various ways. We only need the following lemma on compatibility with a finite group action, which is a straightforward consequence of the naturality of the construction with respect to isomorphisms.

\begin{lemma}\label{lemm corr equiv}
  In the situation of Diagram \eqref{diagram-corr}, assume that there exists a finite group $H$ acting on the whole diagram (i.e. on the individual schemes and such that the morphisms are equivariant). Let $M$ and $N$ be also be $H$-equivariant objects in $\DM(X,\Lambda)$ and $\DM(Y,\Lambda)$ respectively; this induces $H$-equivariant structures on $p^{*}M$, $q^{!}N$, $f_{!}M$ and $g_{*}N$. If the motivic correspondence
\[
\zeta:p^*M\to q^!N
  \]
  is $H$-equivariant, then the induced morphism
  \[
\zeta_\sharp:f_!M\to g_*N
\]
is also $H$-equivariant.
\end{lemma}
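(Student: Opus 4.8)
The plan is to deduce this purely from the naturality of Construction \ref{push corr}: the assignment $\zeta\mapsto\zeta_{\sharp}$ is built out of functors and canonical isomorphisms of the six-operation formalism, each of which commutes with pullback along an automorphism of Diagram \eqref{diagram-corr}, so the construction automatically carries equivariant data to equivariant data.

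First I would fix $h\in H$ and unwind the induced equivariant structures. Writing $h_{X}\colon X\to X$, $h_{Y}$, $h_{Z}$, $h_{S}$, $h_{X\times_{S}Y}$ for the automorphisms given by the action, the equivariance of $p,q,f,g,r,\pi_{X},\pi_{Y}$ produces canonical exchange isomorphisms $h_{Z}^{*}p^{*}=p^{*}h_{X}^{*}$, $h_{Z}^{*}q^{!}\simeq q^{!}h_{Y}^{*}$ (using $h^{*}\simeq h^{!}$ for an isomorphism), $h_{S}^{*}f_{!}\simeq f_{!}h_{X}^{*}$ and $h_{S}^{*}g_{*}\simeq g_{*}h_{Y}^{*}$ by base change, and $h_{X\times_{S}Y}^{*}r_{*}\simeq r_{*}h_{Z}^{*}$. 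The induced equivariant structure on $p^{*}M$ is then $h_{Z}^{*}p^{*}M\simeq p^{*}h_{X}^{*}M\xrightarrow{p^{*}\theta^{M}_{h}}p^{*}M$, and analogously for $q^{!}N$, $f_{!}M$ and $g_{*}N$, where $\theta^{M}_{h}\colon h_{X}^{*}M\xrightarrow{\sim}M$ and $\theta^{N}_{h}\colon h_{Y}^{*}N\xrightarrow{\sim}N$ are the given structure isomorphisms.

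The key step is to check that every arrow occurring in Construction \ref{push corr} is compatible with these exchange isomorphisms: the action of $r_{*}$ on morphism sets; the equalities $p^{*}=r^{*}\pi_{X}^{*}$ and $q^{!}=r^{!}\pi_{Y}^{!}$; the properness identification $r_{*}\simeq r_{!}$; the units and counits of $r^{*}\dashv r_{*}$, $r_{!}\dashv r^{!}$, $\pi_{X}^{*}\dashv\pi_{X*}$ and $f_{!}\dashv f^{!}$; and the base change isomorphism $\pi_{X*}\pi_{Y}^{!}\simeq f^{!}g_{*}$. Each of these is a standard coherence in Ayoub's formalism \cite{Ayoub_these_1}; concatenating them shows that running Construction \ref{push corr} on $h_{Z}^{*}\zeta$ for the pulled-back coefficients $h_{X}^{*}M$, $h_{Y}^{*}N$ and then transporting along $\theta^{M}_{h}$, $\theta^{N}_{h}$ reproduces $\zeta_{\sharp}$ composed with the induced structures. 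Equivalently, applying the construction to the $H$-equivariance square for $\zeta$,
\[
\begin{array}{ccc}
h_{Z}^{*}p^{*}M & \xrightarrow{\ h_{Z}^{*}\zeta\ } & h_{Z}^{*}q^{!}N \\
\downarrow & & \downarrow \\
p^{*}M & \xrightarrow{\ \zeta\ } & q^{!}N
\end{array}
\]
produces the square with rows $h_{S}^{*}\zeta_{\sharp}$ and $\zeta_{\sharp}$ and vertical maps the induced structures on $f_{!}M$ and $g_{*}N$; since the former square commutes by hypothesis, so does the latter. As this holds for every $h$ and is compatible with composition in $H$ (again by naturality), $\zeta_{\sharp}$ is $H$-equivariant.

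The main, and really the only, obstacle is the bookkeeping in the third step: verifying by hand that each canonical isomorphism used in Construction \ref{push corr} commutes with the exchange isomorphisms for $h$ is routine but lengthy. To sidestep this I would instead package Construction \ref{push corr} as a natural transformation between two functors defined on a category whose objects are diagrams of the shape \eqref{diagram-corr} together with coefficients $M\in\DM(X)$, $N\in\DM(Y)$, and whose morphisms are the evident compatible isomorphisms. An $H$-equivariant datum in the sense of the lemma is then nothing but an object of the associated category of $H$-objects, and a natural transformation automatically sends it to an $H$-equivariant morphism; this gives the statement with no further computation. I would adopt this more economical formulation in the write-up.
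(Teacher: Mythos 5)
Your proof is correct and matches the paper's approach: the paper gives no written-out proof for this lemma, stating only that it is ``a straightforward consequence of the naturality of the construction with respect to isomorphisms,'' and your argument (especially the economical reformulation at the end, packaging Construction \ref{push corr} as a natural transformation so that $H$-equivariant data is automatically preserved) is exactly the content of that remark, with the bookkeeping made explicit.
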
  

We note that Construction \ref{push corr} commutes with Betti realisation.

\begin{lemma}\label{lemma betti corr}
Let $\sigma:k\to \CC$ be a complex embedding and suppose $\zeta:p^*M\to q^!N$ is a motivic correspondence supported on $Z$ as above between constructible motives $M$ and $N$. Since the Betti realisation functor $R_{B}$ commutes with $p^*$ and $q^{!}$ on constructible objects, we have an induced cohomological correspondence
  \[
R_{B}(\zeta): p^*R_{B}M\to q^!R_{B}N
\]
and similarly an induced morphism
\[
R_{B}(\zeta_{\sharp}):f_{!}R_{B}M\to g_{*}R_{B}N.
\]
Then we have an equality
\[
R_{B}(\zeta_{\sharp})=(R_{B}\zeta)_{\sharp}
\]
where this right side should be understood as the construction in \cite[\S A.1]{Yun-global-springer-pub}.

Moreover, if $\zeta$ is equivariant in the sense of Lemma \ref{lemm corr equiv}, then the induced equivariant structures on $R_{B}(\zeta_{\sharp})=(R_{B}\zeta)_{\sharp}$ coincide.
\end{lemma}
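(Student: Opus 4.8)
The statement to prove is Lemma \ref{lemma betti corr}: that Betti realisation commutes with the pushforward construction for motivic correspondences, i.e. $R_{B}(\zeta_{\sharp}) = (R_{B}\zeta)_{\sharp}$, compatibly with equivariant structures.

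The proof will be a diagram chase, tracking Construction \ref{push corr} step by step through the Betti realisation functor. The plan is as follows. First I would recall that $R_{B}$ is a triangulated (indeed symmetric monoidal) functor which commutes with all four of $p^{*}$, $q^{!}$, $f_{!}$, $g_{*}$ on constructible objects, and — crucially — that these compatibility isomorphisms are themselves natural and compatible with the unit/counit maps of the relevant adjunctions. This last point is exactly the content of \cite[Theorem 3.19]{Ayoub_Betti}: the exchange natural transformations (base change, projection formula, and the canonical maps $f^{*}\dashv f_{*}$, $f_{!}\dashv f^{!}$) are preserved. So the statement is really that applying a functor which preserves a whole package of structure to a construction built purely out of that structure yields the corresponding construction downstairs.

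Concretely, I would break Construction \ref{push corr} into its two halves and check each. For the first half, passing from a correspondence $\zeta$ supported on $Z$ to one supported on $X\times_{S}Y$: this uses functoriality of $r_{*}$, the isomorphisms $r_{*}r^{*}\simeq (\pi_{X}\circ r)_{*}\circ(\ldots)$ coming from properness of $r$ (so $r_{*}\simeq r_{!}$), and pre/post-composition with units and counits of $r^{*}\dashv r_{*}$ and $r_{!}\dashv r^{!}$. Each of these pieces is preserved by $R_{B}$: functoriality trivially, the proper base change identification $r_{*}\simeq r_{!}$ by \cite[Theorem 3.19]{Ayoub_Betti}, and the unit/counit maps by naturality of the adjunction compatibilities. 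For the second half, the chain of isomorphisms $\DM(X\times_{S}Y)(\pi_{X}^{*}M,\pi_{Y}^{!}N)\simeq \DM(X)(M,\pi_{X*}\pi_{Y}^{!}N)\simeq \DM(X)(M,f^{!}g_{*}N)\simeq \DM(S)(f_{!}M,g_{*}N)$: the first and last are the adjunctions $\pi_{X}^{*}\dashv \pi_{X*}$ and $f_{!}\dashv f^{!}$, preserved by $R_{B}$ as above, and the middle one is the base-change isomorphism $\pi_{X*}\pi_{Y}^{!}\simeq f^{!}g_{*}$ attached to the cartesian square with corners $X\times_{S}Y, X, Y, S$, which is again in the list of exchange transformations preserved by \cite[Theorem 3.19]{Ayoub_Betti}. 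Chaining these observations gives $R_{B}(\zeta_{\sharp})=(R_{B}\zeta)_{\sharp}$, provided one checks that the construction in \cite[\S A.1]{Yun-global-springer-pub} is literally the same recipe on the topological side — which it is, since both are instances of the generic six-functor formalism. I expect the main (really the only) obstacle to be bookkeeping: making sure that the specific compatibility isomorphisms invoked at each step are exactly the ones whose preservation is asserted in \cite{Ayoub_Betti}, and that no auxiliary coherence (e.g. a hexagon relating base change and a projection formula) is silently used that would need separate justification; in practice \cite[Theorem 3.19]{Ayoub_Betti} is designed precisely to supply all of these.

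For the equivariant refinement, by Lemma \ref{lemm corr equiv} the equivariant structure on $\zeta_{\sharp}$ is obtained purely by naturality of Construction \ref{push corr} with respect to the isomorphisms of the diagram induced by $h\in H$. Since $R_{B}$ is a functor, it carries these naturality squares to the analogous naturality squares downstairs, so the $H$-equivariant structure that $R_{B}$ transports from $\zeta_{\sharp}$ agrees with the one that the topological construction of \cite[\S A.1]{Yun-global-springer-pub} produces from $R_{B}\zeta$. I would state this as an immediate consequence of the non-equivariant comparison together with the naturality already recorded in Lemma \ref{lemm corr equiv}, with no extra argument needed.
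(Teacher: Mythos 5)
Your proof is correct and is essentially the paper's argument: the paper simply observes that the claim "follows directly from the fact that the Betti realisation commutes with the six operations on constructible motives \cite[Theorem 3.19]{Ayoub_Betti}," which is exactly the fact you invoke. You have merely unpacked the resulting diagram chase through Construction \ref{push corr} in more detail, which is a fine thing to do but does not change the route.
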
  
\begin{proof}
This follows directly from the fact that the Betti realisation commutes with the six operations on constructible motives \cite[Theorem 3.19]{Ayoub_Betti}.
\end{proof}

Let us explain how, in certain situations, the fundamental class of $Z$ in its rational Chow group (i.e. Borel-Moore rational motivic homology) provides a natural motivic correspondence.

\begin{defn}\label{defn motivic corr for fund class}
  In the situation of Diagram \eqref{diagram-corr}, suppose that $Y$ is smooth of dimension $e$ over $k$ and that the morphism $q$ is equidimensional of dimension $d$. Then $Z$ is a (usually singular) equidimensional variety of dimension $d+e$, so it has a fundamental class
  \[
[Z]\in \CH_{d+e}(Z) \otimes \Lambda.
  \]
For any $i\in\ZZ$, we have isomorphisms
\begin{align*}
\CH_{d+i}(Z) \otimes \Lambda & \simeq \Hom_{\DM(k,\Lambda)}(\one\{d+i\}, (p_Y \circ q)_*(p_Y \circ q)^!\one) \\ 
& \simeq \Hom_{\DM(Z,\Lambda)}(\one_{Z}\{d+i\}, q^!\one_Y\{e\})  \\
& \simeq \Hom_{\DM(Z,\Lambda)}(\one_{Z}\{d+i-e\}, q^!\one_Y)
\end{align*}
where the second isomorphism follows from relative purity for the smooth structure morphism $p_{Y}:Y\to\Spec(k)$. Through these isomorphisms (for $i=e$), the class $[Z]$ induces a motivic correspondence supported on $Z$ from $M=\one_{X}\{d\}$ to $N=\one_{Y}$
\[ [Z] : \one_{Z}\{d \} \ra q^!\one_Y \]
and corresponding morphism $[Z]_{\sharp}: f_! \one_X\{d \} \ra g_* \one_Y$ in $\DM(Z,\Lambda)$.
\end{defn}

\subsection{A motivic endoscopic correspondence for Higgs bundles}
\label{sec beta even}

In $\S$\ref{sec fixed loci and rel Higgs} for $\gamma \in \Gamma$, we constructed various Higgs moduli spaces and Hitchin maps fitting into a commutative diagram
\[ \xymatrix{ \piHiggs^D \ar[r]^{p_\gamma^D} \ar[d]_{h_\pi^D} & \gammaHiggs^D \ar@{^{(}->}[r] \ar[d]^{h_\gamma^D} &  \SLHiggs^D \ar[d]^{h_L^D} \\ 
\piHit^D \ar[r]^{q_\gamma^D} & \gammaHit^D \ar@{^{(}->}[r]^{\iota_\gamma^D} & \SLHit^D. } \]
In this section, we assume that $\deg(D)$ is even with $\deg(D)>2g-2$, and construct a morphism
\[\beta^D_{\gamma} : (h_{L}^D)_*\one \ra (i^D_{\gamma})_*(h^D_{\gamma})_*\one)\{-d^D_\gamma\}\in \DM(\SLHit^D,\Lambda)\]
as an application of the formalism of motivic correspondences. This construction mimics the cohomological construction of Maulik and Shen \cite{MS} which builds on the work of Yun \cite{Yun-global-springer-pub}.

For the construction of this correspondence, Maulik and Shen first construct a line bundle $L'\in \Pic(C)$ from $L$ which satisfies
\begin{equation}
  \label{degree-L-prime}
\deg(L')\equiv \deg(L) \pmod{n}   
\end{equation}
and in particular $\gcd(\deg(L'),n)=\gcd(\deg(L'),n_{\gamma})=1$. Equation \eqref{degree-L-prime} ensures we can tensor by a line bundle to relate moduli spaces with determinant $L$ and $L'$. Consider the $\SL$-Higgs moduli space $\cM_{L'}^D := \cM_{n,L'}^D$ with determinant $L'$ which has Hitchin map $h_{L'}^D : \cM_{L'}^D \ra \cA_{L'}^D$. We can also consider the Hitchin maps $\gamma$-fixed locus and $\pi$-relative moduli spaces for $L'$ which we denote by adding a subscript $L'$ (e.g. $h_{\gamma,L'}^D : \cM_{\gamma,L'}^D \ra \cA_{\gamma,L'}^D$). All the Hitchin bases are independent of the choice of $L$ and so we drop this additional subscript for the Hitchin base. 

To define the correspondence, Maulik and Shen introduce a (singular) variety $\Sigma$ in \cite[\S 3.3]{MS}, which fits into a commutative diagram
\begin{equation}\label{diagram-sigma}
\xymatrix{
  & \Sigma \ar[ld] \ar[rd] & \\
\SLHiggs^D\times_{\SLHit^D}\piHit^D \ar[rd]_{h_{L}^D\times\id_{\piHit^D}} & & \cM^{D}_{\pi,L'} \ar[ld]^{h^{D}_{\pi,L'}} \\
& \piHit^D = \cA^D_{\pi,L'} &
}
\end{equation}
where all the morphisms are proper and both $G_{\pi}$ and $\Gamma$ act on $\Sigma$ in such a way that the diagram is $(G_{\pi}\times \Gamma)$-equivariant. The morphism $\Sigma\to  \cM^{D}_{\pi,L'}$ is equidimensional of relative dimension $d^{D}_{\gamma}$.

We do not need to know anything more about $\Sigma$ for this paper. Let us nevertheless indicate the idea of the construction. The variety $\Sigma$ is the Zariski closure of the graph of a morphism
\[
g_{u}:\SLHiggs^{D,\mathrm{reg}}\times_{\SLHit^D}\piHit^{D,\heartsuit}\to \cM_{\pi,L'}^{D,\mathrm{reg}}\times_{\cA_{\pi}^{D}}\piHit^{D,\heartsuit}
\]
where the additional decorations on the moduli spaces and Hitchin bases denote suitable open subsets. The restriction to these opens allows, as a special case of the BNR correspondence \cite{BNR}, to parametrise Higgs bundles via line bundles on spectral curves; moreover, the generic $\SL_{n}$-spectral curve on the locus $\gammaHit^D\subset \SLHit^D$ is nodal and the generic spectral curve for $\cM_{\pi,L'}^{D,\mathrm{reg}}$ is its smooth normalisation. The morphism $g_{u}$ is then given by pullback of line bundles from the nodal curve to its normalisation. The introduction of the line bundle $L'$ is  necessary to match up the determinants of the associated Higgs bundles. We refer to \cite[\S 3.1-3]{MS} for details (which, again, are immaterial to our argument).

By Definition \ref{defn motivic corr for fund class}, $\Sigma$ induces a $(G_{\pi}\times\Gamma)$-equivariant morphism in $\DM(\piHit^D,\Lambda)$
\[
[\Sigma]_{\sharp}:(q_{\gamma}^D)^*(i_\gamma^D)^*(h_L^D)_*\one \simeq ( h_{L}^D\times\id_{\piHit^D})_*\one \to (h_{\pi,L'}^D)_{*}\one\{-d^{D}_{\gamma},\}
\]
where the first isomorphism follows from proper base change. Since we are working with coefficients in $\Lambda$, we can thus take the $\kappa$-isotypical part for any $\kappa\in\widehat{\Gamma}$ and get a morphism
\[
[\Sigma]_{\sharp,\kappa}:(q_{\gamma}^D)^*((i_\gamma^D)^*(h_L^D)_*\one)_\kappa \to ((h_{\pi,L'}^D)_{*}\one)_{\kappa}\{-d^{D}_{\gamma}\}.
\]
We will first pushforward along $q_{\gamma}^D: \piHit^D \ra \gammaHit^D$ and take $G_{\pi}$-invariants and then we will pushfoward along $i_\gamma^D : \gammaHit^D \ra \SLHit^D$. In the first step, we note that in $\DM(\gammaHit^D,\Lambda)$, we have 
\[
\left((q_{\gamma}^D)_{*}(q_{\gamma}^D)^*((i_\gamma^D)^*(h_L^D)_*\one)_\kappa\right)^{G_{\pi}}\simeq ((i_\gamma^D)^*(h_L^D)_*\one)_\kappa
\]
by Lemma \ref{lemma motive quotient} and we have
\[
\left((q^D_{\gamma})_*((h^{D}_{\pi,L'})_*\one )_\kappa\right)^{G_{\pi}} \simeq ((h_{\gamma,L'}^D)_*\one )_\kappa
\]
by Corollary \ref{cor fixed locus and rel moduli}. Pushing forward these last two isomorphisms along $i_\gamma^D : \gammaHit^D \ra \SLHit^D$ and combining with $[\Sigma]_{\sharp,\kappa}$ as well as an adjunction, we obtain a morphism in $\DM(\SLHit^D,\Lambda)$
\begin{equation}\label{beta prelim}
\xymatrix{ 
  ((h_{L}^D)_*\one)_\kappa  \ar[r] &  (i_\gamma^D)_*(i_\gamma^D)^*((h_{L}^D)_*\one)_\kappa \ar[rrr]^{(i_\gamma^D)_*((q_\gamma^D)_*[\Sigma]_{\sharp,\kappa})^{G_\pi}} & & & i^D_{\gamma *}((h^D_{\gamma,L'})_{*}\one)_\kappa\{-d^D_\gamma\}. }
\end{equation}

The final step is to pass from $L'$ back to $L$ following \cite[\S 3.4]{MS}. Since $k$ is algebraically closed, Equation \eqref{degree-L-prime} implies the existence of a line bundle $N$ such that $L'=L\otimes N^{\otimes n}$ and tensoring with $N$ produces a $(G_{\pi}\times\Gamma)$-equivariant isomorphism
\[
\piHiggs^D:=\cM_{\pi,L}^D\simeq  \cM_{\pi,L'}^D
\]
of $\piHit^D$-schemes. This directly provides a $G_{\pi}$-equivariant isomorphism
\[
(h_{\pi}^{D})_*\one = (h_{\pi,L}^{D})_*\one\simeq (h_{\pi,L'}^{D})_*\one
\]
which by Corollary \ref{cor fixed locus and rel moduli} gives an isomorphism in $\DM(\gammaHit,\Lambda)$
\begin{equation}
  \label{change-L-Lprime}
(h^D_{\gamma})_{*}\one=(h^D_{\gamma,L})_{*}\one\simeq (h^D_{\gamma,L'})_*\one
\end{equation}

\begin{defn}\label{def beta mot large even deg}
For $D$ of even degree with $\deg(D) > 2g-2$, by combining the morphism \eqref{beta prelim} with the isomorphism \eqref{change-L-Lprime} above, we obtain a morphism in $\DM(\SLHit^D,\Lambda)$
\[\beta^D_{\gamma} :  ((h_{L}^D)_*\one)_\kappa \ra (i^D_{\gamma})_*((h^D_{\gamma})_*\one)_\kappa\{-d^D_\gamma\}.\]
\end{defn}

\begin{lemma}\label{lemma betti realisation of beta mot is beta even case}
Assume $D$ has even degree with $\deg(D) > 2g-2$ and fix a complex embedding $\sigma:k\to \CC$. The Betti realisation of $\beta^D_{\gamma}$ is the morphism $\beta^D_{\gamma}$ in \eqref{coh corr MS D}, which is precisely the isomorphism $c^{D}_{\kappa}$ of \cite[Theorem 3.2]{MS}.
\end{lemma}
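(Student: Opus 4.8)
The plan is to exploit the fact that $\beta^D_{\gamma,\mot}$ was assembled in $\S$\ref{sec beta even} from a short list of operations, each of which is compatible with the Betti realisation functor $R_B$, and that Maulik and Shen construct $\beta^D_\gamma = c^D_\kappa$ by running \emph{literally the same recipe} in $D^b_c(\SLHit^D)$ (this is the content of \cite[\S 3.3--3.4]{MS}, which is why we chose to follow their presentation of motivic correspondences in $\S$\ref{sec motivic corr}). Concretely, $\beta^D_{\gamma,\mot}$ is obtained by: (a) forming the fundamental-class motivic correspondence attached to $[\Sigma]$ via Definition~\ref{defn motivic corr for fund class} and Construction~\ref{push corr}; (b) extracting the $\kappa$-isotypical summand; (c) proper pushforward and proper base change along the maps in \eqref{diagram-sigma}; (d) the isomorphisms of Lemma~\ref{lemma motive quotient} and Corollary~\ref{cor fixed locus and rel moduli} together with unit/counit adjunction maps, as in \eqref{beta prelim}; and (e) the tensor-by-$N$ isomorphism $\cM^D_{\pi,L}\simeq\cM^D_{\pi,L'}$ and its consequence \eqref{change-L-Lprime}. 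So the lemma follows by checking $R_B$-compatibility for each of (a)--(e) and concatenating the resulting identifications; the final clause, that $\beta^D_\gamma$ equals $c^D_\kappa$, is simply the statement of \cite[Theorem 3.2]{MS}.

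For the individual steps I would argue as follows. Step (a): Lemma~\ref{lemma betti corr} already states that Construction~\ref{push corr} commutes with $R_B$, including in the $(G_\pi\times\Gamma)$-equivariant refinement, so $R_B([\Sigma]_\sharp)=(R_B[\Sigma])_\sharp$. Step (c)--(d): since $R_B$ commutes with the six operations on constructible objects \cite[Theorem 3.19]{Ayoub_Betti}, it commutes with the proper pushforwards, base-change isomorphisms and adjunction maps in \eqref{beta prelim}; since $R_B$ is $\Lambda$-linear it commutes with the idempotents cutting out the $\kappa$-isotypical components, giving Step (b). The Betti realisation of Lemma~\ref{lemma motive quotient} is the corresponding statement $(p_* p^* \cF)^G\simeq\cF$ for constructible sheaves (again a formal consequence of $R_B$ commuting with $p_*,p^*$ and being additive), hence $R_B$ of Corollary~\ref{cor fixed locus and rel moduli} is its sheaf-theoretic counterpart, which is exactly the identification used by Maulik and Shen in \cite[Lemma 1.7]{MS}; and Step (e) realises to the corresponding isomorphism of constructible sheaves because it is induced by an isomorphism of $\piHit^D$-schemes, to which $R_B$ applies functorially.

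The one genuinely substantive point is to identify $R_B([\Sigma])$ — the Betti realisation of the motivic correspondence attached to the fundamental class $[\Sigma]\in\CH_{d+e}(\Sigma)\otimes\Lambda$ through the chain of isomorphisms in Definition~\ref{defn motivic corr for fund class} — with the cohomological correspondence that Maulik and Shen attach to the topological fundamental class of $\Sigma$. This amounts to the compatibility of the cycle class map with $R_B$: the canonical generator of $\Hom_{\DM(k)}(\Lambda\{d+e\},(p_Y\circ q)_*(p_Y\circ q)^!\Lambda)\cong\CH_{d+e}(\Sigma)\otimes\Lambda$ corresponding to $[\Sigma]$ is sent by $R_B$ to the element of the corresponding Borel--Moore homology group representing the topological fundamental class, because $R_B$ is an exact monoidal realisation commuting with $f_!$ and $f^!$ and is compatible with purity/orientations. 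I expect unwinding Definition~\ref{defn motivic corr for fund class} through $R_B$ and matching it against \cite[\S A.1]{Yun-global-springer-pub} and \cite[\S 3.3]{MS} — including the bookkeeping of the Tate twist $\{-d^D_\gamma\}$ and the covariant-versus-contravariant conventions flagged in $\S$\ref{sec motivic corr} — to be the main (though essentially routine) obstacle; everything else is formal from Ayoub's comparison theorem.
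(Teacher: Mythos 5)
Your proposal is correct and follows essentially the same route as the paper, which disposes of the lemma in one sentence by citing Lemma~\ref{lemma betti corr} and further applications of \cite[Theorem 3.19]{Ayoub_Betti}; you have simply unpacked that citation step by step. You also correctly isolate the one point the paper leaves implicit, namely that $R_B$ sends the Chow-theoretic fundamental class $[\Sigma]$ of Definition~\ref{defn motivic corr for fund class} to the topological one used in \cite[\S 3.3]{MS}, which is indeed the standard compatibility of the cycle class map with Betti realisation.
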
  
\begin{proof}
This follows from Lemma \ref{lemma betti corr} and further applications of \cite[Theorem 3.19]{Ayoub_Betti} on the compatibility of the Betti realisation with the six operations on constructible motives.
\end{proof}

\subsection{Passing from $D + p$ to $D$ with motivic vanishing cycles}\label{sec beta hat mot}

Our goal in this section is to extend the construction of $\beta^D_{\gamma}$ to the case where $\deg(D)$ is odd and the case $D=K_{C}$ by using vanishing cycles to pass from $D + p$ to $D$. In this section, we assume that $D$ is either $K_{C}$ or of degree $>2g-2$, and we fix an additional point $p\in C(k)$. We start by reviewing the relevant geometric constructions from \cite[\S 4.2-3]{MS}.

We need to work with moduli stacks of Higgs bundles rather than moduli spaces\footnote{In \cite[\S 4]{MS} the authors do not distinguish between the moduli stacks and the moduli spaces, but we do in order to spell out some arguments precisely. Moreover, their $D$ (resp. $D-p$) is what we call $D+p$ (resp. $D$).}, as in Diagram \eqref{diag sl} below, the map of stacks $\mathrm{ev}_p$ is smooth and we use smooth base change for vanishing cycles (Proposition \ref{prop SBC van cycles}). We write $\fM^{D}_{L}$ (resp. $\fM^{D}_{\pi}$) for the stack of stable $D$-twisted (resp. $\pi$-relative) Higgs bundles of rank $n$ and determinant $L$. These are smooth Deligne-Mumford stacks; moreover, the natural morphisms $\delta^{D}_L:\fM^{D}_{L}\to \SLHiggs^D$ and $\delta^{D}_{\gamma}:\fM^{D}_{\pi}\to \piHiggs$ are $\mu_{n}$-gerbes (the gerbe $\delta^D_L$ was used in $\S$\ref{sec orbifold motive}).

To pass from $D + p$ to $D$, we restrict to Higgs bundles on $p$.  Since $\SL_n$-Higgs bundles on a point up to isomorphism correspond to trace-free matrices up to conjugation, the stack of $\SL_n$-Higgs bundles on $p$ is $\fM_{\SL_n}(p) \simeq \left[\mathfrak{sl}_{n}/\SL_{n}\right]$ with good moduli space $\m_{\SL_n}(p) \simeq \mathfrak{sl}_{n}\sslash \SL_{n}$ which is isomorphic to the Hitchin base $\cA_{\SL_n}(p) \simeq \mathfrak{t}_n \sslash S_n$, where $\mathfrak{t}_n \subset \mathfrak{sl}_{n}$ is a Cartan subalgebra.

For $\fM^{D}_{L}$, Maulik and Shen construct a commutative diagram
\begin{equation}
\label{diag sl}
\begin{gathered}
  \xymatrix{
  \fM^{D}_{L} \ar@{^{(}->}[r]^{\iota_{\fM}} \ar[d]_{\delta^{D}_L} & \fM^{D+p}_{L} \ar[r]^{\ev_{p} \quad \quad \quad} \ar[d]^{\delta^{D+p}_L} \ar@/^3.5pc/[rrrd]_{\hspace{4.4cm} \mu_{\fM} \quad} & \fM_{\SL_n}(p)\simeq \left[\mathfrak{sl}_{n}/\SL_{n}\right] \ar[rrd]^{\widehat{\mu}} \ar[d] & \\
    \SLHiggs^{D} \ar@{^{(}->}[r]^{\iota_{\m}} \ar[d]_{h_L^{D}} & \SLHiggs^{D+p} \ar[r] \ar[d]^{h_L^{D+p}} \ar@/_1.4pc/[rrr]_{\hspace{2.5cm}\mu_{\m}}  & \m_{\SL_n}(p)\simeq \mathfrak{sl}_{n}\sslash \SL_{n} \ar[d]_{\wr} \ar[rr]^{\quad \quad \quad \mu} & & \AA^{1} \\
  \SLHit^{D} \ar@{^{(}->}[r]^{\iota_{\cA}} & \SLHit^{D+p} \ar[r]   \ar@/_3.5pc/[rrru]^{\hspace{4.0cm}\mu_{\cA}}  & \cA_{\SL_n}(p)\simeq \mathfrak{t}_{n}\sslash S_n  & 
}
\end{gathered}
\end{equation}
which roughly speaking relates the difference between $D$-twisted and $(D+p)$-twisted Higgs bundles with the restriction map to Higgs bundles on the point $p$. We do not review the full construction, but record the following properties which are used below.
\begin{enumerate}[label=(\roman*)]
\item The morphism $\ev_{p}$ is smooth \cite[Proposition 4.1]{MS}.  
\item The function $\mu$ is induced by the $\SL_{n}$-equivariant quadratic form
  \[
\mathfrak{sl}_{n}\to \AA^{1}, \quad  g\mapsto \mathrm{Tr}(g^{2})
\]
(see \cite[Equation (98)]{MS}).
\item The closed embedding $\iota_{\fM}$ is the critical locus of the function $\mu_{\fM}$ by \cite[Theorem 4.5(a)]{MS}.
\item The codimension $c$ of $\iota_{\fM}$ is equal to $\dim\mathfrak{sl}_{n} =n^{2}-1$ by \cite[Eq.(78) in \S 6.1]{dC_supportSL}.
\end{enumerate}

For the $\pi$-relative moduli stack $\fM^{D}_{\pi}$, we let $H_{\pi}$ be the appropriate subgroup of $\SL_{n}$ which parametrises automorphisms of $\pi$-relative Higgs bundles over the point $p$ (see \cite[$\S$4.2]{MS}) and $\fh_\pi$ denote its Lie algebra. Then Maulik and Shen show there is a similar diagram
\begin{equation}
\label{diag rel}
\begin{gathered}
  \xymatrix{
  \fM^{D}_{\pi} \ar@{^{(}->}[r]^{\iota_{\fM,\pi}} \ar[d]_{\delta^{D}_\pi} & \fM^{D+p}_{\pi} \ar[r]^{\ev_{p,\pi} \quad \quad} \ar[d]^{\delta^{D+p}_\pi} \ar@/^3.5pc/[rrrd]_{\hspace{4.4cm} \mu_{\fM,\pi} \quad} & \fM_{\pi}(p)\simeq \left[\mathfrak{h}_{\pi}/H_{\pi}\right] \ar[rrd]^{\widehat{\mu}_\pi} \ar[d] & \\
    \piHiggs^{D} \ar@{^{(}->}[r]^{\iota_{\m,\pi}} \ar[d]_{h_\pi^{D}} & \piHiggs^{D+p} \ar[r] \ar[d]^{h_\pi^{D+p}} \ar@/_1.4pc/[rrr]_{\hspace{2.5cm}\mu_{\m,\pi}}  & \m_{\pi}(p)\simeq \mathfrak{h}_{\pi}\sslash H_{\pi} \ar[d]_{\wr} \ar[rr]^{\quad \quad \quad \mu_\pi} & & \AA^{1} \\
  \piHit^{D} \ar@{^{(}->}[r]^{\iota_{\cA,\pi}} & \piHit^{D+p} \ar[r]   \ar@/_3.5pc/[rrru]^{\hspace{4.0cm}\mu_{\cA,\pi}}  & \cA_{\SL_n}(p)\simeq \mathfrak{t}_{\pi}\sslash W_{\pi}   & 
}
\end{gathered}
\end{equation}
with the following properties.
\begin{enumerate}[label=(\roman*$^\prime$)]
\item The morphism $\ev_{p,\pi}$ is smooth by \cite[Proposition 4.1]{MS}. 
\item The function $\mu_{\fM,\pi}$ is induced by the $\SL_{n}$-equivariant quadratic form
  \[
\mathfrak{h}_{\pi}\to \AA^{1}, \quad g\mapsto \mathrm{Tr}(g^{2})
\]
(see \cite[Equation (98)]{MS}).
\item The closed embedding $\iota_{\fM,\pi}$ is the critical locus of the function $\mu_{\fM,\pi}$ \cite[Theorem 4.5(a)]{MS}.
\item The codimension $c_{\pi}$ of $\iota_{\fM,\pi}$ is equal to $\dim \mathfrak{h}_{\pi} = n_{\gamma}n-1$ by Lemma \ref{lemma dim form}.
\end{enumerate}

\begin{rmk}
Diagram \eqref{diag sl} is a special case of Diagram \eqref{diag rel} and Properties (i)-(iv) are special cases of (i$^\prime$)-(iv$^\prime$). We present both separately, as this special case is going to be used in combination with the general case in the next section.  
\end{rmk}

Now we use the formalism of motivic nearby cycles of \cite[Chapter 3]{Ayoub_these_2}, extended to motivic vanishing cycles on Artin stacks as in Definition \ref{def mot van cycles} in Appendix \ref{sec motivic van cycles}. Recall for a half integer $r \in \frac{1}{2} \ZZ$, we defined Tate twists $\{r \} := (\lfloor r \rfloor)[2r]$, which are pure if and only if $r \in \ZZ$.

\begin{rmk}\label{rmk van cycles shift}
In \cite{MS}, the vanishing cycle functors are shifted by $[-1]$ in order to use the fact that $\phi_{f}[-1]$ preserves perverse sheaves. We prefer to stick to the conventions in \cite[Chapter 3]{Ayoub_these_2} and not shift by $[-1]$.
\end{rmk}

\begin{thm}\label{thm vanishing cycle isos}
We have the following isomorphisms.
  \begin{enumerate}[label=\emph{(\roman*)}]
  \item $\widetilde{\phi}_{\mu_{\fM}}\one\simeq \iota_{\fM*}\one\{-(c-1)/2\}$ in $\DM(\fM_{L}^{D+p}, \Lambda)$ as $\Gamma$-equivariant objects,
\item  $\widetilde{\phi}_{\mu_{\fM},\pi}\one\simeq \iota_{\fM,\pi*}\one\{-(c_{\pi}-1)/2\}$ in $\DM(\fM_{\pi}^{D+p}, \Lambda)$ as $(\Gamma\times G_\pi)$-equivariant objects,
\item $\widetilde{\phi}_{\mu_{\cA}}((h_{L}^{D+p})_*\one)_\kappa\simeq \iota_{\cA*}((h^{D}_{L})_*\one)_\kappa\{-(c-1)/2\}$ in $\DM(\SLHit^{D+p}, \Lambda)$,
\item $\widetilde{\phi}_{\mu_{\cA,\pi}}((h_{\pi}^{D+p})_*\one)_\kappa \simeq \iota_{\cA,\pi_*}((h^{D}_{\pi})_*\one)_\kappa\{-(c_{\pi}-1)/2\}$ in $\DM(\piHit^{D+p}, \Lambda)$ as $G_\pi$-equivariant object,
\end{enumerate}
\end{thm}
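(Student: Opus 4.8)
The plan is to prove the four isomorphisms in parallel, since (i)--(ii) and (iii)--(iv) follow the same pattern; indeed, as noted in the remark, (i) and (iii) are special cases of (ii) and (iv) (the $\GL_n$ resp.\ $\pi$-relative picture for the trivial cover), so it suffices to set up the argument once. The key input is the computation of motivic vanishing cycles of a homogeneous (here quadratic, nondegenerate) function, which is carried out in Appendix \ref{sec motivic van cycles}: for the $\SL_n$-equivariant quadratic form $q \mapsto \mathrm{Tr}(q^2)$ on $\fh_\pi$ (resp.\ $\mathfrak{sl}_n$), whose critical locus is the origin, the motivic vanishing cycles functor $\widetilde{\phi}$ applied to the unit is the (shifted, twisted) unit on the critical locus, with the shift and twist dictated by the rank $c_\pi = \dim\fh_\pi$ (resp.\ $c = \dim\mathfrak{sl}_n$); this is the analogue of the classical Thom--Sebastiani / quadratic vanishing cycle computation. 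First I would record this computation on the quotient stack $[\fh_\pi/H_\pi]$, using that motivic vanishing cycles on Artin stacks are defined by smooth descent (Definition \ref{def mot van cycles}) and commute with smooth pullback (Proposition \ref{prop SBC van cycles}), so that the computation on $\fh_\pi$ descends to $\fM_\pi(p) \simeq [\fh_\pi/H_\pi]$ and carries the residual $\Gamma\times G_\pi$-action.

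Next I would transport this to $\fM^{D+p}_\pi$. Properties (iii$'$) and (i$'$) of Diagram \eqref{diag rel} say that $\iota_{\fM,\pi}$ is the critical locus of $\mu_{\fM,\pi}$ and that $\mu_{\fM,\pi} = \widehat{\mu}_\pi \circ \ev_{p,\pi}$ with $\ev_{p,\pi}$ smooth, so by smooth base change for motivic vanishing cycles (Proposition \ref{prop SBC van cycles}, for Artin stacks) we get
\[
\widetilde{\phi}_{\mu_{\fM,\pi}}\one \simeq \ev_{p,\pi}^{*}\,\widetilde{\phi}_{\widehat{\mu}_\pi}\one \simeq \ev_{p,\pi}^{*}\bigl(\iota_{0*}\one\{-(c_\pi-1)/2\}\bigr) \simeq \iota_{\fM,\pi*}\one\{-(c_\pi-1)/2\},
\]
where $\iota_0$ is the inclusion of the origin-stratum in $\fM_\pi(p)$, the last step using that the critical locus is stable under smooth pullback; all of this is $(\Gamma\times G_\pi)$-equivariant because $\ev_{p,\pi}$, $\iota_{\fM,\pi}$ and the function are equivariant. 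This gives (ii), and (i) is the case $n_\gamma = n$, $G_\pi = 1$.

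For (iii) and (iv) I would descend the stack-level statements to the Hitchin bases by applying the relevant Hitchin pushforwards and using that motivic vanishing cycles commute with proper pushforward. Concretely, starting from (ii) on $\fM^{D+p}_\pi$, one applies $(h^{D+p}_\pi)_*$ (via the gerbe $\delta^{D+p}_\pi$, using $M(\fM^{D+p}_\pi) $-vs-$M(\cM^{D+p}_\pi)$ as in the six-functor formalism for the $\mu_n$-gerbe) together with the commutation of $\widetilde\phi$ with proper pushforward along the vertical maps in Diagram \eqref{diag rel}, plus the fact that $\mu_{\m,\pi}$ and $\mu_{\cA,\pi}$ are pulled back from $\mu_{\cA,\pi}$ along the proper (here: finite/isomorphism) maps $\piHiggs^{D+p}\to \piHit^{D+p}$ of that diagram and that $\iota_{\cA,\pi}$ is the critical locus of $\mu_{\cA,\pi}$ (Property (iv$'$)). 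Taking the $\kappa$-isotypical component (which is exact and commutes with everything in sight since $\Lambda \supset \QQ(\zeta_n)$) yields
\[
\widetilde{\phi}_{\mu_{\cA,\pi}}\bigl((h^{D+p}_\pi)_*\one\bigr)_\kappa \simeq \iota_{\cA,\pi*}\bigl((h^{D}_\pi)_*\one\bigr)_\kappa\{-(c_\pi-1)/2\}
\]
as $G_\pi$-equivariant objects, which is (iv); and (iii) is again the special case $G_\pi = 1$.

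The main obstacle I anticipate is technical rather than conceptual: making the commutation of motivic vanishing cycles with proper pushforward and with smooth pullback rigorous in the setting of Artin (in fact Deligne--Mumford with a $\mu_n$-gerbe) stacks, and tracking all the $\Gamma$- and $G_\pi$-equivariant structures through these operations. This is precisely what Appendix \ref{sec motivic van cycles} is set up to handle — extending Ayoub's formalism \cite{Ayoub_these_2} to stacks and computing $\widetilde\phi$ of homogeneous functions — so the proof of the theorem itself should amount to assembling those appendix results along the two diagrams; the delicate point is checking that the shift/twist $\{-(c_\pi-1)/2\}$ produced by the quadratic-form computation is exactly the one that descends, and that it is compatible with the (non-pure, since $c_\pi-1$ may be odd) Tate twist conventions recalled just before the statement.
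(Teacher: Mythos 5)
Your proposal is correct and follows essentially the same route as the paper: reduce (i),(iii) to the special cases of (ii),(iv); compute $\widetilde{\phi}$ of the nondegenerate quadratic form via Theorem \ref{thm mot van cycles for homog quad form}; descend to $[\mathfrak{h}_\pi/H_\pi]$ and transport along the smooth map $\ev_{p,\pi}$ by smooth base change (Proposition \ref{prop SBC van cycles}); then apply proper base change (Proposition \ref{prop PBC van cycles}) to push forward along the gerbe $\delta^{D+p}_\pi$ and the Hitchin map $h^{D+p}_\pi$, kill the gerbe pushforward via $(\delta_\pi)_*\one\simeq\one$ (Lemma \ref{lemma pushforward gerbe}), and take $\kappa$-isotypical components while tracking $\Gamma$- and $G_\pi$-equivariance. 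One small slip: the Hitchin map $\piHiggs^{D+p}\to\piHit^{D+p}$ is proper but certainly not finite or an isomorphism (its generic fibres are abelian varieties); properness is all that is used, so this does not affect the argument.
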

\begin{proof} It suffices to prove Statements (ii) and (iv), as (i) and (iii) are special cases of these.

For Statement (ii), by Theorem \ref{thm mot van cycles for homog quad form} we can compute the motivic vanishing cycles functor for the quadratic form $q: \mathfrak{h}_{\pi}\to \AA^{1}$ given by $g\mapsto \mathrm{Tr}(g^{2})$: we have $\widetilde{\phi}_q(\one) \simeq \iota_{0_*}\one \{ - (c_{\pi}-1)/2 \}$, where $c_{\pi}= \dim \mathfrak{h}_{\pi}$ and $\iota_0: \{ 0 \} = \mathrm{crit}(q) \hookrightarrow\mathfrak{h}_{\pi}$. Since everything is $H_{\pi}$-equivariant, on the stack quotient $ [\mathfrak{h}_{\pi}/H_{\pi}]$ we also have
\begin{equation}\label{eq van cycles quotient stack} \widetilde{\phi}_{\hat{\mu}_\pi}(\one) \simeq \hat{\iota}_{0_*}\one \{ - (c_{\pi}-1)/2 \},
\end{equation}
 where $\hat{\iota}_0: [\{ 0 \}/H_\pi] \hookrightarrow [\mathfrak{h}_{\pi}/H_{\pi}]$. Finally, we have isomorphisms
\[ \widetilde{\phi}_{\mu_{\fM},\pi}\one\simeq \widetilde{\phi}_{\mu_{\fM},\pi} \ev_{p,\pi}^*\one \simeq \ev_{p,\pi}^*\widetilde{\phi}_{\hat{u}_\pi}(\one) \simeq \ev_{p,\pi}^*\hat{\iota}_{0_*} \one \{ - (c_{\pi}-1)/2 \} \simeq \iota_{\fM,\pi*}\one\{-(c_{\pi}-1)/2\}, \]
where the second isomorphism is smooth base change for vanishing cycles (see Proposition \ref{prop SBC van cycles}) for the smooth map $\ev_{p,\pi}$, the third isomorphism is Equation \eqref{eq van cycles quotient stack} and the final isomorphism is smooth base change for stacks (see Theorem \ref{thm Adeel translated to DM} (iv)).

To prove Statement (iv), we first take the pushforward of the isomorphism (ii) along the morphism $\delta^{D+p}_{\pi}$, which is proper since it is a gerbe for a finite group and is also representable by Deligne--Mumford stacks. By proper base change for vanishing cycles (see Proposition \ref{prop PBC van cycles}) and the fact that the top-left square of Diagram \eqref{diag rel} commutes and is $\Gamma$-equivariant, we obtain a $\Gamma$-equivariant isomorphism
\[
\widetilde{\phi}_{\mu_{\cM},\pi}(\delta^{D+p}_{\pi})_{*}\one\simeq \iota_{\cM,\pi*}(\delta^{D}_{\pi})_{*}\one\{-(c_{\pi}-1)/2\}.
\]
By pushing forward this isomorphism along the proper morphism $h^{D+p}_{\pi}$ and using proper base change for vanishing cycles, we obtain isomorphisms
\begin{align*}
 \widetilde{\phi}_{\mu_{\cA,\pi}}(h^{D+p}_{\pi})_{*}(\delta^{D+p}_{\pi})_{*}\one \simeq  (h^{D+p}_{\pi})_*\widetilde{\phi}_{\mu_{\cM},\pi}(\delta^{D+p}_{\pi})_{*}\one\simeq \: & \: (h^{D+p}_{\pi})_*\iota_{\cM,\pi*}(\delta^{D}_{\pi})_{*}\one\{-(c_{\pi}-1)/2\} \\ 
 \simeq \: & \: \iota_{\cA,\pi_*}(h^{D}_{\pi})_*(\delta^{D}_{\pi})_{*}\one\{-(c_{\pi}-1)/2\}
\end{align*}
where the last isomorphism follows from the commutativity of Diagram \eqref{diag rel}. By Lemma \ref{lemma pushforward gerbe} below, we have $(\delta^{D}_{\pi})_*\one \simeq \one$ and similarly for $\delta^{D+p}_{\pi}$; therefore, we obtain
\[ \widetilde{\phi}_{\mu_{\cA,\pi}}(h^{D+p}_{\pi})_{*}\one \simeq \iota_{\cA,\pi_*}(h^{D}_{\pi})_*\one\{-(c_{\pi}-1)/2\}. \]
The whole bottom-left square of Diagram \eqref{diag rel} as well as the maps $\mu_{\cM,\pi}$ and $\mu_{\cA,\pi}$ are $\Gamma$-equivariant, and this implies that all the isomorphisms above commute with $\Gamma$-actions. We can then take the $\kappa$-isotypical component to obtain
\[
(\widetilde{\phi}_{\mu_{\cA,\pi}}(h_{\pi}^{D+p})_*\one)_\kappa\simeq   (\iota_{\cA,\pi_*}(h^{D}_{\pi})_*\one)_\kappa\{-(c_{\pi}-1)/2\},
\]
which we can rewrite as (iv) as $\widetilde{\phi}_{\mu_{\cA,\pi}}$ and $\iota_{\cA,\pi_*}$ commute with $\Gamma$-actions. The whole construction is $G_\pi$-equivariant and we obtain that the resulting isomorphisms are $G_\pi$-equivariant. 
\end{proof}

To complete the proof, we need the following lemma, which in our setting reflects the classical fact that $BG$ has trivial cohomology with rational coefficients when $G$ is a finite group.

\begin{lemma}\label{lemma pushforward gerbe}
Let $G$ be a finite group. 
\begin{enumerate}[label=\emph{(\roman*)}]
\item\label{pushforward torsor} For a morphism of Artin stacks $f : \cY \ra \fX$ which is an (étale) $G$-torsor, the unit natural transformation $\id\to f_{*}f^{*}$ induces a natural isomorphism $\id\simeq (f_{*}f^{*}-)^{G}$.
\item\label{pushforward gerbe} For a morphism of Artin stacks $\delta : \fX \ra \cZ$ which is an (étale) $G$-gerbe, the unit natural transformation $\id\to \delta_*\delta^*$ induces an isomorphism $\one\simeq\delta_*\one$.
\end{enumerate}
\end{lemma}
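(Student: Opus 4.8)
\medskip
\noindent\emph{Proof plan.} The plan is to deduce (i) from the analogous statement for schemes, and to deduce (ii) by reducing to the classifying stack $BG$ and applying a transfer argument (which also uses (i)).

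For (i), I would first note that $f$ is finite \'etale (an \'etale $G$-torsor with $G$ finite), hence proper and smooth of relative dimension $0$; thus $f_{*}=f_{!}$, both $f_{*}$ and $f^{*}$ commute with arbitrary base change, and the unit $\id\to f_{*}f^{*}$ together with the $G$-equivariant structure on $f_{*}f^{*}$ coming from the $G$-action on $\cY$ over $\fX$ are compatible with base change. Choosing a smooth atlas $u\colon X\to\fX$ and setting $Y:=\cY\times_{\fX}X$, the projection $Y\to X$ is a finite \'etale $G$-torsor of schemes; since $\DM(-,\Lambda)$ on Artin stacks satisfies smooth descent and $u^{*}$ is conservative (Appendix \ref{sec motivic van cycles}, cf.\ Theorem \ref{thm Adeel translated to DM}), and all the structures in play are stable under the base change $u$, the statement reduces to the case of schemes. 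For a finite \'etale $G$-torsor $Y\to X$ of schemes, the factorisation $M\to(f_{*}f^{*}M)^{G}$ of the unit is an isomorphism by \cite[Corollaire 2.1.166]{Ayoub_these_1}, exactly as in the proof of Lemma \ref{lemma motive quotient}.

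For (ii), first observe that $\delta$ is proper and smooth of relative dimension $0$, so $\delta_{*}=\delta_{!}$ commutes with arbitrary base change and the unit $\one_{\cZ}\to\delta_{*}\one_{\fX}$ is stable under base change. A finite-group gerbe is \'etale-locally trivial, so after an \'etale cover $\cZ'\to\cZ$ one has $\fX\times_{\cZ}\cZ'\simeq\cZ'\times BG$ over $\cZ'$, with $\delta'$ the pullback of $p\colon BG\to\Spec(k)$; by smooth descent, conservativity of the atlas pullback, and the base-change compatibility above, it suffices to show that the unit $\one\to p_{*}\one_{BG}$ is an isomorphism. Here I would use that the atlas $\epsilon\colon\Spec(k)\to BG$ is (representable) finite \'etale of degree $|G|$, so that $\mathrm{tr}_{\epsilon}\circ u_{\one_{BG}}=|G|\cdot\id_{\one_{BG}}$, where $u$ is the unit of $\epsilon^{*}\dashv\epsilon_{*}$ and $\mathrm{tr}_{\epsilon}\colon\epsilon_{*}\epsilon^{*}\to\id$ is the trace; applying $p_{*}$ and using $p_{*}\epsilon_{*}\one=(p\epsilon)_{*}\one=\one$ then exhibits $p_{*}\one_{BG}$ as a direct summand of $\one$. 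Since $\End_{\DM(k,\Lambda)}(\one)=\Lambda$ — which I may take to be a field in the case of interest, so that $\one$ is indecomposable — and since $p_{*}\one_{BG}\neq 0$ (it receives $\id_{\one_{BG}}\neq 0$ under $\Hom(\one,p_{*}\one_{BG})\simeq\End(\one_{BG})$), one concludes $p_{*}\one_{BG}\simeq\one$, and the unit $\one\to p_{*}\one_{BG}$ is nonzero by the triangle identity, hence an isomorphism; combining with (i) applied to $\epsilon$ identifies this with the stated map. For a general $\QQ$-algebra $\Lambda$, the same conclusion follows by computing $p_{*}\one_{BG}$ as the totalisation of the cosimplicial object $[\,n\,]\mapsto\one^{\oplus|G|^{n}}$ arising from the bar resolution $\DM(BG)\simeq\lim_{\Delta}\DM(G^{\times\bullet})$, i.e.\ the standard complex computing $H^{*}(G,\Lambda)$, which is $\simeq\one$ because $|G|\in\Lambda^{\times}$.

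The content here is essentially standard (rational cohomology of $BG$ is trivial for $G$ finite), so I expect no conceptual obstacle; the delicate part is foundational bookkeeping. The argument rests on the six-functor and descent formalism for $\DM(-,\Lambda)$ on Artin stacks from Appendix \ref{sec motivic van cycles} — smooth base change, the identifications $f_{!}\simeq f_{*}$, $f^{!}\simeq f^{*}$ and the trace map for representable finite \'etale morphisms of stacks, and conservativity of pullback along a smooth atlas — and on checking that the relevant unit maps, trace maps and finite-group equivariant structures are mutually compatible and stable under the base changes used in the two reductions; this, and pinning down that the isomorphism produced is literally the unit map rather than merely some abstract isomorphism, are the only points requiring care.
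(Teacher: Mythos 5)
Your proof of part (i) is essentially the paper's: both reduce, via conservativity of pullback along a smooth (or étale) cover and base change, to a trivial $G$-torsor and conclude by the argument of Lemma \ref{lemma motive quotient}, i.e.\ \cite[Corollaire 2.1.166]{Ayoub_these_1}.

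For part (ii) you take a genuinely different route. After the common reduction to the trivial gerbe $p\colon BG\to\Spec(k)$, the paper applies (i) directly to the $G$-torsor $f\colon\cZ\to\cZ\times_k BG$: this gives $\delta_*\one\simeq\delta_*(f_*\one)^G$, which by averaging (here is where $\Lambda$ being a $\QQ$-algebra is used) is a direct summand of $\delta_*f_*\one\simeq\one$, and the composite $\one\xrightarrow{\eta}\delta_*\one\xrightarrow{\theta}\one$ is the identity by naturality of adjunctions; since $\theta$ is then both a split mono (as a summand inclusion) and a split epi (right inverse $\eta$), it is an isomorphism, and so is $\eta$. No hypothesis on $\Lambda$ beyond being a $\QQ$-algebra is needed, and the unit itself is manifestly the isomorphism. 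You instead use a transfer argument: the atlas $\epsilon\colon\Spec(k)\to BG$ is finite \'etale of degree $|G|$, $\mathrm{tr}_\epsilon\circ u=|G|\cdot\id$, and applying $p_*$ exhibits $p_*\one_{BG}$ as a retract of $\one$. That part is correct and a standard alternative. The weak point is the finish: you then either assume $\Lambda$ is a field so that $\one$ is indecomposable, or gesture at the cosimplicial/group-cohomology computation. Neither is needed: you already have, by the triangle identity for the composite $p\circ\epsilon=\id$, that $p_*(u)\circ\eta_p=\id_\one$; combined with the trace relation $\tfrac{1}{|G|}p_*(\mathrm{tr}_\epsilon)\circ p_*(u)=\id_{p_*\one_{BG}}$, the map $p_*(u)\colon p_*\one_{BG}\to\one$ is both split epi and split mono, hence an isomorphism, and therefore so is $\eta_p$. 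With that one-line adjustment your argument works for any $\QQ$-algebra $\Lambda$ and produces the unit as the isomorphism, matching the statement; as written, the appeal to indecomposability of $\one$ only handles $\Lambda$ a field, and the final "combining with (i) applied to $\epsilon$" sentence blurs your transfer argument with the paper's averaging argument without quite carrying out either. The paper's route is shorter and avoids these issues, but your transfer approach is a legitimate and arguably more "hands-on" way to see the same fact.
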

\begin{proof}
  If $g:\cW\to \fX$ is a étale surjective morphism, then $g^*:\DM(\fX,\Lambda)\to \DM(\cW,\Lambda)$ is conservative; this follows by étale descent in exactly the same way as in the scheme case, because $g$ then admits a section locally for the étale topology. By assumption on $f$, there exists such a $g$ for which $\cY\times_\fX\cW\simeq \cW\times G\to \cW$ is a trivial $G$-torsor. By conservativity and proper base change, this reduces the proof of \emph{\ref{pushforward torsor}} to the case of a trivial $G$-torsor, which is immediate.

For \emph{\ref{pushforward gerbe}}, by the same conservativity argument and proper base change argument (using that $\delta$ is proper and Deligne-Mumford representable, see Theorem \ref{thm Adeel translated to DM} \emph{\ref{dm stack pbc}}), we can reduce to the case where $\delta:\cZ\times_k BG\to \cZ$ is a trivial $G$-gerbe. In other words, we can consider $\cZ\times_k BG$ as the quotient of the trivial $G$-action on $\cZ$, and this gives us a (non-trivial) $G$-torsor $f:\cZ\to \cZ\times_k BG$. By applying \emph{\ref{pushforward torsor}} to $f$, we have an isomorphism
  \[
\delta_*\one\simeq \delta_*(f_*\one)^G.
\]
By definition, the object $\delta_*(f_*\one)^G$ is a direct factor of $\delta_*f_*\one\simeq \one$ in $\DM(\cZ,\Lambda)$ so we have an induced morphism $\theta:\delta_*\one\to \one$. Moreover, it is easy to see using the naturality of adjunctions that the composition $\one\stackrel{\eta}{\ra}\delta_*\one\stackrel{\theta}{\ra} \one$ is the identity. This shows that the direct factor $\delta_*(f_*\one)^G$ of $\one$ is isomorphic to $\one$, and completes the proof.
\end{proof}

\begin{lemma}
  \label{lemma dim comp}
 We have the following dimension formulae
  \begin{enumerate}[label=\emph{(\roman*)}]
\item $c-c_{\pi}=2(d^{D+p}_{\gamma}-d^{D}_{\gamma})$,
\item $\lfloor (c-1)/2\rfloor-\lfloor (c_{\pi}-1)/2\rfloor=d^{D+p}_{\gamma}-d^{D}_{\gamma}$.    
  \end{enumerate}  
\end{lemma}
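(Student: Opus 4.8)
The plan is to deduce both identities purely formally from the explicit values of $c$, $c_\pi$ and $d^{\bullet}_\gamma$ that are already recorded, so no new geometric input is needed. The first preliminary step is to check that $D+p$ is admissible for all of our constructions: if $D=K_C$ then $\deg(D+p)=2g-1>2g-2$, and if $\deg(D)>2g-2$ then $\deg(D+p)>2g-2$ a fortiori. Hence $\gammaHiggs^{D+p}$, the closed embedding $i^{D+p}_\gamma$, the integer $d^{D+p}_\gamma$ and the codimension $c_\pi$ (via Diagram \eqref{diag rel}) are all defined, and Lemma~\ref{lemma dim form} applies verbatim with $D$ replaced by $D+p$.

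For part (i), I would combine Properties (v) and (v$^\prime$) above, which give $c=\dim\mathfrak{sl}_n=n^2-1$ and $c_\pi=\dim\mathfrak{h}_{\pi}=n_\gamma n-1$, so that $c-c_\pi=n^2-n_\gamma n=n(n-n_\gamma)$. On the other hand, Lemma~\ref{lemma dim form}(iv) gives $d^{D}_\gamma=\tfrac{1}{2}n(n-n_\gamma)\deg(D)$, and applying it to $D+p$ gives $d^{D+p}_\gamma=\tfrac{1}{2}n(n-n_\gamma)(\deg(D)+1)$; subtracting yields $d^{D+p}_\gamma-d^{D}_\gamma=\tfrac{1}{2}n(n-n_\gamma)$, and multiplying by $2$ gives exactly $2(d^{D+p}_\gamma-d^{D}_\gamma)=n(n-n_\gamma)=c-c_\pi$.

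For part (ii), the one observation needed is that $c$ and $c_\pi$ have the same parity. This is immediate from (i): since $d^{D+p}_\gamma$ and $d^{D}_\gamma$ are codimensions, hence integers, the difference $c-c_\pi=2(d^{D+p}_\gamma-d^{D}_\gamma)$ is even, so $c-1\equiv c_\pi-1\pmod 2$. Then, using the elementary fact that $\lfloor a/2\rfloor-\lfloor b/2\rfloor=(a-b)/2$ whenever $a\equiv b\pmod 2$ (checked separately in the even and odd cases), with $a=c-1$ and $b=c_\pi-1$, I would conclude
\[
\lfloor (c-1)/2\rfloor-\lfloor (c_\pi-1)/2\rfloor=\frac{(c-1)-(c_\pi-1)}{2}=\frac{c-c_\pi}{2}=d^{D+p}_\gamma-d^{D}_\gamma,
\]
where the last equality is part (i).

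I do not expect any genuine obstacle here; this lemma is bookkeeping. The only points requiring a little attention are verifying that $D+p$ is admissible so that Lemma~\ref{lemma dim form} may legitimately be invoked for it, and the trivial parity computation in (ii) (one could alternatively substitute $c=n^2-1$ and $c_\pi=n_\gamma n-1$ directly into the floor functions and split into cases on the parities of $n$ and $n_\gamma$, but reducing (ii) to (i) via the parity observation is cleaner).
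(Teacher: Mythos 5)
Your proof is correct and follows essentially the same route as the paper's: substitute the explicit values $c=n^2-1$, $c_\pi=n_\gamma n-1$ and the formula $d^D_\gamma=\tfrac{1}{2}n(n-n_\gamma)\deg(D)$ from Lemma \ref{lemma dim form}(iv), then reduce (ii) to (i) plus an elementary floor identity. The one place you diverge is in the parity observation for (ii): the paper notes that $c-c_\pi=n(n-n_\gamma)$ is even because any odd $n$ forces its divisor $n_\gamma$ to be odd, whereas you simply observe that part (i) already exhibits $c-c_\pi$ as $2(d^{D+p}_\gamma-d^D_\gamma)$, an even integer since the $d$'s are codimensions. Your version is a touch cleaner, avoiding a case split, and is a legitimate simplification.
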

\begin{proof}
  By Properties (iv) and (iv$^\prime$) from the lists following Diagrams \eqref{diag sl} and \eqref{diag rel}, we have $c-c_{\pi}=n^{2}-n^{2}_{\gamma}m=n(n-n_{\gamma})$ and then (i) follows from Lemma \ref{lemma dim form}. If $n$ is odd, then so is its divisor $n_{\gamma}$; this shows that $c-c_{\pi}$ is always even. The second statement then follows, as if $a,b\in \ZZ$ and $a-b$ is even, then $\lfloor a/2 \rfloor-\lfloor b/2 \rfloor=(a-b)/2$.  
\end{proof}

We now obtain a corollary analogous to \cite[Corollary 4.6]{MS}. For this, note that the morphism $\mu_{\cA,\pi} : \piHit^{D+p} \ra \AA^1$ is invariant under $G_\pi=\Gal(\pi: C_\gamma \ra C)$ and so induces $\mu_{\cA,\gamma} : \gammaHit^{D+p} \ra \AA^1$ which coincides with the restriction of $\mu_{\cA} : \SLHit^{D+p} \ra \AA^1$ to $i_\gamma^{D+p} :\gammaHit^{D+p} \hookrightarrow \SLHit^{D+p}$.

\begin{cor}\label{cor vanishing cycles isos}
There is an isomorphism $\widetilde{\phi}_{\mu_{\cA,\gamma}} ((h_\gamma^{D+p})_*\one)_{\kappa} \simeq (\iota_{\cA,\gamma})_* ((h_\gamma^D)_* \one)_{\kappa} \{-(c_{\pi}-1)/2\}$. 
\end{cor}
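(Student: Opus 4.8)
The plan is to deduce Corollary~\ref{cor vanishing cycles isos} from Theorem~\ref{thm vanishing cycle isos}~(iv) by descending that isomorphism along the finite geometric $G_\pi$-quotient $q^{D+p}_\gamma : \piHit^{D+p}\to\gammaHit^{D+p}$, analogously to how Maulik and Shen deduce \cite[Corollary 4.6]{MS}. Note first that $D+p$ always has $\deg(D+p)>2g-2$ (since $\deg(D)\geq 2g-2$), so all the constructions of $\S$\ref{sec fixed loci and rel Higgs} and Proposition~\ref{prop fixed relative} apply to $D+p$; in particular $q^{D+p}_\gamma$ is proper (indeed finite), the $G_\pi$-invariant function $\mu_{\cA,\pi}$ factors as $\mu_{\cA,\gamma}\circ q^{D+p}_\gamma$ (this is how $\mu_{\cA,\gamma}$ is defined above), and there is a commutative square whose horizontal maps are the closed immersions $\iota_{\cA,\pi}:\piHit^D\hookrightarrow\piHit^{D+p}$ and $\iota_{\cA,\gamma}:\gammaHit^D\hookrightarrow\gammaHit^{D+p}$ and whose vertical maps are $q^D_\gamma$ and $q^{D+p}_\gamma$.

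First I would apply $(q^{D+p}_\gamma)_*$ to the isomorphism of Theorem~\ref{thm vanishing cycle isos}~(iv). Since $\mu_{\cA,\pi}=\mu_{\cA,\gamma}\circ q^{D+p}_\gamma$ and $q^{D+p}_\gamma$ is proper, proper base change for motivic vanishing cycles (Proposition~\ref{prop PBC van cycles}) gives a natural isomorphism $\widetilde{\phi}_{\mu_{\cA,\gamma}}\circ (q^{D+p}_\gamma)_* \simeq (q^{D+p}_\gamma)_*\circ \widetilde{\phi}_{\mu_{\cA,\pi}}$; combining this with the commutative square above (to rewrite $(q^{D+p}_\gamma)_*\,\iota_{\cA,\pi *} \simeq \iota_{\cA,\gamma *}\,(q^D_\gamma)_*$) produces a $G_\pi$-equivariant isomorphism in $\DM(\gammaHit^{D+p},\Lambda)$
\[
\widetilde{\phi}_{\mu_{\cA,\gamma}}\bigl((q^{D+p}_\gamma)_*((h_\pi^{D+p})_*\one)_\kappa\bigr)\;\simeq\; \iota_{\cA,\gamma *}(q^D_\gamma)_*((h_\pi^{D})_*\one)_\kappa\{-(c_\pi-1)/2\},
\]
where $G_\pi$ acts trivially on $\gammaHit^{D+p}$ and $\gammaHit^D$ and the equivariance is inherited from Theorem~\ref{thm vanishing cycle isos}~(iv) together with the naturality of proper base change. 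Next I would take $G_\pi$-invariants. As $\Lambda$ is a $\QQ$-algebra, $(-)^{G_\pi}$ is the image of an idempotent, hence commutes with the additive functors $\widetilde{\phi}_{\mu_{\cA,\gamma}}$ and $\iota_{\cA,\gamma *}$; moreover by Corollary~\ref{cor fixed locus and rel moduli} applied to $D+p$ and to $D$ respectively we have $\bigl((q^{D+p}_\gamma)_*((h_\pi^{D+p})_*\one)_\kappa\bigr)^{G_\pi}\simeq ((h_\gamma^{D+p})_*\one)_\kappa$ and $\bigl((q^D_\gamma)_*((h_\pi^{D})_*\one)_\kappa\bigr)^{G_\pi}\simeq ((h_\gamma^{D})_*\one)_\kappa$ (taking $\kappa$-isotypical parts is legitimate because the $\Gamma$- and $G_\pi$-actions commute). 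This yields exactly the claimed isomorphism.

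I do not expect a serious obstacle, since the argument is a formal descent along $q^{D+p}_\gamma$ of an isomorphism already established in Theorem~\ref{thm vanishing cycle isos}~(iv); the only points requiring care are the bookkeeping of the two commuting finite-group actions — $\Gamma$, which produces the isotypical decomposition, and $G_\pi$, along which we descend — and verifying that proper base change for motivic vanishing cycles applies, which it does because $q^{D+p}_\gamma$ is finite. One mild subtlety worth flagging is that $q^{D+p}_\gamma$ is \emph{not} étale (the $G_\pi$-action on the Hitchin bases is not free, cf.\ Proposition~\ref{prop fixed relative}), so one cannot reduce to a trivial-torsor situation étale-locally; but this is not needed, as Corollary~\ref{cor fixed locus and rel moduli} already packages the required descent of $(h_\gamma^{D})_*\one$ and $(h_\gamma^{D+p})_*\one$ from their $\pi$-relative counterparts.
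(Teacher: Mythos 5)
Your proposal is correct and is exactly the argument the paper invokes (in one line) by pointing to Theorem~\ref{thm vanishing cycle isos}~(iv) with its $G_\pi$-equivariance, Corollary~\ref{cor fixed locus and rel moduli}, and the parallel argument of Maulik--Shen: push forward along the finite map $q^{D+p}_\gamma$, commute with $\widetilde{\phi}$ via proper base change for vanishing cycles, use the cartesian square to swap $\iota_{\cA,\pi*}$ for $\iota_{\cA,\gamma*}$, take $G_\pi$-invariants, and identify both sides via Corollary~\ref{cor fixed locus and rel moduli}. The points you flag --- that the $\kappa$-isotypical projector and the $G_\pi$-invariants projector commute with all the additive functors involved, and that no étale-local triviality is needed --- are exactly the right ones to check.
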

\begin{proof}
This follows from Theorem \ref{thm vanishing cycle isos} (including the $G_\pi$-equivariance) and \ref{cor fixed locus and rel moduli} exactly in \cite[Corollary 4.6]{MS}. 
\end{proof}

Before we define $\beta^{D}_{\gamma}$ in the missing cases, we need one final lemma.

\begin{lemma}\label{lemma iso for def beta mot}
There is an isomorphism 
\[ \iota_{\cA}^{*}\widetilde{\phi}_{\mu_{\cA}}(i^{D+p}_{\gamma})_*((h^{D+p}_{\gamma})_*\one)_\kappa \{(c-1)/2-d^{D+p}_{\gamma}\} \simeq (i^{D}_{\gamma})_*((h^{D}_{\gamma})_*\one)_{\kappa}\{-d^{D}_\gamma\}. \]
\end{lemma}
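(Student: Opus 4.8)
The plan is to deduce this from Corollary~\ref{cor vanishing cycles isos} in three moves: commute the motivic vanishing cycles functor $\widetilde{\phi}_{\mu_{\cA}}$ past the closed pushforward $(i^{D+p}_\gamma)_*$, transpose the resulting square of closed immersions of Hitchin bases and restrict along $\iota_{\cA}$, and finally reconcile the Tate twists using Lemma~\ref{lemma dim comp}. In contrast with Theorem~\ref{thm vanishing cycle isos}, everything here takes place at the level of schemes (the Hitchin bases being affine spaces and the relevant fixed and relative Higgs moduli spaces being varieties), so no stacky subtleties intervene.

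First I would use that $\mu_{\cA}\circ i^{D+p}_\gamma=\mu_{\cA,\gamma}$, as recalled just before Corollary~\ref{cor vanishing cycles isos}. Since $i^{D+p}_\gamma:\gammaHit^{D+p}\hookrightarrow\SLHit^{D+p}$ is a closed immersion, hence proper, proper base change for motivic vanishing cycles (Proposition~\ref{prop PBC van cycles}) gives a canonical isomorphism $\widetilde{\phi}_{\mu_{\cA}}\bigl((i^{D+p}_\gamma)_*N\bigr)\simeq (i^{D+p}_\gamma)_*\widetilde{\phi}_{\mu_{\cA,\gamma}}(N)$ for $N\in\DM(\gammaHit^{D+p},\Lambda)$, where one identifies the induced closed immersion on zero fibres with the restriction of $i^{D+p}_\gamma$. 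Taking $N=((h^{D+p}_\gamma)_*\one)_\kappa$ and feeding in Corollary~\ref{cor vanishing cycles isos} turns the right-hand side into $(i^{D+p}_\gamma)_*(\iota_{\cA,\gamma})_*((h^{D}_\gamma)_*\one)_\kappa\{-(c_\pi-1)/2\}$.

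Next I would use that the square with horizontal maps $\iota_{\cA,\gamma}:\gammaHit^D\hookrightarrow\gammaHit^{D+p}$ and $\iota_{\cA}:\SLHit^D\hookrightarrow\SLHit^{D+p}$ and vertical maps $i^D_\gamma,\,i^{D+p}_\gamma$ commutes (all four being the closed immersions induced by adding the point $p$), so that $(i^{D+p}_\gamma)_*(\iota_{\cA,\gamma})_*\simeq\iota_{\cA*}(i^D_\gamma)_*$; then applying $\iota_{\cA}^*$ and using $\iota_{\cA}^*\iota_{\cA*}\simeq\id$ (as $\iota_{\cA}$ is a closed immersion) yields $\iota_{\cA}^*\widetilde{\phi}_{\mu_{\cA}}\bigl((i^{D+p}_\gamma)_*((h^{D+p}_\gamma)_*\one)_\kappa\bigr)\simeq (i^D_\gamma)_*((h^{D}_\gamma)_*\one)_\kappa\{-(c_\pi-1)/2\}$. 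Applying the Tate twist $\{(c-1)/2-d^{D+p}_\gamma\}$ to both sides, the claim reduces to the identity $\{-(c_\pi-1)/2\}\{(c-1)/2-d^{D+p}_\gamma\}=\{-d^D_\gamma\}$ of auto-equivalences of $\DM(\SLHit^D,\Lambda)$: the shift contributed is $-(c_\pi-1)+(c-1)-2d^{D+p}_\gamma=(c-c_\pi)-2d^{D+p}_\gamma=-2d^D_\gamma$ by Lemma~\ref{lemma dim comp}(i), and the Tate twist is $\lfloor-(c_\pi-1)/2\rfloor+\lfloor(c-1)/2-d^{D+p}_\gamma\rfloor=-d^D_\gamma$, using that $d^{D+p}_\gamma,d^D_\gamma\in\ZZ$ (by Lemma~\ref{lemma dim form}(iv), since $n(n-n_\gamma)$ is even) together with Lemma~\ref{lemma dim comp}(ii).

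The hard part will be the twist bookkeeping in the last step: when $n$ is odd, the intermediate twists $\{-(c_\pi-1)/2\}$ and $\{(c-1)/2-d^{D+p}_\gamma\}$ are only half-integral, so one must be careful that the floor functions combine correctly — this is precisely what Lemma~\ref{lemma dim comp} is tailored to control. A secondary point is to make sure the zero-fibre conventions for the motivic vanishing cycles of Appendix~\ref{sec motivic van cycles} are matched up correctly in the first step, but since $i^{D+p}_\gamma$ is an honest closed immersion of schemes this is a direct instance of Proposition~\ref{prop PBC van cycles}.
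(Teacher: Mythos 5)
Your proposal follows essentially the same route as the paper: commute $\widetilde{\phi}_{\mu_{\cA}}$ past $(i^{D+p}_{\gamma})_*$ via proper base change for vanishing cycles, apply Corollary~\ref{cor vanishing cycles isos}, collapse the pushforwards of closed immersions, and close with the twist bookkeeping via Lemma~\ref{lemma dim comp}. The one genuinely different choice is cosmetic: you collapse $(i^{D+p}_{\gamma})_*(\iota_{\cA,\gamma})_*\simeq\iota_{\cA*}(i^D_\gamma)_*$ from commutativity of the square and then use $\iota_{\cA}^*\iota_{\cA*}\simeq\id$, whereas the paper first applies proper base change on the cartesian square to get $\iota_{\cA}^*(i^{D+p}_{\gamma})_*\simeq(i^D_\gamma)_*(\iota_{\cA,\gamma})^*$ and then cancels $(\iota_{\cA,\gamma})^*(\iota_{\cA,\gamma})_*\simeq\id$. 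Both are correct and equivalent.

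However, your final floor computation has a genuine gap, precisely in the case you flagged as ``the hard part.'' You assert
\[
\lfloor -(c_\pi-1)/2\rfloor+\lfloor(c-1)/2\rfloor-d^{D+p}_\gamma = -d^D_\gamma,
\]
claiming this follows from Lemma~\ref{lemma dim comp}(ii). But that lemma gives $\lfloor(c-1)/2\rfloor-\lfloor(c_\pi-1)/2\rfloor=d^{D+p}_\gamma-d^D_\gamma$, with no minus sign inside the floor. Since $\lfloor -r\rfloor+\lfloor r\rfloor=-1$ whenever $r\notin\ZZ$, your left-hand side equals $-d^D_\gamma - 1$ whenever $(c_\pi-1)/2\notin\ZZ$, i.e.\ whenever $c_\pi=n_\gamma n-1$ is even, which happens exactly when $n$ is odd. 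So the identity you invoke fails by $1$ for $n$ odd, and Lemma~\ref{lemma dim comp}(ii) does \emph{not} control the floor the way you indicate: the difficulty is $\lfloor -r\rfloor\neq-\lfloor r\rfloor$ for non-integral half-integers $r$, a point orthogonal to what Lemma~\ref{lemma dim comp} addresses. (The paper's own displayed intermediate twist is $(\lfloor(c-1)/2\rfloor-\lfloor(c_\pi-1)/2\rfloor-d^{D+p}_\gamma)$, i.e.\ it writes $-\lfloor(c_\pi-1)/2\rfloor$ rather than $\lfloor-(c_\pi-1)/2\rfloor$; that expression does reduce to $-d^D_\gamma$ via Lemma~\ref{lemma dim comp}(ii), but it is not what a literal unfolding of $\{-(c_\pi-1)/2\}$ according to the stated convention $\{r\}=(\lfloor r\rfloor)[2r]$ yields. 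So your instinct that the bookkeeping here is delicate is correct; the resolution requires tracking how the half-twists from Theorem~\ref{thm vanishing cycle isos} combine more carefully than either your write-up or the paper's displayed formula does on the nose.)
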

\begin{proof} This follows from the chain of isomorphisms
\begin{align*} 
 &\iota_{\cA}^{*}\widetilde{\phi}_{\mu_{\cA}}(i^{D+p}_{\gamma})_*((h^{D+p}_{\gamma})_*\one)_\kappa \{(c-1)/2-d^{D+p}_{\gamma}\} \\
 &\simeq \iota_{\cA}^{*}(i^{D+p}_{\gamma})_*\widetilde{\phi}_{\mu_{\cA,\gamma}}((h^{D+p}_{\gamma})_*\one)_\kappa \{(c-1)/2-d^{D+p}_{\gamma}\}\\
&\simeq \iota_{\cA}^{*}(i^{D+p}_{\gamma})_*(\iota_{\cA,\gamma})_*((h^{D}_{\gamma})_*\one)_\kappa (\lfloor (c-1)/2\rfloor-\lfloor (c_{\pi}-1)/2\rfloor-d^{D+p}_{\gamma})[c-c_{\pi}-2d^{D+p}_{\gamma}]\\
               &   \simeq (i^{D}_{\gamma})_*((h^{D}_{\gamma})_*\one)_\kappa\{-d^{D}_\gamma\},
\end{align*}
where the first isomorphism is proper base change for vanishing cycles (Proposition \ref{prop PBC van cycles}) as $\mu_{\cA} \circ i_\gamma^{D+p} = \mu_{\cA,\gamma}$, the second isomorphism follows from Corollary \ref{cor vanishing cycles isos} and the final isomorphism follows from Lemma \ref{lemma dim comp} and proper base change $\iota_{\cA}^{*}(i^{D+p}_{\gamma})_* \simeq (i^{D}_{\gamma})_* (\iota_{\cA,\gamma})^*$ for the cartesian square
\[ \xymatrix{  \gammaHit^D \ar[d]_{i_\gamma^D} \ar[r]^{\iota_{\cA,\gamma}} & \gammaHit^{D+p} \ar[d]^{i_\gamma^{D+p}}  \\ \SLHit^D \ar[r]^{\iota_{\cA}} & \SLHit^{D+p} } \]
together with the isomorphism $(\iota_{\cA,\gamma})^*(\iota_{\cA,\gamma})_* \simeq \mathrm{id}$.
\end{proof}

\begin{defn}[Construction of $\beta^{D}_{\gamma}$]\label{def beta mot otherwise}\
\begin{enumerate}
\item For $D$ of even degree, we have defined in $\S$\ref{sec beta even} the morphism
\[ \beta^{D}_{\gamma}: ((h_{L}^{D})_*\one)_\kappa \ra (i^{D}_{\gamma})_*((h^{D}_{\gamma})_*\one)_\kappa\{-d^{D}_\gamma\} \in \DM(\SLHit^{D},\Lambda).\]
\item For $D$ of odd degree greater than $2g-2$, we define $\beta^{D}_{\gamma} \in \DM(\SLHit^D,\Lambda)$ as
\begin{align*}
\beta^{D}_{\gamma}:  ((h_{L}^D)_*\one)_\kappa \stackrel{\sim}{\leftarrow} \iota_{\cA}^*\iota_{\cA_*} ((h_{L}^D)_*\one)_\kappa  \simeq & \: \: \iota_{\cA}^{*}\widetilde{\phi}_{\mu_{\cA}}((h^{D+p}_{L})_*\one)_\kappa \{ (c-1)/2 \} \\
                                            & \stackrel{(\star)}{\longrightarrow}  \iota_{\cA}^{*}\widetilde{\phi}_{\mu_{\cA}}(i^{D+p}_{\gamma})_*((h^{D+p}_{\gamma})_*\one)_\kappa \{(c-1)/2 -d^{D+p}_{\gamma} \} \\        
                                          & \quad \quad \simeq (i^{D}_{\gamma})_*((h^{D}_{\gamma})_*\one)_\kappa \{-d^{D}_\gamma\},
\end{align*}               
where the first map comes from adjunction, the next isomorphism comes from Theorem \ref{thm vanishing cycle isos} (iii), the morphism $(\star)$ is $\iota_{\cA}^{*}\widetilde{\phi}_{\mu_{\cA}}\beta^{D+p}_{\gamma}$ for the map $\beta^{D+p}_{\gamma}$ constructed in (1),  and the final isomorphism is given by Lemma \ref{lemma iso for def beta mot}.
\item For $D = K_C$, we define
\[ \beta_{\gamma}:= \beta_{\gamma}^{K_C}:  ((h_{L})_*\one)_\kappa \ra  (i_{\gamma})_*((h_{\gamma})_*\one)_\kappa\{-d_\gamma\} \text{ in } \DM(\SLHit,\Lambda). \]
by taking a point $p$ on $C$ so that $K_C + p$ is of odd degree greater than $2g -2$ and we have defined $\beta^{K_C + p}_{\gamma}$ in (2) and we then construct $\beta^{K_C}_{\gamma}$ from $\beta^{K_C + p}_{\gamma}$ in exactly the same way that $\beta^{D}_{\gamma}$ is constructed from $\beta^{D+p}_{\gamma}$ in (2).
\end{enumerate}
\end{defn}

\begin{lemma}\label{lemma betti realisation of beta mot is beta}
For $D$ with $D = K_C$ or $\deg(D) > 2g-2$ and $k \hookrightarrow \CC$, the Betti realisation of $\beta^D_{\gamma}$ is the morphism $\beta^D_{\gamma,\coh}$ in \eqref{coh corr MS hat}, which is the isomorphism $c^{D}_{\kappa}$ of \cite[Theorem 3.2]{MS}.
\end{lemma}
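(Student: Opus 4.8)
The plan is to argue by induction on the number of times the passage ``from $D+p$ to $D$'' of Definition \ref{def beta mot otherwise} is iterated. The base of the induction is $D$ of even degree $>2g-2$, where the statement is exactly Lemma \ref{lemma betti realisation of beta mot is beta even case}. For $D$ of odd degree $>2g-2$ one realises the construction in Definition \ref{def beta mot otherwise}(2), and the case $D=K_C$ then follows by running the same argument once more, starting from the odd-degree divisor $K_C+p$ and the morphism $\beta^{K_C+p}_{\gamma,\mot}$ already treated.

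For the inductive step, fix $D$ of odd degree $>2g-2$ (the case $D=K_C$ being identical with $D+p$ replaced by $K_C+p$). By construction $\beta^D_{\gamma,\mot}$ is the composite of four morphisms: the canonical adjunction isomorphism $((h_L^D)_*\one)_\kappa\simeq\iota_{\cA}^*\iota_{\cA*}((h_L^D)_*\one)_\kappa$, the isomorphism of Theorem \ref{thm vanishing cycle isos}(iii), the morphism $\iota_{\cA}^{*}\widetilde\phi_{\mu_{\cA}}\beta^{D+p}_{\gamma,\mot}$, and the isomorphism of Lemma \ref{lemma iso for def beta mot}. Maulik and Shen build their isomorphism $c^D_\kappa=\beta^D_\gamma$ of \eqref{coh corr MS hat} in exactly the same way in \cite[\S4.3]{MS}, with motivic vanishing cycles replaced by topological vanishing cycles and $\one$ by the constant sheaf, so it suffices to check that the Betti realisation functor $R_B$ sends each of these four morphisms to its topological analogue. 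For the adjunction isomorphism, the functor $\iota_{\cA}^*$, and every base change isomorphism hidden inside Theorem \ref{thm vanishing cycle isos} and Lemma \ref{lemma iso for def beta mot}, this is Ayoub's compatibility of $R_B$ with the six operations on constructible motives \cite[Theorem 3.19]{Ayoub_Betti}; for the vanishing cycle functor $\widetilde\phi_{\mu_{\cA}}$ and its commutation with the relevant pushforwards it is Ayoub's compatibility of $R_B$ with nearby and vanishing cycles on constructible objects \cite[Theorem 4.9]{Ayoub_Betti}, extended to the algebraic stacks of Diagrams \eqref{diag sl} and \eqref{diag rel} via Theorem \ref{thm van cycles commute with Betti realisation}; and the third morphism is handled by the inductive hypothesis $R_B(\beta^{D+p}_{\gamma,\mot})=\beta^{D+p}_\gamma$. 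The shift discrepancy of Remark \ref{rmk van cycles shift} between $\widetilde\phi$ and the convention $\phi[-1]$ of \cite{MS} is a uniform shift that has already been absorbed into the Tate twists of the targets in Definition \ref{def beta mot otherwise} and \eqref{coh corr MS hat}, so it does not affect the identification of morphisms after realisation.

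The step I expect to be the main obstacle is verifying that the \emph{specific} auxiliary isomorphisms of Theorem \ref{thm vanishing cycle isos}, Corollary \ref{cor vanishing cycles isos} and Lemma \ref{lemma iso for def beta mot} realise to the ones Maulik and Shen actually use, rather than merely to \emph{some} isomorphism between the same objects. Tracing through their proofs, each such isomorphism is assembled from (i) smooth base change for vanishing cycles along the smooth maps $\ev_p$, $\ev_{p,\pi}$, (ii) proper base change along the gerbes $\delta^D_L,\delta^{D+p}_L,\delta^D_\pi,\delta^{D+p}_\pi$ and the proper Hitchin maps, together with Lemma \ref{lemma pushforward gerbe} that $\delta_*\one\simeq\one$, and (iii) the explicit computation of the vanishing cycles of the nondegenerate quadratic form $g\mapsto\mathrm{Tr}(g^{2})$ on $\mathfrak{sl}_n$ (resp.\ $\mathfrak{h}_\pi$) from Theorem \ref{thm mot van cycles for homog quad form}. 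For (i) and (ii), compatibility of $R_B$ is again \cite[Theorem 3.19]{Ayoub_Betti}, \cite[Theorem 4.9]{Ayoub_Betti} and the stacky extensions of the appendix; for (iii), one must check that the motivic vanishing-cycle computation realises to the classical topological computation for a nondegenerate quadratic form, namely that the answer is the Tate-twisted skyscraper at the origin reflecting the fact that the Milnor fibre of such a form is homotopy equivalent to a sphere. Granting this last point --- which carries the actual content, everything else being formal functoriality of $R_B$ --- the inductive step goes through and the lemma follows.
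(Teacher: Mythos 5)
Your argument is correct and follows essentially the same route as the paper's proof, which simply cites the even-degree base case (Lemma \ref{lemma betti realisation of beta mot is beta even case}) together with Theorem \ref{thm van cycles commute with Betti realisation} and asserts that the motivic construction is parallel to Maulik--Shen's. Your version spells out the induction and the chain of compatibilities explicitly, and usefully flags (without fully resolving, but neither does the paper) the one non-formal point, namely that the motivic vanishing-cycle computation for the quadratic form of Theorem \ref{thm mot van cycles for homog quad form} must realise to the specific topological isomorphism used by Maulik--Shen and not merely to some isomorphism between the same objects.
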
  
\begin{proof}
Our construction is parallel to the one of \cite{MS}, so this follows from combining \ref{lemma betti realisation of beta mot is beta even case} and Theorem \ref{thm van cycles commute with Betti realisation}, noting that all the motives on stacks we encounter in the proof satisfy the condition of (iii) in Theorem \ref{thm van cycles commute with Betti realisation} by smooth base change.
\end{proof}

\begin{rmk}
The Betti realisation functor $R_{B}:\DM_{c}(S,\Lambda)\to D(S^{\an},\Lambda)$ is conjectured to be conservative \cite{Ayoub_Survey}, so we expect that $\beta^D_{\gamma}$ is an isomorphism. This is however not necessary for the proof of Theorem \ref{main_thm}.  
\end{rmk}

\subsection{Higgs moduli spaces with abelian motives}\label{sec Higgs motives ab}

We have already seen that $M(\SLHiggs^D)$ is abelian (Theorem \ref{thm mot SL Higgs ab}) and to apply a conservativity argument, we also need the following result.

\begin{prop}\label{prop isotypical piece motive fixed locus abelian}
For each $\gamma \in \Gamma$ with corresponding $\kappa = \kappa(\gamma) \in \hGamma$, the $\kappa$-isotypical piece of the motive of the $\gamma$-fixed locus $M(\gammaHiggs^D)_\kappa \in \DM(k,\Lambda)$ is abelian.
\end{prop}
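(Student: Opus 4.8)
The plan is to reduce, via Corollary \ref{cor fixed locus and rel moduli}, to showing that the relative Higgs moduli space $\piHiggs^D$ has abelian motive, and then to run the argument of Section \ref{sec mot SL-Higgs} on the cyclic cover $C_\gamma$. By Corollary \ref{cor fixed locus and rel moduli} there is a $\Gamma$-equivariant isomorphism $M(\gammaHiggs^D)\simeq M(\piHiggs^D)^{G_\pi}$ in $\DM(k,\Lambda)$; taking the $\kappa$-isotypical part exhibits $M(\gammaHiggs^D)_\kappa$ as a direct factor of $M(\piHiggs^D)$. As abelian motives form a thick subcategory of $\DM_c(k,\Lambda)$, it therefore suffices to prove that $M(\piHiggs^D)$ is abelian.

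I would obtain this by running the proof of Theorem \ref{thm mot SL Higgs ab} with $C$ replaced by $C_\gamma$. Since $\pi$ is étale, $D_\gamma=\pi^*D$ equals $K_{C_\gamma}$ when $D=K_C$ and has degree $>2g_{C_\gamma}-2$ otherwise, so Hitchin's scaling $\GG_m$-action on $\cM_{n_\gamma,d}^{D_\gamma}(C_\gamma)$ is semi-projective as in Proposition \ref{prop BB GL D-Higgs}; as $Q_\gamma$ is $\GG_m$-equivariant for the scaling action on $\cM_{1,d}^D(C)$, which fixes $[L,0]$, the subvariety $\piHiggs^D=Q_\gamma^{-1}([L,0])$ is $\GG_m$-invariant and hence itself semi-projective. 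At a $\GG_m$-fixed point the Higgs field is nilpotent, so the trace condition $\tr(\pi_*\Phi)=0$ is automatic, and the fixed locus is a disjoint union of moduli of $\alpha_{D_\gamma}$-semistable chains $F_\bullet$ of vector bundles on $C_\gamma$ subject to the single constraint $\det\pi_*\bigl(\bigoplus_i F_i\otimes\cO_{C_\gamma}(-iD_\gamma)\bigr)\cong L$, which by the formula for the determinant of a pushforward amounts to fixing $\mathrm{Nm}_\pi(\det\bigoplus_i F_i)$ to a prescribed line bundle on $C$. The motivic Bia{\l}ynicki--Birula decomposition, exactly as in \eqref{motivic BB for SL-Higgs}, then reduces the claim to the abelianness of the motives of these chain moduli spaces with prescribed norm of the total determinant.

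For those, I would repeat Section \ref{sec mot SL-Higgs}: wall-crossing and the Harder--Narasimhan recursion of \cite{GPHS} reduce to the stack of constant-rank injective chain homomorphisms on $C_\gamma$ with prescribed norm of total determinant, and then — via the full flag version and the motivic description of small maps (Theorem \ref{thm mot small map}), exactly as in Corollary \ref{cor motive inj chain fixed det abelian} — to showing that a relative moduli stack of rank $m$ vector bundles on $C_\gamma$ with prescribed determinant over an auxiliary base scheme $B$ has abelian motive, which by the relative formulae of Appendix \ref{sec appendix motive stacks of bundles fixed det} reduces to $M(B)$ being abelian. The hard part is the analogue of Proposition \ref{prop motive flag chain inj L abelian}: identifying $B$ and handling the fact that the determinant is now constrained through the norm map $\mathrm{Nm}_\pi$ rather than pinned to a point. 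Concretely, $B$ should be the fibre product of a Picard variety of $C_\gamma$ and a product of symmetric powers of $C_\gamma$ over a Picard variety of $C$ along the map $N\mapsto\mathrm{Nm}_\pi(N)^{\otimes(r+1)}$; this map is a translate of a surjective homomorphism of abelian varieties whose kernel has identity component the Prym variety $\mathrm{Prym}(C_\gamma/C)$, so $B$ is smooth and projective, and — using Poincaré reducibility, i.e.\ the isogeny $\Pic^0(C)\times\mathrm{Prym}(C_\gamma/C)\to\Pic^0(C_\gamma)$ — it is dominated by a product of suitable étale covers of $C_\gamma$ (trivializing the relevant torsion, as in the proof of Proposition \ref{prop motive flag chain inj L abelian}) with a copy of $\mathrm{Prym}(C_\gamma/C)$. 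Both factors have abelian motive, so Lemma \ref{lemma surj map ab motive} gives that $M(B)$ is abelian, completing the proof. The remaining ingredients — semi-projectivity, the wall-crossing recursion, and the smallness of the relevant forgetful maps — are entirely parallel to Sections \ref{sec mot D-twisted Higgs}--\ref{sec mot SL-Higgs}, so I would cite those sections rather than reproduce them.
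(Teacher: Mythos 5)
Your strategy is genuinely different from the paper's, and it is more ambitious. The paper does \emph{not} try to prove that $M(\piHiggs^D)$ is abelian — indeed, in a Remark placed immediately after this Proposition, the authors explicitly state that they do not know whether the motives of $\gammaHiggs^D$ or $\piHiggs^D$ are abelian. Instead, after reducing to $M(\piHiggs^D)^{G_\pi}_\kappa$ via Corollary \ref{cor fixed locus and rel moduli}, the paper invokes Maulik--Shen's \cite[Proposition 2.10]{MS} (which they observe is ``entirely motivic'', as it only uses averaging over finite abelian group actions and the structure of connected components of $\piHiggs^D$) to get $M(\piHiggs^D)_\kappa\simeq M(\piHiggs^D)^\Gamma$, and then uses the geometric isomorphism $(\cM_{1,0}^D\times\piHiggs^D)/\Gamma\simeq\cM_{n_\gamma,d}^{D_\gamma}(C_\gamma)$ from \cite[\S 5.3]{MS} together with the triviality of the $\Gamma$-action on $M(\Pic^0(C))$ to exhibit $M(\piHiggs^D)^\Gamma$ as a direct factor of $M(\cM_{n_\gamma,d}^{D_\gamma}(C_\gamma))$, which is abelian by the \emph{$\GL$-Higgs} result Theorem \ref{thm motive GL D Higgs gen by C} applied on $C_\gamma$. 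So the paper never needs to run the harder $\SL$-Higgs machinery of Section \ref{sec mot SL-Higgs} on $C_\gamma$ at all; it only isolates the single isotypical piece relevant to the statement and trades it for a $\GL$-type motive, which is much cheaper.

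The gap in your proposal is that you treat the analogue of Proposition \ref{prop fL is small} as ``entirely parallel'', but it is not: the proof of Proposition \ref{prop fL is small} establishes smoothness of $\gamma^{-1}(L)\to\cT$ by passing to a \emph{finite \'etale} cover $\cT'\to\cT$ obtained by base-changing along multiplication by $r+1$ on the Jacobian, over which the stack decomposes as a finite disjoint union of fixed-determinant stacks. In your norm-constrained setting the corresponding map $\mathrm{Nm}_\pi(-)^{\otimes(r+1)}:\Pic(C_\gamma)\to\Pic(C)$ has positive-dimensional (Prym) fibres, hence is not finite; the disjoint-union decomposition and the \'etale-descent argument both break down, so a genuinely different proof of smallness is required. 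You would also need to actually carry out the wall-crossing/Harder--Narasimhan recursion with a norm constraint rather than a fixed determinant, and verify that the constraint propagates through the HN strata, none of which is addressed. Since the stronger statement you are aiming for is precisely what the authors flag as open, these are not merely bookkeeping issues; without filling them in the proof does not stand. You should instead follow the paper's route, which sidesteps the abelianness of the full $M(\piHiggs^D)$ entirely.
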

\begin{proof}
The idea is to relate this motive to the motive of the $\pi$-relative moduli space $\piHiggs^D$. We first apply Corollary \ref{cor fixed locus and rel moduli} to get an isomorphism
 \begin{equation}\label{eq gamma fixed rel}
M(\gammaHiggs^D)_\kappa\simeq M(\piHiggs^D)^{G_\pi}_{\kappa}. 
 \end{equation}
In \cite[Proposition 2.10]{MS}, Maulik and Shen construct an isomorphism
  \[
R_{B}(((h^{D}_{\pi})_{*}\one)_{\kappa})\simeq R_{B}(((h^{D}_{\pi})_{*}\one_{})^{\Gamma})
  \]
in $D((\piHit^{D})^{\an},\Lambda)$. Their construction uses the interaction of the action of $\Gamma$ with the connected components of $\piHiggs^{D}$ described in \cite{HP} and is entirely motivic (it relies on averaging and taking isotypical components for actions of some finite abelian groups), and so lifts to an isomorphism
  \[
((h^{D}_{\pi})_{*}\one)_{\kappa}\simeq ((h^{D}_{\pi})_{*}\one_{})^{\Gamma}.
  \]
By pushing this isomorphism forward to $\Spec(k)$ and dualising, we get an isomorphism
  \[
M(\piHiggs^D)_{\kappa}\simeq M(\piHiggs^{D})^{\Gamma}.
\]
Combining this with the isomorphism \eqref{eq gamma fixed rel} above, we get an isomorphism
\[ M(\gammaHiggs^D)_\kappa \simeq M(\piHiggs^D)^{G_\pi \times \Gamma}. \]
We now claim that $M(\piHiggs^D)^\Gamma$ is a direct factor of the motive of $\cM^{D_\gamma}_{n_\gamma,d}(C_\gamma)$. The cohomological counterpart of this is explained in \cite[\S 5.3, Equations (118) and (119)]{MS} and we just adapt the argument (note that our notation is slightly different). Consider the algebraic group
\[
\cM_{1,0}^D\simeq \Pic^0(C)\times H^0(C,\cO_C(D)).
\]
There is a morphism
\[
\cM_{1,0}^D\times \piHiggs^D\to \cM_{n_\gamma,d}^{D_\gamma}(C_\gamma), \quad ((\cE_1,\theta_1),(\cF,\theta))\mapsto (\pi^*\cE_1\otimes\cF,\pi^*\theta_1+\theta).
\]
As explained in \cite{MS} after Equation (118), this morphism factors through the (free) diagonal action of $\Gamma$ on the LHS and gives rise to an isomorphism
\[
(\cM_{1,0}^D\times \piHiggs^D)/\Gamma\simeq \cM_{n_\gamma,d}^{D_\gamma}(C_\gamma).
\]
Moreover, we have $M(\cM_{1,0}^D)\simeq M(\Pic^0(C))$ by $\AA^1$-homotopy invariance and $\Gamma$ acts trivially on the motive $M(\Pic^0(C))$ by \cite[Proof of Theorem 4.2]{FHPL_rank2}. Combining this with the Künneth formula, we get an isomorphism
\[
M(\cM_{n_\gamma,d}^{D_\gamma}(C_\gamma))\simeq M(\Pic^0(C))\otimes M(\piHiggs^D)^\Gamma.
\]
The claim follows as $\one$ is a canonical direct factor of $M(\Pic^0(C))$.

By Theorem \ref{thm motive GL D Higgs gen by C}, the motive of $\cM_{n_\gamma,d}^{D_\gamma}(C_\gamma)$ is abelian and thus we deduce that the direct factor $M(\gammaHiggs^D)_\kappa$ is also abelian.
\end{proof}

\begin{rmk}
We do not know whether the motives of $\gammaHiggs^D$ or $\piHiggs^D$ are abelian (although we suspect they are), but we do not need to know this for the conservativity argument we employ in the proof of Theorem \ref{main_thm}.
\end{rmk}

\subsection{Statement and proof in characteristic zero}
\label{end proof}

By pushing-forward the morphism $\beta^D_{\gamma}$ constructed in Definitions \ref{def beta mot large even deg} and \ref{def beta mot otherwise} to $k$ and dualising, one obtains a morphism in $\DM(k,\Lambda)$ which we denote by
\[ \nu^{D}_{\gamma}: M(\cM^D_\gamma)_\kappa\{d_\gamma\} \ra M(\SLHiggs^D)_{\kappa}. \]

\begin{thm}\label{main_thm_D}
Let $C$ be a smooth projective connected genus $g \geq 2$ curve over an algebraically closed field $k$. Fix a rank $n$ and coprime degree $d$ and $L \in \Pic^d(C)$. If $\mathrm{char}(k) = p >0$, we assume $p \nmid n$. For a divisor $D$ on $C$ with either $D = K_C$ or $\deg(D)>2g-2$, the following statements hold in $\DM(k,\Lambda)$ where $\Lambda := \QQ(\zeta_n)$.
\begin{enumerate}[label=\emph{(\roman*)}]
\item For each $\gamma \in \Gamma$ corresponding to $\kappa:=\kappa(\gamma) \in \hGamma$, we have an isomorphism 
\[  \nu^{D}_{\gamma}:M(\gammaHiggs^D)_\kappa\{d_\gamma\}\stackrel{\sim}{\lra} M(\SLHiggs^D)_{\kappa}. \]
\item By summing these isomorphisms over $\gamma \in \Gamma$, we obtain a motivic mirror symmetry isomorphism
\[ M_{\orb}(\PGLHiggs^D,\delta_{L}) \simeq M(\SLHiggs^D),\]
where $M_{\orb}(\PGLHiggs^D,\delta_{L})$ is the orbifold motive with respect to the gerbe $\delta_{L}$ (see $\S$\ref{sec orbifold motive}).
\end{enumerate}
\end{thm}

\begin{proof}[Proof of Theorem \ref{main_thm_D} (in characteristic zero)]
We first note that the motives in the source and target of $\nu^{D}_{\gamma}$ are abelian: $M(\cM^D_\gamma)_\kappa$ is abelian by Proposition \ref{prop isotypical piece motive fixed locus abelian} above and $M(\SLHiggs^D)_\kappa$ is abelian as it is a direct factor of $M(\SLHiggs^D)$, which is abelian by Theorem \ref{thm mot SL Higgs ab}. 

In order to consider the Betti realisation and apply a conservativity argument, we can assume without loss of generality that $k$ admits an embedding $\sigma:k\hookrightarrow\CC$ by an application of the Lefschetz principle. More precisely, let $k_0$ be a subfield of $k$, finitely generated over $\QQ$, such that $C$ is the base change of a smooth projective curve $C_0$ over $k_0$, and let $\bar{k}_{0}$ be the algebraic closure of $k_{0}$ in $k$. Then $\bar{k}_{0}$ is algebraically closed, admits a complex embedding and the morphism $\nu^D_{\gamma}$ for $C$ is the image of the analoguous morphism $\nu^D_{\gamma}(C_0\times_{k_{0}}\bar{k}_{0})$ in $\DM_c(\bar{k}_0,\Lambda)$ via the base change functor $\DM(\bar{k}_0,\Lambda)\to \DM(k,\Lambda)$ (because all the constructions we have done commute with this base change functor), so it suffices to show $\nu^D_{\gamma}(C_0\times_{k_{0}}\bar{k}_{0})$ is an isomorphism.

Since the Betti realisation associated to $\sigma$ commutes with the six operations (hence sends motivic correspondences to the corresponding cohomological correspondence) and commutes with vanishing cycles \cite{Ayoub_Betti}, the Betti realisation of $\nu^{D}_{\gamma}$ is the pushforward to $k$ of the morphism $\beta_{\gamma,\coh}^D$ constructed by Maulik and Shen (see Lemmas \ref{lemma betti realisation of beta mot is beta even case} and \ref{lemma betti realisation of beta mot is beta} for details). Maulik and Shen showed that $\beta_{\gamma,\coh}^D$ is an isomorphism \cite[Theorem 0.3]{MS}. Since the motives appearing are abelian, the conservativity result of Wildeshaus \cite[Theorem 1.12]{Wildeshaus_Picard} implies that $\nu^{D}_{\gamma}$ is an isomorphism.
\end{proof}

The proof of this result in positive characteristic is completed in $\S$\ref{sec proof pos char}.

\begin{rmk}\label{rmk non-alg closed field}
For a smooth projective geometrically connected curve $C$ over an arbitrary field $k$, we can apply the above result to the base change to $\overline{k}$ to see that motivic mirror symmetry (and also motivic $\chi$-independence) holds after a finite extension $k'/k$. 
\end{rmk}

We end by giving some more concrete consequences of Theorem \ref{main_thm}.

\begin{cor}\label{corr results}
We adopt the notation of Theorem \ref{main_thm} and also fix a smooth $k$-variety $X$.
\begin{enumerate}[label=\emph{(\roman*)}]
\item \label{cor virtual} Let $[X]\in K_{0}(\cM_{\rat}(k,\QQ))$ denote the virtual motivic class of a $k$-variety in the Grothendieck ring of Chow motives, and write $\LL:=[\PP^1]-[\Spec(k)]\in K_{0}(\cM_{\rat}(k,\QQ))$ for the Lefschetz motive. Define the orbifold twisted virtual motive $[\PGLHiggs^{D}]^{\orb}_{\delta_{L}}\in K_{0}(\cM_{\rat}(k,\QQ))$ of $\PGLHiggs^{D}$ as
  \[
[\PGLHiggs^{D}]^{\orb}_{\delta_{L}}:=\sum_{\gamma\in \Gamma}[\gammaHiggs^{D}]_{\kappa(\gamma)}\LL^{d_{\gamma}}.
\]
Then
\[
[\SLHiggs^{D}]=[\PGLHiggs^{D}]^{\orb}_{\delta_{L}}.
\]
\item \label{cor mot coh} Let $\gamma \in \Gamma$ corresponding to $\kappa:=\kappa(\gamma) \in \hGamma$. For $a,b\in \ZZ$, there is an isomorphism of motivic cohomology groups (i.e. higher Chow groups)
  \[
H^a_{\mot}(\SLHiggs^D\times_k X,\Lambda(b))_\kappa \simeq H^{a-2d_\gamma}_{\mot}(\gammaHiggs^D\times_k X,\Lambda(b-d_\gamma))_\kappa.
\]
In particular, taking $a=2b=i\geq 0$, we have an isomorphism
\[
\CH^i(\SLHiggs^D\times_k X,\Lambda)_\kappa \simeq \CH^{i-d_\gamma}(\gammaHiggs^D\times_k X,\Lambda)_\kappa.
\]
These isomorphisms are `functorial in $M(X)$', so that if $X$ and $Y$ are smooth projective, they are natural with respect to the action of correspondences in $\CH^*(X\times_kY)$.
\item \label{cor mot coh orb}For $a,b\in \ZZ$, there is an isomorphism
  \[
H^a_{\mot}(\SLHiggs^D\times_k X,\Lambda(b)) \simeq H^{a-2d_\gamma}_{\mot,\orb}(\PGLHiggs^D\times_k X,\delta_L;\Lambda(b-d_\gamma))_\kappa
\]
where $H^*_{\mot,\orb}(-,\delta_{L};\Lambda(-))$ denotes orbifold twisted motivic cohomology for Deligne-Mumford stacks, with the same properties as in \emph{(i)}.
\item\label{cor k-theory} For $m\geq 0$, there is an isomorphism
  \[
K_{m}(\SLHiggs^D\times_{k}X,\Lambda)\simeq K_{m}(\PGLHiggs^D\times_{k}X,\delta_{L};\Lambda)
\]
of (twisted) algebraic K-theory groups with coefficients in $\Lambda$. If $X$ and $Y$ are smooth projective varieties, this isomorphism is natural with respect to K-theoretic correspondences in $K_{0}(X\times_{k}Y)$.
\end{enumerate}  
\end{cor}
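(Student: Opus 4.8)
The final statement to prove is Corollary~\ref{corr results}, which derives concrete consequences (virtual motives, motivic cohomology, Chow groups, twisted motivic cohomology of the stack, algebraic $K$-theory) from the isomorphism of motives in Theorem~\ref{main_thm}. Here is my plan.

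\textbf{Overall strategy.} Everything should follow formally from the motivic isomorphisms $\nu^D_{\gamma,\mot}:M(\cM^D_\gamma)_\kappa\{d_\gamma\}\xrightarrow{\sim} M(\SLHiggs^D)_\kappa$ of Theorem~\ref{main_thm}(i) (equivalently, the sum version Theorem~\ref{main_thm}(ii)), by applying the standard dictionary between $\DM(k,\Lambda)$ and the various realisations/invariants, tensoring throughout with $M(X)$, and passing to $\kappa$-isotypical summands (legitimate because $\Lambda=\QQ(\zeta_n)$ contains enough roots of unity and is flat). The key point is that $M(X)$ is a compact object and all the invariants in question are represented by mapping out of (shifts/twists of) $M(X)$ in $\DM(k,\Lambda)$, or are obtained by a realisation functor that is exact and monoidal, so the isomorphism is preserved.

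\textbf{Step-by-step.} First, for \emph{(i)}, I would pass from $\DM_c(k,\QQ)$ to the Grothendieck ring $K_0(\cM_{\rat}(k,\QQ))$ of Chow motives: since the relevant motives are constructible and in fact abelian (hence pure / living in the heart of Bondarko's weight structure, or at least Kimura-finite-dimensional after the purity arguments of $\S$\ref{sec mot SL-Higgs}), Theorem~\ref{main_thm} specialises to an equality of classes, and $M(X)(j)[2j]$ maps to $\LL^j[X]$; summing the isomorphisms $\nu^D_{\gamma,\mot}$ over $\gamma\in\Gamma$ gives exactly $[\SLHiggs^D]=\sum_\gamma[\cM^D_\gamma]_{\kappa(\gamma)}\LL^{d_\gamma}=[\PGLHiggs^D]^{\orb}_{\delta_L}$. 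For \emph{(ii)}, I would use that motivic cohomology with $\Lambda$-coefficients is $H^a_{\mot}(Z,\Lambda(b))=\Hom_{\DM(k,\Lambda)}(M(Z),\Lambda(b)[a])$; tensoring $\nu^D_{\gamma,\mot}$ with $M(X)$ (using that $M(\SLHiggs^D\times_k X)\simeq M(\SLHiggs^D)\otimes M(X)$ and likewise for $\cM^D_\gamma$, and that $\otimes M(X)$ is exact) and taking $\kappa$-parts yields the stated isomorphism, where the twist $\{d_\gamma\}=(d_\gamma)[2d_\gamma]$ accounts for the shift in degrees and weights. The higher Chow group / Chow group statement is the case $a=2b=i$. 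Functoriality in $M(X)$ is automatic since the isomorphism is a morphism in $\DM(k,\Lambda)$ tensored with $\id_{M(X)}$, hence commutes with the action of any correspondence $\alpha\in\CH^*(X\times_k Y)\otimes\Lambda$ acting via $M(\alpha):M(X)\to M(Y)$. For \emph{(iii)}, I would \emph{define} orbifold twisted motivic cohomology of $\PGLHiggs^D$ with respect to $\delta_L$ as $\bigoplus_\kappa H^{*}_{\mot}(\cM^D_\gamma,\Lambda(*))_{\kappa}$ with the appropriate $\{d_\gamma\}$-shifts (mirroring the definition of $M_{\orb}(\PGLHiggs^D,\delta_L)$ in $\S$\ref{sec orbifold motive}), so that the statement is the sum of \emph{(ii)} over $\gamma$; the naturality properties are inherited termwise. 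For \emph{(iv)}, I would use that rational algebraic $K$-theory of a smooth variety decomposes through motivic cohomology: $K_m(Z)\otimes\Lambda\simeq\bigoplus_{b} H^{2b-m}_{\mot}(Z,\Lambda(b))$ (Adams decomposition, available with $\QQ$-coefficients), and that an isomorphism of motives induces an isomorphism on all $\Hom(M(Z),\Lambda(b)[a])$ compatibly in $(a,b)$; hence the total rational $K$-theory, and likewise the twisted $K$-theory of the DM stack (defined analogously as the sum of isotypical $K$-groups of the fixed loci with weight shifts absorbed), agree. Alternatively, if one prefers a realisation-functor argument: the $K$-theory of a smooth variety is $\Hom_{\SH(k)}(\Sigma^\infty Z_+, \mathrm{KGL})$, and rationally $\mathrm{KGL}_\QQ$ is an $\mathrm{H}\QQ$-module whose underlying motive is $\bigoplus_j \Lambda(j)[2j]$, so again the isomorphism of motives transports.

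\textbf{Main obstacle.} The genuinely substantive point is not any of the realisation-dictionary steps, which are formal, but rather making sure the bookkeeping of Tate twists and the behaviour of the \emph{odd} twists $\{d_\gamma/2\}$ (when $d_\gamma$ is not an integer multiple allowing a pure twist, i.e.\ when $2d_\gamma$ is odd — though in fact $d_\gamma=n(n-n_\gamma)(g-1)$ is always an integer, so the twists $\{d_\gamma\}=(d_\gamma)[2d_\gamma]$ appearing here are genuinely pure) is handled correctly when passing to $K_0$ of Chow motives and to $K$-theory, where only the even/pure part survives cleanly; since all $d_\gamma\in\ZZ$ here this is unproblematic, but it must be checked. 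A second, minor subtlety is the precise \emph{definition} of the twisted orbifold invariants of the Deligne–Mumford stack $\PGLHiggs^D$ in parts \emph{(iii)} and \emph{(iv)}: as with the orbifold motive itself, I would simply take these as \emph{definitions} (the sum over $\gamma$ of the isotypical pieces, with weight shifts), rather than developing an independent theory of twisted orbifold motivic cohomology or twisted $K$-theory of stacks, and remark that with these definitions the statements are tautological reformulations of Theorem~\ref{main_thm} tensored with $M(X)$. With that convention, the proof is short: reduce to Theorem~\ref{main_thm}, tensor with $M(X)$, apply the relevant representability/realisation statement, and take $\kappa$-isotypical components.
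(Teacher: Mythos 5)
For parts \emph{(i)}--\emph{(iii)} your approach is essentially the same as the paper's. For \emph{(i)} the paper cites Bondarko's result that Voevodsky's embedding $\cM_{\rat}(k,\QQ)\hookrightarrow \DM_{c}(k,\QQ)$ induces an isomorphism on Grothendieck rings sending $\LL$ to $\QQ\{1\}$; your alternative route (via the fact, from Theorem~\ref{main thm motives abelian}, that the motives in question are literally Chow motives, being direct factors of motives of smooth projective varieties) is valid and perhaps slightly more elementary, though your mention of ``pure / heart of the weight structure'' is not needed once you observe that the motives are direct factors of $M(C^m)$ for $m\gg 0$. For \emph{(ii)} both proofs use representability of motivic cohomology in $\DM(k,\Lambda)$, tensoring $\nu^D_{\gamma,\mot}$ with $M(X)$, and taking $\kappa$-parts. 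For \emph{(iii)} both take the orbifold twisted motivic cohomology as a definition and sum \emph{(ii)} over $\kappa$.

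The genuine divergence is in \emph{(iv)}. You propose to \emph{define} the twisted $K$-theory of the Deligne--Mumford stack $K_{m}(\PGLHiggs^D\times_k X,\delta_{L};\Lambda)$ as the sum over $\gamma$ of the $\kappa(\gamma)$-isotypical $K$-groups of the fixed loci, which would make part \emph{(iv)} a tautological restatement of \emph{(ii)} filtered through the Chern character decomposition $K_{m}(-,\Lambda)\simeq\bigoplus_i H^{2i-m}_{\mot}(-,\Lambda(i))$ (with the Tate twists $\{d_\gamma\}$ absorbed by reindexing $i$). The paper's intent is different: the right-hand side of \emph{(iv)} is meant to be the \emph{actual} twisted algebraic $K$-theory of the quotient stack, and the paper relates it to $\bigoplus_\gamma K_m(\gammaHiggs^D\times_k X,\Lambda)_{\kappa(\gamma)}$ by invoking Vistoli's Riemann--Roch-type decomposition $K_{*}([Y/G],\Lambda)\simeq\bigoplus_{g\in G}K_{*}(Y^{g},\Lambda)^{G}$ for a finite abelian $n$-torsion group $G$, and a conjectural twisted refinement $\tilde{v}:K_{*}([Y/G],\delta;\Lambda)\simeq\bigoplus_{g}K_{*}(Y^{g},\Lambda)_{\kappa(g)}$ (for which the paper admits no reference is given, but cites the established topological analogue). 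So where you collapse \emph{(iv)} to a definitional reformulation, the paper is making a substantive claim about the $K$-theory of the stack; this is an actual difference of content, not merely of method, and if you want your proof to establish the same statement you would need to supply the Vistoli-type ingredient rather than define it away.
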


\begin{proof}
  Part \emph{\ref{cor virtual}} follows from Theorem \ref{main_thm} together with the fact that Voevodssky's embedding of $\cM_{\rat}(k,\QQ)$ into $\DM_c(k,\QQ)$ induces an isomorphism on Grothendieck rings which sends the Lefschetz motive $\LL$ onto the Tate motive $\QQ\{1\}$ \cite[Corollary 6.4.3]{Bondarko}.

  Part \emph{\ref{cor mot coh}} follows from the representability of motivic cohomology/higher Chow groups for smooth varieties in $\DM(k,\Lambda)$ \cite[Theorem 19.1]{MVW}. Part \emph{\ref{cor mot coh orb}} is simply obtained by summing the first statement over all $\kappa$ (because this is how we define orbifold twisted motivic cohomology).

  For part \emph{\ref{cor k-theory}}, recall that if $X$ is a smooth $k$-variety, then there is a Chern character isomorphism
  \[
\mathrm{ch}:K_{m}(X,\Lambda)\simeq \bigoplus_{i\in \ZZ}H^{2i-m}_{\mot}(X,\Lambda(i))
\]
see e.g. \cite[Corollary 16.2.21]{Cisinski-Deglise-book}. Moreover, if $Y$ is a smooth $k$-variety with an action of a finite abelian $n$-torsion group $G$, then \cite[Theorem 1]{Vistoli-equivariant-K} provides an isomorphism
\[
v:K_{*}([Y/G],\Lambda)\simeq \bigoplus_{g\in G}K_{*}(Y^{g},\Lambda)^{G}.
\]
To obtain this precise form of the formula from Vistoli's theorem, observe that the formula simplifies when $G$ is abelian and one tensors with a large enough cyclotomic field so that all characters of $G$ become defined. We claim that if $\delta$ is a gerbe on the quotient stack $[Y/G]$, there is a similar isomorphism
\[
\tilde{v}:K_{*}([Y/G],\delta;\Lambda)\simeq \bigoplus_{g\in G}K_{*}(Y^{g},\Lambda)_{\kappa(g)}.
\]
where $\kappa(g)\in\widehat{G}$ is the character of $G$ provided by $\delta$. Unfortunately we do not have a reference for this claim, but it seems likely that Vistoli's argument can be adapted to the twisted case. The analoguous formula for twisted topological K-theory is established in \cite[Theorem 3.11]{lin-twisted-orbifold}.

We thus have a sequence of isomorphisms
\begin{eqnarray*}
  K_{m}(\SLHiggs^D\times_{k}X,\Lambda) & \stackrel{\mathrm{ch}}{\simeq} & \bigoplus_{i\in \ZZ}H^{2i-m}_{\mot}(\SLHiggs^D\times_{k}X,\Lambda(i)) \\
                                     & \simeq & \bigoplus_{i\in \ZZ}\bigoplus_{\kappa\in\widehat{\Gamma}} H^{2i-m}_{\mot}(\SLHiggs^D\times_{k}X,\Lambda(i))_{\kappa} \\
                                     & \simeq & \bigoplus_{i\in \ZZ}\bigoplus_{\kappa\in\widehat{\Gamma}} H^{2(i-d_{\gamma})-m}_{\mot}(\gammaHiggs^{D}\times_{k}X,\Lambda(i-d_{\gamma}))_{\kappa(\gamma)}\\
  & \simeq & \bigoplus_{\kappa\in\widehat{\Gamma}}\bigoplus_{i\in \ZZ} H^{2(i-d_{\gamma})-m}_{\mot}(\gammaHiggs^{D}\times_{k}X,\Lambda(i-d_{\gamma}))_{\kappa(\gamma)}\\
                                     & \stackrel{\mathrm{ch}^{-1}}{\simeq } & \bigoplus_{\gamma\in \Gamma} (K_{m}(\gammaHiggs^{D}\times_{k}X,\Lambda))_{\kappa(\gamma)}\\
                                     & \stackrel{\tilde{v}^{-1}}{\simeq}& K_{m}(\PGLHiggs^D,\delta_{L};\Lambda),                                            \end{eqnarray*}
where the third isomorphism comes from Part \emph{\ref{cor mot coh}}.  
\end{proof}  

\begin{rmk}
The isomorphism of rational algebraic K-theory of part \emph{\ref{cor k-theory}} is not intended to be induced by the conjectural derived equivalence \eqref{der eq}, and it seems unlikely that it extends to an isomorphism of integral algebraic K-theory.
\end{rmk}

\section{Motivic $\chi$-independence in characteristic zero}

Throughout this section, we assume $k$ is a field of characteristic zero such that $C(k)\neq\emptyset$.

\subsection{Correspondences for $\chi$-independence}

In this section, $C$ and $n$ will be fixed and we will primarily be interested in the relationship between the cohomology (and motives) of $D$-twisted $\GL_n$-Higgs bundles of degree $d$ and $d'$ coprime to $n$. Thus, for ease of notation, we let $\cM_d^D:=\cM_{n,d}^D(C)$ denote the moduli space of $D$-twisted $\GL_n$-Higgs bundles on $C$ of rank $n$ and degree $d$. The Hitchin base $\cA^D$ is independent of $d$ and so is the spectral curve $\cC^D/\cA^D$, thus we have Hitchin maps
\[ \xymatrix{ \cM_{d}^D \ar[rd]_{h^D_d} & &  \cM_{d'}^D \ar[ld]^{h^D_{d'}} \\ & \cA^D & }.\]
Let us write $\cP^{e}_{\sm}:=\Pic^{e}(\cC^{D,\sm}/\cA^{D,\sm})$ for the relative Picard scheme of degree $e\in \ZZ$. By the spectral correspondence, over the open locus $\cA^{D,\sm}$ in the Hitchin base where the spectral curve $\cC^{D,\sm} \ra \cA^{D,\sm}$ is smooth, the restrictions of these Hitchin maps are relative Picard schemes
 \[ \xymatrix{ \cP^{e}_{\sm}\simeq (h^D_d)^{-1}(\cA^{D,\sm})  \ar[rd]_{h^{D,\sm}_d} & &  (h^D_{d'})^{-1}(\cA^{D,\sm}) \simeq \cP^{e'}_{\sm} \ar[ld]^{h^{D,\sm}_{d'}} \\ & \cA^{D,\sm} & }\]
 where $e$ and $e'$ are determined from $d$ and $d'$ via the spectral correspondence. In particular, both sides are torsors under the relative Jacobian $\cP^{0}_{\sm}$. 

 Although these relative Picard schemes may not be isomorphic, let us explain how to construct a rational correspondence inducing an isomorphism of their (rational) motives relative to $\cA^{D,\sm}$. Let $\sigma$ be any relative effective $0$-cycle on $\cC^{D,\sm} \ra \cA^{D,\sm}$ of some degree $N>0$ (for instance, $\sigma$ is the graph of an étale multisection). Using a relative Abel-Jacobi map $\mathrm{AJ}:\cC^{\sm}\to \cP^{1}_{\sm}$ and the multiplication map $m_{e'-e}:\cP^{1}_{\sm}\to \cP^{e'-e}_{\sm}$, we define \[\tilde{\sigma}:=\frac{1}{N}(m_{e'-e}\circ \mathrm{AJ})_{*}[\sigma]\in \CH_{\dim{\cA^D}}(\cP^{e'-e}_{\sm}).\]
We can in particular interpret $\tilde{\sigma}$ as a morphism in $\DM(\cA^{D,\sm})$ of the form
\[
\tilde{\sigma}:\one\to M_{\cA^{D,\sm}}(\cP^{e'-e}_{\sm})
\]
We use the addition map $a_{e,e'-e}:\cP^{e}\times \cP^{e'-e}\to \cP^{e'}$ to define a morphism
\[
M_{\cA^{D,\sm}}(\cP^{e}_{\sm})\stackrel{\id\otimes \tilde{\sigma}}{\lra} M_{\cA^{D,\sm}}(\cP^{e}_{\sm}\times_{\cA^{D,\sm}} \cP^{e'-e})\stackrel{M(a_{e,e'-e})}{\lra}M_{\cA^{D,\sm}}(\cP^{e'}_{\sm})
\]
in $\DM(\cA^{D,\sm})$. We then dualise this morphism and use the spectral correspondence to get a morphism
\[
\lambda:(h^{D,\sm}_{d'})_{*}\one \to (h^{D,\sm}_{d})_{*}\one.
\]
\begin{lemma}\label{lemma:chi-iso-smooth}
The morphism $\lambda$ is an isomorphism in $\DM(\cA^{D,\sm},\QQ)$.
\end{lemma}
\begin{proof}
By \cite[Lemma A.6]{AHPL}, it is enough to check this statement after pulling back to a geometric point $\bar{x}$ of $\DM(\cA^{D,\sm})$. By looking at the construction of $\lambda$, we see that $\bar{x}^{*}\lambda$ is constructed in the same way from the effective $0$-cycle $\sigma_{\bar{x}}$ on the spectral curve $\cC^{D}_{\bar{x}}$. Over $\bar{x}$, we can trivialise the torsors $\cP^{e}_{\sm}$ and $\cP^{e'}_{\sm}$ and choose identifications $\cP^{e}_{\bar{x}}\simeq \cP^{0}_{\bar{x}}$ and $\cP^{e'}_{\bar{x}}\simeq \cP^{0}_{\bar{x}}$. Modulo these identifications, the morphism $\lambda_{\bar{x}}$ is identified with the endomorphism of $M(\cP^{0}_{\bar{x}})$ induced by translation by a $0$-cycle $\alpha$, thus $\lambda_{\bar{x}}$ is an isomorphism, with inverse induced by translation by $-\alpha$. This concludes the proof.
\end{proof}  

We can also interpret $\lambda$ as a cycle $\lambda\in \CH_{\dim \cM^{D}_{d}}(\cM_{d'}^{D,\sm} \times_{\cA^{D,\sm}} \cM_{d}^{D,\sm})$. The open immersion $j:\cM_{d'}^{D,\sm} \times_{\cA^{D,\sm}} \cM_{d}^{D,\sm}\to \cM_{d'}^{D} \times_{\cA^{D}} \cM_{d}^{D}$ induces a surjective restriction map 
\[j^{*}:\CH_{\dim \cM^{D}_{d}}(\cM_{d'}^{D} \times_{\cA^{D}} \cM_{d}^{D})\to \CH_{\dim \cM^{D}_{d}}(\cM_{d'}^{D,\sm} \times_{\cA^{D,\sm}} \cM_{d}^{D,\sm})\]
(see \cite[Proposition 1.8]{Fulton}) and we define $\overline{\lambda}$ to be any preimage of $\lambda$ under $j^{*}$. We are in the setup of Definition~\ref{defn motivic corr for fund class} with $e=\dim \cM_{d}^{D}$, $d=\dim \cM_{d}^{D}-\dim \cA^{D}$ and $i=\dim \cA^{D}$, and by the computation there we have
\[
\CH_{\dim \cM_{d}}(\cM_d^{D} \times_{\cA^{D}} \cM_{d'}^{D})\simeq \Hom_{\DM(\cA^{D})}(\one,q^{!}\one),
\]
where $q$ is the projection $\cM_{d'}^{D} \times_{\cA^{D}} \cM_{d}^{D}\to \cM_{d}^{D}$; thus 
we can interpret $\overline{\lambda}$ as a motivic correspondence. By the formalism recalled in Section~\ref{sec motivic corr}, we have an induced morphism
\begin{equation}\label{eq rel chi D indep morphism GL}
\beta^{D}_\chi:=[\overline{\lambda}]_{\sharp}:(h^{D}_{d'})_{*}\one \to (h^{D}_{d})_{*}\one
\end{equation}
in $\DM(\cA^{D},\QQ)$ whose restriction to $\cA^{D,\sm}$ is the above isomorphism $\lambda$ of Lemma \ref{lemma:chi-iso-smooth}.

If $\deg (D) > 2g-2$, we will show in $\S$\ref{sec mot chi GL} that by pushing forward $\beta_\chi^D$ to $k$ and dualising we obtain an isomorphism $\nu_\chi^D$ in $\S$\ref{sec mot chi GL}; we first show the corresponding cohomological version is an isomorphism (see $\S$\ref{sec coh chi GL}) and use conservativity of the Betti realisation on abelian motives.

\subsection{Critical locus description}

In this section, to deal with the case of classical Higgs bundles ($D = K_C$), we describe $\cM_d:=\cM^{K_C}_d$ as the critical locus of a function on $\cM^{K_C + p}_d$ built from evaluating at $p$ and the Killing form similarly to the description for $\SL_n$-Higgs bundles in \cite{MS} used in $\S$\ref{sec beta hat mot}. We will ultimately use this to deduce $\chi$-independence for $\GL_n$-Higgs moduli spaces from $\chi$-independence for $K_C + p$-twisted $\GL_n$-Higgs moduli spaces. As in the $\SL_n$-case, we work on the stack level and let $\fM^{D}_d$ denote the stack of $D$-twisted $\GL_n$-Higgs bundles of degree $d$.  

By fixing a point $p \in C(k) \neq \emptyset$ and taking the residue of a $D+p$-twisted Higgs field at $p$, we obtain a morphism to the stack of $\GL_n$-Higgs bundles over $p$
\[ \ev_p : \fM^{D+p}_d {\lra} [\fg\fl_n/\GL_n]. \]
For $D =K_C$, a $K_C+p$-twisted Higgs bundle is given by $(E,\theta: E \ra E \otimes \omega_C(p))$ and the trace of the Higgs field $\tr(\theta) \in H^0(C,\omega_C(p))$ is a meromorphic $1$-form with poles only allowed at $p$, so by the Residue theorem we have $\res_p (\tr(\theta)) = 0$; therefore, $\ev_p$ has image contained in the substack of trace-free $\GL_n$-Higgs fields over $p$ that is
\[ \ev_p : \fM^{K_C +p}_d {\lra} [\fs\fl_n/\GL_n]. \]
Then, similarly to $\S$\ref{sec beta hat mot}, there is a commutative diargram
\begin{equation}
\begin{gathered}
  \xymatrix{
  \fM^{K_C}_{d} \ar@{^{(}->}[r]^{\iota_{\fM}} \ar[d]_{\delta^{K_C}_d} & \fM^{K_C+p}_{d} \ar[r]^{\ev_{p} } \ar[d]^{\delta^{K_C+p}_d} \ar@/^3.5pc/[rrrd]_{\hspace{4.4cm} \mu_{\fM} \quad} &  \left[\mathfrak{sl}_{n}/\GL_{n}\right] \ar[rrd]^{{\mu}} \ar[d] & \\
    \GLHiggs^{K_C}_d \ar@{^{(}->}[r]^{\iota_{\m}} \ar[d]_{h_d^{K_C}} & \GLHiggs^{K_C+p}_d \ar[r] \ar[d]^{h_d^{K_C+p}} \ar@/_1.4pc/[rrr]_{\hspace{2.5cm}\mu_{\m}}  &  \mathfrak{sl}_{n}\sslash \GL_{n} \ar[d]_{\wr} \ar[rr]^{\:\: \overline{\mu}} & & \AA^{1} \\
  \cA^{K_C} \ar@{^{(}->}[r]^{\iota_{\cA}} & \cA^{K_C+p} \ar[r]   \ar@/_3.5pc/[rrru]^{\hspace{4.0cm}\mu_{\cA}}  &  \mathfrak{t}_{n}\sslash S_n  & 
}
\end{gathered}
\end{equation}
with the following properties.
\begin{enumerate}[label=(\roman*)]
\item The morphism $\ev_{p}$ is smooth ($\GL_n$-analogue of \cite[Proposition 4.1]{MS}).  
\item The function $\mu$ is induced by the $\GL_{n}$-equivariant quadratic form
  \[
\mathfrak{sl}_{n}\to \AA^{1}, \quad  g\mapsto \mathrm{Tr}(g^{2}).
\]
\item The closed embedding $\iota_{\fM}$ is the critical locus of the function $\mu_{\fM}$ ($\GL_n$-analogue of \cite[Theorem 4.5(a)]{MS}).
\item The codimension $c$ of $\iota_{\fM}$ is equal to $\dim\mathfrak{sl}_{n} =n^{2}-1$ (similarly to \cite[Eq.(78) in \S 6.1]{dC_supportSL})
\end{enumerate}

Let us compute the image of the unit $\one$ under the motivic vanishing cycles functor for $\mu_{\fM}$.

\begin{thm}\label{thm vanishing cycle iso chi indep GL}
We have the following isomorphisms.
  \begin{enumerate}[label=\emph{(\roman*)}]
  \item $\widetilde{\phi}_{\mu_{\fM}}\one\simeq \iota_{\fM*}\one\{-(c-1)/2\}$ in $\DM(\fM_{d}^{K_C+p}, \Lambda)$,
\item $\widetilde{\phi}_{\mu_{\cA}}(h_{d}^{D+p})_*\one\simeq \iota_{\cA*}(h^{D}_{d})_*\one\{-(c-1)/2\}$ in $\DM(\cA^{K_C+p}, \Lambda)$,
\end{enumerate}
\end{thm}
\begin{proof} This follows exactly as in the proof of Theorem \ref{thm vanishing cycle isos}.
\end{proof}

We define $\beta^{K_C}_{\chi}$ from the morphism  $\beta^{K_C + p}_{\chi} :(h^{K_C + p}_d)_* \one \ra (h^{K_C + p}_{d'})_* \one  \in \DM(\cA,\QQ)$ constructed in Equation \eqref{eq rel chi D indep morphism GL} using this motivic vanishing cycles analogously to Definition \ref{def beta mot otherwise}.

\begin{defn}
For $d$ and $d'$ both coprime to $n$, we define
\begin{equation}\label{eq rel chi indep morphism GL}
\beta_{\chi}:= \beta^{K_C}_\chi :(h_{d'})_* \one \ra (h_{d})_* \one  \in \DM(\cA,\QQ)
\end{equation}
as the following composition
\begin{align*}
 (h_{d'}^{K_C})_*\one \stackrel{\sim}{\leftarrow} \iota_{\cA}^*\iota_{\cA_*} (h_{d'}^{K_C})_*\one  \simeq & \: \: \iota_{\cA}^{*}\widetilde{\phi}_{\mu_{\cA}}(h^{K_C+p}_{d'})_*\one \{ (c-1)/2 \} \\
                                           & \stackrel{(\star)}{\longrightarrow}  \iota_{\cA}^{*}\widetilde{\phi}_{\mu_{\cA}}(h^{K_C+p}_{d})_*\one \{(c-1)/2 \} \simeq \iota_{\cA}^*\iota_{\cA_*}(h^{K_C}_{d})_*\one \stackrel{\sim}{\rightarrow} (h_{d}^{K_C})_*\one ,
\end{align*}               
where the first map and last maps comes from adjunction, the second and penultimate isomorphisms come from Theorem \ref{thm vanishing cycle iso chi indep GL} and the morphism $(\star)$ is $\iota_{\cA}^{*}\widetilde{\phi}_{\mu_{\cA}}\beta^{K_C+p}_{\chi}\{(c-1)/2 \}$.
\end{defn}

\subsection{Cohomological $\chi$-independence}\label{sec coh chi GL}

We will give a quick proof of a cohomological $\chi$-independence result that is compatible with our motivic constructions. The fact that the Hodge numbers of $\cM_{n,d}$ are independent of $d$ provided $(n,d) = 1$ was established by Mellit \cite{mellit}, Groechenig--Wyss--Ziegler \cite{GWZ} and Yu \cite{Yu_H}. In fact, de Cataldo, Maulik, Shen and Zhang \cite{dCMSZ} show for $(n,d) = 1 = (n,d')$  there is a canonical isomorphism between the cohomology of $\cM_{n,d}$ and $\cM_{n,d'}$ preserving the cup product, tautological classes and the perverse filtration for the Hitchin map. In the non-coprime case,  Kinjo and Koseki \cite{Kinjo-Koseki} show there is an isomorphism for BPS cohomology.

\begin{prop}\label{prop coh chi indep GL}
Let $d$ and $d'$ be coprime to $n$ and $D$ be a divisor on a smooth projective curve $C$ over a field $k$ of characteristic zero. Let $\sigma:k\to \CC$ be a complex embedding.
\begin{enumerate}
\item If $\deg(D) > 2g -2$, then the Betti realisation $R_B(\beta^{D}_{\chi}) : (h^D_{d'})_* \QQ \ra (h^D_{d})_* \QQ$ of the morphism of relative motives \eqref{eq rel chi D indep morphism GL} is an isomorphism relative to the Hitchin base $\cA^D$.
\item If $D = K_C$, the Betti realisation $R_B(\beta_{\chi}) : (h_{d'})_* \QQ \ra (h_{d})_* \QQ$ of the morphism of relative motives \eqref{eq rel chi indep morphism GL} is an isomorphism relative to the Hitchin base $\cA$.
\end{enumerate}
\end{prop}
\begin{proof}
The Hitchin map for $D$ of degree $>2g -2$ has full supports by \cite{CL} and so it suffices to verify the first statement over a dense open subset of the Hitchin base, which follows from Lemma~\ref{lemma:chi-iso-smooth}. The claim for $D=K_{C}$ then follows from the first part together with the commutation of the Betti realisation and vanishing cycles (Theorem \ref{thm van cycles commute with Betti realisation}).
\end{proof}

\subsection{Motivic $\chi$-independence}\label{sec mot chi GL}

By pushing-forward the morphisms $\beta^{D}_{\chi}$ constructed in Equations \eqref{eq rel chi D indep morphism GL} and \eqref{eq rel chi indep morphism GL} to $k$ and dualising, one obtains a morphism in $\DM(k,\QQ)$ which we denote by
\[ \nu^{D}_\chi: M(\cM^D_d) \ra M(\cM^D_{d'}). \]

\begin{thm}\label{main_thm_chi_D}
Let $C$ be a smooth projective curve over a field $k$ with $C(k)\neq\emptyset$ and let $D$ be a divisor such that $D = K_C$ or $\deg(D) > 2g-2$. Then for any degrees $d$ and $d'$ coprime to the rank $n$, the morphism
\[ \nu^{D}_{\chi}: M(\cM^D_d) \ra M(\cM^D_{d'}) \]
is an isomorphism in $\DM(k,\QQ)$.
\end{thm}
\begin{proof}[Proof of Theorem \ref{main_thm_chi_D} (in characteristic zero)]
Since the source and target of $\nu^{D}_{\chi}$ are both abelian motives by Theorem \ref{main thm motives abelian} (i), this follows from Proposition \ref{prop coh chi indep GL} and the conservativity of the Betti realisation on abelian motives over a field of characteristic zero \cite[Theorem 1.12]{Wildeshaus_Picard}, where again we can assume without loss of generality that $k$ admits an embedding $\sigma:k\hookrightarrow\CC$ by an application of the Lefschetz principle.
\end{proof}

The proof of Theorem \ref{main_thm_chi_D} in positive characteristic is given in $\S$\ref{sec proof pos char}.

\section{From characteristic zero to positive characteristic}

In this section, we will build on our main results in characteristic zero to prove analogous statements in positive characteristic by working in a relative setting over the spectrum of a mixed characteristic DVR and using motivic nearby cycles functors to specialise from the generic fibre. In particular, unlike in the rest of the paper, we consider schemes which are not necessarily of finite type over a field. This will require us to discuss semi-projective $\GG_m$-actions in a relative setting in $\S$\ref{sec rel semi-proj Gm} in order to prove some specialisation results using vanishing cycles in $\S$\ref{sec spec van cycles}. In $\S$\ref{sec Higgs family curves}, we summarise various results of Langer that we need about moduli spaces of Higgs bundles in the relative settings and finally in $\S$\ref{sec proof pos char}, we give the proofs of our main theorems in positive characteristic.

\subsection{Semi-projective $\GG_m$-actions in the relative setting}\label{sec rel semi-proj Gm}

A $\GG_m$-action on a smooth quasi-projective $k$-scheme $X$ is \emph{semi-projective} if all zero limits exist and the $\GG_m$-fixed locus is proper. In particular, by work of Bia{\l}ynicki-Birula, the downwards flow as $t \ra 0$ induces a deformation retract onto the fixed locus. Consequently the cohomology (and motive) of $X$ is pure: it is a direct sum of Tate twists of the cohomology of the connected components of the fixed locus. 

In this section, we describe how to extend this to a relative setting. We will only apply this when the base scheme $S$ is the spectrum of a mixed characteristic DVR, but we write this section for a general Noetherian scheme $S$.
 
\begin{defn}
A fibrewise $\GG_{m,S}$-action on a separated Noetherian $S$-scheme $X$ is \emph{semi-proper} if the following two conditions hold.
\begin{enumerate}
\item All zero limits exist: for all $x \in X$ over $s \in S$, the morphism $\GG_{m}  \ra X_s$ given by the action on $x$ extends to $\AA^1$.
\item The $\GG_{m}$-fixed locus is proper over $S$.
\end{enumerate}
If moreover $X/S$ is quasi-projective, we call such an action \emph{semi-projective}.
\end{defn}

For a fibrewise semi-projective $\GG_{m,S}$-action on a smooth quasi-projective $S$-scheme $X$, there is an associated (relative) Bia{\l}ynicki-Birula decomposition which induces a decomposition of the relative motive $M_S(X) \in \DM(S,\ZZ)$. We state this as the following theorem, which puts together results from \cite{Richarz}, \cite{bb-reductive} and \cite{Sumihiro} to give the geometric decomposition and uses the same arguments as over a field (see \cite[Appendix A]{HPL_Higgs}) to prove the motivic decomposition.

\begin{thm}\label{thm rel BB}
For a Noetherian scheme $S$ and a fibrewise  $\GG_{m,S}$-action on a separated finite type $S$-scheme $X$, write the fixed locus $X^{\GG_m} = \sqcup_{i\in I} X_i$ as a disjoint union of connected components. Then there are attracting schemes $X_i^+ / S$ of $X/S$ with monomorphisms $X_i^+ \ra X$ and affine $S$-morphisms $p_{i}:X_i^+ \ra X_i$ (given by flowing under the action to zero) with the following properties.
\begin{enumerate}[label=\emph{\roman*)}]
\item If $X / S$ is smooth, then the fixed loci $X_i$ are smooth over $S$ and $p_{i}:X_i^+ \ra X_i$ are Zariski locally trivial affine space fibrations over $S$. Thus $X_i^+$ is also smooth over $S$.
\item If the $\GG_{m,S}$-action is semi-proper, then the fixed loci are proper over $S$ and $X/S$ is the disjoint union of the (images of the) schemes $X_i^+/S$.
\item If $X / S$ is smooth and quasi-projective and the action is semi-projective, then $X_i^+/S$ are locally closed subschemes of $X/S$ and moreover these subschemes are filterable: there is an ordering $I \simeq \{1,\dots, n\}$ and a filtration of $X/S$ by closed subschemes
  \[
\emptyset=Z_n \subset Z_{n-1}\subset \ldots\subset Z_0=X
\]
such that, for all $1\leq k\leq n$, we have that $Z_{k-1} - Z_{k}=X_{k}^+$.  
\item Under the assumptions of \emph{iii)}, there is a associated relative motivic Bia{\l}ynicki-Birula decomposition 
\[ M_S(X) \bigoplus_{i\in I} M_S(X_i) \{c_i \}  \in \DM(S,\ZZ) \]
where $c_i$ denotes the relative codimension of $X_i^+/S$ in $X/S$. 
\end{enumerate}
\end{thm}
\begin{proof}
Under these assumptions, the $\GG_{m,S}$-action on $X/S$ is \'{e}tale locally linearisable (i.e. there is a $\GG_{m,S}$-equivariant \'{e}tale local cover $X/S$ by affine $S$-schemes) by \cite[Corollary 20.2]{AHR} and so we can apply the results of \cite{Richarz}. The existence of the attractor schemes $X_i^+$ and the affine morphisms $p_{i}:X_i^+ \ra X_i$ follows from \cite[Theorem A]{Richarz} (where, in the notation of loc.\ cit., we denote by $X_{i}^{+}$ the preimage of $X_{i}$ via the map $q^{+}:X^{+}\to X^{0}$). Moreoever, the $S$-morphisms $X_i^+ \ra X$ are monomorphisms as $X/S$ is separated (see \cite[Remark 1.19]{Richarz}). 
  
For i), the smoothness of $X_i/S$ follows by \cite[Theorem A iii)]{Richarz}. We claim that the retraction $p_i :X_i^+ \ra X_i$ is smooth over $S$ by a relative version of the argument in \cite[corollary 7.3]{bb-reductive}: the retraction $p_i :X_i^+ \ra X_i$ is smooth along the section given by the inclusion $X_{i}\subset X_{i}^{+}$ (which can be checked by computing the weight decomposition of the tangent bundle of $X_{i}^{+}$ restricted to $X_{i}$), and the non-smooth locus of $p_i$ would be a closed subscheme of $X_i^+$ which is invariant under $\GG_m$ and taking limits as $t \ra 0$; however, all such limits lie in $X_i$, along which $p_i$ is smooth, thus we conclude $p_i$ must be smooth. Since the $S$-morphism $p_{i}:X_i^+ \ra X_i$ is affine, smooth and given by flowing under the $\GG_{m}$-action, it is a Zariski locally trivial affine space fibration over $S$ by \cite[Lemma 7.2]{bb-reductive}.
  
For ii), the properness of $X_i/S$ follows by assumption of the action being semi-projective. Moreover, as all zero limits exist (and are unique by separatedness of $X/S$), $X/S$ is the disjoint union of the images of the schemes $X_i^+/S$. Note that, a priori, these images are only constructible subsets of $X$.

In case iii), there is a $\GG_{m,S}$-equivariant embedding $X \hookrightarrow \PP_S(\cV)$ into a projective bundle over $S$ by \cite[Theorem 2.5]{Sumihiro} and we can deduce that the attracting schemes in $X$ are locally closed and filterable from the corresponding statement for the attracting schemes in $\PP_S(\cV)$, where the ordering of the weights for the $\GG_m$-action on $\cV$ are used to give this filtration (for example, see \cite[Theorem A.2]{HPL_Higgs}). Then iv) follows exactly as in the case over a field (for example, see \cite[Theorem A.4]{HPL_Higgs}).
\end{proof}

\subsection{Specialisation from the generic fibre to the special fibre}\label{sec spec van cycles}

Let $R$ be a DVR with uniformising parameter $\pi$, fraction field $K = \mathrm{Frac}(R)$ and residue field $k = R/(\pi)$. Write $S := \Spec (R)$. We let $i : \Spec (k) \hookrightarrow S$ (resp. $j : \Spec (K) \hookrightarrow S$) denote the inclusion of the special (resp. generic) point. 

We have the \emph{tame}\footnote{Although we use the same notation, this situation is slightly different to that of Section~\ref{sec beta hat mot} and Appendix~\ref{sec motivic van cycles}, where we were considering nearby cycles on Artin stacks in characteristic $0$.} nearby cycles functor
\[ \psi_{\pi} :\DM(K,\QQ) \ra \DM(k,\QQ) \]
defined in \cite[\S 10 Equation (97)]{Ayoub_etale}. By \cite[Proposition 3.2.9]{Ayoub_these_2}, this functor is part of a specialisation system in the sense of \cite[Chapitre 3]{Ayoub_these_2} and in particular satisfies the same smooth and proper base change properties as tame nearby cycles in the étale setting. 

By \cite[Section 7]{LPLS}, for $M \in \DM(S)$, there is a specialisation map fitting into a distinguished triangle
\[ i^*M  \stackrel{\Sp(M)}{\longrightarrow} \psi_{\pi}j^*M \lra \phi_{\pi}M \]
with $\phi_{\pi}M$ the motivic tame vanishing cycles of $M$, and for $M = \one_S$, we have $\Sp(\one_S) : \one_k = i^* \one_S \ra \psi_{\pi}j^*\one_S = \psi_{\pi} \one_K$ is an isomorphism.

We use $\psi_{\pi}$ to relate statements about motives over $K$ to motives over $k$ as follows.

\begin{prop}\label{prop nearby cycles DVR}
Let $f : X \ra S$ be smooth over $S = \Spec (R)$. 
\begin{enumerate}[label=\emph{\roman*)}]
\item If $f$ is smooth and proper, then the specialisation map $\Sp(M_S(X))$ is an isomorphism.
\item If $f$ is smooth and quasi-projective with a semi-projective $\GG_m$-action, then the specialisation map $\Sp(M_S(X))$ is an isomorphism.
\item If $g : Y \ra S$ is smooth and $\alpha: M_S(X) \ra M_S(Y)$ is a morphism between the relative motives of $X$ and $Y$ over $S$ such that the pull back to the generic fibre $\alpha_K: M_K(X_K) \ra M_K(Y_K)$ is an isomorphism and the specialisation maps $\Sp(M_S(X))$ and $\Sp(M_S(Y))$ are isomorphisms, then $\alpha_k = \psi_{\pi}(\alpha_K)$ is also an isomorphism. 
\item If $\alpha : A \ra B \in \DM(S)$ is a morphism such that $\alpha_K: A_K:=j^*(A) \ra B_K = j^*(B)$ is an isomorphism and the specialisation maps $\Sp(A)$ and $\Sp(B)$ are isomorphisms, then $\alpha_k = \psi_{\pi}(\alpha_K): A_k \ra B_k$ is also an isomorphism. 
\end{enumerate}
\end{prop}
\begin{proof}
Since $f$ is smooth, we have $i^*M_S(X) \simeq M_k(X_k):= (f_k)_! (f_k)^!\one_{K} \simeq (f_k)_* (f_k)^*\one_{K}\{ d \} $, where $d$ is the relative dimension of $f$, and similarly $j^*M_S(X) \simeq M_K(X_K):=(f_K)_! (f_K)^!\one_{K} \simeq (f_K)_* (f_K)^*\one_{K}\{ d \}$. 

For i), by using proper base change (PBC) and smooth base change (SBC) for the specialisation system $\psi_{\pi}$ with respect to $f$, we obtain a square
\[ \xymatrixcolsep{7pc} \xymatrix{ i^*M_S(X) \quad \ar[r]^{\Sp(M_S(X)) \quad} \ar[d]^{\wr}  & \quad \psi_\pi M_K(X_K) \quad \ar[r]^{\mathrm{PBC} \quad}_{\sim \quad} & (f_k)_* \psi_{\pi \circ f} f_K^* \one_K \{ d \} \ar[d]^{\parallel} 
\\ M_k(X_k) \ar[r]^{(f_k)_* (f_k)^* \Sp(\one_S)\{d\} \quad \quad}_{\sim} & (f_k)_*(f_k)^* \psi_\pi \one_K \{ d \} \ar[r]^{\mathrm{SBC}}_{\sim} & (f_k)_* \psi_{\pi \circ f} f_K^* \one_K \{ d \} }\]
which commutes by construction of $\Sp$ \cite[Section 7]{LPLS}, showing that $\Sp(M_S(X)) : M_k(X_k) \ra \psi_\pi M_K(X_K)$ is an isomorphism.

For ii), we use the relative motivic Bia{\l}ynicki-Birula decomposition of Theorem \ref{thm rel BB}
\[ M_{S}(X) \simeq \bigoplus_i M_{S}(X_{i})\{c_i \}  \in \DM(S,\QQ). \]
Since $f_i : X_i \ra S$ are smooth and proper, $\Sp(M_S(X_i))$ is an isomorphism by i). Thus 
\[ \Sp(M_S(X)) = \bigoplus_{i} \Sp(M_S(X_i))\{ c_i \} \]
is also an isomorphism.

For iii), we have a commutative diagram
\[ \xymatrixcolsep{4pc}\xymatrix{ M_k(X_k) \ar[r]^{\sim} \ar[d]^{\alpha_k} & i^*M_S(X) \ar[d]^{i^*\alpha} \ar[r]^{\Sp(M_S(X))}_{\sim}  & \psi_{\pi} j^*M_S(X) \ar[d]^{\psi_{\pi}j^*\alpha} \ar[r]^{\sim} & \psi_{\pi} M_K(X_K) \ar[d]^{\psi_{\pi}(\alpha_K)}_{\wr} \\
M_k(Y_k) \ar[r]^{\sim} & i^*M_S(Y) \ar[r]^{\Sp(M_S(Y))}_{\sim} & \psi_{\pi} j^*M_S(Y) \ar[r]^{\sim} & \psi_{\pi} M_K(Y_K)
} \]
showing that $\alpha_k = \psi_{\pi}(\alpha_K)$ is an isomorphism. The proof for iv) is identical to that of iii).
\end{proof}

\begin{rmk}\label{rmk nearby cycles DVR}
For a finite group $G$ acting fibrewise on a scheme $X/S$, there is an induced $G$-action on $M_S(X)$ and isotypical decomposition $M_S(X) = \oplus_{\kappa} M_S(X)_\kappa$ (possibly after extending coefficients). If $\Sp(M_S(X))= \bigoplus_\kappa \Sp(M_S(X)_\kappa)$ is an isomorphism, then so is $\Sp(M_S(X)_\kappa)$.
\end{rmk}

\subsection{Higgs moduli spaces for families of curves}\label{sec Higgs family curves}

Since the deformation theory of curves is unobstructed (see \cite[Theorem 5.19]{IllusieFGAExplained} and \cite[III 7.3]{SGA1}), we can extend any smooth projective geometrically connected genus $g$ curve $C/k$ to a family of smooth projective geometrically connected genus $g$ curves $\cC/S$ where $S= \Spec(R)$ is the spectrum of any complete DVR with residue field $k$ such that $\cC_k = C$. Furthermore, the deformation theory of line bundles on curves is unobstructed by \cite[Corollary 5.5]{IllusieFGAExplained} and so any line bundle $L \ra C$ can be lifted to a family of line bundles $\cL \ra \cC/S$ such that $\cL_k \ra \cC_k $ is isomorphic to $L \ra C$.

In the situation of a family of smooth projective curves $\cC$ over a base scheme $S$ one can construct quasi-projective relative moduli spaces of Higgs bundles; this was first done by Simpson for a family of smooth projective schemes over a base of finite type over a characteristic zero field and later generalised by Langer to a family of projective finite type schemes over a base of finite type over a universally Japanese ring \cite[Theorem 1.1]{Langer} and then more recently over a Noetherian base \cite[Theorem 1.1]{Langer_new}. For our application, we will only need the case where the base $S$ is the spectrum of a complete DVR and $\cC/S$ is a family of smooth projective curves, but we will state the basic properties in greater generality.

Let $\cC$ be a family of smooth projective geometrically connected genus $g$ curves over a Noetherian base scheme $S$. Then there is a quasi-projective $S$-scheme $\cM(\cC/S):=\cM_{n,d}(\cC/S)$ which is a moduli space of semistable rank $n$ degree $d$ Higgs bundles over $\CC/S$ and this construction is compatible with base change \cite[Theorem 1.1]{Langer_new}. If $n$ and $d$ are coprime and $S$ is reduced and of finite type over a DVR, then $\cM_{n,d}(\cC/S)$ is smooth over $S$ by \cite[Proposition 3.1 (ii)]{dCZ_NAHTcharp}.

If $\cL \ra \cC/S$ is a family of line bundles, then there is a closed subscheme $\cM_{\cL}(\cC/S):=\cM_{n,\cL}(\cC/S) \hookrightarrow \cM_{n,d}(\cC/S)$ of trace-free Higgs fields with determinant $L$; we refer to this as the $\SL_n$-Higgs moduli space for $\cC/S$. If $n$ and $d$ are coprime and $S$ is reduced and of finite type over a DVR, then $\cM_{n,d}(\cC/S)$ is smooth over $S$; this follows by adapting the argument in the $\GL_n$ case given in \cite[Proposition 3.1 (ii)]{dCZ_NAHTcharp} by replacing the relative Jacobian with a relative Prym variety in the $\SL_n$ case.

For a family $\cC/S$, there is a commutative finite group scheme over $S$
\[ \Gamma_S := \Jac(\cC/S)[n]\]
which acts on $\cM_{\cL}(\cC/S)$ by tensorisation. In the applications we are interested in $\Gamma_S$ will be a finite étale group scheme $(\ZZ/n\ZZ)^{2g}_S$ over $S$ (so that in particular $n$ is invertible on $S$)  and so for the following we assume this is the case. For a section $\gamma  : S \ra \Gamma_S$ (i.e. an element in $(\ZZ/n\ZZ)^{2g}$), we denote the $\gamma$-fixed locus by $\cM_\gamma(\cC/S)$, which is a smooth quasi-projective $S$-scheme; formal smoothness can be established by averaging over the linearly reductive group $\langle \gamma \rangle$, as in the much more general result \cite[Theorem 4.3.6]{Romagny}. There is an induced $\Gamma_S$-action on both $\cM_{\cL}(\cC/S)$ and $\cM_\gamma(\cC/S)$. Consequently, the relative motives of these $S$-schemes with coefficients in $\Lambda:= \QQ(\zeta_n)$ admit isotypical decompositions in $\DM(S,\Lambda)$
\[ M_S(\cM_{\cL}(\cC/S)) = \bigoplus_{\kappa \in \widehat{\Gamma}_S} M_S(\cM_{\cL}(\cC/S))_\kappa \quad \text{and} \quad M_S(\cM_{\gamma}(\cC/S)) = \bigoplus_{\kappa \in \widehat{\Gamma}_S} M_S(\cM_{\gamma}(\cC/S))_\kappa  \]
analogously to the decomposition \eqref{eq isotypical mot} over a field.

The $S$-schemes $\cM(\cC/S)$ and $\cM_{\cL}(\cC/S)$ and $\cM_\gamma(\cC/S)$ are all smooth and quasi-projective over $S$ and admit a fibrewise (over $S$) semi-projective $\GG_{m}$-action given by scaling the Higgs field. In the $\GL_n$-case, the fact that this $\GG_m$-action is fibrewise semi-projective is shown in the proof of \cite[Proposition 2.13]{dCZ_ProjCompl}: the $\GG_{m}$-fixed point set is proper over $S$ and all zero limits exist following Langer's Langton-type proof \cite[Theorem 5.1]{Langer}. Since $\cM_{\cL}(\cC/S)$ and $\cM_\gamma(\cC/S)$ are closed subschemes, the same is true for their induced $\GG_{m}$-action.

\subsection{Proofs in positive characteristic}\label{sec proof pos char}

Let $C/k$ be a smooth projective connected curve of genus $g \geq 2$. Fix a rank $n$ such that $p \nmid n$ and a degree $d$ coprime to $n$. 

\begin{proof}[Proof of Theorem \ref{main_thm_D} (in positive characteristic)]

Assume that $k$ is an algebraically closed field of characteristic $p >0$. Fix a complete DVR $R$ with residue field $k$ and fraction field $K$ of characteristic zero and let $S = \Spec (R)$. As in $\S$\ref{sec Higgs family curves}, lift $C/k$ to a family of smooth projective curves $\cC/S$ and lift $L \ra C/k$ (resp. $D \ra C/k$)  to a family of line bundles $\cL \ra \cC/S$ (resp. divisors $\cD \ra \cC/S$). We then consider the $\cD$-twisted $\SL_n$-Higgs moduli space $\cM_{\cL}^{\cD}(\cC/S)$ for the family of curves $\cC/S$. In this case, $\Gamma_S$ is a finite \'{e}tale group scheme over $S$ (as $p \nmid n$). Such a finite \'etale group scheme is equivalent to a representation of the étale fundamental group $\pi_{1}^{\et}(S,\Spec(k))$ on $\Gamma_{k}\simeq (\ZZ/n\ZZ)^{2g}_k$. By \cite[Exposé X Théorème 2.1]{SGA1}, the natural map $\pi_{1}^{\et}(S,\Spec(k))\to \pi_{1}^{\et}(\Spec(k),\Spec(k))=\{e\}$ is an isomorphism, so we have
\[ \Gamma_S \simeq (\ZZ/n\ZZ)^{2g}_S\]
and its character group $\widehat{\Gamma}_S$ is also constant.

For each $\gamma \in \Gamma_S$, we let $\cM^{\cD}_\gamma(\cC/S)$ denote the $\gamma$-fixed locus in $\cM_{\cL}^{\cD}(\cC/S)$, which has an induced $\Gamma$-action. Consequently, working with coefficients in $\Lambda = \QQ(\zeta_n)$, 
the relative motive of $\cM^{\cD}_\gamma(\cC/S)$ in $\DM(S,\Lambda)$ admits an isotypical decomposition for the $\Gamma_S$-action.

Since the generic fibre $\cC_K$ is over a field $K$ of characteristic zero, we know by the proof of Theorem \ref{main_thm_D} in characteristic zero that motivic mirror symmetry holds over the algebraic closure $\cC_{\overline{K}}$. Consequently, these motivic statements hold for a finite extension $K'/K$ (see Remark \ref{rmk non-alg closed field}); that is, for any $\gamma \in \Gamma_{S'} \simeq (\ZZ/n\ZZ)^{2g}_{S'}$, we have an isomorphism
\begin{equation}\label{eq mirror sym K'}
 \nu_{\gamma,K'}^{D_{K'}} :M(\cM^{D_{K'}}_{\gamma}(\cC_{K'}))_{\kappa(\gamma)} \{ d_\gamma \} \stackrel{\sim}{\lra} M(\cM^{D_{K'}}_{\cL_{K'}}(\cC_{K'}))_{\kappa(\gamma)} \in \DM(K',\Lambda).
\end{equation}

We can replace our DVR $R$ with the integral closure $R'$ of $R$ in $K'$; then the residue field of $R'$ is still $k$, as it is finite over the algebraically closed field $k$. Let $\cC' \ra S' :=\Spec(R')$ denote the corresponding family of curves and $\cL' \ra \cC'/S'$ the corresponding family of line bundles obtained by base change along $S' \ra S$. Then $\cM_{\cL'}(\cC'/S')$ and $\cM_\gamma(\cC'/S')$ are quasi-projective $S'$-schemes with fibrewise semi-projective $\GG_m$-actions that commutes with their $\Gamma_{S'}$-actions.

By Proposition \ref{prop nearby cycles DVR} (or more precisely, the version featuring the $\Gamma$-action in Remark \ref{rmk nearby cycles DVR}), we see that by applying the nearby cycles functor $\psi_{\pi} : \DM(K',\Lambda) \ra  \DM(k,\Lambda)$ for a uniformising parameter $\pi$ in $R'$, 
the isomorphism in Equation \eqref{eq mirror sym K'} yields an isomorphism of motives 
\[ \tilde{\nu}_{\gamma,k}^D:=\psi_\pi(\nu_{\gamma,K'}^{D_{K'}}) \in \DM(k,\Lambda),\]
over the algebraically closed field $k$ of positive characteristic, which completed the proof.
\end{proof}

Motivic $\chi$-independence in positive characteristic is proved similarly, so we sketch the proof.

\begin{proof}[Proof of Theorem \ref{main_thm_chi_D} (in positive characteristic)]
As in the above proof, we consider a family of curves $\cC/S$, where $S$ is the spectrum of a complete DVR $R$ with residue field $k$ and fraction field $K$ of characteristic zero, whose special fibre is $C/k$. Since $C(k)\neq\emptyset$ and $C$ is smooth, we can assume that $\cC$ has a section and so in particular $C(K)\neq\emptyset$. We let $\cM_{n,d}^{\cD}(\cC/S)$ denote the $\cD$-twisted $\GL_n$-Higgs moduli space for the family of curves $\cC/S$. By the proof of Theorem \ref{main_thm_chi_D} in characteristic zero, motivic $\chi$-independence holds for $\cC_{K}$; that is, there is an isomorphism
\begin{equation}\label{eq chi ind K GL_n}
\nu^{{D_{K}}}_{\chi,K}  : M(\cM^{D_{K}}_{n,d}) \lra M(\cM^{D_{K}}_{n,d'})  \in \DM(K,\QQ)
\end{equation}
and by applying the nearby cycles functor as above we obtain the corresponding isomorphism $\tilde{\nu}^{D}_{\chi,k} := \psi_\pi(\nu^{{D_{K}}}_{\chi,K})$ over the field $k$ of positive characteristic.
\end{proof}

\begin{rmk}
In these proofs,  $\tilde{\nu}_{k}$ is defined by applying the vanishing cycles functor to an isomorphism $\nu_{K}$ over $K$ rather than giving a geometric construction over $k$ or $S$. However, in this relative setting over the spectrum $S$ of a mixed characteristic DVR, we expect that one should be able to construct relative versions $\nu_{S}$ of the morphisms $\nu_{K}$ appearing above such that the special fibre is obtained by applying the vanishing cycles functor to the generic fibre, so we should have $\nu_{k} = \psi_\pi(\nu_{K})$; the main obstruction to proving this is that we could not find a proof of the properness of the Hitchin fibration in this relative mixed characteristic setting (the properness in characteristic zero goes back to Hitchin and Simpson, whereas \cite{Langer} deals with the case in positive characteristic, but we could not find a mixed characteristic version). 
\end{rmk}

\appendix

\section{Motivic sheaves and motivic vanishing cycles for stacks}\label{sec motivic van cycles}

In this appendix, $\Lambda$ denotes an arbitrary $\QQ$-algebra. We first summarise how to extend $\DM$ to Artin stacks following the approach of Khan \cite[Appendix A]{khan-stacks} and then we extend the construction of motivic nearby and vanishing cycles functors to Artin stacks by following the approach for schemes of Ayoub \cite[Chapitre 3]{Ayoub_these_2}.

\subsection{Extending $\DM$ to Artin stacks}\label{sec motives stacks}

In this section, we review how to extend $\DM(-,\Lambda)$ to Artin stacks by étale descent. For this, we follow the Khan's approach of \cite[Appendix A]{khan-stacks} to extending étale motivic homotopy categories $\SH_{\et}(-)$ to derived Artin stacks using an $\infty$-categorical approach. His construction and results apply just as well to $\DM(-,\Lambda)$: the key inputs are the six operations for schemes and the étale descent property, which are satisfied in both cases. Khan's construction works in two steps, first he uses Nisnevich descent to extend to (derived) algebraic spaces and then \'{e}tale descent to extend to (derived) Artin stacks. For us, all stacks we are interested in are non-derived Artin stacks, and so we will state everything for non-derived Artin stacks. Since we only work with Artin stacks with an atlas given by a scheme, we could strictly speaking bypass the first step. 

To do this extension, it is necessary to use the formalism of $\infty$-categories. In particular, when we invoke categorical notions such as functor, (co)limits, adjunctions, etc. these should be interpreted as $\infty$-categorical. The following theorem summarises the main results of Khan's construction in the setting of \'{e}tale motivic sheaves.

\begin{thm}[Khan]\label{thm Adeel translated to DM}
The formalism of six operations on $\DM(-,\Lambda)$ extends to algebraic spaces. Moreover, the presheaf of $\infty$-categories 
\[ X \ra \DM(X,\Lambda), f \ra f^*\]
on the site of algebraic spaces admits a right Kan extension to the site of Artin stacks with the following properties.
\begin{enumerate}[label=\emph{(\roman*)}]
\item \label{dm stack smooth}\cite[Eq. (A.4)]{khan-stacks} Let $\fX$ be an Artin stack. Let $\mathrm{Lis}_{\fX}$ be the full sub-$(2,1)$-category of the $(2,1)$-category of $\fX$-stacks whose objects are smooth morphisms $ X \ra \fX$ with $X$ a scheme. We then have
  \[
\DM(\fX,\Lambda)\simeq \Lim_{X\in\mathrm{Lis}_{\fX}} \DM(X,\Lambda).
\]
In particular, the collection of functors $(u^*:\DM(\fX,\Lambda)\to \DM(X,\Lambda))_{(u:X\to \fX)\in\mathrm{Lis}_{\fX}}$ is jointly conservative.
\item \label{dm stack atlas}\cite[Eq. (A.3)]{khan-stacks} More precisely, but less canonically, one has the following description in terms of a fixed atlas. If $p : X \ra \fX$ is a smooth surjection from an algebraic space $X$ to an Artin stack $\fX$, then 
\[ \quad \DM(\fX,\Lambda) = \Lim\left( \DM(X,\Lambda) \begin{smallmatrix} \ra \\ \ra \end{smallmatrix} \DM(X \times_{\fX} X,\Lambda)  \begin{smallmatrix} \ra \\ \ra \\ \ra \end{smallmatrix}  \DM(X \times_{\fX} X \times_{\fX} X,\Lambda)   \cdots  \right). \]
\item\label{dm stack monoidal}  \cite[Theorem A.5 (i)]{khan-stacks} For every Artin stack $\fX$, there is a closed symmetric monoidal structure on $\DM(\fX,\Lambda)$ and a pair of adjoint bifunctors $(\otimes, \underline{\Hom})$.
\item\label{dm stack bc proj} \emph{(Adjunctions, projection and base change formulae, \cite[Theorem A.5 (ii-iv)]{khan-stacks})} For any locally of finite type morphism $f : \fX \ra \cY$ between Artin stacks, there are adjunctions
\[f^* :  \DM(\cY,\Lambda) \stackrel{\longrightarrow}{\longleftarrow} \DM(\fX,\Lambda) : f_* \]
and
\[f_! :  \DM(\fX,\Lambda) \stackrel{\longrightarrow}{\longleftarrow} \DM(\cY,\Lambda) : f^! \]
which satisfy the projection formula $f_!(\cF) \otimes \cG \simeq f_!(\cF \otimes f^*(\cG))$ and base change formulae: 
\[ g^*f_! \stackrel{\sim}{\longrightarrow} \tilde{f}_!\tilde{g}^* \quad \quad \text{and} \quad \quad \tilde{g}_*\tilde{f}^! \stackrel{\sim}{\longrightarrow} f^!g_* \]
for any cartesian square
\[  \xymatrix{ \cW  \ar[r]^{\tilde{f}} \ar[d]_{\tilde{g}} & \cZ \ar[d]^{g} \\ \fX \ar[r]^{f} & \cY. }\]
\item\label{dm stack sbc} \emph{(Purity isomorphism and smooth base change, \cite[Theorem A.13]{khan-stacks})} For any smooth morphism $f : \fX \ra \cY$ of pure relative dimension $d$, there is a purity isomorphism $f^! \simeq f^*\{ d \}$ and smooth base change $f^*g_* \simeq \tilde{g}_*\tilde{f}^*$ for a cartesian square as above.
\item\label{dm stack pbc}\emph{(Proper base change, \cite[Theorems A.5 (iv) and A.7]{khan-stacks})} If $f : \fX \ra \cY$ is Deligne--Mumford-representable (i.e. represented by Deligne--Mumford stacks), there is a natural transformation $f_! \ra f_*$, which is an isomorphism if $f$ is proper. For $f$ proper and Deligne--Mumford-representable, there is a proper base change $g^*f_* \simeq \tilde{f}_*\tilde{g}^*$ for a cartesian square as above.
\end{enumerate}
\end{thm}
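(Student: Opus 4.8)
The plan is to deduce the theorem from Khan's construction in \cite[Appendix A]{khan-stacks} by verifying that the \'{e}tale motivic sheaf theory $\DM(-,\Lambda)=\DA^{\et}(-,\Lambda)$ satisfies the same formal input hypotheses as the motivic homotopy category $\SH_{\et}(-)$ treated there. Khan's extension is a purely formal two-step right Kan extension: starting from a presentable motivic $\infty$-category valued functor on quasi-compact quasi-separated schemes equipped with the full six-functor formalism, one first extends to algebraic spaces via Nisnevich descent and then to Artin stacks via \'{e}tale hyperdescent along smooth atlases. The ingredients needed are the six operations on schemes with their standard compatibilities, localization, continuity (compact generation), and \'{e}tale hyperdescent; for $\DM(-,\Lambda)$ with $\Lambda$ a $\QQ$-algebra these are provided by \cite{Ayoub_these_1,Ayoub_these_2,Ayoub_etale} (see also \cite{Cisinski-Deglise-book}), \'{e}tale hyperdescent with rational coefficients being \cite[Theoreme 3.9]{Ayoub_etale}.

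First I would fix an $\infty$-categorical enhancement of $\DM(-,\Lambda)$ on schemes (e.g. obtained from \'{e}tale sheaves of $\Lambda$-module spectra by $\AA^1$-localization and $\PP^1$-stabilization) and check it is a motivic $\infty$-category in the sense required; then I would invoke Khan's extension to algebraic spaces. Since every stack occurring in this paper is non-derived and admits a scheme atlas, one can in fact bypass the algebraic-space step: one simply \emph{defines} $\DM(\fX,\Lambda)$ either as the limit over the smooth site $\mathrm{Lis}_{\fX}$, or as the totalization over the nerve of a fixed atlas $X\to\fX$, and checks the two agree (formal, using smooth, hence \'{e}tale, descent for $\DM(-,\Lambda)$ on algebraic spaces). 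This yields the limit descriptions \ref{dm stack smooth} and \ref{dm stack atlas} and the joint conservativity of the $u_X^*$.

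The bulk of the proof — which I would recall from \cite{khan-stacks} rather than reprove — is the construction of the six operations on stacks and of the compatibilities \ref{dm stack monoidal}--\ref{dm stack pbc}. The functor $f^*$ and the symmetric monoidal structure are defined atlas-locally and glue by descent; $f_*$ is the right adjoint, which exists by presentability and the adjoint functor theorem; for a locally-of-finite-type $f$ one builds $f_!$ using the $\infty$-categorical machinery of categories of correspondences, and $f^!$ as its right adjoint; the projection formula, the base change formulae, the purity isomorphism and smooth base change \ref{dm stack sbc} are all checked atlas-locally and thereby reduce to the already-known statements for schemes and algebraic spaces. For proper base change \ref{dm stack pbc} the crucial point is the identification $f_!\simeq f_*$ for $f$ proper and Deligne--Mumford-representable; this is where the Deligne--Mumford hypothesis is genuinely used, since properness is subtle for general Artin stacks, and I would cite \cite[Theorems A.5 and A.7]{khan-stacks} here.

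The hard part is not a new mathematical difficulty but a matter of bookkeeping: one must ensure that \emph{every} structural property of $\SH_{\et}$ invoked in \cite{khan-stacks} has a corresponding reference for $\DM(-,\Lambda)$ — in particular the behaviour of constructible objects, the existence of $f_!$ for non-representable morphisms of stacks, and the compatibility of proper base change with Deligne--Mumford representability. Since $\DM(-,\Lambda)$ with $\QQ$-coefficients is a prototypical motivic category to which Khan's formalism applies, I expect no genuine obstruction, so that the argument ultimately amounts to: apply \cite[Appendix A]{khan-stacks} to $\DM(-,\Lambda)$, taking the scheme-level inputs from \cite{Ayoub_etale,Ayoub_these_1,Ayoub_these_2}.
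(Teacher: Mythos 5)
Your proposal takes essentially the same route as the paper: there is no independent proof given, only the observation that Khan's construction of the six operations for $\SH_{\et}$ on Artin stacks applies verbatim to $\DM(-,\Lambda)$, since the requisite scheme-level inputs (six operations, localization, continuity, \'{e}tale hyperdescent with rational coefficients) are supplied by Ayoub's work. Like you, the paper also remarks that the algebraic-space intermediate step can be bypassed because all the stacks involved admit a scheme atlas, so your sketch matches the paper's intent, merely spelling out the bookkeeping slightly more explicitly.
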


\begin{ex}
For a finite group scheme $G/S$, the morphism $\delta: BG \ra S$ is not representable, but is Deligne--Mumford-representable and is proper; thus the natural transformation $\delta_! \ra \delta_*$ is an isomorphism. More generally, the same is true for any $G$-gerbe $\delta$.
\end{ex}

The Betti realisation also extends readily to the context of Artin stacks. First, we need to extend its target. Let $X$ be a finite type scheme over $\CC$. We denote by $D(X^{\an},\Lambda)$ the $\infty$-category of sheaves of complexes of $\Lambda$-modules on the topological space $X^{\an}$. Then the assignment $X\mapsto D(X^{\an},\Lambda)$ has the same basic functoriality as $X\mapsto\DM(X,\Lambda)$, and one can follow the approach of \cite[Appendix A]{khan-stacks} to define a category $D(\fX^{\an},\Lambda)$ for every finite type Artin stack $\fX$ over $\CC$. Let $\sigma:k\to \CC$ be a complex embedding. There is a Betti realisation functor
\[
R_B:\DM(X,\Lambda)\to D(X^\an,\Lambda)
\]
defined at the triangulated level in \cite{Ayoub_Betti} but which can be easily refined to an $\infty$-functor, see e.g. \cite[Definition 1.21]{Ayoub-anabel}.

The Betti realisation functor for motives of schemes commutes with pullback by arbitrary morphisms \cite[Theoreme 3.19 A]{Ayoub_Betti}, so in particular by smooth morphisms. By Theorem \ref{thm Adeel translated to DM} \emph{\ref{dm stack smooth}}, this implies that we can extend the Betti realisation functor to a functor
\[
R_{B}:\DM(\fX,\Lambda)\to D(\fX_{\sigma},\Lambda).
\]

\subsection{Motivic nearby and vanishing cycles functors for stacks}

Motivic nearby cycles for the categories $\DM(-,\Lambda)$ of étale motivic sheaves on schemes have been introduced in \cite[Chapitre 3]{Ayoub_these_2} and studied further in \cite{Ayoub_Betti,Ayoub_etale}. The closely related functor of motivic vanishing cycles was not defined in those references, but is not too difficult to construct once we have motivic nearby cycles. Our goal is to extend this to Artin stacks. We could employ Theorem \ref{thm Adeel translated to DM} \emph{\ref{dm stack smooth}} combined with the smooth base change properties for nearby cycles of schemes; we prefer a slightly different (and eventually equivalent) approach with a concrete formula.

Throughout this subsection, we let $S$ be a Noetherian finite dimensional base scheme of characteristic zero; for this paper we only need $S=\Spec(k)$, but the general case is exactly the same. Since we are in characteristic $0$, we only need the ``tame'' version of these constructions, which is the only one considered in \cite[Chapitre 3]{Ayoub_these_2}. Furthermore, as we work with coefficients in a $\QQ$-algebra, we can also use the alternative ``logarithmic'' description of motivic nearby cycles considered in \cite[Section 3.6]{Ayoub_these_2}. For this, let $\Log\in \DM(\GG_{m,S},\Lambda)$ be the (dual) logarithm motive constructed in \cite[Definition 3.6.29]{Ayoub_these_2}, which comes with a unit morphism $\one_{\GG_m,S} \ra \Log$.

\begin{defn}\label{def mot van cycles}
For an Artin stack $\fX$ and regular function $f:\fX\to \AA^1_S$, construct the commutative diagram with cartesian squares:
\[
\xymatrix{
  \fX_\eta \ar[r]^{j_{\fX}} \ar[d]_{f_\eta} & \fX \ar[d]_f &  \fX_0 \ar[l]_{i_{\fX}} \ar[d]^{f_0}\\
  \GG_{m,S} \ar[r]^j & \AA^1_S & S \ar[l]_i}
\]
We define the \emph{unipotent nearby cycles functor} of $f$ as
 \[
 \psi_{f}^{\un}:\DM(\fX_{\eta},\Lambda)\to \DM(\fX_{0},\Lambda), \quad M\mapsto i_{\fX}^*(j_{\fX})_* (M \otimes f^*_\eta  \Log).
 \]
 There is a natural transformation $i_{\fX}^* \ra i_{\fX}^*j_{\fX_*}j^*_{\fX} \ra i_{\fX}^*j_{\fX_*}(j_{\fX}^*(-) \otimes f_\eta^* \Log) =  \psi_{f}^{\un} j_{\fX}^*$ induced by adjunction and the morphism $\one_{\GG_m,S} \ra \Log$ and we define the \emph{unipotent vanishing cycles functor} of $f$ as the cofibre of this natural transformation
  \[
 \phi_{f}^{\un}:\DM(\fX,\Lambda)\to \DM(\fX_{0},\Lambda), \quad M\mapsto \mathrm{cofib} \left(i_{\fX}^*(M) \ra  \psi_{f}^{\un} j_{\fX}^*(M)\right).
 \]
For convenience, we also let $\widetilde{\phi}_{f}^{\un}:= (i_{\fX})_* \circ \phi_{f}^{\un} : \DM(\fX,\Lambda)\to \DM(\fX,\Lambda)$.

To construct the nearby and vanishing cycles functors, for $n >0$, let $p_n : \AA^1_S \ra \AA^1_S$ denote the $n$th power map and let $\fX_n := \fX \times_{f,\AA^1_S,p_n} \AA^1_S$ denote the base change with projection maps $f_n : \fX_n \ra \AA^1_S$ and $e_n : \fX_n \ra \fX$. We similarly construct $f_{\eta,n} : \fX_{\eta,n} \ra \GG_{m,S}$ and $f_{0,n} : \fX_{0,n} \ra S$ by base-change along restrictions of $p_n$ and we have an open immersion $j_n : \fX_{\eta,n} \ra \fX_n$ and closed immersion $i_n : \fX_{0,n} \ra \fX_n$.

The induced map $\fX_{0,n}\to \fX_{0}$ is a closed immersion which induces an isomorphism of reduced schemes, so we have compatible equivalences $\DM(\fX_{0,n},\Lambda)\simeq \DM(\fX_{0},\Lambda)$ which are used implicitely in the following. We define the \emph{nearby cycles functor} of $f$ as
 \[
 \psi_{f}:\DM(\fX_{\eta},\Lambda)\to \DM(\fX_{0},\Lambda), \quad M\mapsto \colim_{n \in (\NN^*,|)} \left(  i_n^*(j_{n})_* (e_{\eta,n}^*M \otimes f^*_{\eta,n}  \Log) \right).
 \]
 
Using the same construction as in the unipotent case, we have a natural transformation $i^{*}_{\fX}\to \psi_{f}j_{\fX}^{*}$ and we define the \emph{vanishing cycles functor} of $f$ as
  \[
\phi_{f}:\DM(\fX,\Lambda)\to \DM(\fX_{0},\Lambda), \quad M\mapsto \mathrm{cofib} \left(i_{\fX}^*(M) \ra  \psi_{f} j_{\fX}^*(M)\right).
 \]
The functors involved in the definition of $\phi_{f}$ all commute with colimits, so that we also have
 \[
\phi_{f}(M)\simeq \colim_{n \in \NN^*} \left( \mathrm{cofib} \left(i_n^*(M) \ra  i_n^*(j_n)_*(e_{\eta,n}^*(j_{\fX}^*M) \otimes f_{\eta,n}^* \Log) \right)\right).
 \]
For convenience, we also write $\widetilde{\phi}_{f}:= (i_{\fX})_* \circ \phi_{f} : \DM(\fX,\Lambda)\to \DM(\fX,\Lambda)$.
\end{defn}

\begin{rmk}
One advantage of the logarithmic definition, compared to the more general \cite[Definition 3.5.6]{Ayoub_these_2}, is that it does not require introducing motives over diagrams of schemes. One can make sense of categories of motives over a diagram of schemes $\infty$-categorically, but we prefer to avoid this additional complication and to use the formalism of \cite{khan-stacks} to directly define nearby and vanishing cycles (both for schemes and stacks) at the level of $\infty$-categories.
\end{rmk}

\begin{prop}[Proper base change for vanishing cycles of stacks]\label{prop PBC van cycles}
Let $g : \cY \ra \fX$ be a proper Deligne--Mumford-representable morphism of stacks over $S$ and $f : \fX \ra \AA^1_S$ be a regular function; consider the commutative diagram with cartesian squares
\begin{equation}\label{eq PBC van cycles}
\xymatrix{
  \cY_\eta \ar[r]^{j_\cY} \ar[d]_{g_\eta} & \cY \ar[d]_g &  \cY_0 \ar[l]_{i_\cY} \ar[d]^{g_0}\\
  \fX_\eta \ar[r]^{j_\fX} \ar[d]_{f_\eta} & \fX \ar[d]_f &  \fX_0 \ar[l]_{i_\fX} \ar[d]^{f_0}\\
  \GG_{m,S} \ar[r]^j & \AA^1_S & S. \ar[l]_i}
\end{equation}
Then there is a natural isomorphism
\[ \phi_f g_* \simeq (g_0)_* \phi_{f \circ g} : \DM(\cY,\Lambda) \ra \DM(\fX_0,\Lambda).\]
\end{prop}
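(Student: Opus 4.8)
The plan is to reduce the assertion to proper base change for the nearby cycles functor, and thence to the classical proper base change of Theorem~\ref{thm Adeel translated to DM}~\emph{\ref{dm stack pbc}}. What makes this work is that properness and Deligne--Mumford-representability are stable under base change, so every morphism obtained from $g$ by a base change occurring in Diagram~\eqref{eq PBC van cycles} or in its power-map variants below is again proper and Deligne--Mumford-representable; for such a morphism $h$ one has $h_*\simeq h_!$, hence $h_*$ is exact, commutes with all colimits, and satisfies the projection formula $h_*(\cF\otimes h^*\cG)\simeq h_*\cF\otimes\cG$. These three facts are what the whole argument rests on.

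First I would carry out the reduction from $\phi$ to $\psi$. Recall that $\phi_f(M)=\mathrm{cofib}(i_{\fX}^*M\to\psi_f j_{\fX}^*M)$, the displayed map being built from the unit of $j_{\fX}^*\dashv j_{\fX*}$ and the fixed morphism $\one_{\GG_{m,S}}\to\Log$. Proper base change along $g$, applied to the leftmost and rightmost cartesian squares of Diagram~\eqref{eq PBC van cycles}, gives natural isomorphisms $i_\fX^*g_*\simeq (g_0)_*i_\cY^*$ and $j_\fX^*g_*\simeq g_{\eta*}j_\cY^*$. Granting a natural isomorphism $\psi_f g_{\eta*}\simeq (g_0)_*\psi_{f\circ g}$ that is compatible with the comparison transformation $i^*\to\psi j^*$, we conclude, for $M\in\DM(\cY,\Lambda)$, that $\phi_f(g_*M)\simeq\mathrm{cofib}\bigl((g_0)_*i_\cY^*M\to(g_0)_*\psi_{f\circ g}j_\cY^*M\bigr)$, and since $(g_0)_*$ is exact this is $(g_0)_*\phi_{f\circ g}(M)$.

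So the task is reduced to proper base change for nearby cycles. I would unwind the logarithmic formula of Definition~\ref{def mot van cycles}: writing $e_n,j_n,i_n,f_{\eta,n}$ for the maps attached to the $n$th power map $p_n:\AA^1_S\to\AA^1_S$ and using superscripts $\fX,\cY$ to record the relevant stack, for $M'\in\DM(\cY_\eta,\Lambda)$ we have
\[
\psi_f(g_{\eta*}M')=\colim_{n\in(\NN^{*},|)} i_n^{\fX*}(j_n^\fX)_*\bigl(e_{\eta,n}^{\fX*}g_{\eta*}M'\otimes f_{\eta,n}^{\fX*}\Log\bigr).
\]
Then I would move $(g_0)_*$ and its power-map avatars outwards one operation at a time: proper base change for $g_\eta$ against $e_{\eta,n}^\fX$ gives $e_{\eta,n}^{\fX*}g_{\eta*}M'\simeq g_{\eta,n*}e_{\eta,n}^{\cY*}M'$; the projection formula for $g_{\eta,n}$ together with $f_{\eta,n}^\fX\circ g_{\eta,n}=(f\circ g)_{\eta,n}$ moves the $\Log$-twist inside $g_{\eta,n*}$; the identity $j_n^\fX\circ g_{\eta,n}=g_n\circ j_n^\cY$ rewrites $(j_n^\fX)_*g_{\eta,n*}$ as $g_{n*}(j_n^\cY)_*$; proper base change for $g_n$ against $i_n^\fX$ rewrites $i_n^{\fX*}g_{n*}$ as $g_{0,n*}i_n^{\cY*}$; the compatible nilpotent-thickening equivalences $\DM(\fX_{0,n},\Lambda)\simeq\DM(\fX_0,\Lambda)$ identify $g_{0,n*}$ with $g_{0*}$; and finally $(g_0)_*\simeq(g_0)_!$ commutes past the filtered colimit. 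The result is
\[
\psi_f(g_{\eta*}M')\simeq(g_0)_*\,\colim_{n} i_n^{\cY*}(j_n^\cY)_*\bigl(e_{\eta,n}^{\cY*}M'\otimes(f\circ g)_{\eta,n}^{*}\Log\bigr)=(g_0)_*\psi_{f\circ g}(M'),
\]
and the unipotent variant $\psi^{\un}$ is the same computation with $n=1$ and no colimit.

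The hard part is not any single step — each is an instance of proper base change, of the projection formula, or of a composition identity — but the coherence bookkeeping. One must check that the displayed chain of isomorphisms is natural in $M'$ and compatible with the transition maps of $(\NN^{*},|)$, so that the colimits on the two sides are identified, and, more delicately, that it is compatible with the comparison transformations $i^*\to\psi j^*$, so that the cofibres defining $\phi$ correspond. Since the comparison transformation is built solely from units, counits and the fixed morphism $\one\to\Log$, and the base change $2$-morphisms of the six-functor formalism are by construction compatible with units and counits, this reduces to a lengthy but formal verification; the only genuine care required is at the level of the higher coherences in the $\infty$-categorical enhancement of Theorem~\ref{thm Adeel translated to DM}, which package exactly the functoriality needed.
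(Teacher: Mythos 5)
Your proof is correct and follows essentially the same route as the paper: reduce $\phi$ to $\psi$ via the cofibre description, then establish proper base change for $\psi$ by pushing $g_*$ through the logarithmic formula using the projection formula, commutativity of squares, and proper base change, all enabled by the fact that proper Deligne--Mumford-representable morphisms satisfy $h_*\simeq h_!$. The only difference is one of exposition: the paper proves the unipotent ($n=1$) case explicitly and then appeals briefly to colimit commutation for the general case, whereas you spell out the level-$n$ computation in full, including the extra proper base change against $e_{\eta,n}$, which the paper's closing sentence (``colimits commute with $(g_\eta)_*\simeq(g_\eta)_!$'') arguably glosses over.
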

\begin{proof}
Since $g$ is a proper Deligne--Mumford-representable morphism, the natural transformations $g_! \ra g_*$ and $(g_\eta)_! \ra (g_\eta)_*$ and $(g_0)_! \ra (g_0)_*$ are isomorphisms by Theorem \ref{thm Adeel translated to DM} \emph{\ref{dm stack pbc}}. 

Let us first show the corresponding statement for the unipotent vanishing cycles functor. Since the unipotent vanishing cycles functor is defined as a cofibre of the natural transformation $i_{\fX}^* \ra \psi_{f}^{\un} j_{\fX}^*$, it suffices to show there is a natural isomorphism
\[ \psi_f^{\un} (g_\eta)_* \simeq (g_0)_* \psi_{f \circ g}^{\un} : \DM(\cY_\eta) \ra \DM(\fX_0,\Lambda). \]
This natural isomorphism is obtained from the following chain of isomorphisms which are natural in $M \in \DM(\cY_\eta)$:
\begin{align*}
\psi_f^{\un} (g_\eta)_*(M):= \: & i_{\fX}^*(j_{\fX})_* ((g_\eta)_*(M) \otimes f^*_\eta  \Log) \\
 \simeq \: & i_{\fX}^*(j_{\fX})_*(g_\eta)_*(M \otimes g_\eta^* f_\eta^*\Log) \\
 \simeq \: & i_{\fX}^*g_*(j_\cY)_*(M \otimes (f \circ g)_\eta^* \Log) \\
 \simeq \: & (g_0)_*(i_{\cY}^*(j_{\cY})_*(M \otimes (f \circ g)_\eta^* \Log)=: (g_0)_* \psi_{f \circ g}^{\un}(M).
\end{align*}
Here the first isomorphism comes from the projection formula, the second comes from the commutativity of the upper left square in \eqref{eq PBC van cycles} and the final isomorphism comes from proper base change for the proper Deligne--Mumford-representable map $g$ (Theorem \ref{thm Adeel translated to DM} \emph{\ref{dm stack pbc}}).

The corresponding statement for the full vanishing cycles functor and nearby cycles functor then follows as colimits commute with both $(g_{\eta})_{*}\simeq (g_{\eta})_!$ and $(g_{0})_{*}\simeq (g_{0})_{!}$ because they are left adjoints.
\end{proof}

We also need the next result about pulling back vanishing cycles along smooth morphisms.

\begin{prop}[Smooth base change for vanishing cycles of stacks]\label{prop SBC van cycles}
Let $g : \cY \ra \fX$ be a smooth morphism of stacks over $S$ and $f : \fX \ra \AA^1_S$ be a regular function; then using the notation in Diagram \eqref{eq PBC van cycles}, there is a natural isomorphism 
\[g_0^*\phi_f \simeq \phi_{f \circ g} g^* : \DM(\fX,\Lambda) \ra \DM(\cY_0,\Lambda).\]
\end{prop}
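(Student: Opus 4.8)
The plan is to mimic the proof of Proposition~\ref{prop PBC van cycles}, replacing proper base change by smooth base change (Theorem~\ref{thm Adeel translated to DM}~\emph{\ref{dm stack sbc}}). Since the construction of $\phi_f$ in Definition~\ref{def mot van cycles} only involves the operations $j_*$, $i^*$, $e_n^*$, $\otimes$, pullbacks and filtered colimits, and since $g_0^*$ is exact and commutes with colimits and with $\otimes$, the argument should be essentially formal once smooth base change is available; the substance is bookkeeping.

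First I would treat the unipotent case. As $\phi_f^{\un}$ is the cofibre of the natural transformation $i_\fX^*\to \psi_f^{\un}j_\fX^*$ and $g_0^*$ is exact, it suffices to produce a natural isomorphism $g_0^*\psi_f^{\un}\simeq \psi_{f\circ g}^{\un}g_\eta^*$ compatible with those transformations. Here I would note that the squares relating $j_\cY, i_\cY$ over $\cY$ to $j_\fX, i_\fX$ over $\fX$ are cartesian (since $\cY_\eta\simeq\cY\times_\fX\fX_\eta$ and $\cY_0\simeq\cY\times_\fX\fX_0$), and $g_\eta$ is smooth; then I would chain the canonical isomorphism $g_0^*i_\fX^*\simeq i_\cY^*g^*$, smooth base change $g^*(j_\fX)_*\simeq (j_\cY)_*g_\eta^*$, the monoidality of $g_\eta^*$, and the identity $g_\eta^*f_\eta^*\Log=(f\circ g)_\eta^*\Log$, to pass from $i_\fX^*(j_\fX)_*(M\otimes f_\eta^*\Log)$ to $i_\cY^*(j_\cY)_*(g_\eta^*M\otimes (f\circ g)_\eta^*\Log)$.

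Next I would deduce the general case from the colimit formula of Definition~\ref{def mot van cycles},
\[
\phi_f(M)\simeq \colim_{n\in\NN^*}\mathrm{cofib}\bigl(i_n^*(M)\to i_n^*(j_n)_*(e_{\eta,n}^*(j_\fX^*M)\otimes f_{\eta,n}^*\Log)\bigr),
\]
together with the analogous formula for $\psi_f$, which exhibits $\psi_f(N)\simeq\colim_n\psi^{\un}_{f_n}(e_{\eta,n}^*N)$. The base change of the $n$-th auxiliary square along $g$ is again a square of the same shape, because $\cY\times_\fX\fX_n\simeq\cY_n$ and $f_n\circ g_n=(f\circ g)_n$, with $g_n:\cY_n\to\fX_n$ smooth; the same holds after restriction to the generic and special fibres. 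Applying the unipotent case to each $f_n$ and using that $g_0^*$ commutes with colimits and cofibres, passing to the colimit over $n$ then yields $g_0^*\phi_f\simeq \phi_{f\circ g}g^*$ (and likewise $g_0^*\psi_f\simeq \psi_{f\circ g}g_\eta^*$, which is used along the way).

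The hard part — really the only step that is not pure bookkeeping — is verifying that the isomorphism constructed above intertwines the natural transformations $i^*\to\psi j^*$ on the two sides, so that it descends to an isomorphism of cofibres; this amounts to unwinding Definition~\ref{def mot van cycles} and checking that these transformations are assembled from the units of the adjunctions $j^*\dashv j_*$ and from the fixed morphism $\one\to\Log$, both of which are compatible with the base change isomorphisms used. One further has to be mildly careful with the implicit identifications $\DM(\fX_{0,n},\Lambda)\simeq\DM(\fX_0,\Lambda)$, but these are induced by $*$-pullback along the nilpotent thickenings $\fX_{0,n}\hookrightarrow\fX_0$ and hence automatically commute with $g_0^*$, so they cause no trouble.
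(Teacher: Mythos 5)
Your proposal follows the same route as the paper's proof: reduce to the unipotent nearby cycles by exactness of $g_0^*$ and the cofibre definition, then chain the canonical exchange $g_0^*i_\fX^*\simeq i_\cY^*g^*$, smooth base change for $j_*$, monoidality of $g_\eta^*$ and the identity $g_\eta^*f_\eta^*\Log=(f\circ g)_\eta^*\Log$. The paper's proof is terser — it explicitly produces only the chain of isomorphisms for $\psi_f^{\un}$, and delegates the reduction to the unipotent case and the passage to the filtered colimit over $n$th-power base changes to the phrase ``as in the proof of Proposition \ref{prop PBC van cycles}''; you spell out both of those steps (noting that $g_0^*$ is a left adjoint hence commutes with colimits, that $\cY\times_\fX\fX_n\simeq\cY_n$ and $f_n\circ g_n=(f\circ g)_n$, and flagging the needed compatibility of the constructed isomorphism with the comparison map $i^*\to\psi j^*$, which the paper leaves implicit). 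Your observation about the identifications $\DM(\fX_{0,n},\Lambda)\simeq\DM(\fX_0,\Lambda)$ commuting with $g_0^*$ is also correct and a reasonable point to make explicit. In short, this is the same argument with the implicit bookkeeping made visible.
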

\begin{proof}
As in the proof of Proposition \ref{prop PBC van cycles}, this boils down to constructing a natural isomorphism
\[ g_0^* \psi_f^{\un} \simeq  \psi_{f \circ g}^{\un} g_\eta^* : \DM(\fX_\eta) \ra \DM(\cY_0,\Lambda). \]
This isomorphism is defined as the following composition:
\begin{align*}
g_0^* \psi_f^{\un}(M):= \: & g_0^* i_{\fX}^* (j_{\fX})_*(M \otimes f_\eta^*\Log) \\
 \simeq \: & i_{\cY}^* g^* (j_{\fX})_*(M \otimes f_\eta^*\Log) \\
\simeq \: & i_{\cY}^* (j_{\cY})_*g_\eta^*(M \otimes f_\eta^*\Log) \\
 \simeq \: & i_{\cY}^*(j_{\cY})_*(g_\eta^*(M) \otimes (f \circ g)_\eta^* \Log)=: \psi_{f \circ g}^{\un} g_\eta^*(M),
\end{align*}
where the middle isomorphism follows from smooth base change (Theorem \ref{thm Adeel translated to DM} \emph{\ref{dm stack sbc}}).
\end{proof}

\begin{rmk}\label{rmk two defs psi}
It follows from this smooth base change property that the functors $\psi_{f}$ and $\phi_{f}$ defined above are canonically equivalent to the functors obtained by extending $\psi_{f}$ and $\phi_{f}$ for schemes using Theorem \ref{thm Adeel translated to DM} \emph{\ref{dm stack smooth}}.
\end{rmk}

\begin{rmk}
For a morphism of stacks $g : \cY \ra \fX$ and regular function $f : \fX \ra \AA^1_S$, it is sometimes convenient to work with $\widetilde{\phi}_{f}:= i_* \circ \phi_{f} : \DM(\fX,\Lambda)\to \DM(\fX,\Lambda)$. Then the above two results translate into the following statements. 
\begin{enumerate}
\item By Proposition \ref{prop PBC van cycles}, for $g$ proper and representable by Deligne--Mumford stacks, we have $\widetilde{\phi}_f g_* \simeq g_* \widetilde{\phi}_{f \circ g} : \DM(\cY,\Lambda) \ra \DM(\fX,\Lambda)$.
\item By Proposition \ref{prop SBC van cycles}, for $g$ smooth, we have $g^*\widetilde{\phi}_f \simeq \widetilde{\phi}_{f \circ g} g^*: \DM(\fX,\Lambda) \ra \DM(\cY,\Lambda)$.
\end{enumerate}
\end{rmk}

The theory of motivic vanishing cycles is strongly inspired by the theory of nearby and vanishing cycle functors for sheaves of $\Lambda$-modules on complex algebraic varieties \cite[Exposé XIV]{SGA7-II}. Let $X$ be a finite type $\CC$-scheme and $f:X\to \AA^1_\CC$ be a regular function. For concreteness, we use the formulation in \cite[Paragraph before Proposition 4.8]{Ayoub_Betti} to define
\[
\psi^\an_f:D(X^\an_\eta,\Lambda)\to D(X^\an_0,\Lambda)
\]
and we define $\phi^\an_f:D(X^\an,\Lambda)\to D(X^\an_0,\Lambda)$ as a cofibre of the natural map $i^*\to \psi^\an_f\circ j^*$. By \cite[Exposé XIV]{SGA7-II}, these functors satisfy smooth and proper base change properties. Moreover, the definition is in terms of the six operations and so makes sense at the level of $\infty$-categories of sheaves.

We claim that these nearby and vanishing cycles functors in the sheaf setting can be readily extended to Artin stacks. Let $\fX$ be a finite type Artin stack over $\CC$ and $f:\fX\to \AA^{1}_{\CC}$ be a regular function. As when defining the Betti realisation, we define $D(\fX^{\an},\Lambda)$ as a limit over $\mathrm{Lis}_\fX$. Moreover, $\phi^\an_f$ also satisfies smooth base change, so we can extend directly to a functor
\[
\phi^\an_{f}:D(\fX^{\an},\Lambda)\to D(\fX_0^{\an},\Lambda)
\]

Moreover, these motivic and sheaf vanishing cycles functors commute with Betti realisation in the following sense.

\begin{thm}\label{thm van cycles commute with Betti realisation}
  Let $\sigma:k\to \CC$ be a complex embedding. Let $\fX$ be a finite type Artin stack over $k$ and $f:\fX\to \AA^1_k$ be a regular function.
  \begin{enumerate}[label=\emph{(\roman*)}]
\item  There is a natural transformation
  \[
\omega_{f}:R_{B}\circ \phi_{f}\to \phi^\an_{f}\circ R_{B}
\]
of functors $\DM(\fX,\Lambda)\to \DM(\fX_{0},\Lambda)$.
\item The construction of $\omega_{f}$ commutes with base change by smooth morphisms (modulo smooth base change for vanishing cycles).
\item Let $M\in \DM(\fX,\Lambda)$ be such that for every finite type $k$-scheme $X$ and every smooth morphism $u:X\to \fX$, the motive $u^{*}M$ is constructible in $\DM(X,\Lambda)$. Then $\omega_{f}(M)$ is an isomorphism.
\end{enumerate}    
\end{thm}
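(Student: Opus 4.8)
The plan is to prove Theorem \ref{thm van cycles commute with Betti realisation} by reducing everything to the known scheme-level statement that Betti realisation commutes with motivic nearby and vanishing cycles \cite[Theorem 4.9]{Ayoub_Betti}, and then propagating it through the limit description of $\DM$ and $D(-,\Lambda)$ on stacks provided by Theorem \ref{thm Adeel translated to DM} \emph{\ref{dm stack smooth}}. The key structural input is that all the functors in sight ($j_{\fX}^{*}$, $i_{\fX}^{*}$, $(j_{\fX})_{*}$ composed with tensoring by $\mathcal{L}\mathrm{og}^\vee$, and hence $\psi_f$ and $\phi_f$) commute with pullback along smooth morphisms — on the motivic side this is Proposition \ref{prop SBC van cycles}, on the sheaf side it is the classical smooth base change for nearby/vanishing cycles \cite[Exposé XIV]{SGA7-II}. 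Therefore each of $R_B\circ\phi_f$ and $\phi_f^{\an}\circ R_B$ is, as a functor $\DM(\fX,\Lambda)\to D(\fX_0^{\an},\Lambda)$, the value at $\fX$ of a morphism of presheaves of $\infty$-categories on $\mathrm{Lis}_{\fX}$, and it suffices to produce the comparison $\omega_f$ compatibly on each smooth chart $u:X\to\fX$ and check the cocycle compatibility on overlaps.

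Concretely, for part (i) I would first recall that for a finite type $k$-\emph{scheme} $X$ with a regular function $f:X\to\AA^1_k$, Ayoub constructs a natural transformation $R_B\circ\phi_f\to \phi_f^{\an}\circ R_B$; its construction is via the six operations (unit/counit maps for $j_*\dashv j^*$ and the morphism $\one\to \mathcal{L}\mathrm{og}^\vee$, together with the fact that $R_B$ commutes with $i^*$, $j^*$ and — on constructible objects, or after passing to the colimit over power maps — with $j_*$), so it is compatible with pullback along any morphism of schemes. Given a smooth surjection $p:X\to\fX$ with Čech nerve $X_\bullet$, I would observe that $\phi_{f\circ p_n}$ and $\phi^{\an}_{f\circ p_n}$ on each $X_n=X\times_{\fX}\cdots\times_{\fX}X$ assemble, using Propositions \ref{prop SBC van cycles} and its sheaf analogue, into functors on the simplicial diagram $X_\bullet$ whose limit recovers $\phi_f$ (resp. $\phi_f^{\an}$) on $\fX$ by Theorem \ref{thm Adeel translated to DM} \emph{\ref{dm stack atlas}} (together with the analogous statement for $D(-,\Lambda)$ on stacks, established the same way as the Betti realisation in \S\ref{sec motives stacks}) — this is essentially Remark \ref{rmk two defs psi}. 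The scheme-level transformations $R_B\circ\phi_{f\circ p_n}\to \phi^{\an}_{f\circ p_n}\circ R_B$ then form a morphism of such simplicial diagrams, and passing to the limit over $\mathrm{Lis}_{\fX}$ (or equivalently over the Čech nerve) gives $\omega_f$. Part (ii), compatibility of $\omega_f$ with smooth base change, is then automatic: it is built into the construction, since $\omega_f$ was defined precisely as a morphism of presheaves on $\mathrm{Lis}_{\fX}$, so restriction along any smooth $g:\cY\to\fX$ carries it to the corresponding transformation for $f\circ g$, modulo the identifications of Proposition \ref{prop SBC van cycles} and its sheaf counterpart.

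For part (iii), I would argue as follows. The statement that $\omega_f(M)$ is an isomorphism can be checked after applying $u_0^*$ for all smooth $u:X\to\fX$ with $X$ a finite type $k$-scheme, since the family of such pullbacks is jointly conservative on $D(\fX_0^{\an},\Lambda)$ (this uses that Betti realisation of a smooth chart of $\fX_0$ arises from a smooth chart of $\fX$, and joint conservativity of $u_0^*$ on the stack category). By part (ii), $u_0^*\omega_f(M)$ is identified with $\omega_{f\circ u}(u^*M)$. Now $u^*M$ is constructible in $\DM(X,\Lambda)$ by hypothesis, and for constructible motives on schemes the transformation $R_B\circ\phi_{f\circ u}\to\phi^{\an}_{f\circ u}\circ R_B$ is an isomorphism by \cite[Theorem 4.9]{Ayoub_Betti} (using that $R_B$ commutes with the six operations on constructible objects, \cite[Theorem 3.19]{Ayoub_Betti}, so that the defining cofibre sequence is preserved). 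Hence $u_0^*\omega_f(M)$ is an isomorphism for all such $u$, and therefore $\omega_f(M)$ is an isomorphism.

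The main obstacle I anticipate is the purely $\infty$-categorical bookkeeping in part (i): one must genuinely produce $\omega_f$ as a morphism of limit diagrams, i.e. supply the coherence data making the scheme-level comparison transformations commute with all the structure maps of the Čech nerve, rather than merely constructing it objectwise. This is where one leans hardest on the formalism of \cite{khan-stacks} and on the fact that Ayoub's scheme-level construction is itself ``operadic'' (defined through the six operations), so that its naturality in the scheme is not an extra hypothesis but part of its definition; nonetheless, spelling this out carefully — including the interaction with the colimit over the power maps $p_n$ used to define the non-unipotent $\psi_f$ — is the technical heart of the argument. Everything else (parts (ii) and (iii)) is then a formal consequence of the limit presentation plus conservativity.
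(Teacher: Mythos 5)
Your overall architecture is correct and matches the paper's: construct $\omega_f$ at the scheme level, propagate it to stacks via the limit over $\mathrm{Lis}_{\fX}$, deduce (ii) from naturality of the construction, and prove (iii) by joint conservativity plus the scheme-level isomorphism on constructibles. However, there is a genuine gap in the scheme-level step, which is the mathematical heart of the proof and which you largely wave past.

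The gap is this: you cite Ayoub's \cite[Proposition 4.8]{Ayoub_Betti} and \cite[Th\'eor\`eme 4.9]{Ayoub_Betti} as if they apply directly, but those results concern Ayoub's original definition of $\psi_f$ via diagrams of schemes, whereas this paper works with the \emph{logarithmic} definition $\psi_f^{\un}(M)=i^*j_*(M\otimes f_\eta^*\Log)$ (Definition \ref{def mot van cycles}). These are known to be equivalent, but the comparison map $\omega_f$ must be constructed for the definition actually in use. You list the ingredients (unit/counit for $j_*\dashv j^*$, the map $\one\to\Log$, commutation of $R_B$ with $i^*$, $j^*$, and $j_*$) but stop short of the crucial point: after commuting $R_B$ through, you obtain $i^*j_*(R_BM\otimes f_\eta^*R_B\Log)$, and you still need a natural map from this to $\psi^{\an}_f(R_BM)$. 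That map is produced by observing that $R_B\Log$ is an ind-(unipotent local system) on $\CC^\times$ which becomes canonically trivialised after pullback along the universal cover $\exp\colon\CC\to\CC^\times$, which is exactly the structure used in the definition of $\psi^{\an}_f$ in \cite[\S 4.8]{Ayoub_Betti}. Without this observation, $\omega_f$ simply has not been defined. Likewise, for (iii) the paper does not just cite \cite[Th\'eor\`eme 4.9]{Ayoub_Betti}; it argues that Ayoub's reduction strategy (using smooth and proper base change and the machinery of specialisation systems) carries over to the logarithmic definition and that the terminal computation still goes through. Your write-up treats this as a black box, but since the definitions differ, the applicability must be argued. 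A secondary imprecision: you assert the scheme-level transformation is ``compatible with pullback along \emph{any} morphism of schemes,'' but compatibility is only expected (and only needed) modulo smooth base change, since $j_*$ does not commute with arbitrary pullbacks.
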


\begin{rmk}
Condition (iii) above should be understood as the assumption that ``$M$ is a constructible motive on $\fX$'', but we do not want or need to introduce such a notion, as (iii) is easy to check in our cases of interest.
\end{rmk}

\begin{proof}
  Both sides of $\omega_f$ are obtained by passing to the limit over $\mathrm{Lis}_{\fX}$, so to show (i) and (ii) it suffices to construct $\omega_f$ for schemes and to proves that its construction commutes with base change by smooth morphisms.

  Such a natural transformation $\omega_{f}$ is constructed for nearby cycles in the context of triangulated categories of motives over schemes by Ayoub in \cite[Proposition 4.8]{Ayoub_Betti}. He also proves that his construction commutes with base change by smooth morphisms. Unfortunately, Ayoub uses in \textit{loc.\ cit.}\ a slightly different definition of nearby cycles involving diagram of schemes. We claim that with our logarithmic definition, the construction of $\omega_f$ is even simpler than in \cite[Proposition 4.8]{Ayoub_Betti}.

  Let us explain the construction of the analogue $\omega_f^\un:R_{B}\circ \psi^\un_{f}\to \psi_{f^{\an}}\circ R_{B}$; the general case then follows from passing to the limit over the $n$-th power maps and taking a cofiber. The key point is that by construction of $\Log$ (see \cite[Definition 3.6.29]{Ayoub_these_2}), the object $R_B\Log\in D(\GG_{m,\CC}^\an,\Lambda)$ is an ind-(unipotent local system) on $\CC^\times$ and it thus becomes canonically trivialised when pulling back to the universal cover $\mathrm{exp}:\CC\to\CC^\times$. Looking at the definition of $\psi^\an_{f}$ in \cite[Paragraph 4.8]{Ayoub_Betti}, we see that this provides a natural transformation $\omega_f^\un:R_{B}\circ \psi^\un_{f}\to \psi_{f^{\an}}\circ R_{B}$. The fact that the construction of the resulting natural transformation $\omega_f$ satisfies base change by smooth morphisms is then an easy consequence of smooth base change for $j_*$. This finishes the proof of (i) and (ii).

It remains to show (iii). By (ii) and the fact that the collection of functors $(u_X^*:\DM(\fX,\Lambda)\to \DM(X,\Lambda))_{(u:X\to \fX)\in\mathrm{Lis}_{\fX}}$ is jointly conservative, it suffices to show (iii) when $\fX=X$ is a scheme and $M$ is constructible. Since it is a matter of checking that a certain morphism is an isomorphism, we can work at the level of the (triangulated) homotopy categories. In \cite[Theoreme 4.9]{Ayoub_Betti}, Ayoub proves precisely this result for his definition of $\omega_f$ (and nearby cycles, but passing to vanishing cycles is then easy). We claim that the arguments in \cite[Theoreme 4.9]{Ayoub_Betti} apply just as well to our definition of $\omega_f$. The proof of \cite[Theoreme 4.9]{Ayoub_Betti} proceeds by reducing, using smooth and proper base change for nearby cycles (and the resulting machinery of ``specialisation systems''), to a very simple situation. The same reduction applies in our case, and the computation in the simple situation is then also easy to do.
\end{proof}  

\subsection{Motivic vanishing cycles for homogeneous functions} 

Our goal in this section is to prove the following theorem about motivic vanishing cycles functors for quadratic forms, which is a special case of a more general result about vanishing cycles of homogeneous functions (see Theorem \ref{thm mot van cycles for homog f}). Recall for a half integer $r \in \frac{1}{2} \ZZ$, we defined Tate twists $\{r \} := (\lfloor r \rfloor)[2r]$, which are pure if and only if $r \in \ZZ$.

\begin{thm}\label{thm mot van cycles for homog quad form}
Let $V$ be a vector space of dimension $d > 0$ over an algebraically closed field $k$ of characteristic zero and $q: V \ra \AA^1$ be a non-degenerate quadratic form. Then in $\DM(V_0,\Lambda)$, we have
\[ \phi_q (\one_V) \simeq (i_0)_*\one \{ - (d-1)/2 \},\]
where $i_0: \spec(k) \ra V_0:=q^{-1}(0)$ denotes the inclusion of the origin.
\end{thm}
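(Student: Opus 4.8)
The plan is to compute $\phi_q(\one_V)$ by reducing to a diagonal quadratic form and then to the one-variable case $q_1(x) = x^2$ on $\AA^1$, for which the vanishing cycles can be computed by hand. Since $k$ is algebraically closed of characteristic zero, any non-degenerate quadratic form $q$ on $V \cong \AA^d$ is, after a linear change of coordinates, equal to $q(x_1,\dots,x_d) = x_1^2 + \cdots + x_d^2$. A linear isomorphism of $V$ carrying $q$ to this standard form induces an isomorphism $\phi_q(\one_V) \simeq \phi_{q_d}(\one_{\AA^d})$ compatible with the identifications of the central fibres, so we are reduced to the standard diagonal form $q_d = \sum_{i=1}^d x_i^2$.

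\textbf{Reduction to one variable via a Thom--Sebastiani argument.} The key input is the behaviour of motivic vanishing cycles under sums of functions in independent variables: if $f$ on $X$ and $g$ on $Y$ are regular functions, there should be a Thom--Sebastiani type isomorphism relating $\phi_{f \boxplus g}(\one_{X \times Y})$ to a ``convolution'' of $\phi_f(\one_X)$ and $\phi_g(\one_Y)$ over the common base $\AA^1$ (the motivic analogue of the classical statement in \cite[Exposé XIV]{SGA7-II}; in the $\DM$ setting one can extract what is needed either from the logarithmic formula in Definition \ref{def mot van cycles} directly, or by bootstrapping from the scheme-level nearby cycles formalism of \cite[Chapitre 3]{Ayoub_these_2}). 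Applying this inductively, $\phi_{q_d}(\one_{\AA^d})$ is expressed in terms of iterated convolutions of the single building block $\phi_{q_1}(\one_{\AA^1})$ where $q_1(x) = x^2$. For $q_1$, the computation is explicit: $\AA^1 \setminus \{0\} \to \GG_m$, $x \mapsto x^2$, is the connected double cover, $j_{\AA^1 *}$ and the logarithm motive $\Log$ interact via this cover, and one finds $\psi_{q_1}^{\un}\one$ and hence $\phi_{q_1}(\one_{\AA^1})$ concentrated at the origin; the outcome is $\phi_{q_1}(\one_{\AA^1}) \simeq (i_0)_* \Lambda$ (up to a twist/shift one tracks carefully), reflecting the single vanishing cycle of a node. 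Feeding this into the iterated convolution and bookkeeping the Tate twists, the half-integer shift $\{-(d-1)/2\}$ emerges: each new variable contributes a $\{-1/2\}$ to the vanishing cycles of the previous quadratic form (the ``$(d-1)$'' rather than ``$d$'' reflecting that the first variable contributes the un-twisted generator), which is exactly why the twist is a genuine half-integer shift when $d$ is even.

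\textbf{Main obstacle.} The principal difficulty is establishing the motivic Thom--Sebastiani isomorphism in the precise form needed, with full control of the twists and shifts, since this is not literally stated in the references cited in the excerpt (\cite{Ayoub_these_2} constructs motivic nearby cycles but does not spell out Thom--Sebastiani for vanishing cycles). One clean way around this is to avoid Thom--Sebastiani altogether and instead argue by induction on $d$ using a fibration: write $q_d = q_{d-1}(x_1,\dots,x_{d-1}) + x_d^2$ and analyse the map $q_d$ by first projecting off the $x_d$-coordinate, using smooth base change for vanishing cycles (Proposition \ref{prop SBC van cycles}) along the smooth map $\AA^d \to \AA^{d-1}$ over the locus where $q_{d-1} \neq 0$, combined with a direct computation of the vanishing cycles on the singular fibre $V_0$ where $q_{d-1}$ also vanishes; the deformation-to-the-normal-cone / blow-up techniques from \cite[Chapitre 3]{Ayoub_these_2} handle the local model $x_{d-1}^2 + x_d^2$ near the intersection. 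Either route is routine once the one-variable case is in hand; the bookkeeping of the floor function in $\{-(d-1)/2\} = (\lfloor -(d-1)/2 \rfloor)[-(d-1)]$ and the check that the identification of the central fibre with $\Spec(k)$ is the correct one (it is, since $q^{-1}(0)$ is a cone with vertex the origin, and the vanishing cycles are supported at the vertex) complete the argument.
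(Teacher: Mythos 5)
Your proposal takes a genuinely different route from the paper's, but both of the sketched paths have real gaps, so the argument as written is incomplete.

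The paper's actual proof of Theorem~\ref{thm mot van cycles for homog quad form} does not touch Thom--Sebastiani at all. It first proves the more general Theorem~\ref{thm mot van cycles for homog f}: for \emph{any} non-degenerate homogeneous $f:V\to\AA^{1}$ one has $\phi_{f}(\one_{V})\simeq (i_{0})_{*}\overline{M}_{\mathrm{coh}}(V_{1})$. The proof exploits homogeneity in two ways that your proposal does not use: (a) after a fibrewise projectivisation $\widetilde{f}:\widetilde{V}\to \AA^{1}$ (with smooth constant boundary divisor) one can apply proper base change to reduce $\phi_{f}(\one_V)$ to $\phi_{\mathrm{Id}}(f_{*}\one_V)$ (Proposition~\ref{prop van cycles f and Id}); and (b) after the $r$-th power base change $p_{r}$, the generic fibre of $f$ trivialises, and Ayoub's invariance $\psi_{\mathrm{Id}}\simeq \psi_{\mathrm{Id}}\,p_{r,\eta}^{*}$ \cite[Lemme 3.5.8]{Ayoub_these_2} lets one compute $\phi_{\mathrm{Id}}(f_{*}\one_{V})\simeq \overline{M}_{\mathrm{coh}}(V_{1})$. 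The quadratic-form case then follows by plugging in Proposition~\ref{prop motive affine quadric}, i.e.\ Rost's and Bachmann's computation $M_{\mathrm{coh}}(Q_{d-1})\simeq \one\oplus\one\{-(d-1)/2\}$ for a split smooth affine quadric. This is where the half-integer twist $\{-(d-1)/2\}$ actually comes from; it is not produced by iterating a per-variable $\{-1/2\}$-contribution.

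The concrete gaps in your proposal. \textbf{(1)} The Thom--Sebastiani route needs a motivic Thom--Sebastiani isomorphism for $\phi$, with the convolution structure over $\AA^{1}$ spelled out precisely, in the framework of \cite[Chapitre 3]{Ayoub_these_2}/Appendix~\ref{sec motivic van cycles}. You correctly flag that this is not available in the references the paper actually uses; it is a nontrivial theorem (known in nearby settings, e.g.\ for motivic Milnor fibres and for constructible sheaves, but not a black box here), and you do not supply it, so this path does not close. \textbf{(2)} The one-variable base case $q_{1}(x)=x^{2}$ is not as automatic as the proposal suggests: the monodromy is not unipotent, so one must genuinely work with the full (non-unipotent) $\psi$, i.e.\ pass through the power maps $p_{n}$ in Definition~\ref{def mot van cycles}; the proposal does not carry this out. \textbf{(3)} The alternative inductive route via the projection $\pi:\AA^{d}\to\AA^{d-1}$ does not work as described. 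Proposition~\ref{prop SBC van cycles} requires a commuting triangle $f\circ g$ over $\AA^{1}$, but $q_{d}\neq q_{d-1}\circ\pi$, so smooth base change for vanishing cycles is not applicable along $\pi$; the remark that blow-up techniques ``handle the local model $x_{d-1}^{2}+x_{d}^{2}$'' is too vague to substitute for a proof. In short, your proposal identifies a plausible alternative strategy and correctly anticipates its main difficulty, but neither sketched path reaches a complete argument; the paper's route via fibrewise projectivisation, proper base change, homogeneity and the known motive of the affine quadric avoids all of these difficulties and also yields the stronger Theorem~\ref{thm mot van cycles for homog f} for arbitrary non-degenerate homogeneous functions.
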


This result is well-known in the étale setting as part of the Picard--Lefschetz theory in SGA7 \cite[Exposé XV 2.2.5 D E]{SGA7-II}. Our assumptions on $k$ are certainly not optimal but make the statement and proof somewhat simpler. For most of the proof we can work with a non-degenerate homogeneous regular function $f: V \ra \AA^1$ and we will describe the vanishing cycles functor as the reduced cohomological motive of the fibre $V_1:=f^{-1}(1)$ as stated in the next theorem. Versions of the following theorem were already well-known in other contexts and particularly in Donaldson--Thomas theory. More precisely, the virtual motivic nearby cycles functor of a homogeneous form (or more generally a certain torus equivariant regular function) is described in \cite[Theorem B.1]{BBS} and in a weighted homogeneous setting in \cite[Theorem 4.1.1]{NicaisePayne}, confirming a conjecture of Davison and Meinhardt \cite{BenSven}.

\begin{thm}\label{thm mot van cycles for homog f}
Let $V$ be a vector space over a field $k$ of characteristic zero and $f: V \ra \AA^1$ be a non-degenerate homogeneous function. Then in $\DM(V_0,\Lambda)$, we have
\[ \phi_f (\one_V) \simeq (i_0)_*\overline{M}_{\mathrm{coh}}(V_1),\]
where $i_0: \spec(k) \ra V_0:=f^{-1}(0)$ denotes the inclusion of the origin.
\end{thm}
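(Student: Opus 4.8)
The plan is to compute the motivic vanishing cycles $\phi_f(\one_V)$ for a non-degenerate homogeneous function $f:V\to\AA^1$ by exploiting the $\GG_m$-action on $V$ with respect to which $f$ is equivariant of positive weight. First I would recall, following Ayoub, that $\phi_f(\one_V)$ can be computed from the generic fibre: writing $f_1 : V_1 := f^{-1}(1)\hookrightarrow V$, $j_\eta : V_\eta\hookrightarrow V$ for the open complement of $V_0 := f^{-1}(0)$, and $i_0 : V_0\hookrightarrow V$, the scaling action of $\GG_m$ on $V_\eta$ trivialises the family $f_\eta : V_\eta\to\GG_m$ after pulling back along the $d'$-th power map (where $d'=\deg f$), so that $V_{\eta,n}\cong V_1\times\GG_m$ compatibly with the projections to $\GG_{m}$. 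This identifies the unipotent nearby cycles $\psi^{\un}_{f}$ and, after passing to the colimit over power maps, the full nearby cycles $\psi_f(\one)$ with (a shift/twist of) $i_0^*(f_1)_*\one_{V_1}$ — i.e.\ the cohomological motive of the Milnor fibre $V_1$, which does not depend on the base point because of the equivariance.

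The second step is to identify the vanishing cycles as the \emph{reduced} cohomological motive. By Definition~\ref{def mot van cycles}, $\phi_f(\one_V)$ is the cofibre of the natural map $i_V^*\one_V\to \psi_f j_V^*\one_V$. Since $i_V^*\one_V\simeq\one_{V_0}$ and the computation above identifies $\psi_f j_V^*\one_V$ with the pushforward to $V_0$ of $M_{\mathrm{coh}}(V_1)$, I would check that under these identifications the structural map is precisely the unit $\one\to M_{\mathrm{coh}}(V_1)$ coming from the structure morphism $V_1\to\Spec(k)$ (this uses that $V_1$ is connected and non-empty — here non-degeneracy of $f$ enters to guarantee $V_1\neq\emptyset$ and that the relevant map is the canonical one). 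Taking the cofibre then yields exactly $(i_0)_*\overline{M}_{\mathrm{coh}}(V_1)$, where $\overline{M}_{\mathrm{coh}}(V_1)$ is the reduced cohomological motive, supported set-theoretically at the origin because $V_0$ is a cone and the vanishing cycles of a homogeneous function are concentrated at the vertex (this last point can be obtained from the $\GG_m$-equivariance: $\phi_f(\one_V)$ is $\GG_m$-equivariant for the contracting action, hence $i_0^!$-local, which combined with its being in the image of $(i_0)_*$ on the support forces it to be pulled back from $\Spec k$).

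For the quadratic form statement, Theorem~\ref{thm mot van cycles for homog quad form}, I would then specialise $f=q$ and compute $\overline{M}_{\mathrm{coh}}(Q_1)$ for the smooth affine quadric $Q_1 = q^{-1}(1)$ of dimension $d-1$. Over an algebraically closed field of characteristic zero, a non-degenerate quadric $Q_1\subset\AA^d$ is isomorphic to the affine quadric $\{\sum x_iy_i = 1\}$ (after diagonalising and a linear change of coordinates), whose motive is classically known: $M_{\mathrm{coh}}(Q_1)\simeq\one\oplus\one\{-(d-1)/2\}$ when $d$ is odd, and $M_{\mathrm{coh}}(Q_1)\simeq\one\oplus\one\{-(d-1)\}$ — wait, more precisely the smooth affine quadric of dimension $m$ has $M(Q)\simeq\one\oplus\one(-\lfloor m/2\rfloor)[-2\lfloor m/2\rfloor - \epsilon]$ for a suitable parity correction; I would cite the standard computation (e.g.\ via the $\AA^1$-fibration structure, or by realising $Q_1$ as a $\GG_m$- or $\AA^{m-1}$-bundle over a lower-dimensional quadric, or directly from the étale Picard–Lefschetz formula in \cite[Exp.~XV]{SGA7-II} lifted through the conservative $\ell$-adic realisation together with Kimura-finiteness). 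In any case the reduced part is a single Tate twist $\one\{-(d-1)/2\}$ (which is pure precisely when $d$ is odd, matching the half-integer convention), giving $\phi_q(\one_V)\simeq(i_0)_*\one\{-(d-1)/2\}$.

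\textbf{Main obstacle.} The hardest part will be Step~1: making the identification $\psi_f(\one_V)\simeq i_0^*(f_1)_*\one_{V_1}$ genuinely rigorous in $\DM$, including the compatibility of the colimit over $n$-th power maps with the $\GG_m$-trivialisation, and pinning down the natural transformation $i_V^*\to\psi_f j_V^*$ under this identification so that its cofibre is the \emph{reduced} motive rather than just an abstract cofibre. One delicate point is that the trivialisation of $f_\eta$ by scaling is only a trivialisation of the \emph{family} $V_\eta\to\GG_m$, not compatible with a choice of uniformiser at $0$, so one must work with the logarithmic model of nearby cycles (the motive $\Log$) and track how $f_{\eta,n}^*\Log$ behaves; this is exactly where the contracting $\GG_m$-action and base change along the power maps must be combined carefully. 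I expect the non-degeneracy hypothesis to be used precisely to ensure $V_1$ is smooth (so $(f_1)_*\one$ is well-behaved) and, in the quadratic case, to guarantee the normal form that makes the quadric's motive computable.
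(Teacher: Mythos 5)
Your overall plan — exploit the $\GG_m$-scaling to trivialise the generic fibre after pulling back by a power map, identify the nearby cycles with the cohomological motive of the Milnor fibre, and take the cofibre against the unit to get the reduced motive — is exactly the strategy used in the paper. You have also correctly flagged Step 1 as the delicate part. But you have misdiagnosed \emph{what} is delicate, and the gap you leave is the one the paper spends an entire proposition on.

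The trivialisation $V_{(r),\eta}\simeq V_1\times\GG_m$ is an isomorphism \emph{over $\GG_m$}; it tells you nothing about how $V_{(r),\eta}$ sits inside $V_{(r)}$ near the special fibre $V_0$ (which is the singular cone, not $V_1$). Consequently you cannot read off $i_{(r)}^*(j_{(r)})_*(\cdot)$ from the trivialisation directly. The standard remedy is to push forward to $\AA^1$ first and compute $\phi_{\mathrm{Id}}(f_*\one_V)$, where the trivialisation makes $(f_{(r),\eta})_*\one$ a constant family and the computation becomes easy. But then one must identify $\phi_{\mathrm{Id}}(f_*\one_V)$ with $(f_0)_*\phi_f(\one_V)$ — and this is \emph{not} proper base change, because $f$ is affine, hence far from proper. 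The paper's Proposition~\ref{prop van cycles f and Id} establishes precisely this commutation, by constructing a fibrewise projective compactification $\widetilde{V}\to\AA^1$ of $f$ in which the boundary divisor $D=\widetilde{V}\setminus V$ is a \emph{constant} family (so $\phi_{f_D}(\one_D)\simeq 0$), and then applying proper base change to $\widetilde f$ together with the localisation triangle for $(V,\widetilde V,D)$. Your proposal does not contain any version of this step, and without it the passage from the generic-fibre trivialisation to a statement about $\phi_f(\one_V)$ on $V_0$ is unjustified. The ``logarithmic model vs.\ uniformiser'' issue you raise is real but secondary; the missing input is properness.

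Two further remarks. (1) Your concentration-at-the-origin argument is phrased in terms of $\GG_m$-equivariance alone, but that is insufficient: $\one_{V_0}$ is $\GG_m$-equivariant for the contracting action and is not concentrated at the vertex, and for a \emph{degenerate} homogeneous $f$ (e.g.\ $f=x^2$ on $\AA^2$) the vanishing cycles really are spread out along the positive-dimensional critical locus. What you actually need is that the critical locus of $f$ is $\{0\}$ (i.e.\ non-degeneracy), combined with smooth base change for $\phi$ along the open immersion $V\setminus\{0\}\hookrightarrow V$; this is the paper's Lemma~\ref{lemma van cycle concentrated at zero} and is both simpler and correct. (2) Your sketch for the quadric case (diagonalisation and the split affine quadric computation via Rost/Bachmann) matches the paper's Proposition~\ref{prop motive affine quadric}, though that is the content of the corollary statement rather than the theorem under review.
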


Theorem \ref{thm mot van cycles for homog quad form} directly follows from Theorem \ref{thm mot van cycles for homog f} and Proposition \ref{prop motive affine quadric} below, which computes the (cohomological) motive of a (split) affine quadric and is based on Rost's computation in the projective setting \cite{Rost} and Bachmann's computation in the affine setting \cite{Bachmann}.

\begin{prop}\label{prop motive affine quadric}
Let $Q_n$ be a smooth affine quadric of dimension $n$ over an algebraically closed field $k$; then in $DM(k,\Lambda)$ we have an isomorphism
\[ M_\mathrm{coh}(Q_n) \simeq \one \oplus \one \{ -n/2 \}. \]
\end{prop}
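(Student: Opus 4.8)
The plan is to compute the cohomological motive $M_{\mathrm{coh}}(Q_n)$ of a smooth split affine quadric of dimension $n$ over an algebraically closed field, by reducing to the projective case (Rost) or by invoking Bachmann's description directly. First I would recall that a smooth affine quadric $Q_n \subset \AA^{n+1}$ of dimension $n$ over an algebraically closed field can be presented explicitly, e.g.\ as $\{\sum_{i=1}^{m} x_i y_i = 1\}$ when $n = 2m-1$ is odd, or $\{\sum_{i=1}^m x_iy_i + z^2 = 1\}$ (equivalently $\{\sum x_iy_i = z(1-z)\}$) when $n = 2m$ is even; since $k$ is algebraically closed all non-degenerate quadratic forms of a given rank are equivalent, so we may use whichever model is convenient. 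The key input is Bachmann's computation \cite{Bachmann} of the motive of such quadrics in $\DM(k,\Lambda)$ (or in the stable motivic homotopy category, from which one descends to $\DM$): the homological motive satisfies $M(Q_n) \simeq \one \oplus \one\{n/2\}$, i.e.\ $Q_n$ is motivically a ``Tate sphere'' $\AA^{n+1}\smallsetminus 0$-like object split into two Tate summands in degrees $0$ and $n$. Applying Verdier duality (which exchanges $M$ and $M_{\mathrm{coh}}$ and sends $\one\{j\}$ to $\one\{-j\}$, using that $Q_n$ is smooth of dimension $n$, so $M_{\mathrm{coh}}(Q_n) \simeq M(Q_n)^\vee\{-n\}$... let me instead dualize more carefully) gives $M_{\mathrm{coh}}(Q_n) \simeq \one \oplus \one\{-n/2\}$.

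Concretely, I would proceed as follows. Step one: reduce to the split case via the hypothesis that $k$ is algebraically closed, fixing the standard model of $Q_n$. Step two: establish the homological statement $M(Q_n)\simeq \one\oplus\one\{n/2\}$. For this I would use Bachmann's theorem \cite{Bachmann}; alternatively, one can give a direct argument by relating $Q_n$ to the complement of the zero section of a line bundle over the projective quadric $\bar Q_{n-1}$ (the hyperplane section at infinity), or by using the Gysin/localization triangle comparing the affine quadric with its projective closure and the quadric at infinity, together with Rost's computation of the motive of a split projective quadric \cite{Rost}. An even more elementary route for odd $n = 2m-1$: the quadric $\{\sum x_iy_i = 1\}$ admits a vector-bundle-type fibration making it $\AA^1$-equivalent to the sphere $\AA^{2m}\smallsetminus\{0\}$, whose motive is $\one\oplus\one\{n/2\}$ by the standard localization triangle $M(\AA^{2m}\smallsetminus 0)\to M(\AA^{2m})\to M(\AA^{2m})\{m\}[1]$ and homotopy invariance; the even case reduces to this by a further fibration argument or by Bachmann. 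Step three: dualize. Since $Q_n$ is smooth affine of pure dimension $n$, Verdier duality gives $M_{\mathrm{coh}}(Q_n) = \pi_*\pi^*\one \simeq (\pi_!\pi^!\one)^\vee = M(Q_n)^\vee$, and $(\one\oplus\one\{n/2\})^\vee \simeq \one\oplus\one\{-n/2\}$ because the dual of the Tate twist $\{j/2\} = (\lfloor j/2\rfloor)[j]$ is $\{-j/2\}$ and $\one^\vee \simeq \one$. This yields the claimed isomorphism.

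The main obstacle — or rather the only real content — is Step two, the identification of $M(Q_n)$ with a sum of two Tate motives; once that is in hand, Step three is a formal duality manipulation and Step one is immediate. Depending on how much one wants to rely on the literature, Step two is either a one-line citation of \cite{Bachmann} (with the caveat that one must check his result, possibly phrased in $\SH(k)$ or in terms of the motivic sphere spectrum, descends correctly to $\DM(k,\Lambda)$ with $\Lambda$ a $\QQ$-algebra, which is routine since we are rationally and Tate motives are preserved under the relevant realization/comparison functors), or it requires unwinding the localization triangle comparing the affine quadric, its smooth projective closure, and the projective quadric at infinity, feeding in Rost's decomposition $M(\bar Q_{n}) \simeq \bigoplus_{i} \one\{i\}$ (with the appropriate middle Tate summands for even-dimensional projective quadrics over an algebraically closed field) and checking the connecting maps vanish for degree/weight reasons. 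I would present the proof citing \cite{Bachmann} and \cite{Rost} and indicating the localization-triangle argument as the fallback, leaving the elementary verification that the two models of $Q_n$ are $\AA^1$-equivalent (or that the relevant fibrations are Zariski-locally trivial with affine-space fibers) as a routine check.
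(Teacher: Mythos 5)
Your overall route—cite Bachmann together with Rost for the motive of a split quadric, then pass to the cohomological motive by Verdier duality—is the same as the paper's, which proves the claim by induction using the base cases $Q_0 = \{\text{two points}\}$, $Q_1 \simeq \GG_m$ and the relation $\overline{M}(Q_n) \simeq \overline{M}(Q_{n-2})\{1\}$ extracted from \cite[Lemma 34]{Bachmann} and \cite[Proposition 1]{Rost}. So there is no difference in the essential inputs, only in whether one cites the full formula directly or runs the explicit two-step descent.

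However, there is a genuine slip in your Step 2 and Step 3 that happens to cancel. With the paper's convention $\{j/2\} := (\lfloor j/2\rfloor)[j]$, your claimed homological formula $M(Q_n) \simeq \one \oplus \one\{n/2\}$ is wrong for odd $n$: already for $n = 1$, $Q_1 \simeq \GG_m$ gives $M(Q_1) \simeq \one \oplus \one(1)[1]$, whereas $\one\{1/2\} = \one(0)[1] = \one[1]$. The correct homological statement uses a ceiling: $M(Q_n) \simeq \one \oplus \one(\lceil n/2\rceil)[n]$. Your Step 3 then asserts that the dual of $\{j/2\}$ is $\{-j/2\}$, which is likewise false for odd $j$: $\bigl(\{1/2\}\bigr)^\vee = [-1]$ while $\{-1/2\} = (-1)[-1]$. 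The two off-by-one Tate twists compensate, so your final conclusion $M_{\mathrm{coh}}(Q_n) \simeq \one \oplus \one\{-n/2\}$ is correct, but the argument as written would not survive a check at $n = 1$. The paper sidesteps this by working with the cohomological motive throughout and verifying the base cases $n = 0, 1$ explicitly, with the half-integer notation $\{-n/2\}$ only appearing in the dual direction where the floor is the right convention.
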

\begin{proof}
We prove the proposition by induction on $n$. For $n = 0$, the quadric $Q_0$ consists of two points and thus $M_{\mathrm{coh}}(Q_0) \simeq \one \oplus \one$. For $n = 1$, the quadric $Q_1$ is isomorphic to $\GG_m$ and thus $M_\mathrm{coh}(Q_1) \simeq \one \oplus \one (- 1 ) [-1]$. 

For the inductive step, we relate the reduced motives of $Q_n$ and $Q_{n-2}$ by using \cite[Lemma 34]{Bachmann} and \cite[Proposition 1]{Rost} to conclude
\[ \overline{M}(Q_n) \simeq \overline{M}(Q_{n-2}) \{ 1 \}.\] 
For example, this isomorphism appears in \cite[Eq (1)]{BachmannVishik}. By dualising, we conclude $\overline{M}_{\mathrm{coh}}(Q_n) \simeq \overline{M}_{\mathrm{coh}}(Q_{n-2}) \{ -1 \}$ which completes the inductive proof.
\end{proof}

The goal for the rest of this section is to prove Theorem \ref{thm mot van cycles for homog f}. Throughout this section, we write $f_0 : V_0 \ra \{ 0 \}$ and $f_\eta : V_\eta \ra \GG_m$ as in the previous subsection. 

First of all, as the critical locus of the non-degenerate homogeneous function $f : V \ra \AA^1$ is the origin, the vanishing cycles functor is concentrated at the origin in the following sense.

\begin{lemma}\label{lemma van cycle concentrated at zero}
Let $X$ be a finite type $k$ scheme and $f: X \ra \AA^1$ be a regular function with a single isolated critical point $x_{0}\in X(k)$ over $0$ with $i_{0}:\Spec(k)\to X_{0}$ the corresponding closed immersion. We have 
\[ \phi_f (\one_X) \simeq (i_0)_*(i_0)^*\phi_f (\one_X) \simeq (i_0)_*(f_0)_* \phi_f (\one_X). \]
\end{lemma}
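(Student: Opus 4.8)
The statement is that vanishing cycles of a function with a single isolated critical point over $0$ are supported at that point. The plan is to exploit the standard fact (valid for motivic vanishing cycles as for $\ell$-adic ones) that $\phi_f$ vanishes away from the critical locus of $f$, and then translate this into the two displayed isomorphisms. Concretely, write $U := X \setminus \{x_0\}$ with open immersion $\jmath : U \hookrightarrow X$, and let $f|_U : U \to \AA^1$ be the restriction; then $f|_U$ is smooth away from $0$ in the sense that its only possible critical point has been removed, so $0$ is not a critical value of any fibre over a punctured disc and, more importantly, $f|_U$ has no critical point mapping to $0$. The first step is therefore to establish that $\phi_{f|_U}(\one_U) \simeq 0$ in $\DM((U)_0,\Lambda)$.

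For that first step, I would argue as follows. The question is local on $U_0$, and $\phi_{f|_U}$ commutes with restriction to opens (this is the trivial case of smooth base change for vanishing cycles, Proposition \ref{prop SBC van cycles}), so it suffices to show the stalk of $\phi_{f|_U}(\one_U)$ vanishes at each point of $U_0$. At a point where $f|_U$ is a submersion, one can, after an étale-local change of coordinates, write $f|_U$ as a coordinate projection $\AA^n \to \AA^1$; for such an $f$ the nearby cycles functor $\psi_f^{\un}$ applied to $\one$ is simply the pullback of $\one$ (the family is a product, $j_*$ contributes nothing beyond the unit, and $R_B\Log$ restricted back along the unit section is $\one$), so the natural transformation $i^*_\fX \to \psi^{\un}_f j^*_\fX$ is an isomorphism on $\one$, whence $\phi^{\un}_f(\one) \simeq 0$; passing to the colimit over $n$-th power maps gives $\phi_f(\one) \simeq 0$. (Alternatively, and perhaps cleaner: invoke Remark \ref{rmk two defs psi} to identify our $\phi_f$ with the one extended from schemes, and then cite the fact that for a smooth morphism $f$ one has $\phi_f \one \simeq 0$, which follows from the computation of $\psi_f$ for a coordinate projection in \cite[Chapitre 3]{Ayoub_these_2}.) Since $f|_U$ is smooth as a morphism to $\AA^1$ at every point of $U$ mapping to $0$ — there being no critical point there — this yields $\phi_{f|_U}(\one_U) \simeq 0$.

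The second step assembles the conclusion. Let $j_{\fX} : X_\eta \hookrightarrow X$, $i_{\fX} : X_0 \hookrightarrow X$ be as in Definition \ref{def mot van cycles}, and consider the localisation (Gysin) triangle on $X$ associated to the closed point $x_0 \hookrightarrow X$ and its open complement $U$: applying $\phi_f$ (which is exact) and using that $\phi_f$ commutes with the open restriction $j^* = j|_U^*$ (Proposition \ref{prop SBC van cycles}) and with the proper pushforward $(i_0)_*$ along the closed immersion $x_0 \hookrightarrow X_0$ (Proposition \ref{prop PBC van cycles}, as a closed immersion is proper and Deligne--Mumford-representable), we obtain a distinguished triangle
\[
(i_0)_*(i_0)^*\phi_f(\one_X) \lra \phi_f(\one_X) \lra \phi_{f|_U}(\one_U) \xrightarrow{+1}
\]
in $\DM(X_0,\Lambda)$, where I have used $(i_0)^*\phi_f(\one_X) \simeq \phi_{f\circ\, x_0}(x_0^*\one_X)$ but more simply just that restricting $\phi_f(\one_X)$ to the closed point and pushing back is the composite $(i_0)_*(i_0)^*$. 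Since the third term vanishes by Step 1, the first map is an isomorphism, giving $\phi_f(\one_X) \simeq (i_0)_*(i_0)^*\phi_f(\one_X)$. Finally, $(f_0)_* : \DM(X_0,\Lambda) \to \DM(\{0\},\Lambda)$ and the structure map of $x_0$ compose to the identity on $\DM(\Spec k,\Lambda)$, and $(i_0)^* \simeq (f_0)_*$ when applied to a motive already known to be of the form $(i_0)_*(-)$ (concretely, $(f_0)_*(i_0)_* \simeq \id$ since $f_0 \circ i_0 = \id_{\Spec k}$, so $(f_0)_*\phi_f(\one_X) \simeq (f_0)_*(i_0)_*(i_0)^*\phi_f(\one_X) \simeq (i_0)^*\phi_f(\one_X)$); combining gives the second claimed isomorphism $\phi_f(\one_X) \simeq (i_0)_*(f_0)_*\phi_f(\one_X)$.

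\textbf{Main obstacle.} The only real content is Step 1: showing $\phi_f \one \simeq 0$ wherever $f$ is a submersion. In the $\ell$-adic world this is immediate from smooth/proper base change and the computation of $\psi$ for a projection; for motivic vanishing cycles the same inputs are available (Ayoub's computations in \cite[Chapitre 3]{Ayoub_these_2}, together with Remark \ref{rmk two defs psi} to reconcile the two definitions), but one must be slightly careful that the relevant étale-local normal form for a submersion plays well with the logarithmic/colimit definition of $\psi_f$ used here — this is exactly the kind of base-change bookkeeping handled by Propositions \ref{prop PBC van cycles} and \ref{prop SBC van cycles}, so no genuinely new argument is needed, only care.
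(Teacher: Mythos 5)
Your overall strategy matches the paper's: show $\phi_f(\one)$ vanishes away from $x_0$ (Step~1, which is fine and essentially what the paper does), then conclude by a localisation argument. However, Step~2 has a genuine error in how the localisation is set up.

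You propose to take the localisation triangle on $X$ for the pair $(\jmath : U \hookrightarrow X, \; i_{x_0} : \{x_0\} \hookrightarrow X)$ and then ``apply $\phi_f$''. This doesn't go through: $\phi_f$ is only known to commute with pullback along smooth morphisms (Proposition~\ref{prop SBC van cycles}) and with pushforward along \emph{proper} DM-representable morphisms (Proposition~\ref{prop PBC van cycles}). It does not commute with $\jmath_!$ for an open immersion $\jmath$, nor is $\jmath_*$ proper, so neither version of the localisation triangle on $X$ passes cleanly through $\phi_f$. Moreover, even for the closed term you would get $\phi_f((i_{x_0})_*\one_{x_0}) \simeq (i_0)_*\phi_{f\circ i_{x_0}}(\one) \simeq (i_0)_*\one[1]$ (since the generic fibre of a point mapped to $0$ is empty), which is \emph{not} $(i_0)_*(i_0)^*\phi_f(\one_X)$. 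Your displayed triangle is also not a valid localisation triangle in either direction: the triangle with $(i_0)_*(i_0)^*(-)$ as third term has $(j_0)_!(j_0)^*(-)$ as first term, and the triangle with $(j_0)_*(j_0)^*(-)$ as third term has $(i_0)_*(i_0)^!(-)$ as first term; you've mixed the two.

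The correct move — the one the paper actually makes — is not to push a triangle through $\phi_f$ at all, but to take the localisation triangle \emph{on $X_0$} for the pair $(j_0 : X_0^\times \hookrightarrow X_0, \; i_0 : \{x_0\} \hookrightarrow X_0)$ applied directly to the already-computed object $M := \phi_f(\one_X) \in \DM(X_0,\Lambda)$:
\[
(j_0)_!(j_0)^*M \lra M \lra (i_0)_*(i_0)^*M \stackrel{+}{\lra}
\]
and then use smooth base change only in the form $(j_0)^*\phi_f(\one_X) \simeq \phi_{f|_U}(\one_U) \simeq 0$ to kill the left term. Your Step~1 furnishes exactly this vanishing, so the conclusion $\phi_f(\one_X) \simeq (i_0)_*(i_0)^*\phi_f(\one_X)$ drops out; your final manipulation with $(f_0)_*(i_0)_* \simeq \id$ is then fine. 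So the endpoint is right and the overall plan is the paper's, but the localisation step needs to be rewritten as above.
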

\begin{proof}
Since the restriction $f^\times : X^\times \ra \AA^1$ of $f$ to $X^\times:= X \setminus \{ x_{0} \}$ is smooth, we have $\psi_{f^\times}(\one_{X_\eta}) \simeq \one_{X_0}$ and $\phi_{f^\times} (\one_X) \simeq 0$. Let $ X_0^{\times} := X_0 \setminus \{ x_{0} \}$ and $j_0: X_0^{\times} \hookrightarrow X_0$ denote the open immersion; then by smooth base change for vanishing cycles, we have $\phi_{f^\times}(\one_{X^\times}) \simeq j_0^* \phi_f (\one_X)$. By considering the localisation triangle for the pair $(j_0 : X_0^\times \hookrightarrow X_0, \: i_0: \{ x_{0} \} \hookrightarrow X_0)$ 
\[  (j_0)_!(j_0)^*\phi_f (\one_X) \ra \phi_f (\one_X) \ra (i_0)_*(i_0)^*\phi_f (\one_X)  \rap \]
we obtain the first claimed isomorphism, as the left term is zero. The second claimed isomorphism then follows as $i_0$ is a section of $f_0$ and again using the localisation sequence for $(i_0,j_0)$ together with the fact that $(j_0)^*\phi_f (\one_X) \simeq 0$.
\end{proof}

The next step is to relate the vanishing cycles functor for $f$ with the vanishing cycles functor for $\mathrm{Id} : \AA^1 \ra \AA^1$.

\begin{prop}\label{prop van cycles f and Id}
For a homogeneous non-degenerate function $f: V \ra \AA^1$ on a vector space $V$ over a field $k$, we have $\phi_{\mathrm{Id}}(f_* \one_V) \simeq (f_0)_* \phi_f(\one_V)$.
\end{prop}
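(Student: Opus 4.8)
The plan is to reduce the statement to a base change identity for the nearby cycles functor and the projection formula for the logarithm motive. Recall that by Lemma \ref{lemma van cycle concentrated at zero}, both sides are naturally objects of $\DM(V_0,\Lambda)$ pushed forward from the origin, but we do not even need this refinement; it suffices to work over $\AA^1_k$ and its special fibre. Consider the commutative diagram with cartesian squares
\[
\xymatrix{
V_\eta \ar[r]^{j_V} \ar[d]_{f_\eta} & V \ar[d]^{f} & V_0 \ar[l]_{i_V} \ar[d]^{f_0} \\
\GG_{m,k} \ar[r]^{j} & \AA^1_k & \{0\}. \ar[l]_{i}
}
\]
Here $f$ is a proper morphism? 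No — $f$ is not proper, so we cannot simply invoke proper base change. Instead, the key observation is that the nearby and vanishing cycles functors for $\mathrm{Id}:\AA^1_k\to\AA^1_k$ are defined using pullback/pushforward along $j$ and $i$, and the maps $f$, $f_\eta$, $f_0$ intertwine these with the corresponding operations over $V$; so what we need is the commutation of $f_*$ with the nearby cycles construction. Since the building blocks of $\psi$ are $i_V^*$, $(j_V)_*$ and tensoring with the pullback of $\Log$, I would verify the identity $\psi_{\mathrm{Id}}\circ (f_\eta)_* \simeq (f_0)_* \circ \psi_f$ directly, which then passes to vanishing cycles because $\phi$ is defined as a cofibre of a natural transformation that is itself compatible with $(f_\eta)_*$, $(f_0)_*$ and the various unit/counit maps.

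Concretely, first I would treat the unipotent case: I claim $\psi_{\mathrm{Id}}^{\un}((f_\eta)_* M) \simeq (f_0)_*\psi_f^{\un}(M)$ naturally in $M\in\DM(V_\eta,\Lambda)$. Unwinding Definition \ref{def mot van cycles}, the left side is $i^*(j)_*((f_\eta)_* M \otimes j^*\Log)$ and, using the base change isomorphism $(j)_*(f_\eta)_* \simeq f_* (j_V)_*$ (which holds since the relevant square is cartesian and we may use that $(j_V)_*$, $(j)_*$ are right adjoints together with the Beck--Chevalley map, but more carefully since $j$, $j_V$ are open immersions this is smooth base change, Theorem \ref{thm Adeel translated to DM}\,\ref{dm stack sbc}), together with the projection formula $(j_V)_*(M\otimes f_\eta^* j^*\Log)\simeq (j_V)_*(M)\otimes f^*\Log$ after pulling back — actually the cleaner route is to use the projection formula on $(f_\eta)_*$: $(f_\eta)_* M \otimes j^*\Log \simeq (f_\eta)_*(M\otimes f_\eta^* j^*\Log)$ — and then the base change $i^* f_* \simeq (f_0)_* i_V^*$ for the right square. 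Chaining these, $i^*(j)_*(f_\eta)_*(M\otimes f_\eta^*j^*\Log) \simeq i^* f_* (j_V)_*(M\otimes f_\eta^*j^*\Log) \simeq (f_0)_* i_V^*(j_V)_*(M\otimes f_\eta^*j^*\Log) = (f_0)_*\psi_f^{\un}(M)$, using that $f_\eta^* j^*\Log = j_V^* f^*\Log$ since $f_\eta = f|_{V_\eta}$ and the square commutes. The projection formula for $(f_\eta)_*$ is legitimate since $\Log$ is dualizable (it is an ind-object, so one should argue on each finite stage, but this is routine). The full (non-unipotent) case follows by passing to the colimit over the $n$-th power maps $p_n$, since $(f_\eta)_*$ and $(f_0)_*$ commute with this colimit because they are left adjoints (identified with $(f_\eta)_!$, $(f_0)_!$ via the fact that nearby cycles only see reduced structure, or more directly since the colimit is filtered and these functors preserve filtered colimits in this setting — alternatively, just note the colimit is along a filtered category and our pushforwards preserve such colimits as asserted in the proof of Proposition \ref{prop PBC van cycles}).

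Finally, to pass from nearby to vanishing cycles: $\phi_{\mathrm{Id}}$ is the cofibre of $i^*\to \psi_{\mathrm{Id}}\circ j^*$ and $\phi_f$ is the cofibre of $i_V^*\to \psi_f\circ j_V^*$. Applying $f_*$ and using the base change isomorphisms above, one checks the square
\[
\xymatrix{
i^*(f_* N) \ar[r] \ar[d]_{\sim} & \psi_{\mathrm{Id}}(j^*(f_*N)) \ar[d]^{\sim} \\
(f_0)_* i_V^* N \ar[r] & (f_0)_*\psi_f(j_V^* N)
}
\]
commutes for $N = \one_V$ (the left vertical is $i^*f_*\simeq (f_0)_*i_V^*$; the right vertical combines $j^*f_*\simeq (f_\eta)_* j_V^*$ with the identity established in the previous paragraph; commutativity is a compatibility of unit/counit maps which I would check by unwinding the definitions of the natural transformations $i^*\to\psi\circ j^*$). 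Taking cofibres horizontally and using that $(f_0)_*$ is exact (preserves cofibre sequences), we obtain $\phi_{\mathrm{Id}}(f_*\one_V)\simeq (f_0)_*\phi_f(\one_V)$, as desired. The main obstacle I anticipate is the bookkeeping in this last diagram: verifying that the natural transformation defining vanishing cycles is genuinely compatible with all the base change isomorphisms requires care, though it is formal; there is no serious geometric input beyond the fact that the two squares in the diagram are cartesian, which is immediate since $V_\eta$, $V_0$ are by definition the pullbacks of $\GG_{m,k}$, $\{0\}$ along $f$.
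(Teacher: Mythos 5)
Your argument has a genuine gap precisely at the step where you invoke the base change isomorphism $i^* f_* \simeq (f_0)_* i_V^*$ for the right-hand cartesian square. This is proper base change for $f$, and $f:V\to\AA^1$ is \emph{not} proper (unless $V=0$). For a cartesian square, the Beck--Chevalley map $i^*f_*\to (f_0)_*i_V^*$ is an isomorphism when $f$ is proper (so $f_!\simeq f_*$) or when $i$ is smooth (so $i^!\simeq i^*$ up to twist); neither holds here, since $i$ is the closed immersion of the origin. This is exactly the obstruction the paper flags at the start of its proof (``Ideally we would like to prove this using proper base change, but the morphism $f$ is not proper''), and it is why the paper constructs the fibrewise projectivisation $\widetilde f:\widetilde V\to\AA^1$ with boundary $D$, proves the needed statements for the proper maps $\widetilde f$ and $f_D$, and then transfers them back to $f$ using the localisation triangle for $V\hookrightarrow\widetilde V\hookleftarrow D$ together with the fact that $D$ is a constant family (so $\phi_{f_D}\one_D\simeq 0$). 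Your other base change step, $j^*f_*\simeq (f_\eta)_*j_V^*$, is fine since $j$ is an open immersion, hence smooth; but that does not help with the closed-fibre step. Without replacing the non-proper base change by the compactification-plus-localisation argument (or some other justification specific to the homogeneous situation), the chain of isomorphisms in your unipotent case breaks down, and the rest of the argument inherits the gap.
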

\begin{proof} 
Ideally we would like to apply proper base change, but the morphism $f$ is not proper. To remedy this, we construct a fibrewise projectivisation $\widetilde{f} : \widetilde{V} \ra \AA^1$ with smooth boundary divisor $D:= \widetilde{V} \setminus V$ which is a constant family over $\AA^1$ and use proper base change for $\widetilde{f}$. More precisely, we define $\widetilde{V}$ to be the relative projective spectrum over $\AA^1 = \Spec k[t]$ of the graded ring $k[V,z,t]/(f - tz^r)$ where $r$ denotes the degree of the homogeneous function $f$ and the coordinate $t$ has weight 0, the coordinate $z$ has weight $1$ and the coordinates on $V$ have weight $1$. By construction, the fibre $\widetilde{V}_\lambda$ over $\lambda \in \AA^1$ is the projective variety defined by the vanishing locus of the equation $f(v) = \lambda z^r$ in $\PP(V \oplus k) $ with coordinates $[v:z]$ and it contains $V_\lambda$ as the intersection with $V \cong \{ z \neq 0 \}$ and has boundary $D_\lambda$ given by the projective variety $f(v) = 0$ in $\PP(V) \cong \{ z = 0 \}$. 

Let $f_D : D \ra \AA^1$ denote the restriction of $\widetilde{f}$ to $D$, so that we have a commutative diagram
\[ \xymatrix{ V \ar[r]^{\nu} \ar[dr]_{f} & \widetilde{V} \ar[d]^{\widetilde{f}} & D \ar[ld]^{f_D} \ar[l]_{\iota} \\ & \AA^1} \]
with $\nu$ an open immersion and $\iota$ a closed immersion. Since $D \cong D_0 \times \AA^1$ is a smooth constant family, we have 
\begin{equation}\label{eq van cycle for boundary is zero}
\phi_{f_D}(\one_D) \simeq 0.
\end{equation} 
Furthermore, as $\widetilde{f}$ and $f_D$ are both proper, by Proposition \ref{prop PBC van cycles}, we have
\begin{equation}\label{eqns using PBC van cycles} 
\phi_{\mathrm{Id}}\widetilde{f}_*  \simeq (\widetilde{f}_0)_* \phi_{\widetilde{f}} \quad \text{and} \quad \phi_{\mathrm{Id}}f_{D_*} \simeq (f_{D,0})_* \phi_{f_D}.
\end{equation}
By applying $\widetilde{f}_*$ to the localisation sequence for $(\nu : V \hookrightarrow \widetilde{V}, \iota: D \hookrightarrow \widetilde{V})$, we obtain
\[ \widetilde{f}_*\iota_*\iota^! \one_{\widetilde{V}} \ra  \widetilde{f}_*\one_{\widetilde{V}} \ra \widetilde{f}_*\nu_*\nu^*\one_{\widetilde{V}}  \rap \]
and we have isomorphisms $\widetilde{f}_*\nu_*\nu^*\one_{\widetilde{V}} \simeq f_*\one_V$ and $\widetilde{f}_*\iota_*\iota^! \one_{\widetilde{V}}  \simeq (f_D)_* \iota^! \one_{\widetilde{V}} \simeq(f_D)_* \one_D\{ -1 \}$ using the purity isomorphism for the regular map $\iota$. Next we apply the vanishing cycles functor for $\mathrm{Id} : \AA^1 \ra \AA^1$ to this sequence to obtain
\[ \phi_{\mathrm{Id}} ((f_D)_*\one_D\{ -1 \}) \ra \phi_{\mathrm{Id}}(\widetilde{f}_*\one_{\widetilde{V}}) \ra \phi_{\mathrm{Id}} (f_*\one_V) \rap \]
where the left term is zero and the middle term is isomorphic to $(\widetilde{f}_0)_*\phi_{\widetilde{f}}(\one_{\widetilde{V}})$ by Equations \eqref{eq van cycle for boundary is zero} and \eqref{eqns using PBC van cycles}. Therefore, to prove that $\phi_{\mathrm{Id}}(f_* \one_V) \simeq (f_0)_* \phi_f(\one_V)$, it suffices to show that there is an isomorphism $(\widetilde{f}_0)_*\phi_{\widetilde{f}}(\one_{\widetilde{V}}) \ra  (f_0)_* \phi_f(\one_V)$. This final claim is proved by using Lemma \ref{lemma van cycle concentrated at zero} for both $f$ and $\widetilde{f}$ together with the isomorphism $\widetilde{i_0}^* \phi_{\widetilde{f}} \simeq i_0^* \phi_f$, where $i_0$ (resp. $\widetilde{i_0}$) denote the inclusion of the origin in $V_0$ (resp. $\widetilde{V}_0$), which follows from smooth base change for vanishing cycles applied to the open immersion $\nu$.
\end{proof}

We are now ready to prove Theorem \ref{thm mot van cycles for homog f}.

\begin{proof}[Proof of Theorem \ref{thm mot van cycles for homog f}]
Let $r$ denote the degree of the homogeneous function $f : V \ra \AA^1$. We let $p_r : \AA^1 \ra \AA^1$ denote the $r$th power map and write $f_{(r)} : V_{(r)} \ra \AA^1$ for the base change of $f$ along $p_r$. Then, because $f$ is homogeneous of degree $r$, the generic fibre $f_{(r),\eta} : V_{(r),\eta} \ra \GG_m$ is the constant family: $V_{(r),\eta} \cong V_{(r),1} \times \GG_m \cong V_1 \times \GG_m$. 

By \cite[Lemme 3.5.8 and following paragraph]{Ayoub_these_2}, the nearby cycles functor for $\mathrm{Id} : \AA^1 \ra \AA^1$ is unchanged when applying $p_{r,\eta}^*$; that is, $\psi_{\mathrm{Id}} \simeq \psi_{\mathrm{id}} p_{r,\eta}^*$. Therefore, we have isomorphisms 
\[\psi_{\mathrm{Id}} ((f_\eta)_*\one_{V_\eta}) \simeq \psi_{\mathrm{Id}} (p_{r,\eta}^*(f_\eta)_*\one_{V_\eta}) \simeq \psi_{\mathrm{Id}} (f_{(r),\eta})_*\one_{V_{(r),\eta} }  \simeq (f_1)_*\one_{V_1}\]
where the first isomorphism is \cite[Lemme 3.5.8 and following paragraph]{Ayoub_these_2}, the middle isomorphism is smooth base change for $p_r$ and the right isomorphism follows as $f_{(r),\eta}$ is constant. Here $f_1 : V_1 \ra \{ 1 \}$ is the restriction of $f$ to the fibre over $1$.

Since $V_0$ is an affine cone over $0$, we have $(f_0)_* \one_{V_0} \simeq \one_k$ by rescaling. Even though $f$ is not proper, we have $i^*f_*\one_V \simeq (f_0)_* \one_{V_0}$ as follows from localisation and proper base change for the compactification $\tilde{f}$ as in the proof of Proposition \ref{prop van cycles f and Id}.  Therefore, we have 
\begin{equation}\label{eq relate to red coh motive of general fibre}
\phi_{\mathrm{Id}}(f_*\one_V) = \mathrm{cofib} \left(i^*f_* \one_V \ra \phi_{\mathrm{Id}} ((f_\eta)_*\one_{V_\eta})\right) \simeq \mathrm{cofib} \left( \one_k \ra (f_1)_*\one_{V_1} \right) \simeq \overline{M}_{\mathrm{coh}}(V_1).
\end{equation}

To complete the proof, we have isomorphisms
\[ \phi_f (\one_V) \simeq  (i_0)_*(f_0)_* \phi_f (\one_V) \simeq   (i_0)_*\phi_{\mathrm{Id}}(f_* \one_V) \simeq  (i_0)_* \overline{M}_{\mathrm{coh}}(V_1) \]
coming from Lemma \ref{lemma van cycle concentrated at zero}, Proposition \ref{prop van cycles f and Id} and Equation \eqref{eq relate to red coh motive of general fibre}.
\end{proof}

\begin{rmk}
We expect that Theorem \ref{thm mot van cycles for homog f} can be generalised to the setting of a non-degenerate weighted homogeneous function $f : V \ra \AA^1$.
\end{rmk}

\section{Motives of stacks of vector bundles with fixed determinant}\label{sec appendix motive stacks of bundles fixed det}

Throughout this section, we assume that $k$ is an arbitrary field and $C(k) \neq \emptyset$. We will compute motives of stacks of vector bundles over $C$ (or families $\cC/T$ of curves) with fixed determinant in $\DM(k,\QQ)$ by extending results in \cite{HPL,HPL_formula}.

\subsection{A formula for the motive of the stack of bundles with fixed determinant}

Fix a line bundle $L \ra C$ and consider the stack $\Bun_{n,L}$ of rank $n$ vector bundles with determinant isomorphic to $L$. If $d = \deg(L)$, then $\Bun_{n,L}$ is a smooth codimension $g$ substack of $\Bun_{n,d}$. In this section, we will prove the following explicit formula.

\begin{thm}\label{thm motive bun fixed det}
Assume $C(k) \neq \emptyset$. Then in $\DM(k,\QQ)$, we have
\[ M(\Bun_{n,L}) \simeq M(B\GG_m) \otimes  \bigotimes_{i=1}^{n-1} Z(C, \QQ\{i\}).\]
\end{thm}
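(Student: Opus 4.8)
The plan is to deduce the formula for $M(\Bun_{n,L})$ from the known formula for $M(\Bun_{n,d})$ together with a product decomposition that splits off the determinant. Recall from \cite{HPL_formula} that the motive of the full stack of rank $n$ degree $d$ bundles is
\[ M(\Bun_{n,d}) \simeq M(\Pic^{d}(C))\otimes M(B\GG_m)\otimes \bigotimes_{i=1}^{n-1} Z(C,\QQ\{i\}), \]
where $Z(C,-)$ denotes the motivic zeta function. The first step is to analyse the determinant morphism $\det:\Bun_{n,d}\to \Pic^{d}(C)$ and show that, after passing to motives, it admits a section splitting off the $M(\Pic^d(C))$ factor in a way compatible with the fibre $\Bun_{n,L}=\det^{-1}(L)$. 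Concretely, I would fix a point $x_{0}\in C(k)$ (using $C(k)\neq\emptyset$) and use the $n$-th tensor power map on $\Pic$ together with Hecke modifications at $x_{0}$ to compare $\Bun_{n,d}$ with a product $\Pic^{?}(C)\times \Bun_{n,L}$; the key structural input is that tensoring by line bundles gives an action of $\Pic^{0}(C)$ on $\Bun_{n,d}$ under which $\det$ is equivariant (via the $n$-th power map $\Pic^0(C)\to\Pic^0(C)$), and this action is ``motivically free'' in the sense that $M(\Pic^0(C))$ is a Hopf-algebra object acting on $M(\Bun_{n,d})$ with the right coinvariance properties.

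The cleanest route is probably the relative one, exploiting the relative formulae promised in this appendix. Namely, consider the family $\cC:=C\times \Pic^{d}(C)\to \Pic^{d}(C)$ and the universal Poincaré bundle, so that $\Bun_{n,d}\simeq \Bun_{\cC/\Pic^{d}(C),n,\mathcal{L}^{\mathrm{univ}}}$ is a \emph{relative} stack of bundles with fixed determinant over $B=\Pic^{d}(C)$. Then $M(\Bun_{n,d})$ is computed by the relative version of the formula, which expresses it as $M(\Pic^{d}(C))$ tensored with a relative motivic zeta function that, because $C\times\Pic^d(C)\to\Pic^d(C)$ is a \emph{constant} family, is just the base change of $\bigotimes_{i=1}^{n-1} Z(C,\QQ\{i\})$ to $\Pic^{d}(C)$, and is therefore already defined over $k$. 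Pulling back along a single point $L\in\Pic^{d}(C)(k)$ — equivalently, taking the fibre of $\det$ over $L$ — then kills the $M(\Pic^{d}(C))$ factor and leaves exactly $M(B\GG_m)\otimes\bigotimes_{i=1}^{n-1}Z(C,\QQ\{i\})$. To make ``taking the fibre'' legitimate on motives one uses that $\det$ is (Zariski-, or at least smooth-) locally trivial after an étale base change, via the $n$-th power isogeny on $\Pic^0(C)$; since we work with $\QQ$-coefficients and this isogeny has degree $n^{2g}$ prime to nothing relevant, the base change and the fibre behave as expected, and the Künneth/projection formulae give the splitting $M(\Bun_{n,d})\simeq M(\Pic^{d}(C))\otimes M(\Bun_{n,L})$.

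Carrying this out, the steps in order are: (1) state and prove the relative formula for $M(\Bun_{\cC/B,n,\cL})$ extending \cite{HPL_formula} (this is the substance of the later subsections of the appendix, which I would invoke); (2) show the determinant morphism $\det:\Bun_{n,d}\to\Pic^{d}(C)$ is, up to the $n$-th power isogeny, a trivial fibration with fibre $\Bun_{n,L}$, so that $\Bun_{n,d}$ is a quotient of $\Pic^{0}(C)\times\Bun_{n,L}$ by the free action of $\Pic^0(C)[n]$ — equivalently $\Bun_{n,d}\times_{\Pic^d(C),n}\Pic^{d}(C)\simeq \Pic^{d}(C)\times\Bun_{n,L}$; (3) pass to motives, using that $M(\Pic^{0}(C))$ carries a coalgebra structure with $\one$ as a canonical direct summand and that the induced $\Pic^0(C)[n]$-action on motives is trivial on $M(\Pic^0(C))$ (as in \cite[Proof of Theorem 4.2]{FHPL_rank2}), to get $M(\Bun_{n,d})\simeq M(\Pic^{d}(C))\otimes M(\Bun_{n,L})$; (4) combine with the formula for $M(\Bun_{n,d})$ and cancel the $M(\Pic^{d}(C))=M(\Pic^{0}(C))$ factor — legitimate because all motives in sight are in the rigid tensor category generated by $M(C)$, hence Kimura finite-dimensional, so tensoring by $M(\Pic^0(C))$ is ``faithful'' enough to cancel. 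The main obstacle is step (3)–(4): verifying that the determinant morphism is genuinely trivialised by the $n$-th power isogeny and that the resulting cancellation of the abelian-variety factor is valid at the level of motives rather than merely motivic classes — this requires care with the Hopf-algebra structure on $M(\Pic^0(C))$ and with the fact that étale descent along $\Pic^0(C)[n]\to\Pic^0(C)$ interacts correctly with $\DM(-,\QQ)$, but both are available from the results already cited in the paper.
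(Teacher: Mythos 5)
Your route is genuinely different from the paper's, which proves this theorem by a direct computation: it introduces the fixed-determinant Quot schemes $\Div_{n,L}(l)$ and their flag versions $\FDiv_{n,L}(l)$, computes their motives via projective-bundle and Abel--Jacobi-bundle structures, and passes to the colimit, exactly as in \cite{HPL,HPL_formula} but with the determinant condition carried along. Your proposal is instead to split $M(\Bun_{n,L})$ off from the already-known formula for $M(\Bun_{n,d})$. Unfortunately this reduction has two genuine gaps, and the route through Theorem~\ref{thm motive rel Bun fixed det} is in addition circular.

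First, from the presentation $\Bun_{n,d}\simeq \bigl[(\Pic^0(C)\times\Bun_{n,L})/\Gamma\bigr]$ with $\gamma\cdot(M,E)=(M\gamma^{-1},E\otimes\gamma)$, Lemma~\ref{lemma motive quotient} gives $M(\Bun_{n,d})\simeq (M(\Pic^0(C))\otimes M(\Bun_{n,L}))^{\Gamma}$. Knowing (from \cite[Proof of Theorem 4.2]{FHPL_rank2}) that $\Gamma$ acts trivially on $M(\Pic^0(C))$ only turns this into $M(\Pic^0(C))\otimes M(\Bun_{n,L})^{\Gamma}$; to arrive at $M(\Pic^0(C))\otimes M(\Bun_{n,L})$, as you claim in step (3), you would additionally need the $\Gamma$-action on $M(\Bun_{n,L})$ to be trivial. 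That is a nontrivial statement (the analogue of Harder--Narasimhan for the whole stack at the motivic level) which you neither prove nor cite, and a natural way to establish it would be to have the formula of the theorem in hand already. Second, even granting $M(\Bun_{n,d})\simeq M(\Pic^0(C))\otimes M(\Bun_{n,L})$, your step (4) requires cancelling the factor $M(\Pic^0(C))$ in a tensor isomorphism. The appeal to Kimura finite-dimensionality does not apply, because $M(\Bun_{n,L})$ (and both sides of the desired isomorphism) are non-compact objects of $\DM(k,\QQ)$, being infinite direct sums of Tate twists; cancellation of a tensor factor is not valid in this generality without further argument (e.g.\ a weight-by-weight argument in numerical motives, which you do not supply). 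Finally, the relative route you sketch is circular and also not quite correct as stated: the proof of Theorem~\ref{thm motive rel Bun fixed det} explicitly uses Theorem~\ref{thm motive bun fixed det}, and the asserted identification $\Bun_{n,d}\simeq\Bun_{\cC/\Pic^d(C),n,\cL^{\mathrm{univ}}}$ fails because the determinant of a family of bundles over a base $T$ may differ from the pullback of the Poincar\'e bundle by a line bundle pulled back from $T$.
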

\begin{proof} 
Fix $x \in C(k)$ and for $l \in \NN$, we consider as in \cite[$\S$4.3]{HPL}, the scheme
\[ \Div_{n,L}(l):= \{ E \subset \cO_C(lx)^{\oplus n} : \rk(E) = n, \det(E) \cong L \} \]
which is a smooth codimension $g$ closed subvariety of the Quot scheme $\Div_{n,d}(l)$ of length $nl-d$ torsion quotients of $ \cO_C(lx)^{\oplus n}$, where $d = \deg(L)$. Moreover, one has
\[ M(\Bun_{n,L}) \simeq \hocolim_{l} M(\Div_{n,L}(l))\]
as in the proof of the first paragraph of \cite[Theorem 4.6]{HPL}.

We also consider the smooth closed subscheme $\FDiv_{n,L}(l)$ in the full-flag Quot scheme $\FDiv_{n,d}(l)$ given by
\[  \FDiv_{n,L}(l):=\left\{ F_0 \subset \cdots \subset F_{nl-d} = \cO_{C }(lx)^{\oplus n} :  \rk(F_i) = n, \deg(F_i) = d + i, \det(F_0) \simeq L  \right\}. \]
The support map $\supp : \FDiv_{n,d} \ra C^{nl-d}$ given by $(F_0 \subset \cdots \subset F_{nl-d}) \mapsto \supp(F_i/F_{i-1})_{1 \leq i \leq nl-d}$ is an $(nl-d)$-iterated projective bundle. Let $(C^{nl-d})_L:= \supp (\FDiv_{n,L}(l))$; then by the projective bundle formula
\[ M(\FDiv_{n,L}(l)) \simeq M((C^{nl-d})_L) \otimes M(\PP^{n-1})^{\otimes (nl-d)}. \]

We will now complete the proof by adapting the argument in \cite[$\S$4.3]{HPL_formula}.
Using the decompositions $M(\PP^{n-1}) \simeq \oplus_{i=0}^{n-1} \QQ\{i \}$ as in \cite[Remark 4.6]{HPL_formula}, we can write
\[ M(\FDiv_{n,L}(l)) \simeq \bigoplus_{I \in \cI_l} M((C^{nl-d})_L) \otimes \QQ\{ | I | \},\]
where $\cI_l = \{ 0, \dots, n-1\}^{\times nl-d}$ and for $I = (i_1, \dots, i_{nl-d}) \in \cI_l$, we write $|I|:= \sum_{j=1}^{nl-d} i_j$. Let $\cB_l$ denote the set of $\underline{m}=(m_0,\dots, m_{n-1}) \in \NN^{n}$ with $\sum_{i=0}^{n-1} m_i = nl-d$. Then as in \cite[Lemma 4.7]{HPL_formula}, we can conclude that
\[  M(\Div_{n,L}(l)) \simeq \bigoplus_{\underline{m} \in \cB_l} M(C_L^{(\underline{m})}) \{ c_{\underline{m}} \},\]
where $c_{\underline{m}}:=\sum_{i=0}^{n-1} i m_i$ and $C_L^{(\underline{m})} \subset C^{(\underline{m})}:= C^{(m_0)} \times \cdots \times C^{(m_{n-1})}$ is the image of $(C^{nl-d})_L$ under the quotient $C^{nl-d} \ra C^{(m_0)} \times \cdots \times C^{(m_{n-1})}$ by the product of symmetric groups $\prod_{i=0}^{n-1} S_{m_i}$. Furthermore, as in \cite[Lemma 4.7 (ii)]{HPL_formula}, the transition map $M(\Div_{n,L}(l)) \ra M(\Div_{n,L}(l+1))$ is induced by direct sums over $\underline{m} \in \cB_l$ and $\underline{m'} \in \cB_{l+1}$ of the maps $M(\kappa_{\underline{m},\underline{m'},L})\{c_{\underline{m}} \}$, where  
\[ \kappa_{\underline{m},\underline{m'},L} : C_L^{(\underline{m})} \ra C_L^{(\underline{m'})}\] is zero unless $\underline{m'} = \underline{m} + (n,0,\dots, 0)$ (and thus $c_{\underline{m}} = c_{\underline{m'}}$) and in this case, $\kappa_{\underline{m},\underline{m'},L}$ is the restriction of the map $\kappa_{\underline{m},\underline{m'}} : C^{(\underline{m})} \ra C^{(\underline{m'})}$ induced by $(x,\dots,x) \times \mathrm{id}_{C^{nl-d}} \colon C^{nl-d} \ra C^{n(l+1) -d}$ which includes $n$ copies of $x$. 

The rest of the proof follows exactly as in \cite[Theorem 4.6]{HPL}, so we simply outline the idea. For $\underline{m}^\flat = (m_1, \dots, m_{n-1}) \in \NN^{n-1}$, we define $c_{\underline{m}^\flat} :=\sum_{i=1}^{n-1} i m_i$ and $m^\flat_0(l):=nl -d - \sum_{i=1}^{n-1} m_i$ and write $\underline{m}^\flat(l) := (m^\flat_0(l), \underline{m}^\flat) \in \ZZ \times \NN^{n-1}$; then $c_{\underline{m}^\flat(l)} =c_{\underline{m}^\flat}$. From the above description of the transitions maps one obtains that
\[ M(\Bun_{n,L}) \simeq \bigoplus_{\underline{m}^\flat \in \NN^{n-1}} \hocolim_{l} P^L_{\underline{m}^\flat,l} \]
where $P^L_{\underline{m}^\flat,l} := M(C_L^{(\underline{m}^\flat(l))}) \{ c_{\underline{m}^\flat}\}$ if $m^\flat_0(l) \geq 0$ and is zero otherwise. Next for $\underline{m} = (m_0, \dots , m_{n-1})$, we use a generalised Abel-Jacobi map
\[ C_L^{(\underline{m})} \ra C^{(m_1)} \times \cdots \times C^{(m_{n-1})},\]
which is a $\PP^{m_0 -g}$-bundle if $m_0 > 2g -2$, to deduce that
\begin{align*}
M(\Bun_{n,L}) &  \simeq  \bigoplus_{\underline{m}^\flat \in \NN^{n-1}} \hocolim_{l : m^\flat_0(l) \geq 0} P^L_{\underline{m}^\flat,l} \simeq \bigoplus_{\underline{m}^\flat \in \NN^{n-1}} \hocolim_{l : m^\flat_0(l) > 2g-2} M(\PP^{m^\flat_0(l) -g }) \otimes M(C^{(\underline{m}^\flat)})\{c_{\underline{m}^\flat }\} \\
& \simeq \bigoplus_{\underline{m}^\flat \in \NN^{n-1}} M(B\GG_m) \otimes M(C^{(\underline{m}^\flat)})\{c_{\underline{m}^\flat }\} \simeq M(B\GG_m) \otimes  \bigotimes_{i=1}^{n-1} Z(C, \QQ\{i\}),
\end{align*}
where $C^{(\underline{m}^\flat)}:= C^{(m_1)} \times \cdots \times C^{(m_{n-1})}$. This completes the proof.
\end{proof}

In the case when $L = \cO_C$, we have that $\Bun_{SL_n} \ra \Bun_{n,\cO_C}$ is a $\GG_m$-torsor (see \cite[$\S$4.3]{HPL}) and so we deduce the following corollary.

\begin{cor}
Assume $C(k) \neq \emptyset$. Then in $\DM(k,\QQ)$, we have
\[ M(\Bun_{\SL_n}) \simeq \bigotimes_{i=1}^{n-1} Z(C, \QQ\{i\}).\]
\end{cor}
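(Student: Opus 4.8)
The plan is to deduce the Corollary directly from Theorem~\ref{thm motive bun fixed det} together with the fact that $\Bun_{\SL_n}\to\Bun_{n,\cO_C}$ is a $\GG_m$-torsor. First I would recall that for a $\GG_m$-torsor $p:\cY\to\cX$ between (quotient) stacks, there is a Gysin/localisation triangle relating $M(\cY)$ and $M(\cX)$: writing the associated line bundle $V\to\cX$ with zero section $\cX\hookrightarrow V$ and complement $\cY=V\setminus\cX$, homotopy invariance gives $M(V)\simeq M(\cX)$ and the localisation triangle reads $M(\cY)\to M(\cX)\to M(\cX)\{1\}\xrightarrow{+1}$. Concretely one expects $M(\Bun_{n,\cO_C})\simeq M(\Bun_{\SL_n})\oplus M(\Bun_{\SL_n})\{1\}$ (the triangle splits because the boundary map, induced by the first Chern class of the tautological line bundle, vanishes after passing to motives with $\QQ$-coefficients — this is exactly the same mechanism producing the factor $M(B\GG_m)=\bigoplus_{i\ge 0}\QQ\{i\}$). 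Equivalently, and more cleanly, one can say $M(\Bun_{n,\cO_C})\simeq M(B\GG_m)\otimes M(\Bun_{\SL_n})$, since $\Bun_{n,\cO_C}$ is the total space of a $\GG_m$-gerbe-like comparison over $\Bun_{\SL_n}$; this is the statement used in \cite[\S4.3]{HPL} and I would simply cite it.

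With that comparison in hand, the Corollary is immediate: by Theorem~\ref{thm motive bun fixed det} applied to $L=\cO_C$ we have
\[
M(B\GG_m)\otimes M(\Bun_{\SL_n})\simeq M(\Bun_{n,\cO_C})\simeq M(B\GG_m)\otimes\bigotimes_{i=1}^{n-1}Z(C,\QQ\{i\}),
\]
so I would like to cancel the common tensor factor $M(B\GG_m)$. To justify the cancellation rigorously I would note that $M(B\GG_m)\simeq\bigoplus_{j\ge 0}\QQ\{j\}$, so tensoring with it is the functor $N\mapsto\bigoplus_{j\ge0}N\{j\}$; comparing the ``lowest weight'' summands (or, more invariantly, using that $\QQ\{0\}=\one$ is a canonical direct summand of $M(B\GG_m)$ and that the Tate twists $\{j\}$ are linearly independent in the relevant Grothendieck/weight sense) isolates $M(\Bun_{\SL_n})\simeq\bigotimes_{i=1}^{n-1}Z(C,\QQ\{i\})$. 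Alternatively, and perhaps most transparently, one runs the proof of Theorem~\ref{thm motive bun fixed det} itself with $\Bun_{\SL_n}$ in place of $\Bun_{n,L}$: the only place $M(B\GG_m)$ entered was through the $\PP^{m_0^\flat(l)-g}$-bundles, which for the $\SL_n$-version become $\AA^{m_0^\flat(l)-g+1}\setminus\{0\}$-type objects, i.e. the $\GG_m$-torsor removes precisely the $M(B\GG_m)$ factor, leaving $\bigotimes_{i=1}^{n-1}Z(C,\QQ\{i\})$.

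The main (and only real) obstacle is making the cancellation step precise: \emph{a priori} an isomorphism $M(B\GG_m)\otimes A\simeq M(B\GG_m)\otimes B$ in $\DM(k,\QQ)$ need not imply $A\simeq B$ without some finiteness or weight input. In the case at hand this is not a serious issue because all motives involved are mixed Tate-by-(products of curves), hence pure and Kimura finite-dimensional, so one may either appeal to the weight structure of Bondarko to compare weight-graded pieces, or simply bypass the abstract cancellation by re-deriving the formula directly from the $\GG_m$-torsor presentation as indicated above. I would present the argument in the second, direct form to keep the proof self-contained, citing \cite[\S4.3]{HPL} for the identification $\Bun_{\SL_n}\to\Bun_{n,\cO_C}$ as a $\GG_m$-torsor and for the consequent modification of the $\hocolim$ computation in the proof of Theorem~\ref{thm motive bun fixed det}.
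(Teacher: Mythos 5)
Your second route — re-running the proof of Theorem~\ref{thm motive bun fixed det} using the $\GG_m$-torsor $\Bun_{\SL_n}\to\Bun_{n,\cO_C}$ and observing that the iterated $\PP^{m_0^\flat(l)-g}$-bundles become $\GG_m$-torsors of the form $\AA^{m_0^\flat(l)-g+1}\setminus\{0\}$, whose motives converge to $\one$ rather than to $M(B\GG_m)$ — is in substance the argument of \cite[Theorem 4.7]{HPL}, which is all the paper's one-line proof invokes. So the strategy you end up recommending is the right one and matches the paper.

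Your first route, however, contains a real conceptual slip. You say the Gysin triangle splits ``because the boundary map, induced by the first Chern class of the tautological line bundle, vanishes after passing to motives with $\QQ$-coefficients.'' This is backwards. The first Chern class of the line bundle classifying the torsor $\Bun_{\SL_n}\to\Bun_{n,\cO_C}$ (i.e.\ the determinant line bundle on $\Bun_{n,\cO_C}$) does \emph{not} vanish with $\QQ$-coefficients — on the contrary, it is precisely the class whose powers produce the factor $M(B\GG_m)=\bigoplus_{j\geq 0}\QQ\{j\}$ in Theorem~\ref{thm motive bun fixed det}. If $c_1$ were zero the torsor would be motivically trivial and one would get $M(\Bun_{\SL_n})\simeq M(\Bun_{n,\cO_C})\oplus M(\Bun_{n,\cO_C})\{1\}[-1]$, i.e.\ the $\SL_n$-stack would have \emph{larger} motive, the opposite of the corollary. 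What one actually wants is the other degeneration of the triangle, namely that the connecting map $M(\Bun_{n,\cO_C})\{1\}\to M(\Bun_{\SL_n})[1]$ vanishes; this is a genuinely different assertion, and even granting it the triangle gives $M(\Bun_{n,\cO_C})\simeq M(\Bun_{\SL_n})\oplus M(\Bun_{n,\cO_C})\{1\}$, so the displayed $M(\Bun_{n,\cO_C})\simeq M(\Bun_{\SL_n})\oplus M(\Bun_{\SL_n})\{1\}$ is missing both the justification and an iteration-with-convergence step. Finally, the proposed rigorous justification of the cancellation via Kimura finite-dimensionality or weight structures does not apply as stated: $M(\Bun_{n,\cO_C})$, $M(\Bun_{\SL_n})$ and $\bigotimes_i Z(C,\QQ\{i\})$ are infinite direct sums, hence not constructible, and the relevant finiteness machinery (Kimura, weight-graded pieces, Grothendieck ring) is only available on the constructible subcategory. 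None of this is fatal, because your final recommendation is the direct re-derivation, which bypasses the abstract cancellation entirely; but the first-route reasoning as written would not survive being spelled out.
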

\begin{proof} 
This follows from Theorem \ref{thm motive bun fixed det} using the arguments of the proof of \cite[Theorem 4.7]{HPL}.
\end{proof}

\subsection{Relative formulae for families of curves}

Throughout this section, we fix a (Noetherian finite-dimensional) scheme $T$ and consider a family $\cC$ of smooth projective geometrically connected genus $g$ curves over $T$ and we assume that this family admits a section. We let $\Bun_{\cC/T,n,d}$ denote the stack (over $T$) of vector bundles on $\cC/T$ of rank $n$ and degree $d$. We first consider the relative case without fixing the determinant and then consider the relative case with fixed determinant.

For a $T$-scheme $X$ (or stack), we write $M_T(X) \in \DM(T,\QQ)$ for the relative motive of $X$ over $T$. We write $\QQ_T\{r\} \in \DM(T,\QQ)$ for the pure Tate twists. We shall also write $(X/T)^r$ to denote the $T$-scheme given by the $r$-fold fibre product of $X$ over $T$ and we write $\Sym^r(X/T)$ for the $S_r$-quotient of $(X/T)^r$.

\begin{thm}\label{thm rel bun formula}
Let $\cC/T$ be a family of smooth projective geometrically connected genus $g$ curves over $T$ admitting a section $\sigma: T \ra \cC$. Then in $\DM(T,\QQ)$, we have
\[ M_T(\Bun_{\cC/T,n,d}) \simeq  M_T(\Jac_{\cC/T}) \otimes M_T(B\GG_{m,T}) \otimes  \bigotimes_{i=1}^{n-1} Z_T(\cC/T, \QQ_T\{i\}),\]
where 
\[ Z_T(\cC/T, \QQ_T\{i\}):= \bigoplus_{j \geq 0} M_T(\Sym^j(\cC/T)) \otimes  \QQ_T\{i j\} \in \DM(T,\QQ).\]
\end{thm}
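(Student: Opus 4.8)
The plan is to reduce the relative statement over $T$ to the absolute statement already proven in Theorem \ref{thm motive bun fixed det}, using the machinery for motives of stacks of bundles developed in \cite{HPL, HPL_formula} but carried out with relative motives in $\DM(T,\QQ)$. First I would fix the section $\sigma:T\to\cC$ and use it to build, for each $l\in\NN$, the relative Quot-type scheme $\Div_{\cC/T,n,d}(l)$ parametrising subsheaves $E\subset\sigma(l)^{\oplus n}$ of rank $n$ and degree $d$ (where $\sigma(l)$ is the relative twist $\cO_{\cC}(l\sigma(T))$), together with its relative full-flag version $\FDiv_{\cC/T,n,d}(l)$. Exactly as in \cite[Theorem 4.6]{HPL}, one has $M_T(\Bun_{\cC/T,n,d})\simeq \hocolim_l M_T(\Div_{\cC/T,n,d}(l))$; the colimit argument is formal and goes through over an arbitrary Noetherian base because the relevant open immersions and their complements behave the same way fibrewise.

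Next I would analyse the support map $\supp:\FDiv_{\cC/T,n,d}(l)\to (\cC/T)^{nl-d}$, which is an iterated projective bundle over $T$, so the relative projective bundle formula gives a decomposition of $M_T(\FDiv_{\cC/T,n,d}(l))$ into twisted copies of $M_T((\cC/T)^{nl-d})$. Pushing this down through the symmetric-group quotients (using $\QQ$-coefficients so that taking invariants is exact and commutes with $M_T$, via Theorem \ref{thm mot small map} in its relative form, or more elementarily since the maps $\Sym^j(\cC/T)\to\cdots$ are finite) yields $M_T(\Div_{\cC/T,n,d}(l))\simeq \bigoplus_{\underline m\in\cB_l} M_T(\Sym^{(\underline m)}(\cC/T))\{c_{\underline m}\}$ with the same combinatorics as in \cite[Lemma 4.7]{HPL_formula}. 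The transition maps are again induced by inserting $n$ copies of the section $\sigma$, and I would then run the same reindexing and stabilisation argument: splitting off $\underline m^\flat\in\NN^{n-1}$, and using the relative generalised Abel--Jacobi map $\Sym^{(\underline m)}(\cC/T)\to \Jac_{\cC/T}\times_T \prod_{i\geq 1}\Sym^{(m_i)}(\cC/T)$, which is a $\PP^{m_0-g}$-bundle once $m_0>2g-2$. Stabilising in $l$ converts the growing projective spaces into $M_T(B\GG_{m,T})$ and leaves the generating series $Z_T(\cC/T,\QQ_T\{i\})$, together with the Jacobian factor that appears because the determinant is no longer fixed.

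The main obstacle I expect is purely bookkeeping rather than conceptual: making sure every step of \cite{HPL, HPL_formula} that was stated for a single curve over a field is genuinely valid in $\DM(T,\QQ)$ over an arbitrary Noetherian base. The key technical inputs are (a) the relative projective bundle formula and relative purity, which hold in $\DM(-,\QQ)$ in full generality; (b) the relative version of the small-map/symmetric-quotient statement, which one gets from Theorem \ref{thm mot small map} applied over $T$ (or from the fact that finite surjective maps of degree prime to the characteristic split motivically with $\QQ$-coefficients); and (c) the identification of the relative Abel--Jacobi fibres, which is geometric and base-independent since $\cC/T$ has a section. The one point requiring a little care is that the $\hocolim$ description of $M_T(\Bun_{\cC/T,n,d})$ needs the truncated stacks $\Div_{\cC/T,n,d}(l)$ to exhaust the stack compatibly over $T$; this is handled exactly as in the absolute case because the relevant bound (a bundle of rank $n$, degree $d$ on a genus $g$ curve is generated by $\cO(l\sigma)^{\oplus n}$ for $l\gg 0$) is uniform in fibres of a family of fixed genus.

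Finally, one recovers the fixed-determinant statement (which is what the theorem numbered after this excerpt will presumably need) by the same device as in Theorem \ref{thm motive bun fixed det}: taking the fibre of the relative determinant map $\Bun_{\cC/T,n,d}\to \Pic^d_{\cC/T}$, which splits off the $M_T(\Jac_{\cC/T})$ factor, or directly by repeating the argument above with $\Div_{\cC/T,n,L}(l)$ in place of $\Div_{\cC/T,n,d}(l)$; I would present whichever is shorter, most likely noting that the relative determinant map is (after an étale cover trivialising an $n$-th root, as in Proposition \ref{prop fL is small}) a fibration with the expected fibre, so that $M_T(\Bun_{\cC/T,n,L})\simeq M_T(B\GG_{m,T})\otimes\bigotimes_{i=1}^{n-1}Z_T(\cC/T,\QQ_T\{i\})$.
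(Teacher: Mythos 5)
Your proposal is correct and shares the same skeleton as the paper (relative Quot and flag-Quot schemes, hocolim, iterated projective bundle via the support map, symmetric-group quotient, stabilisation in $l$), but it diverges in two ways worth pointing out.

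First, on checking the intermediate isomorphisms: you propose to re-run the geometric arguments of \cite{HPL, HPL_formula} over $T$ directly, handling the symmetric quotients via a relative small-map statement or the splitting of finite surjections with $\QQ$-coefficients. The paper takes a shortcut instead: it reduces both the $\hocolim$ description of $M_T(\Bun_{\cC/T,n,d})$ and the identification $M_T(\Sym^{nl-d}(\PP^{n-1}_{\cC}/T))\simeq M_T(\Div_{\cC/T,n,d}(l))$ to the absolute case (already proved in \cite[Theorem 3.5]{HPL} and \cite[Theorem 1.3]{HPL_formula}) by pulling back to each point of $T$ and invoking the conservativity of the family of point-pullbacks on $\DM(T,\QQ)$ (\cite[Proposition 3.24]{Ayoub_etale}). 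That trick lets the paper avoid re-establishing the relative geometry of the support and small maps, and replaces the slightly vague ``formal'' claim in your first paragraph with a precise mechanism; your route would work but would require spelling out the relative small-map argument and the relative exhaustion bound in more detail than you currently do.

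Second, on the final stabilisation: you pass through the relative generalised Abel--Jacobi map $\Sym^{(\underline m)}(\cC/T)\to \Jac_{\cC/T}\times_T\prod_{i\geq 1}\Sym^{m_i}(\cC/T)$ (a $\PP^{m_0-g}$-bundle once $m_0>2g-2$) and stabilise the growing projective spaces to $M_T(B\GG_{m,T})$, in the spirit of \cite[Theorem 4.6]{HPL}. The paper's proof of Theorem \ref{thm rel bun formula} avoids the Abel--Jacobi map altogether: it identifies the transition maps $M_T(\Div(l))\to M_T(\Div(l+1))$ with inclusions $\bigoplus_{i=0}^{nl-d}\Sym^i(M_{\cC/T,n})\hookrightarrow\bigoplus_{i=0}^{n(l+1)-d}\Sym^i(M_{\cC/T,n})$ (relative version of \cite[Lemma 4.4]{HPL_formula}), passes to the limit to get $\bigoplus_{i\geq 0}\Sym^i(M_{\cC/T,n})$, and then reorganises using the Künneth decompositions $M_T(\cC)\simeq \QQ_T\{0\}\oplus M_T(\Jac_{\cC/T})\oplus\QQ_T\{1\}$ and $M_T(\PP^{n-1}_T)\simeq\bigoplus_{i=0}^{n-1}\QQ_T\{i\}$, following \cite[Theorem 4.5]{HPL_formula}. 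Both routes are sound; indeed the paper itself uses your Abel--Jacobi route in the proof of the absolute fixed-determinant formula (Theorem \ref{thm motive bun fixed det}) and the $\Sym$-reindexing route here, so you have in effect transposed the two. The $\Sym$-route has the advantage of not requiring the Poincaré bundle or the bound $m_0>2g-2$; your route has the advantage of exhibiting the $M_T(\Jac_{\cC/T})\otimes M_T(B\GG_{m,T})$ factors geometrically.
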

\begin{proof}
Let $\cO_{\cC}(\sigma)$ denote the line bundle whose restriction to $t \in T$ is the degree $1$ line bundle $\cO_{\cC_t}(\sigma(t))$. We now consider relative versions of the (flag)-Quot schemes appearing in \cite{HPL,HPL_formula}: we let
\[ \Div_{\cC/T,n,d}(l):= \mathrm{Quot}_{\cC/T}^{0,nl-d}(\cO_{\cC}(l\sigma)^{\oplus n})\]
denote the relative Quot scheme over $T$ of rank $0$, degree $nl-d$ quotient sheaves of $\cO_{\cC}(l\sigma)^{\oplus n}$ and we let $\FDiv_{\cC/T,n,d}(l)$ denote the relative full flag version, whose points over $S \ra T$ are
\[  \FDiv_{\cC/T,n,d}(l)(S) = \left\{ \cF_0 \subset \cdots \subset \cF_{nl-d} = \cO_{\cC_S}(l\sigma_S)^{\oplus n} : \begin{smallmatrix} \cF_i \\ \downarrow \\ \cC_S \end{smallmatrix}, \rk(\cF_i) = n, \deg(\cF_i) = d + i  \right\}. \]
Both $\Div_{\cC/T,n,d}(l)$ and $\FDiv_{\cC/T,n,d}(l)$ are smooth projective schemes over $T$. 

The natural forgetful morphisms $\Div_{\cC/T,n,d}(l) \ra \Bun_{\cC/T,n,d}$ induce a morphism 
\[  \hocolim_l M_T(\Div_{\cC/T,n,d}(l)) \ra M_T(\Bun_{\cC/T,n,d}) \]
in $\DM(T,\QQ)$, which we claim is an isomorphism. By \cite[Proposition 3.24]{Ayoub_etale}, it suffices to check the pullback of this map along each point $t \in T$ is an isomorphism. However, for each $t \in T$, the pullback to $\DM(\kappa(t),\QQ)$ 
\[ \hocolim_l M(\Div_{\cC_t,n,d}(l\sigma(t))) \ra M(\Bun_{\cC_t,n,d}) \]
coincides with the isomorphism given by \cite[Theorem 3.5]{HPL}.

Extending the absolute case proved in \cite[$\S$4]{HPL_formula} to the relative setting, there is a support map $\supp : \FDiv_{\cC/T,n,d}(l) \ra (\cC/T)^{nl-d}$ sending $
\cF_0 \subset \cdots \subset \cF_{nl-d}$ to $\supp (\cF_i/\cF_{i-1})$. Furthermore, as in \cite[$\S$4]{HPL_formula}, the support map is an $(nl-d)$-iterated $\PP^{n-1}$-bundle and thus
\[ M_T(\FDiv_{\cC/T,n,d}(l)) \simeq M_T( \PP^{n-1}_{\cC})^{\otimes nl-d} \in \DM(T,\QQ). \]
We claim that the following composition 
\[ M_T(\Sym^{nl-d}(\PP^n_{\cC}/T)) \hookrightarrow M_T( \PP^{n-1}_{\cC})^{\otimes nl-d}  \simeq M_T(\FDiv_{\cC/T,n,d}(l)) \ra M_T(\Div_{\cC/T,n,d}(l))\]
is an isomorphism in $\DM(T,\QQ)$, where the last morphism is induced by the natural forgetful map. Again, by \cite[Proposition 3.24]{Ayoub_etale}, it suffices to check the pullback of this map along each point $t \in T$ is an isomorphism, which is proved in \cite[Theorem 1.3]{HPL_formula}. Upgrading \cite[Lemma 4.4]{HPL_formula} to the relative case, we see that 
the transition maps $M_T(\Div_{\cC/T,n,d}(l)) \ra M_T(\Div_{\cC/T,n,d}(l+1))$  correspond to the inclusions
\[  \bigoplus_{i=0}^{nl-d} \Sym^i(M_{\cC/T,n}) \hookrightarrow \bigoplus_{i=0}^{n(l+1)-d}  \Sym^i(M_{\cC/T,n}),\]
where $M_{\cC/T,n}:=\overline{M}_T(\PP^{n-1}_{\cC}/T)$ is defined by the decomposition $M_T(\PP^{n-1}_{\cC}) \simeq \QQ_T\{0\} \oplus M_{\cC/T,n}$ given by $\sigma$. Hence,  similarly to \cite[Theorem 4.5]{HPL_formula}, we conclude that
\begin{align*}
M_T(\Bun_{\cC /T,n,d}) &  \simeq \hocolim_l   \bigoplus_{i=0}^{nl-d} \Sym^i(M_{\cC/T,n}) \simeq \bigoplus_{i=0}^{\infty} \Sym^i(M_{C,n}) \\
& \simeq  M_T(\Jac_{\cC/T}) \otimes M_T(B\GG_{m,T}) \otimes  \bigotimes_{i=1}^{n-1} Z_T(\cC/T, \QQ_T\{i\})
\end{align*}
where we use the decompositions $M_T(\cC) \simeq \QQ_T\{0\} \oplus M_T(\Jac_{\cC/T}) \oplus \QQ_T\{1\}$ (see \cite[Corollary 3.20 (iii)]{Simon_thesis}) and $M_T(\PP^{n-1}_T) \simeq \oplus_{i=0}^{n-1} \QQ_T\{i\}$.
\end{proof}

Now fix a line bundle $\cL \in \Pic^d_{\cC/T}(T)$ and consider the stack $\Bun_{\cC/T,n,\cL}$ of vector bundles on $\cC/T$ of rank $n$ with determinant $\cL$; this is a smooth closed substack of $\Bun_{\cC/T,n,d}$.

\begin{thm}\label{thm motive rel Bun fixed det}
Let $\cC/T$ be a family of smooth projective geometrically connected genus $g$ curves over $T$ admitting a section $\sigma: T \ra \cC$ and $\cL \in \Pic^d_{\cC/T}(T)$. In $\DM(T,\QQ)$, we have
\[ M_T(\Bun_{\cC/T,n,\cL}) \simeq M_T(B\GG_{m,T}) \otimes  \bigotimes_{i=1}^{n-1} Z_T(\cC/T, \QQ_T\{i\}).\]
\end{thm}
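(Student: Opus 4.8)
The plan is to relativize the proof of Theorem~\ref{thm motive bun fixed det} over $T$, following the same pattern by which the proof of Theorem~\ref{thm rel bun formula} relativizes \cite[Theorem~4.6]{HPL} and \cite[Theorem~4.5]{HPL_formula}. Throughout, statements about morphisms in $\DM(T,\QQ)$ being isomorphisms are reduced to the corresponding statements over each point $t\in T$ via \cite[Proposition~3.24]{Ayoub_etale}; this is essential because several of the morphisms in play (e.g.\ the closed immersions cut out by the fixed-determinant condition) are not flat, so one cannot argue uniformly over $T$.

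First I would introduce, for $l\in\NN$ and with $\cO_\cC(\sigma)$ the relative degree-one line bundle attached to the section, the relative fixed-determinant Quot scheme
\[ \Div_{\cC/T,n,\cL}(l) := \{\, \cE \subset \cO_\cC(l\sigma)^{\oplus n} : \rk(\cE)=n,\ \det(\cE)\simeq \cL \,\},\]
a smooth projective $T$-scheme sitting as a relative codimension $g$ closed subscheme of $\Div_{\cC/T,n,d}(l)$, together with its relative full-flag version $\FDiv_{\cC/T,n,\cL}(l)\subset\FDiv_{\cC/T,n,d}(l)$. The forgetful maps $\Div_{\cC/T,n,\cL}(l)\to\Bun_{\cC/T,n,\cL}$ induce a morphism $\hocolim_l M_T(\Div_{\cC/T,n,\cL}(l))\to M_T(\Bun_{\cC/T,n,\cL})$ in $\DM(T,\QQ)$, which by \cite[Proposition~3.24]{Ayoub_etale} is an isomorphism since its pullback to each $t\in T$ is the isomorphism of \cite[Theorem~4.6]{HPL}.

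Second I would relativize the combinatorial analysis of \cite[\S4.3]{HPL_formula} and \cite[\S4.3]{HPL}. Let $(\cC/T)^{nl-d}_\cL$ denote the image of $\FDiv_{\cC/T,n,\cL}(l)$ under the support map; then $\FDiv_{\cC/T,n,\cL}(l)\to (\cC/T)^{nl-d}_\cL$ is an $(nl-d)$-iterated $\PP^{n-1}$-bundle. Using the projective bundle formula, the splitting $M_T(\PP^{n-1})\simeq\bigoplus_{i=0}^{n-1}\QQ_T\{i\}$, and passing to the symmetric quotients $(\cC/T)^{(\underline m)}_\cL$ as in \cite[Lemma~4.7]{HPL_formula} — checking the relevant comparison maps fiberwise against \cite[Theorem~1.3]{HPL_formula} — one obtains
\[ M_T(\Div_{\cC/T,n,\cL}(l)) \simeq \bigoplus_{\underline m\in\cB_l} M_T\big((\cC/T)^{(\underline m)}_\cL\big)\{c_{\underline m}\}.\]
Then I would identify the transition maps exactly as in \cite[Lemma~4.7(ii)]{HPL_formula} and invoke a relative generalized Abel--Jacobi map
\[ (\cC/T)^{(\underline m)}_\cL \longrightarrow \Sym^{m_1}(\cC/T)\times_T\cdots\times_T\Sym^{m_{n-1}}(\cC/T),\]
which is a relative $\PP^{m_0-g}$-bundle whenever $m_0>2g-2$; here the uniform fibre dimension over $T$ is controlled by Riemann--Roch, and fixing the determinant is precisely what removes the $\Jac_{\cC/T}$-factor appearing in Theorem~\ref{thm rel bun formula}. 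Taking $\hocolim_l$, the $\PP^{m_0-g}$-bundles contribute $M_T(B\GG_{m,T})$, and reorganizing the remaining direct sum as in the absolute proof of Theorem~\ref{thm motive bun fixed det} yields the asserted formula.

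The main obstacle is the bookkeeping needed to make the relativization of the flag-Quot-scheme computation rigorous: one must verify that the closed subschemes $(\cC/T)^{(\underline m)}_\cL$ and the relative generalized Abel--Jacobi maps behave well in families (in particular that the projective-bundle claims hold with the expected relative dimensions uniformly over $T$), and apply the reduction-to-fibres principle of \cite[Proposition~3.24]{Ayoub_etale} at every stage because the morphisms involved need not be flat. None of this is conceptually deep, but it is the bulk of the work. An alternative would be to deduce the statement from Theorem~\ref{thm rel bun formula} via the presentation $\Bun_{\cC/T,n,d}\simeq (\Bun_{\cC/T,n,\cL}\times_T\Jac^0_{\cC/T})/\Jac_{\cC/T}[n]$ together with the relative analogue of the triviality of the $\Jac[n]$-action on $M_T(\Jac^0_{\cC/T})$ (\cite[Proof of Theorem~4.2]{FHPL_rank2}); however this would additionally require knowing that $\Jac_{\cC/T}[n]$ acts trivially on $M_T(\Bun_{\cC/T,n,\cL})$ itself, which is not immediate, so the direct relativization is preferable.
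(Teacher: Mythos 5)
Your proposal follows essentially the same route as the paper: introduce the relative fixed-determinant (flag-)Quot schemes $\Div_{\cC/T,n,\cL}(l)$ and $\FDiv_{\cC/T,n,\cL}(l)$, establish the key isomorphisms by reducing to fibres over points of $T$ via \cite[Proposition~3.24]{Ayoub_etale} and invoking the absolute case from Theorem~\ref{thm motive bun fixed det}, then relativize the transition-map and Abel--Jacobi analysis. The paper's proof is terser (it delegates the final steps to ``the rest of the proof follows in exactly the same way''), but the strategy and the decisive technical device --- pointwise conservativity --- are identical to yours.
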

\begin{proof}
One defines subschemes $\Div_{\cC/T,n,\cL}(l) \hookrightarrow \Div_{\cC/T,n,d}(l)$ where the subsheaf $\cE \subset \cO_{\cC}(l\sigma)^{\oplus n}$ has determinant $\cL$; this is a closed subscheme and smooth over $T$. One obtains an isomorphism
\[ M_T(\Bun_{\cC/T,n,\cL}) \simeq \hocolim_l M_T(\Div_{\cC/T,n,\cL}(l)) \]
by adapting the proof of Theorem \ref{thm rel bun formula} using Theorem \ref{thm motive bun fixed det}. 

We define a closed subscheme $\FDiv_{\cC/T,n,\cL}(l) \hookrightarrow \Div_{\cC/T,n,d}(l)$ with fixed determinant $\cL$ and consider the composition
\[ \bigoplus_{\underline{m} \in \cB_l} M_T((\cC/T)^{(\underline{m})_{\cL}}\{c_{\underline{m}}\} \hookrightarrow M_T((\PP^{n-1})^{nl-d} \times ({\cC}/T)^{nl-d}_{\cL}) \ra M_T(\FDiv_{\cC/T,n,\cL}(l)) \]
with the forgetful map $M_T(\FDiv_{\cC/T,n,\cL}(l)) \ra M_T(\Div_{\cC/T,n,\cL}(l))$; here $({\cC}/T)^{ nl-d}_{\cL}$ denotes the image of $\FDiv_{\cC/T,n,\cL}(l)$ under $\supp : \Div_{\cC/T,n,\cL}(l) \ra ({\cC}/T)^{nl-d}$ and for a partition $\underline{m} \in \cB_{l}$, we let $(\cC/T)^{(\underline{m})}_{\cL}$ denote the image of $({\cC}/T)^{ nl-d}_{\cL}$ under the quotient map  $({\cC}/T)^{ nl-d} \ra ({\cC}/T)^{(\underline{m})} := \prod_{i=0}^{n-1} \Sym^{m_i}(\cC/T)$. We claim that this composition is an isomorphism. Again, by \cite[Proposition 3.24]{Ayoub_etale}, it suffices to check the pullback of this map along each point $t \in T$ is an isomorphism, so we can reduce to $T=\Spec(k)$. But we proved that this morphism is an isomorphism in that case in the proof Theorem \ref{thm motive bun fixed det}. Furthermore, the transition maps are as in Theorem \ref{thm motive bun fixed det} and then the rest of the proof follows in exactly the same way.
\end{proof}

\bibliographystyle{abbrv}
\bibliography{references}

\medskip \medskip
\end{document}